\numberwithin{equation}{section} \DeclareMathSizes{2}{10}{12}{13}
\newtheorem{thm}{Proposition}[section]
\newtheorem{Thm}[thm]{Theorem}
\newtheorem{rem}[thm]{Remark}
\newtheorem{cor}[thm]{Corollary}
\newtheorem{lem}[thm]{Lemma}
\newtheorem{defn}[thm]{Definition}
\numberwithin{thm}{section} 
\title{Categories of modules, comodules and contramodules over representations}
\author{Mamta Balodi \footnote{\scriptsize Department of Mathematics, Indian Institute of Science, Bangalore 560012, India. Email: mamta.balodi@gmail.com} $\qquad$ Abhishek Banerjee \footnote{\scriptsize Department of Mathematics, Indian Institute of Science, Bangalore 560012, India. Email: abhishekbanerjee1313@gmail.com} \footnote{\scriptsize AB was partially supported by SERB Matrics fellowship MTR/2017/000112} $\qquad$ Samarpita Ray \footnote{\scriptsize Department of Mathematics, Indian Institute of Science Education and Research, Pune 411008, India. Email: ray.samarpita31@gmail.com} \footnote{\scriptsize SR was partially supported by SERB National Postdoctoral Fellowship PDF/2020/000670}  }
\date{}
\begin{document}

\maketitle 

\medskip

\begin{abstract} We study and relate categories of modules, comodules and contramodules over a representation of a small category taking values in (co)algebras, in a manner similar to modules over a ringed space. As a result, we obtain a categorical framework which incorporates all the adjoint functors between these categories in a natural manner. Various classical properties of coalgebras and their morphisms arise naturally within this theory. We also consider cartesian objects in each of these categories, which may be viewed as counterparts of quasi-coherent sheaves over a scheme. We study their categorical properties using cardinality arguments. Our focus is on generators for these categories and on Grothendieck categories, because the latter may be treated as replacements for noncommutative spaces. 
\end{abstract}

\medskip
MSC(2020) Subject Classification:   16T15, 18E10

\medskip
Keywords : Modules, comodules, contramodules, cartesian objects, Grothendieck categories

\hypersetup{linktocpage}

\tableofcontents

\section{Introduction}

The purpose of this paper is to obtain an algebraic geometry like categorical framework that studies modules, comodules and contramodules over a representation of a small category taking values in (co)algebras. In classical algebraic geometry, one usually has a ringed site, or more generally a ringed category $(X,\mathcal O)$ consisting of a small category $X$ and a presheaf $\mathcal O$ of commutative rings on $X$. Accordingly, a module $M$ over $(X,\mathcal O)$ corresponds to a family $\{M_x\}_{x\in X}$, where each $M_x$ is an $\mathcal O_x$ module, along with compatible morphisms. In more abstract settings, the idea of studying schemes by means of module categories linked with adjoint pairs given by extension and restriction of scalars is well developed in the literature.  This appears for instance in the relative algebraic geometry over symmetric monoidal categories (see Deligne \cite{Del}, To$\ddot{\text{e}}$n and Vaqui\'{e} \cite{TV}), in derived algebraic geometry (see Lurie \cite{Lurie}) and in homotopical algebraic geometry (see To$\ddot{\text{e}}$n and Vezzosi \cite{TV-1}, \cite{TV-2}).

\smallskip
In \cite{EV}, Estrada and Virili considered a representation $\mathcal A:\mathscr X\longrightarrow Add$ of a small category $\mathscr X$ taking values in the category $Add$ of small preadditive categories. Following the philosophy of Mitchell \cite{Mit}, the small preadditive categories play the role of ``algebras with several objects.'' An object $\mathcal M$ in the category $Mod\text{-}\mathcal A$ of $\mathcal A$-modules consists of the data of an $\mathcal A_x$-module $\mathcal M_x$ for each $x\in Ob(\mathscr X)$, along with compatible morphisms corresponding to extension or restriction of scalars.  The authors in \cite{EV} then establish a number of categorical properties of $\mathcal A$-modules, as also those of cartesian objects in $Mod\text{-}\mathcal A$, the latter being  similar to quasi-coherent modules over a scheme. As such, the study in \cite{EV} not only takes the philosophy of Mitchell one step further, but also provides a categorical framework for studying modules over ringed categories where the algebras are not necessarily commutative. 

\smallskip
The work of \cite{EV} is our starting point. For a small category $\mathscr X$, we consider either a representation $\mathcal C:\mathscr X\longrightarrow Coalg$ taking values in coalgebras or a representation $\mathcal A:\mathscr X\longrightarrow Alg$ taking values in algebras. In place of modules, we consider three different ``module like'' categories; those of modules, comodules and contramodules as well as incorporate all the adjoint functors between them into our theory. 
In doing so, we have two objectives. First of all, in each of these contexts, we also work with cartesian objects, which play a role similar to quasi-coherent sheaves over a scheme. By a classical result of Gabriel \cite{Gab} (see also Rosenberg \cite{Rose-1}, \cite{Rose}, \cite{Rose1}) we know that under certain conditions, a scheme can be reconstructed from its category of quasi-coherent sheaves. As such, the categories of cartesian comodules or cartesian contramodules may be  viewed as a step towards constructing a scheme like object related to comodules or contramodules over coalgebras. Our focus is on Grothendieck categories appearing in these contexts and more generally on generators of these categories. This is because Grothendieck categories may be treated as a replacement for noncommutative spaces as noted in \cite{LGS}. The latter is motivated by the work of \cite{Z1}, \cite{Z2}, \cite{Z3} as well as the observation in \cite{Low} that the Gabriel-Popescu theorem for Grothendieck categories may be viewed as an additive version of Giraud's theorem. We also note that module valued representations
of a small category have been studied by a number of authors (see, for instance, \cite{Oda}, \cite{EEES}, \cite{EEGR}). 

\smallskip
Secondly, our methods enable us to explore the richness of the theory of comodules and coalgebras with the flavor of algebraic geometry. In this framework, it becomes natural to include the theory of contramodules, which  have been somewhat neglected in the literature. Formally, the notion of a contramodule is also dual to that of a module, if we write the structure map of a module $P$ over an algebra $A$ as a morphism $P\longrightarrow Hom(A,P)$ instead of the usual $P\otimes A\longrightarrow P$. Accordingly, a contramodule $M$ over a coalgebra $C$ consists of a space $M$ as well as a morphism $Hom(C,M)\longrightarrow M$ satisfying certain coassociativity and counit conditions (see Section 5). While contramodules were introduced much earlier by Eilenberg and Moore \cite[$\S$ IV.5]{EM}, the subject has seen a lot of interest in recent years (see, for instance, \cite{BPS}, \cite{Pst2}, \cite{BBW}, \cite{semicontra}, \cite{Pmem}, \cite{P2}, \cite{Pst1}, \cite{P}, \cite{co-contra}, 
\cite{P1}, \cite{Sha}, \cite{Wis}). 

\smallskip One important aspect of our paper is that for comodules over a coalgebra representation $\mathcal C:\mathscr X\longrightarrow Coalg$ or modules over an algebra representation $\mathcal A:\mathscr X\longrightarrow Alg$, it becomes necessary to work with objects of  two different orientations, which we refer to as ``cis-objects'' and ``trans-objects.''  We shall see that cis-comodules over a coalgebra representation are related to trans-modules over its dual algebra representation and vice-versa. 

\smallskip
For a coalgebra $C$ over a field $K$, let $\mathbf M^C$ denote the category of right $C$-comodules. Given a morphism $\alpha:C\longrightarrow D$ of coalgebras, we consider a system of three different functors between comodule categories
\begin{equation}\label{1Icom3}
\alpha^!:\mathbf M^D\longrightarrow \mathbf M^C\qquad \alpha^*:\mathbf M^C\longrightarrow \mathbf M^D \qquad
\alpha_*:\mathbf M^D\longrightarrow \mathbf M^C
\end{equation}
Here, $\alpha^*:\mathbf M^C\longrightarrow \mathbf M^D$ is the corestriction of scalars and its right adjoint is given by the cotensor product $\alpha_*
=\_\_\square_DC:\mathbf M^D\longrightarrow \mathbf M^C$. In addition, if $\alpha:C\longrightarrow D$ makes $C$ quasi-finite as a right $D$-comodule (see Section 2.2), then the left adjoint $\alpha^!:\mathbf M^D\longrightarrow \mathbf M^C$ of the corestriction functor $\alpha^*$ also exists. In order to define a cis-comodule $\mathcal M$ over a coalgebra representation $\mathcal C:\mathscr X\longrightarrow Coalg$, we need a collection $\{\mathcal M_x\}_{x\in Ob(\mathscr X)}$ where each $\mathcal M_x$ is a $\mathcal C_x$-comodule, along with compatible morphisms $\mathcal M^\alpha:\alpha^*\mathcal M_x\longrightarrow\mathcal M_y$ of $\mathcal C_y$-comodules for $\alpha\in \mathscr X(x,y)$ (see Definition \ref{comod-rep}). Equivalently, we have morphisms $\mathcal M_\alpha:\mathcal M_x\longrightarrow \alpha_*\mathcal M_y$ of $\mathcal C_x$-comodules for each  $\alpha\in \mathscr X(x,y)$. By combining techniques on comodules with adapting the cardinality arguments of \cite{EV}, we study the category $Com^{cs}$-$\mathcal C$ of cis-comodules and give conditions for it to be a Grothendieck category. It turns out that the  relevant criterion is for the representation $\mathcal C:\mathscr X\longrightarrow Coalg$ to take values in semiperfect coalgebras, i.e., those  for which the category of comodules has enough projectives. The semiperfect coalgebras also return in the last section, where they make an interesting appearance with respect to torsion theories.

\smallskip
On the other hand, a trans-comodule $\mathcal M$ over $\mathcal C:\mathscr X\longrightarrow Coalg$ consists of morphisms $_{\alpha}\mathcal M:\mathcal M_y\longrightarrow \alpha^*\mathcal M_x$ for each $\alpha\in \mathscr X(x,y)$. Whenever the coalgebra representation   is quasi-finite, i.e., each morphism $\mathcal C_x
\longrightarrow \mathcal C_y$ of coalgebras induced by $\mathcal C:\mathscr X\longrightarrow Coalg$  makes $\mathcal C_x$ quasi-finite as a $\mathcal C_y$-comodule, this is equivalent to having morphisms $^{\alpha}\mathcal M:\alpha^!\mathcal M_y\longrightarrow \mathcal M_x$  for each $\alpha\in \mathscr X(x,y)$. We study the category $Com^{tr}$-$\mathcal C$ of trans-comodules in a manner similar to $Com^{cs}$-$\mathcal C$.  When the small category $\mathscr X$ is a poset, we show that the evaluation functor at each $x\in Ob(\mathscr X)$ has both a left and a right adjoint. This enables us to construct explicit projective generators for $Com^{cs}$-$\mathcal C$ and $Com^{tr}$-$\mathcal C$, by making use of the projective generators in the category of comodules over each of the semiperfect coalgebras $\{\mathcal C_x\}_{x\in Ob(\mathscr X)}$. 

\smallskip
A cartesian object in the category of cis-comodules consists of $\mathcal M\in Com^{cs}$-$\mathcal C$ such that for each $\alpha\in \mathscr X(x,y)$, the morphism $\mathcal M_\alpha:\mathcal M_x\longrightarrow \alpha_*\mathcal M_y$ is an isomorphism. In order to study these objects, we will suppose that the representation $\mathcal C:\mathscr X\longrightarrow Coalg$ is coflat, i.e., the cotensor products $\_\_\square_{\mathcal C_y}\mathcal C_x$ corresponding to any morphism $\mathcal C_x
\longrightarrow \mathcal C_y$  of coalgebras induced by $\mathcal C$ are exact. By using a transfinite induction argument adapted from \cite{EV}, we will show that for any coflat  and semiperfect representation $\mathcal C:\mathscr X\longrightarrow Coalg$ of a poset, there exists a cardinal $\kappa'$ such that any cartesian cis-comodule $\mathcal M$ over $\mathcal C$ can be expressed as a filtered union of cartesian subcomodules each of cardinality $\leq \kappa'$ (see Theorem \ref{T4.8b}). It follows in particular that the category
$Com^{cs}_c$-$\mathcal C$ of cartesian cis-comodules over such a representation is a Grothendieck category. Here, we also refer the reader to the classical result of Gabber (see, for instance, \cite[Tag 077K]{Stacks}), which shows that the category of quasi-coherent sheaves over a scheme is a Grothendieck category. We also obtain a right adjoint of the inclusion functor
$Com^{cs}_c$-$\mathcal C\hookrightarrow Com^{cs}$-$\mathcal C$, which may be viewed as a coalgebraic counterpart of the classical quasi-coherator construction (see Illusie \cite[Lemme 3.2]{Ill}). 

\smallskip
We will say that a quasi-finite morphism $\alpha:C\longrightarrow D$ of coalgebras is right $\Sigma$-injective if the direct sum $C^{(\Lambda)}$ is injective as a right $D$-comodule for any indexing set $\Lambda$. This is equivalent (see  \cite[Corollary 3.10]{takh}) to the functor $\alpha^!:\mathbf M^D\longrightarrow \mathbf M^C$ being exact. An object $\mathcal M\in Com^{tr}$-$\mathcal C$ is cartesian if the morphism $^{\alpha}\mathcal M:\alpha^!\mathcal M_y\longrightarrow \mathcal M_x$ is an isomorphism for each $\alpha\in \mathscr X(x,y)$. We then study the category $Com^{tr}_c$-$\mathcal C$ of cartesian trans-contramodules over a $\Sigma$-injective and semiperfect coalgebra representation in a manner similar to $Com^{cs}_c$-$\mathcal C$.

\smallskip
If $C$ is a coalgebra over a field $K$, we denote by $\mathbf M_{[C,\_\_]}$ the category of (right) $C$-contramodules. If $\alpha:C\longrightarrow D$ is a morphism of coalgebras, we have a pair of adjoint functors
\begin{equation}\label{1Icont2}
\alpha^\bullet: \mathbf M_{[D,\_\_]}\longrightarrow\mathbf M_{[C,\_\_]}\qquad \alpha_\bullet :\mathbf M_{[C,\_\_]}\longrightarrow \mathbf M_{[D,\_\_]}
\end{equation}
between corresponding categories of contramodules. Here, the functor $\alpha_\bullet$ is the contrarestriction of scalars, obtained by treating a $C$-contramodule $(M,\pi^C_M:Hom_K(C,M)\longrightarrow M)$ as a $D$-contramodule with induced structure map $Hom_K(D,M)\longrightarrow Hom_K(C,M)\xrightarrow{\pi^C_M}M$. Let $\mathcal C:\mathscr X\longrightarrow Coalg$ be a coalgebra representation. A trans-contramodule $\mathcal M$ over $\mathcal C$ consists of a $\mathcal C_x$-contramodule $\mathcal M_x$ for each $x\in 
Ob(\mathscr X)$ along with compatible morphisms $_{\alpha}\mathcal M:\mathcal M_y\longrightarrow\alpha_\bullet\mathcal M_x$ for  $\alpha\in \mathscr X(x,y)$. Equivalently, we have a morphism
$^\alpha\mathcal M:\alpha^\bullet\mathcal M_y\longrightarrow \mathcal M_x$ for $\alpha\in \mathscr X(x,y)$. We note that we have only a single pair of adjoint functors in \eqref{1Icont2}, unlike the system of three adjoint functors for comodule categories in \eqref{1Icom3}. As such, we consider only the category $Cont^{tr}$-$\mathcal C$ of trans-contramodules over a representation $\mathcal C:\mathscr X\longrightarrow Coalg$ but no cis-contramodules. 

\smallskip
In comparison to comodule categories, working with contramodules presents certain difficulties. The first among these is the fact that the category $\mathbf M_{[C,\_\_]}$ of contramodules over a $K$-coalgebra $C$ is not usually a Grothendieck category. Further, direct sums in the category of contramodules do not correspond in general to the direct sums of their underlying vector spaces. As a result, we work by considering morphisms from presentable generators in contramodule categories, using an adjunction between vector spaces and  contramodules. Interestingly, the category of contramodules $\mathbf M_{[C,\_\_]}$  does contain enough projectives. For a coalgebra representation $\mathcal C:\mathscr X\longrightarrow Coalg$ we show that $Cont^{tr}$-$\mathcal C$ has a set of generators. When the small  category $\mathscr X$ is a poset, we show that $Cont^{tr}$-$\mathcal C$  is in fact locally presentable and we construct a set of projective generators for $Cont^{tr}$-$\mathcal C$.

\smallskip
When $\alpha:C\longrightarrow D$ is a coflat morphism of coalgebras, one observes that the contraextension functor $\alpha^\bullet:\mathbf M_{[D,\_\_]}\longrightarrow \mathbf M_{[C,\_\_]}$ is exact. Accordingly, we say that a trans-contramodule $\mathcal M$ over a coflat representation $\mathcal C:\mathscr X\longrightarrow Coalg$ is cartesian if the morphism
$^\alpha\mathcal M:\alpha^\bullet\mathcal M_y\longrightarrow \mathcal M_x$ is an isomorphism for each $\alpha\in \mathscr X(x,y)$. For each $x\in Ob(\mathscr X)$, we choose a regular cardinal $\lambda_x$ such that the dual $\mathcal C_x^*=Hom_K(\mathcal C_x,K)$ is a $\lambda_x$-presentable generator in $\mathbf M_{[\mathcal C_x,\_\_]}$. Since colimits of contramodules do  not correspond to the colimits of underlying vector spaces, we rely extensively on  presentable objects in each $\mathbf M_{[\mathcal C_x,\_\_]}$  to set up a transfinite induction argument.  When $\mathscr X$ is a poset and   
$\mathcal C:\mathscr X\longrightarrow Coalg$ is a coflat representation, we show that there is a regular cardinal $\kappa''$ such that any element of a cartesian trans-contramodule over $\mathcal C$ lies in a cartesian subobject having cardinality $\leq \kappa''$ (see Theorem \ref{T6.11exes}). 

\smallskip
Finally, we consider modules over an algebra representation $\mathcal A:\mathscr X\longrightarrow Alg$ and proceed to relate the categories of modules, comodules and contramodules to each other. For any $K$-algebra $A$, we denote by $\mathbf M_A$ the category of right $A$-modules.  Corresponding to any morphism $\alpha:A\longrightarrow B$ of algebras, we have a system of three adjoint functors
\begin{equation}\label{1Imod3}
\alpha^\circ:\mathbf M_A\longrightarrow \mathbf M_B\qquad \alpha_\circ:\mathbf M_B\longrightarrow \mathbf M_A\qquad\alpha^\dagger:\mathbf M_A\longrightarrow \mathbf M_B
\end{equation} Here $\alpha_\circ$ is the usual restriction of scalars, $\alpha^\circ$ is its left adjoint, while $\alpha^\dagger$ is its right adjoint. Accordingly, we can define categories 
$Mod^{cs}$-$\mathcal A$ and $Mod^{tr}$-$\mathcal A$ respectively of right  cis-modules and right trans-modules over $\mathcal A$. We observe in particular that the cis-modules recover the modules of Estrada and Virili \cite{EV} (in the case where the representation in \cite{EV} takes values in $K$-algebras). 

\smallskip
We now relate comodules over a coalgebra representation $\mathcal C:\mathscr X\longrightarrow Coalg$ to modules over its linear dual representation $\mathcal C^*:\mathscr X^{op}
\longrightarrow Alg$ as well as modules over an algebra representation $\mathcal A:\mathscr X^{op}\longrightarrow Alg$ to comodules over its finite dual representation $\mathcal A^\circ:
\mathscr X\longrightarrow Coalg$. More generally, we define a rational pairing $(\mathcal C,\mathcal A,\Phi)$ of a coalgebra representation $ \mathcal C:\mathscr X\longrightarrow Coalg$  with an algebra representation $\mathcal A:\mathscr X^{op}\longrightarrow Alg$. We recall (see, for instance, \cite[$\S$ 4.18]{BW}) that if $\varphi:C\otimes A\longrightarrow K$ is a rational pairing of a coalgebra $C$ with an algebra $A$, the category of right $C$-comodules can be embedded as the category of rational left $A$-modules. Accordingly, we construct pairs of adjoint functors
\begin{equation}
\begin{tikzcd}[column sep=3cm, row sep=0.7cm]
Com^{cs}\text{-}\mathcal C\arrow[r,shift left,"I_\Phi^{tr}"]
    &
\mathcal A\text{-}Mod^{tr}\arrow[l,shift left,"R_\Phi^{tr}"]
\end{tikzcd}\qquad 
\begin{tikzcd}[column sep=3cm, row sep=0.7cm]
Com^{tr}\text{-}\mathcal C\arrow[r,shift left,"I_\Phi^{cs}"]
    &
\mathcal A\text{-}Mod^{cs}\arrow[l,shift left,"R_\Phi^{cs}"]
\end{tikzcd}
\end{equation}
Here, $I_\Phi^{cs}$ and $I_\Phi^{tr}$ are inclusion functors while their respective right adjoints $R_\Phi^{cs}$ and $R_\Phi^{tr}$ are ``rationalization functors.'' Thereafter, using some classical results from \cite{Lin} on coalgebras and density in module categories, we give conditions for the full subcategory $Com^{cs}$-$\mathcal C$ to be a torsion class in $\mathcal A$-$Mod^{tr}$. In particular, it follows that if $\mathcal C:\mathscr X\longrightarrow Coalg$ is a representation taking values in semiperfect coalgebras, then $Com^{cs}$-$\mathcal C$ becomes a hereditary torsion class in $\mathcal C^*$-$Mod^{tr}$. For any coalgebra representation $\mathcal C:\mathscr X\longrightarrow Coalg$, we also construct a canonical functor
$Cont^{tr}$-$\mathcal C\longrightarrow Mod^{cs}$-$\mathcal C^*$. 

\smallskip
We conclude with the following result: if $\mathcal C:\mathscr X\longrightarrow Coalg$ is a quasi-finite representation taking values in cocommutative coalgebras, for each cartesian trans-comodule $\mathcal N\in Com^{tr}_c$-$\mathcal C$, we produce a pair of adjoint functors 
\begin{equation}
\begin{tikzcd}[column sep=3cm, row sep=0.7cm]
Cont^{tr}\text{-}\mathcal C\arrow[r,shift left,"{\boxtimes_{\mathcal C}\mathcal N}"]
    &
Com^{tr}\text{-}\mathcal C\arrow[l,shift left,"{(\mathcal N,\_\_)}"]
\end{tikzcd}
\end{equation} determined by $\mathcal N$ (see Theorem \ref{T7.19fe}). Here, the left adjoint is obtained by using the contratensor product of a contramodule and a comodule constructed by Positselski \cite{semicontra}.

\medskip

{\bf Acknowledgements:} The authors are grateful to L.~Positselski for a useful discussion.

\section{Comodules over coalgebra representations}
Throughout this paper, $K$ will denote a field and $Vect$ the category of $K$-vector spaces. Let $C$ be a $K$-coalgebra. We denote by $\mathbf{M}^C$ (resp. ${^C}\mathbf{M}$) the category of right $C$-comodules (resp. left $C$-comodules). We also denote by $\rho^M:M \longrightarrow M \otimes C$ (resp. ${^M}\rho:M \longrightarrow C \otimes M$) the right coaction (resp. the left coaction) for $M \in \mathbf{M}^C$ (resp. $M \in {{^C}\mathbf{M}}$). 

\smallskip
For $M \in \mathbf{M}^C$ and $N \in {^C}\mathbf{M}$, we recall (see, for instance, \cite[$\S$10.1]{BW}) that their cotensor product $M \square_C N$ is given by the equalizer of the two maps
\begin{equation}
\begin{tikzcd}[column sep=3cm, row sep=0.7cm]
M\otimes N\ar[r,shift left=.75ex,"\rho^M \otimes id_N"]
  \ar[r,shift right=.75ex,swap,"id_M \otimes {^N}{\rho}"]
&
M\otimes C\otimes N
\\
\end{tikzcd}
\end{equation}
If $M\in \mathbf M^C$, we note the isomorphism $M \square_C C \simeq M$ of right $C$-comodules.  
A coalgebra morphism $\alpha:C \longrightarrow D$  induces a pair of adjoint functors
\begin{equation*}
\begin{array}{ll}
\alpha^*: \mathbf{M}^C \longrightarrow \mathbf{M}^D \qquad (M,\rho^M) \mapsto \left(M, (id_M \otimes \alpha) \circ \rho^M \right)\\
\alpha_*:  \mathbf{M}^D \longrightarrow \mathbf{M}^C \qquad  N \mapsto N \square_D C
\end{array}
\end{equation*} The left adjoint $\alpha^*$ is known as the corestriction functor, while the right adjoint $\alpha_*$ is known as the coinduction functor.

\begin{defn}

  Let $\mathscr{X}$ be a small category and let $Coalg$ denote the category of $K$-coalgebras.  By a representation of $\mathscr{X}$ on the category of $K$-coalgebras, we will mean a functor $\mathcal{C}: \mathscr{X}\longrightarrow Coalg$. 
  
  \smallskip In particular, for each object $x\in Ob(\mathscr{X})$, we have a coalgebra $\mathcal{C}_x$ and for any morphism  $\alpha \in \mathcal{X}(x,y)$, we have a morphism $\mathcal{C}_{\alpha}: \mathcal{C}_x\longrightarrow \mathcal{C}_y$ of  coalgebras.

\end{defn}

By abuse of notation, given a coalgebra representation  $\mathcal{C}: \mathscr{X}\longrightarrow Coalg$, we will simply write $\alpha^*$, $\alpha_*$ and so on for the respective functors
$\mathcal C_\alpha^*$, $\mathcal C_{\alpha *}$, etc  between corresponding categories of comodules.
Corresponding to a representation of $\mathscr{X}$ on the category of coalgebras, we will now define two types of comodules, namely, cis-comodules and trans-comodules. We will
work with cis-comodules in the first subsection and with trans-comodules in the next.

\subsection{Cis-comodules over coalgebra representations}

\begin{defn}\label{comod-rep}

  Let $\mathcal{C}: \mathscr{X}\longrightarrow Coalg$ be a coalgebra representation. A (right) cis-comodule $\mathcal{M}$ over $\mathcal{C}$ will consist of the following data:
  \begin{itemize}
  \item[(1)] For each object $x\in Ob(\mathscr{X})$, a right $\mathcal{C}_x$-comodule $\mathcal{M}_x$
  \item[(2)] For each morphism
    $\alpha: x \longrightarrow y$ in $\mathscr{X}$, a morphism
    ${\mathcal{M}}_{\alpha}: \mathcal{M}_x\longrightarrow \mathcal{M}_y\square_{C_y}\mathcal{C}_x
    =\alpha_*\mathcal{M}_y$ of right $\mathcal{C}_x$-comodules (equivalently, a morphism $\mathcal{M}^{\alpha}: \alpha^*\mathcal{M}_x\longrightarrow \mathcal{M}_y$ of right $\mathcal{C}_y$-comodules)
  \end{itemize}

  We further assume that $\mathcal{M}_{id_x} = id_{\mathcal{M}_x}$ and for any pair of composable morphisms
  $x\xrightarrow{\alpha} y\xrightarrow{\beta}z$ in $\mathscr{X}$, we have $\alpha_*(\mathcal{M}_{\beta})\circ \mathcal{M}_{\alpha}= \mathcal{M}_{\beta\alpha}:\mathcal{M}_x\longrightarrow \alpha_*\mathcal{M}_y\longrightarrow \alpha_*\beta_*\mathcal{M}_z=(\beta\alpha)_*\mathcal{M}_z$. The later condition can be equivalently expressed as $\mathcal{M}^{\beta\alpha}=\mathcal{M}^{\beta}\circ \beta^*(\mathcal{M}^\alpha)$.\\
  A morphism $\eta: \mathcal{M} \longrightarrow \mathcal{N}$ of cis-comodules over $\mathcal{C}$ consists of morphisms $\eta_x: \mathcal{M}_x\longrightarrow\mathcal{N}_x$ of right $\mathcal{C}_x$-comodules for   $x\in Ob(\mathscr{X})$ such that for each morphism $x\longrightarrow y$ in $\mathscr{X}$ the following diagram commutes

  \[\begin{tikzcd}
\mathcal{M}_x \arrow{r}{\eta_x} \arrow[swap]{d}{\mathcal{M}_\alpha} & \mathcal{N}_x \arrow{d}{\mathcal{N}_\alpha} \\
\alpha_*\mathcal{M}_y \arrow{r}{\alpha_*\eta_y} & \alpha_*\mathcal{N}_y
\end{tikzcd}
\]
We denote this category of right cis-comodules by $Com^{cs}$-$\mathcal{C}$. Similarly, we may define the category $\mathcal C$-$Com^{cs}$ of left cis-comodules over $\mathcal C$. 
\end{defn}

\begin{thm}\label{Ab}

Let $\mathcal{C}: \mathscr{X}\longrightarrow Coalg$ be a coalgebra representation. Then, $Com^{cs}$-$\mathcal{C}$  is an abelian category.

\end{thm}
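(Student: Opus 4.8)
The plan is to verify the abelian category axioms for $Com^{cs}$-$\mathcal C$ essentially componentwise, using that each $\mathbf M^{\mathcal C_x}$ is abelian and that the structure functors $\alpha_*$ (equivalently $\alpha^*$) behave well enough with respect to kernels and cokernels. First I would observe that $Com^{cs}$-$\mathcal C$ is clearly additive: the zero object is the family $(0)_{x\in Ob(\mathscr X)}$ with zero transition maps, and for two cis-comodules $\mathcal M,\mathcal N$ the product/coproduct is $(\mathcal M_x\oplus\mathcal N_x)_x$ with transition maps $\mathcal M_\alpha\oplus\mathcal N_\alpha$ (using that $\alpha_*$, being a right adjoint, preserves finite products, so $\alpha_*(\mathcal M_y\oplus\mathcal N_y)\cong\alpha_*\mathcal M_y\oplus\alpha_*\mathcal N_y$ compatibly); hom-sets are abelian groups since each $\eta_x$ lives in an abelian group and the commuting-square condition is additive.

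Next I would construct kernels. Given $\eta:\mathcal M\to\mathcal N$, set $(\ker\eta)_x:=\ker(\eta_x)$ in $\mathbf M^{\mathcal C_x}$. The key point is to produce the transition morphism $(\ker\eta)_x\to\alpha_*(\ker\eta)_y$. Here I would use the description via $\mathcal M^\alpha:\alpha^*\mathcal M_x\to\mathcal M_y$: since $\alpha^*$ is exact (it is the corestriction functor, which is exact — it does not change underlying vector spaces, only the coaction), $\alpha^*$ preserves kernels, so $\alpha^*(\ker\eta)_x=\ker(\alpha^*\eta_x)$, and the commuting square forces $\mathcal M^\alpha$ to restrict to a map $\alpha^*(\ker\eta)_x\to(\ker\eta)_y$; functoriality/counit conditions are inherited. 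This gives $\ker\eta$ as a cis-comodule with the obvious monomorphism into $\mathcal M$, and its universal property follows from the universal property componentwise plus the fact that a morphism of cis-comodules is exactly a componentwise morphism compatible with transitions. Dually, I would build cokernels: set $(\operatorname{coker}\eta)_x:=\operatorname{coker}(\eta_x)$, and again use exactness of $\alpha^*$ to get $\alpha^*(\operatorname{coker}\eta)_x=\operatorname{coker}(\alpha^*\eta_x)$, so $\mathcal M^\alpha$ descends to $\alpha^*(\operatorname{coker}\eta)_x\to(\operatorname{coker}\eta)_y$.

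Finally I would check the remaining axiom: every monomorphism is a kernel and every epimorphism is a cokernel. I would first note that $\eta$ is a monomorphism in $Com^{cs}$-$\mathcal C$ iff each $\eta_x$ is injective (one direction is easy; for the other, a nonzero $\eta_x$ with nontrivial kernel gives, via the kernel construction above, a nonzero map killed by $\eta$), and dually $\eta$ is epi iff each $\eta_x$ is surjective. Then for a monomorphism $\eta:\mathcal M\hookrightarrow\mathcal N$ one shows $\mathcal M=\ker(\mathcal N\to\operatorname{coker}\eta)$ by checking it componentwise, where it holds because $\mathbf M^{\mathcal C_x}$ is abelian; the transition maps automatically match because both sides are subobjects of $\mathcal N$ with transition maps induced from $\mathcal N_\alpha$. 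The epimorphism case is dual.

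I expect the main obstacle to be bookkeeping rather than anything deep: the one genuine content point is the exactness of the corestriction functor $\alpha^*$ (which lets kernels and cokernels be formed componentwise and still carry transition data), together with the compatibility of $\alpha_*$ with finite direct sums; everything else is a routine transfer of the abelian structure of $\mathbf M^{\mathcal C_x}$ through the diagram-of-categories description of $Com^{cs}$-$\mathcal C$. I would therefore keep the write-up brief, stating the componentwise constructions, invoking exactness of $\alpha^*$ explicitly, and leaving the verification of universal properties to the reader as straightforward.
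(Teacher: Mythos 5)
Your proposal is correct and follows essentially the same route as the paper: kernels and cokernels are formed pointwise, the exactness of the corestriction functor $\alpha^*$ supplies the induced transition morphisms, and the coincidence of image and coimage is checked componentwise in each $\mathbf M^{\mathcal C_x}$. The extra detail you give on the additive structure and the mono/epi characterizations is fine but not a different method.
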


\begin{proof}
  Clearly, $Com^{cs}$-$\mathcal{C}$ has a zero object. For any morphism $\eta:\mathcal{M}\longrightarrow \mathcal{N}$ in $Com^{cs}$-$\mathcal{C}$, we define the kernel and cokernel of $\eta$ by setting
  \begin{equation}\label{v2.2y} 
    {Ker(\eta)}_x:=Ker(\eta_x:\mathcal{M}_x\longrightarrow \mathcal{N}_x)\qquad {Coker(\eta)}_x:= Coker(\eta_x: \mathcal{M}_x\longrightarrow \mathcal{N}_x)
    \end{equation}
    for each $x\in \mathscr{X}$. For $\alpha:x\longrightarrow y$ in $\mathscr{X}$, the exactness of the corestriction functor $\alpha^*: \mathcal{M}^{\mathcal{C}_x}\longrightarrow \mathcal{M}^{\mathcal{C}_y}$ induces the morphisms ${Ker(\eta)}^{\alpha}: \alpha^*{Ker(\eta)}_x\longrightarrow {Ker(\eta)}_y$ and ${Coker(\eta)}^{\alpha}: \alpha^*{Coker(\eta)}_x\longrightarrow {Coker(\eta)}_y$. It is also clear from \eqref{v2.2y} that $Coker(Ker(\eta)\hookrightarrow \mathcal{M})= Ker (\mathcal{N}\twoheadrightarrow  Coker(\eta))$.
\end{proof}

  We will now study generators in the category $Com^{cs}$-$\mathcal{C}$. For this, we recall (see, for instance, \cite[Corollary 2.2.9]{DNR}) that any right $C$-comodule is the sum of its finite dimensional subcomodules. If $C$ is a right semiperfect $K$-coalgebra, i.e., the category $\mathbf M^C$  has enough projective objects, we know (see, for instance, \cite[Corollary 2.4.21]{DNR}) that for any finite dimensional right $N\in \mathbf M^C$, there exists a finite dimensional projective $P\in \mathbf M^C$ and an epimorphism $P\longrightarrow N$ in  $\mathbf M^C$. Therefore, it follows that if $C$ is right semiperfect, $\mathbf M^C$ has finitely generated projective generators.\\

For studying generators in $Com^{cs}$-$\mathcal C$, we will now adapt the steps in \cite[$\S$ 4]{AB}, which are motivated by Estrada and Virili \cite{EV}. In fact, we will use a similar argument in several contexts throughout this paper.   Let $\mathcal{C}: \mathscr{X}\longrightarrow Coalg$ be a coalgebra representation. Let $\mathcal{M}$ be a right cis-comodule over $\mathcal{C}$. We consider an object $x\in Ob(\mathscr{X})$ and a morphism
\begin{equation}
  \eta: V\longrightarrow  \mathcal{M}_x
\end{equation}
where $V$ is a finite dimensional projective in $\mathbf M^{\mathcal{C}_x}$. For each object $y\in Ob(\mathscr{X})$, we set $\mathcal{N}_y\subseteq \mathcal{M}_y$ to be the image of the family of maps

\begin{align}\label{defN}
  \mathcal{N}_y&= Im(\bigoplus_{\beta \in \mathscr{X}(x,y)}\beta^*V \xrightarrow{\beta^*\eta}\beta^*\mathcal{M}_x\xrightarrow{\mathcal{M}^{\beta}}\mathcal{M}_y) 
  &= \sum_{\beta \in \mathscr{X}(x,y)}Im(\beta^*V \xrightarrow{\beta^*\eta}\beta^*\mathcal{M}_x\xrightarrow{\mathcal{M}^{\beta}}\mathcal{M}_y)
\end{align} in $\mathbf M^{\mathcal C_y}$.
Let $i_y$ denote the inclusion $\mathcal{N}_y\hookrightarrow \mathcal{M}_y$ and for each $\beta\in \mathscr{X}(x,y)$, we denote by $\eta'_\beta=\mathcal M^\beta\circ \beta^\ast \eta:\beta^*V\longrightarrow \mathcal{N}_y$ the canonical morphism induced from \eqref{defN}.

\begin{lem}\label{lem1}
  Let $\alpha \in \mathscr{X}(y,z)$ and $\beta\in \mathscr{X}(x,y)$. Then, the following composition
  \begin{equation}
    \beta^*V\xrightarrow{\eta'_\beta} \mathcal{N}_y\xrightarrow{i_y}\mathcal{M}_y\xrightarrow{\mathcal{M}_\alpha }\alpha_*\mathcal{M}_z
  \end{equation}
factors through $\alpha_*(i_z):\alpha_*\mathcal{N}_z\longrightarrow \alpha_*\mathcal{M}_z$.
\end{lem}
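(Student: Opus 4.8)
The plan is to route the computation through the adjunction $(\alpha^*,\alpha_*)$ rather than attempt a direct element chase inside the cotensor product $\alpha_*\mathcal M_z=\mathcal M_z\,\square_{\mathcal C_z}\mathcal C_y$, which has no convenient ``image'' description. First, unwinding the definitions of $\eta'_\beta$ and $i_y$, the composite in the statement is nothing but $\mathcal M_\alpha\circ\mathcal M^\beta\circ\beta^*\eta\colon\beta^*V\to\alpha_*\mathcal M_z$, a morphism of right $\mathcal C_y$-comodules. I would take its transpose under the natural bijection $Hom_{\mathbf M^{\mathcal C_z}}(\alpha^*\beta^*V,\mathcal M_z)\cong Hom_{\mathbf M^{\mathcal C_y}}(\beta^*V,\alpha_*\mathcal M_z)$; recalling that $\mathcal M^\alpha\colon\alpha^*\mathcal M_y\to\mathcal M_z$ is by definition the transpose of $\mathcal M_\alpha\colon\mathcal M_y\to\alpha_*\mathcal M_z$, a one-line manipulation with the counit of this adjunction shows that the transpose of our composite equals $\mathcal M^\alpha\circ\alpha^*(\mathcal M^\beta)\circ\alpha^*\beta^*\eta\colon\alpha^*\beta^*V\to\mathcal M_z$.

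Next I would feed in the cis-comodule cocycle identity, which in the $\mathcal M^\bullet$-form reads $\mathcal M^{\alpha\beta}=\mathcal M^\alpha\circ\alpha^*(\mathcal M^\beta)$ — this is the relation $\mathcal M^{\beta\alpha}=\mathcal M^\beta\circ\beta^*(\mathcal M^\alpha)$ of Definition \ref{comod-rep} applied to $x\xrightarrow{\beta}y\xrightarrow{\alpha}z$ — together with the functoriality identification $\alpha^*\beta^*V=(\alpha\beta)^*V$. Since $\alpha\beta\in\mathscr X(x,z)$, this rewrites the transpose as $\mathcal M^{\alpha\beta}\circ(\alpha\beta)^*\eta$, which is precisely $i_z\circ\eta'_{\alpha\beta}$ by the definition of $\eta'_{\alpha\beta}\colon(\alpha\beta)^*V\to\mathcal N_z$ coming from \eqref{defN}. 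So the transpose of our composite factors through $i_z\colon\mathcal N_z\hookrightarrow\mathcal M_z$.

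Finally, I would push this factorization back across the adjunction: let $\psi\colon\beta^*V\to\alpha_*\mathcal N_z$ denote the transpose of $\eta'_{\alpha\beta}\colon\alpha^*\beta^*V\to\mathcal N_z$. By naturality of the adjunction bijection in the target variable, the transpose of $i_z\circ\eta'_{\alpha\beta}$ is $\alpha_*(i_z)\circ\psi$; and since transposition is a bijection, the transpose of $i_z\circ\eta'_{\alpha\beta}$ is exactly $\mathcal M_\alpha\circ i_y\circ\eta'_\beta$. Hence $\mathcal M_\alpha\circ i_y\circ\eta'_\beta=\alpha_*(i_z)\circ\psi$, i.e.\ the composite factors through $\alpha_*(i_z)$, as claimed. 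One may add, although it is not needed for the statement, that $\mathcal M_z\mapsto\mathcal M_z\,\square_{\mathcal C_z}\mathcal C_y$ is left exact over the field $K$, so $\alpha_*$ preserves monomorphisms, $\alpha_*(i_z)$ is a monomorphism, and the factorization $\psi$ is unique.

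The argument is entirely formal; the only place that needs care is the bookkeeping — keeping the two presentations $\mathcal M_\bullet$ and $\mathcal M^\bullet$ aligned, applying the cocycle relation in the correct variance, and staying consistent with the convention that $\beta\in\mathscr X(x,y)$ and $\alpha\in\mathscr X(y,z)$ compose to $\alpha\beta\in\mathscr X(x,z)$. I do not expect a genuine obstacle here: the point is simply that the adjunction makes the otherwise opaque codomain $\alpha_*\mathcal M_z$ tractable, whereas a direct approach inside the cotensor product would be the wrong route.
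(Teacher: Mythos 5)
Your proof is correct and follows essentially the same route as the paper's: both transpose across the adjunction $(\alpha^*,\alpha_*)$, use $i_y\circ\eta'_\beta=\mathcal M^\beta\circ\beta^*\eta$ and the cocycle identity to rewrite the adjunct as $\mathcal M^{\alpha\beta}\circ\alpha^*\beta^*\eta$, and observe that this factors through $i_z$ by the definition in \eqref{defN}. The only difference is that you spell out the naturality bookkeeping of the transposition explicitly, which the paper compresses into the remark that it suffices to check the factorization after applying $\alpha^*$ since $(\alpha^*,\alpha_*)$ is an adjoint pair.
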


\begin{proof}
  It is enough to show that the composition
  \begin{equation}
\alpha^*\beta^*V\xrightarrow{\alpha^*(\eta'_\beta)} \alpha^*\mathcal{N}_y\xrightarrow{\alpha^*i_y}\alpha^*\mathcal{M}_y\xrightarrow{\mathcal{M}^\alpha} \mathcal{M}_z
\end{equation}
factors through $i_z: \mathcal{N}_z\longrightarrow \mathcal{M}_z$ since $(\alpha^*,\alpha_*)$ is an adjoint pair. By definition, we know that the composition $
\beta^*V\xrightarrow{\eta'_\beta} \mathcal{N}_y\xrightarrow{i_y}\mathcal{M}_y$ factors through $\beta^*\mathcal{M}_x$ i.e., we have $i_y\circ {\eta'}_\beta= \mathcal{M}^\beta \circ\beta^*\eta$. Applying $\alpha^*$, composing with $\mathcal{M}^{\alpha}$ and using Definition \ref{comod-rep} we obtain
\begin{equation}
\mathcal{M}^\alpha\circ \alpha^*(i_y)\circ \alpha^*({\eta'}_\beta)=\mathcal{M}^\alpha\circ \alpha^*(\mathcal{M}^\beta)\circ\alpha^*(\beta^*\eta)=\mathcal{M}^{\alpha\beta}\circ \alpha^*\beta^*\eta
  \end{equation}
By the definition in \eqref{defN}, we know that $ \mathcal{M}^{\alpha\beta}\circ \alpha^*\beta^*\eta$ factors through $i_z: \mathcal{N}_z\longrightarrow \mathcal{M}_z$ and therefore so does $\mathcal{M}^\alpha\circ \alpha^*(i_y)\circ \alpha^*({\eta'}_\beta)$.
\end{proof}

For the rest of this subsection, we suppose that $\mathcal{C}: \mathscr{X}\longrightarrow Coalg$ is a coalgebra representation such that each $\mathcal C_x$ is right semiperfect, i.e., $\mathbf M^{\mathcal C_x}$ has a set of projective generators.

  \begin{thm}\label{zzcsub}
The objects $\{\mathcal{N}_y\in \mathbf M^{\mathcal C_y}\}_{y\in Ob(\mathscr{X})}$ together determine a subobject $\mathcal N\subseteq \mathcal M$ in $Com^{cs}$-$\mathcal C$.

    \end{thm}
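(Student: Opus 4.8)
The plan is to verify that the family $\{\mathcal N_y\}_{y\in Ob(\mathscr X)}$ constructed above carries the structure of a cis-comodule over $\mathcal C$ and that the inclusions $i_y:\mathcal N_y\hookrightarrow \mathcal M_y$ assemble into a morphism in $Com^{cs}$-$\mathcal C$. First I would use Lemma \ref{lem1} to produce, for each $\alpha\in\mathscr X(y,z)$, the structure morphism $\mathcal N_\alpha:\mathcal N_y\longrightarrow \alpha_*\mathcal N_z$. Indeed, Lemma \ref{lem1} says that for every $\beta\in\mathscr X(x,y)$ the composite $\beta^*V\xrightarrow{\eta'_\beta}\mathcal N_y\xrightarrow{i_y}\mathcal M_y\xrightarrow{\mathcal M_\alpha}\alpha_*\mathcal M_z$ factors through $\alpha_*(i_z):\alpha_*\mathcal N_z\to\alpha_*\mathcal M_z$; since $\mathcal N_y$ is, by \eqref{defN}, the sum of the images of the maps $\eta'_\beta$ (equivalently, the $\eta'_\beta$ are jointly epimorphic onto $\mathcal N_y$), the map $\mathcal M_\alpha\circ i_y$ itself factors (uniquely, as $\alpha_*(i_z)$ is a monomorphism — $\alpha_*$ being left exact) through $\alpha_*(i_z)$. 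This unique factorization is the desired morphism $\mathcal N_\alpha$, and by construction the square with $i_y$, $i_z$, $\mathcal M_\alpha$, $\mathcal N_\alpha$ commutes. One should also note $\mathcal N_\alpha$ is a morphism of $\mathcal C_y$-comodules because all the maps in sight are.

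Next I would check the two axioms in Definition \ref{comod-rep}, namely $\mathcal N_{id_y}=id_{\mathcal N_y}$ and the cocycle condition $\alpha_*(\mathcal N_\gamma)\circ\mathcal N_\alpha=\mathcal N_{\gamma\alpha}$ for composable $y\xrightarrow{\alpha}z\xrightarrow{\gamma}w$. Both follow by a uniqueness-of-factorization argument: since $i$ is (objectwise) a monomorphism and $\alpha_*$, hence $\alpha_*\gamma_*=(\gamma\alpha)_*$, preserves monomorphisms, it suffices to verify the corresponding identities after postcomposing with the appropriate inclusion into $\mathcal M$, where they reduce to the already-known identities $\mathcal M_{id_y}=id_{\mathcal M_y}$ and $\alpha_*(\mathcal M_\gamma)\circ\mathcal M_\alpha=\mathcal M_{\gamma\alpha}$ together with the commuting squares defining $\mathcal N_\alpha$ and $\mathcal N_\gamma$. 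The same monomorphism-cancellation then shows the $i_y$ form a morphism $\mathcal N\hookrightarrow\mathcal M$ in $Com^{cs}$-$\mathcal C$, and it is a subobject since each $i_y$ is a monomorphism in $\mathbf M^{\mathcal C_y}$ and kernels in $Com^{cs}$-$\mathcal C$ are computed objectwise by Proposition \ref{Ab}.

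The only genuinely delicate point is the very first factorization step, and there are two things to be careful about. One is that $\mathcal N_y$ must really be a \emph{quotient} of $\bigoplus_{\beta}\beta^*V$ in $\mathbf M^{\mathcal C_y}$ so that the $\eta'_\beta$ are jointly epic; this is exactly the content of \eqref{defN}, using that $\bigoplus_\beta\beta^*V\to\mathcal M_y$ factors through its image $\mathcal N_y$. The other is that the target of the factorization, $\alpha_*\mathcal N_z$, receives $\alpha_*\mathcal N_z\hookrightarrow\alpha_*\mathcal M_z$ as a monomorphism — this needs left exactness of the coinduction functor $\alpha_*=\_\_\square_{\mathcal C_z}\mathcal C_y$, which holds because cotensoring is left exact (it is defined as an equalizer). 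I do not expect to need right semiperfectness of the $\mathcal C_x$ for this particular statement — that hypothesis is there to guarantee the existence of the finite-dimensional projective $V$ in the first place and will matter for the generation statement that presumably follows — so the proof of Theorem \ref{zzcsub} itself is essentially a diagram-chase organized around the joint epimorphism \eqref{defN} and the left exactness of $\alpha_*$.
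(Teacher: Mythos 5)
Your proposal is correct, and its skeleton matches the paper's: construct $\mathcal N_\alpha$ as the restriction of $\mathcal M_\alpha$ by combining Lemma \ref{lem1} with the fact that $\bigoplus_{\beta}\eta'_\beta:\bigoplus_{\beta\in\mathscr X(x,y)}\beta^*V\longrightarrow \mathcal N_y$ is an epimorphism, and use that $\alpha_*(i_z)$ is a monomorphism. The difference lies in how the crucial factorization of $\mathcal M_\alpha\circ i_y$ through $\alpha_*(i_z)$ is obtained. The paper invokes the standing semiperfectness hypothesis at this point: it fixes a set of projective generators $\{G_k\}$ of $\mathbf M^{\mathcal C_y}$, reduces (via the criterion of \cite[Lemma 3.2]{AB}) to factoring $\mathcal M_\alpha\circ i_y\circ\zeta_k$ for each $\zeta_k:G_k\to\mathcal N_y$, and then lifts $\zeta_k$ along the epimorphism $\bigoplus_\beta\eta'_\beta$ using projectivity of $G_k$ before applying Lemma \ref{lem1}. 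You instead argue directly with the (epi, mono) diagonal fill-in available in any abelian category: since $\mathcal M_\alpha\circ i_y$ precomposed with the epimorphism $\bigoplus_\beta\eta'_\beta$ factors through the monomorphism $\alpha_*(i_z)$ (by Lemma \ref{lem1} summand by summand), the map $\mathcal M_\alpha\circ i_y$ itself factors uniquely through $\alpha_*(i_z)$, by the image argument. This is a genuine simplification: it shows, as you note, that right semiperfectness of the $\mathcal C_x$ is not needed for this particular proposition (it enters only in producing the finite-dimensional projective $V$ and in the subsequent generation results), and it avoids the projective-lifting detour entirely. Your additional verification of the cis-comodule axioms $\mathcal N_{id_y}=id_{\mathcal N_y}$ and $\alpha_*(\mathcal N_\gamma)\circ\mathcal N_\alpha=\mathcal N_{\gamma\alpha}$ by monomorphism cancellation, and of the left exactness of $\alpha_*=\_\_\square_{\mathcal C_z}\mathcal C_y$ (equivalently, the paper's observation that $\alpha_*$ is a right adjoint), correctly fills in details the paper leaves implicit.
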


    \begin{proof}
Let $\alpha\in \mathscr{X}(y,z)$. Since $\alpha_*$ is a right adjoint, it preserves monomorphisms and it follows that $\alpha_*(i_z): \alpha_*\mathcal{N}_z\longrightarrow \alpha_*\mathcal{M}_z$ is a monomorphism in $\mathbf M^{\mathcal C_y}$. We will now show that the morphism $\mathcal{M}_\alpha: \mathcal{M}_y\longrightarrow \alpha_*\mathcal{M}_z$ restricts to a morphism $\mathcal{N}_\alpha: \mathcal{N}_y\longrightarrow \alpha_*\mathcal{N}_z$ giving us a commutative diagram 

\[\begin{tikzcd}
\mathcal{N}_y \arrow{r}{i_y} \arrow[swap]{d}{\mathcal{N}_\alpha} & \mathcal{M}_y \arrow{d}{\mathcal{M}_\alpha} \\
\alpha_*\mathcal{N}_z \arrow{r}{\alpha_*(i_z)} & \alpha_*\mathcal{M}_z
\end{tikzcd}
\]

Since $\mathcal C_y$ is right semiperfect, we can fix a set   $\{G_k\}_{k\in K}$ of projective generators for $\mathbf M^{\mathcal C_y}$. Using \cite[Lemma 3.2]{AB}, it suffices to show that for any $k\in K$ and any morphism $\zeta_k:G_k\longrightarrow \mathcal{N}_y$, there exists $\zeta'_k:G_k\longrightarrow \alpha_*\mathcal{N}_z$ such that $\alpha_*(i_z)\circ \zeta'_k= \mathcal{M}_\alpha\circ i_y\circ \zeta_k$. By \ref{defN}, we have an epimorphism
\begin{equation}
\bigoplus_{\beta \in \mathscr{X}(x,y)}\eta'_\beta: \bigoplus_{\beta\in \mathscr{X}(x,y)}\beta^*V\longrightarrow \mathcal{N}_y
\end{equation} in $\mathbf M^{\mathcal C_y}$. 
Since $G_k$ is projective, the morphism $\zeta_k:G_k\longrightarrow \mathcal{N}_y$ can be lifted to a morphism  $ {\zeta}_k'': G_k \longrightarrow \bigoplus_{\beta\in \mathscr{X}(x,y)}\beta^*V$ such that
\begin{equation}\label{eq1}
\zeta_k = \left(\bigoplus_{\beta \in \mathscr{X}(x,y)}{\eta'_\beta}\right)\circ \zeta_k''
  \end{equation}
  We know from Lemma \ref{lem1}, that $\mathcal{M}_\alpha\circ i_y\circ \eta_\beta'$ factors through $\alpha_*(i_z):\alpha_*\mathcal{N}_z\longrightarrow \alpha_*\mathcal{M}_z$ for each $\beta \in \mathscr{X}(x,y)$. It now follows from \eqref{eq1} that $\mathcal{M}_\alpha\circ i_y\circ \zeta_k$ factors through $\alpha_*(i_z)$ as required. 
\end{proof}

\begin{lem}\label{lem2}
  Let $\eta_1': V\longrightarrow \mathcal{N}_x$ be the canonical morphism corresponding to the identity map in $\mathscr{X}(x,x)$. Then, for any $y\in Ob(\mathscr{X})$, we have
  \begin{equation}
    \mathcal{N}_y= Im \left(\bigoplus_{\beta \in \mathscr{X}(x,y)}\beta^*V \xrightarrow{\beta^*\eta_1'}\beta^*\mathcal{N}_x\xrightarrow{\mathcal{N}^{\beta}}\mathcal{N}_y \right)
    \end{equation}

  \end{lem}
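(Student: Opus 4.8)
The plan is to reduce the claim to the epimorphism already produced inside the proof of Theorem \ref{zzcsub}. The key point to establish is that, for each $\beta\in\mathscr{X}(x,y)$, the composite $\beta^*V\xrightarrow{\beta^*\eta_1'}\beta^*\mathcal{N}_x\xrightarrow{\mathcal{N}^{\beta}}\mathcal{N}_y$ agrees with the canonical map $\eta'_\beta$ of \eqref{defN}. Granting this, the morphism displayed in the statement, namely $\bigoplus_{\beta\in\mathscr{X}(x,y)}\beta^*V\to\mathcal{N}_y$, coincides componentwise with $\bigoplus_{\beta}\eta'_\beta$; and the latter was shown in the proof of Theorem \ref{zzcsub} (directly from \eqref{defN}) to be an epimorphism in $\mathbf M^{\mathcal C_y}$. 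Hence its image is $\mathcal{N}_y$, which is precisely the assertion of the lemma.

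To prove the identification $\mathcal{N}^{\beta}\circ\beta^*\eta_1'=\eta'_\beta$, I would use two easy observations. First, since $\eta_1'$ is by definition the canonical morphism $\eta'_{id_x}$ attached to $id_x\in\mathscr{X}(x,x)$, and $\mathcal{M}^{id_x}=id$, $(id_x)^*=id$, the relation $i_y\circ\eta'_\beta=\mathcal{M}^{\beta}\circ\beta^*\eta$ (which is how $\eta'_\beta$ is defined in \eqref{defN}) specializes at $\beta=id_x$ to $i_x\circ\eta_1'=\eta$. Second, the inclusion $\mathcal{N}\hookrightarrow\mathcal{M}$ of Theorem \ref{zzcsub} is compatible with structure maps, the defining compatibility $\alpha_*(i_z)\circ\mathcal{N}_\alpha=\mathcal{M}_\alpha\circ i_y$ for $\alpha\in\mathscr{X}(y,z)$ transposing along the adjunction $(\beta^*,\beta_*)$ (using naturality of its counit) to the equivalent identity $i_y\circ\mathcal{N}^{\beta}=\mathcal{M}^{\beta}\circ\beta^*(i_x)$ for $\beta\in\mathscr{X}(x,y)$. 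Composing this with $\beta^*\eta_1'$ and using the first observation gives
\[
i_y\circ\mathcal{N}^{\beta}\circ\beta^*\eta_1'=\mathcal{M}^{\beta}\circ\beta^*(i_x\circ\eta_1')=\mathcal{M}^{\beta}\circ\beta^*\eta=i_y\circ\eta'_\beta ,
\]
so that $\mathcal{N}^{\beta}\circ\beta^*\eta_1'=\eta'_\beta$ because $i_y$ is monic.

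I do not expect a real obstacle here: the statement is essentially a bookkeeping consequence of the construction of $\mathcal{N}$ in Theorem \ref{zzcsub}. The only place that needs a touch of care is keeping straight the two equivalent forms of a cis-comodule structure map (the form $\mathcal{N}_\alpha\colon\mathcal{N}_y\to\alpha_*\mathcal{N}_z$ used in Definition \ref{comod-rep} versus the adjoint form $\mathcal{N}^{\beta}\colon\beta^*\mathcal{N}_x\to\mathcal{N}_y$), together with the fact that the inclusion $\mathcal{N}\subseteq\mathcal{M}$ respects both; this is exactly what the adjunction $(\beta^*,\beta_*)$ delivers.
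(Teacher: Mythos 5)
Your proposal is correct and is essentially the paper's own argument: both rest on the identity $i_x\circ\eta_1'=\eta$, the compatibility square $i_y\circ\mathcal{N}^{\beta}=\mathcal{M}^{\beta}\circ\beta^*(i_x)$ coming from Proposition \ref{zzcsub}, and monicity of $i_y$, differing only in that you first record the morphism-level identity $\mathcal{N}^{\beta}\circ\beta^*\eta_1'=\eta'_\beta$ (making the adjunction transposition explicit) while the paper passes directly to the equality of images. No gap.
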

  \begin{proof}
Let $\beta \in \mathscr{X}(x,y)$. We consider the following commutative diagram:
\begin{equation}\label{commdiag}
\begin{tikzcd}
\beta^*V \arrow{r}{\beta^*\eta_1'} & \beta^*\mathcal{N}_x \arrow{r}{\mathcal{N}^\beta} \arrow[swap]{d}{\beta^* i_x} & \mathcal{N}_y \arrow{d}{i_y}\\
& \beta^*\mathcal{M}_x \arrow{r}{\mathcal{M}^\beta} & \mathcal{M}_y
\end{tikzcd}
\end{equation}
Since $i_x \circ \eta_1'=\eta$, we have $(\beta^*i_x) \circ (\beta^*\eta_1')=\beta^*\eta$. This gives
\begin{equation*}
Im\left(\mathcal{M}^\beta \circ (\beta^*\eta) \right)=Im\left(\mathcal{M}^\beta \circ  (\beta^*i_x) \circ (\beta^*\eta_1') \right)=Im \left(i_y \circ  \mathcal{N}^\beta \circ (\beta^*\eta_1') \right)=Im \left( \mathcal{N}^\beta \circ (\beta^*\eta_1') \right)
\end{equation*}
where the last equality follows from the fact that $i_y$ is a monomorphism. The result now follows directly from the definition in \eqref{defN}.
\end{proof}

\begin{lem}\label{lem2wsf}
  For any $y\in Ob(\mathscr{X})$, we have
  \begin{equation}
\mathcal{N}_y= \sum_{\beta\in \mathscr{X}(x,y)}Im\left(\beta^*\mathcal{N}_x\xrightarrow{\beta^*(i_x)}\beta^*\mathcal{M}_x\xrightarrow{\mathcal{M}^\beta}\mathcal{M}_y\right)
   \end{equation}
  \end{lem}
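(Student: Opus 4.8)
The plan is to establish the stated equality of subobjects of $\mathcal{M}_y$ by proving the two inclusions separately, both by a short diagram chase in the abelian category $\mathbf M^{\mathcal C_y}$. For the inclusion ``$\subseteq$'', I would use the defining relation $i_x\circ \eta_1'=\eta$ of the canonical map $\eta_1'$ associated to $id_x\in \mathscr{X}(x,x)$. Applying the exact functor $\beta^*$ and post-composing with $\mathcal{M}^\beta$ yields $\mathcal{M}^\beta\circ\beta^*\eta=\mathcal{M}^\beta\circ\beta^*(i_x)\circ\beta^*\eta_1'$, so for each $\beta\in\mathscr{X}(x,y)$ the summand $Im(\mathcal{M}^\beta\circ\beta^*\eta)$ appearing in the definition \eqref{defN} of $\mathcal{N}_y$ is contained in $Im(\mathcal{M}^\beta\circ\beta^*(i_x))$. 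Summing over all $\beta\in\mathscr{X}(x,y)$ gives $\mathcal{N}_y\subseteq\sum_{\beta}Im(\mathcal{M}^\beta\circ\beta^*(i_x))$.

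For the reverse inclusion ``$\supseteq$'', I would invoke the commutative diagram \eqref{commdiag}, whose outer square (together with the structure morphism $\mathcal{N}^\beta$ of $\mathcal{N}$ provided by Theorem \ref{zzcsub}) gives $\mathcal{M}^\beta\circ\beta^*(i_x)=i_y\circ\mathcal{N}^\beta$ for each $\beta\in\mathscr{X}(x,y)$. Since $i_y:\mathcal{N}_y\hookrightarrow\mathcal{M}_y$ is a monomorphism, the image of $i_y\circ\mathcal{N}^\beta$ inside $\mathcal{M}_y$ is the image under $i_y$ of $Im(\mathcal{N}^\beta)\subseteq\mathcal{N}_y$, hence is contained in $\mathcal{N}_y$ regarded as a subobject of $\mathcal{M}_y$. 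Summing over $\beta\in\mathscr{X}(x,y)$ yields $\sum_{\beta}Im(\mathcal{M}^\beta\circ\beta^*(i_x))\subseteq\mathcal{N}_y$, and combining the two inclusions finishes the proof.

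I do not anticipate any genuine obstacle here: the whole argument is routine, and the only points requiring a little care are keeping every image in sight as a subobject of the \emph{same} object $\mathcal{M}_y$ throughout, and using that $i_y$ is monic to transport images between $\mathcal{N}_y$ and $\mathcal{M}_y$. (One could alternatively deduce the statement directly from Lemma \ref{lem2} by factoring $\mathcal{N}^\beta\circ\beta^*\eta_1'$ through $\beta^*(i_x)$, but the two-inclusion presentation above is the cleanest.)
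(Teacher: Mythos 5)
Your proof is correct and takes essentially the same route as the paper: both inclusions are exactly the paper's, with $\sum_{\beta}Im\bigl(\mathcal{M}^\beta\circ\beta^*(i_x)\bigr)\subseteq\mathcal{N}_y$ coming from the commutative diagram \eqref{commdiag} (i.e.\ $\mathcal{M}^\beta\circ\beta^*(i_x)=i_y\circ\mathcal{N}^\beta$ with $i_y$ monic) and the reverse inclusion from the factorization $\eta=i_x\circ\eta_1'$. The only cosmetic difference is that you deduce the reverse inclusion straight from the definition \eqref{defN}, whereas the paper routes it through Lemma \ref{lem2}; the underlying computation is identical.
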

  \begin{proof}
   We set $\mathcal{N}_y'= \sum_{\beta\in \mathscr{X}(x,y)}Im\left(\beta^*\mathcal{N}_x\xrightarrow{\beta^*(i_x)}\beta_*\mathcal{M}_x\xrightarrow{\mathcal{M}^\beta}\mathcal{M}_y\right)$. We want to show that $\mathcal{N}'_y= \mathcal{N}_y$. It follows from the commutative diagram \ref{commdiag} that each of the morphisms $\beta^*\mathcal{N}_x\xrightarrow{\beta^*(i_x)}\beta^*\mathcal{M}_x\xrightarrow{\mathcal{M}^\beta}\mathcal{M}_y$ factors through the subcomodule $\mathcal{N}_y\subseteq \mathcal{M}_y$ and hence $\mathcal{N}'_y\subseteq \mathcal{N}_y$. Also, clearly
    \begin{equation}
Im\left(\beta^*V\xrightarrow{\beta^*\eta_1'}\beta^*\mathcal{N}_x\xrightarrow{\beta^*(i_x)}\beta_*\mathcal{M}_x\xrightarrow{\mathcal{M}^\beta}\mathcal{M}_y\right) \subseteq Im\left(\beta^*\mathcal{N}_x\xrightarrow{\beta^*(i_x)}\beta^*\mathcal{M}_x\xrightarrow{\mathcal{M}^\beta}\mathcal{M}_y\right)
\end{equation}
Using Lemma \ref{lem2}, we now  have $\mathcal{N}_y\subseteq \mathcal{N}_y'$ . This proves the result.
\end{proof}

For $\mathcal{M}\in Com^{cs}$-$\mathcal C$, we denote by $el_{\mathscr{X}}(\mathcal{M})$  the union $\cup_{x\in Ob(\mathscr{X})} \mathcal{M}_x$ as sets. The cardinality of $el_{\mathscr{X}}(\mathcal{M})$ will be denoted by $|\mathcal{M}|$. It is easy to see that for any quotient or subobject $\mathcal{N}$ of $\mathcal{M}\in Com^{cs}$-$\mathcal{C}$, we have $|\mathcal{N}|\leq |\mathcal{M}|$. Now, we set
\begin{equation}
\kappa:= sup\{\aleph_0, |Mor(\mathscr X)|, |K| \}
\end{equation}
We note in particular that $|\beta^*V'|\leq \kappa$ for any finite dimensional $\mathcal{C}_x$-comodule $V'$. 

\begin{lem}\label{cardinal}
Let $\mathcal N$ be as constructed in Proposition \ref{zzcsub}. Then, 
	we have $|\mathcal{N}|\leq \kappa$. 
\end{lem}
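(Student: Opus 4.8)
The plan is to bound $|\mathcal N|=|\cup_{y}\mathcal N_y|$ by controlling the cardinality of each $\mathcal N_y$ and then summing over $y$. First I would observe that, by Lemma \ref{lem2wsf}, each $\mathcal N_y$ is the sum over $\beta\in\mathscr X(x,y)$ of the images of the maps $\mathcal M^\beta\circ\beta^*(i_x):\beta^*\mathcal N_x\longrightarrow\mathcal M_y$. Since $\mathcal N_x$ itself is, by Lemma \ref{lem2} applied with $y=x$ (or directly by construction), a quotient of $\bigoplus_{\gamma\in\mathscr X(x,x)}\gamma^*V$, and $V$ is finite dimensional over $K$, we get $|\mathcal N_x|\leq |\mathscr X(x,x)|\cdot\sup\{\aleph_0,|K|,\dim V\}\leq\kappa$ — here the bound $|\gamma^*V'|\leq\kappa$ for finite dimensional $V'$, which was recorded just before the statement, does the work, together with the fact that a union/sum of at most $|Mor(\mathscr X)|\leq\kappa$ sets each of size $\leq\kappa$ has size $\leq\kappa$ since $\kappa$ is infinite.

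Next, for each $\beta\in\mathscr X(x,y)$ the comodule $\beta^*\mathcal N_x$ has the same underlying vector space as $\mathcal N_x$, so $|\beta^*\mathcal N_x|=|\mathcal N_x|\leq\kappa$; hence the image of each map $\beta^*\mathcal N_x\to\mathcal M_y$ has cardinality $\leq\kappa$. Then $\mathcal N_y$, being a sum of at most $|\mathscr X(x,y)|\leq|Mor(\mathscr X)|\leq\kappa$ such images inside the $K$-vector space $\mathcal M_y$, is spanned by a set of cardinality $\leq\kappa\cdot\kappa=\kappa$; since $|K|\leq\kappa$ and $\kappa$ is infinite, the $K$-span of a set of size $\leq\kappa$ again has size $\leq\kappa$, so $|\mathcal N_y|\leq\kappa$. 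Finally $el_{\mathscr X}(\mathcal N)=\cup_{y\in Ob(\mathscr X)}\mathcal N_y$ is a union of at most $|Ob(\mathscr X)|\leq|Mor(\mathscr X)|\leq\kappa$ sets each of cardinality $\leq\kappa$, whence $|\mathcal N|\leq\kappa$.

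The only genuinely delicate point — and the one I would state carefully rather than wave at — is the passage from "spanning set of size $\leq\kappa$" to "cardinality $\leq\kappa$" for the vector spaces involved: this uses that $|K|\leq\kappa$ and $\kappa\geq\aleph_0$, so that the set of all finite $K$-linear combinations of $\leq\kappa$ vectors has cardinality at most $\bigl(\kappa\cdot|K|\bigr)^{<\aleph_0}=\kappa$. Everything else is bookkeeping with the standard cardinal arithmetic identity $\kappa\cdot\kappa=\kappa$ for infinite $\kappa$ and the fact that $\mathscr X$ being small makes $|Ob(\mathscr X)|$, $|\mathscr X(x,y)|$ and $|\mathscr X(x,x)|$ all bounded by $|Mor(\mathscr X)|\leq\kappa$. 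I do not anticipate any real obstacle; the proof is essentially a careful accounting argument, and the main care is simply to route every cardinality estimate through the inequality $|\beta^*V'|\leq\kappa$ for finite dimensional comodules $V'$ and through Lemma \ref{lem2wsf}, which expresses $\mathcal N_y$ in terms of $\mathcal N_x$.
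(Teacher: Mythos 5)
Your argument is correct and is essentially the paper's own cardinality bookkeeping: bound each $\mathcal N_y$ by a generating family indexed by $\mathscr X(x,y)$, use $|\beta^*V'|\leq\kappa$ together with $|Mor(\mathscr X)|,|K|\leq\kappa$, and sum over $y\in Ob(\mathscr X)$. The only difference is that the paper invokes Lemma \ref{lem2} directly for every $y$ (so each $\mathcal N_y$ is an epimorphic image of $\bigoplus_{\beta\in\mathscr X(x,y)}\beta^*V$, whose cardinality is $\leq\kappa$ at once), whereas you take the harmless but unnecessary detour of first bounding $|\mathcal N_x|$ and then passing through Lemma \ref{lem2wsf}; both routes rest on the same estimates.
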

\begin{proof}
Let $y\in Ob(\mathscr{X})$. From Lemma \ref{lem2}, we have 
\begin{equation}
    \mathcal{N}_y= Im \left(\bigoplus_{\beta \in \mathscr{X}(x,y)}\beta^*V \xrightarrow{\beta^*\eta_1'}\beta^*\mathcal{N}_x\xrightarrow{\mathcal{N}^{\beta}}\mathcal{N}_y \right)
  \end{equation}
  In other words, $\mathcal{N}_y$ is an epimorphic image of $\bigoplus_{\beta \in \mathscr{X}(x,y)}\beta^*V$ and we have
  \begin{equation}
    |\mathcal{N}_y|\leq |\bigoplus_{\beta \in \mathscr{X}(x,y)}\beta^*V|\leq \kappa
  \end{equation}
  Thus, $|\mathcal{N}|=\sum_{y\in Ob(\mathscr{X})}|\mathcal{N}_y|\leq \kappa$.
\end{proof}

\begin{Thm}\label{Groth}
  Let $\mathcal{C}: \mathscr{X}\longrightarrow Coalg$ be a representation taking values in right semiperfect $K$-coalgebras. Then, the   category $Com^{cs}$-$\mathcal{C}$ of right cis-comodules over $\mathcal{C}$ is a Grothendieck category.
 \end{Thm}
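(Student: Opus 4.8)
The plan is to verify the three defining properties of a Grothendieck category for $Com^{cs}$-$\mathcal{C}$: it is an abelian category with exact filtered colimits (equivalently, Grothendieck's axiom (AB5)) that possesses a generator (or a set of generators). Abelianness is already established in Proposition \ref{Ab}. For the colimit structure, I would first observe that both coproducts and filtered colimits in $Com^{cs}$-$\mathcal{C}$ can be computed ``pointwise'': given a diagram $\{\mathcal{M}^{(i)}\}$, one sets $(\varinjlim \mathcal{M}^{(i)})_x := \varinjlim (\mathcal{M}^{(i)}_x)$ in $\mathbf{M}^{\mathcal{C}_x}$, and the transition morphisms are assembled using that each corestriction functor $\alpha^*$ is a left adjoint, hence preserves all colimits. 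Since each $\mathbf{M}^{\mathcal{C}_x}$ is a Grothendieck category (comodules over a coalgebra over a field), filtered colimits there are exact; because kernels, cokernels and filtered colimits in $Com^{cs}$-$\mathcal{C}$ are all computed pointwise, exactness of filtered colimits (AB5) transfers immediately from the componentwise statement.

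The substantive point is the existence of a generating set, and this is exactly what the preceding lemmas deliver. The candidate generating set is
\[
\mathcal{G} := \{\, \mathcal{N} \subseteq \mathcal{M} \ :\ \mathcal{M}\in Com^{cs}\text{-}\mathcal{C},\ |\mathcal{M}|\leq\kappa \,\} / \cong,
\]
or more precisely a set of representatives for isomorphism classes of cis-comodules of cardinality $\leq\kappa$; this is genuinely a set because, up to isomorphism, a cis-comodule of cardinality $\leq\kappa$ is determined by $\leq\kappa$-much combinatorial data (underlying sets, coaction maps, structure maps indexed by $Mor(\mathscr{X})$). I would then argue that $\mathcal{G}$ generates: given any $\mathcal{M}\in Com^{cs}$-$\mathcal{C}$ and any proper subobject $\mathcal{L}\subsetneq\mathcal{M}$, there is some $x$ and some element of $\mathcal{M}_x$ not in $\mathcal{L}_x$; since every right $\mathcal{C}_x$-comodule is the sum of its finite-dimensional subcomodules and (by right semiperfectness) every finite-dimensional comodule is a quotient of a finite-dimensional projective, we obtain a finite-dimensional projective $V\in\mathbf{M}^{\mathcal{C}_x}$ and a morphism $\eta:V\to\mathcal{M}_x$ whose image is not contained in $\mathcal{L}_x$. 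Feeding $(x,\eta)$ into the construction of Proposition \ref{zzcsub} produces a subobject $\mathcal{N}\subseteq\mathcal{M}$ with $\mathcal{N}\in\mathcal{G}$ (cardinality $\leq\kappa$ by Lemma \ref{cardinal}) and $\mathcal{N}\not\subseteq\mathcal{L}$, so $\mathcal{L}$ is not the whole of $\mathcal{M}$ witnessed by maps from $\mathcal{G}$. By the standard criterion this shows $\bigoplus_{\mathcal{N}\in\mathcal{G}}\mathcal{N}$ is a generator, and taking the coproduct (which exists since $Com^{cs}$-$\mathcal{C}$ has all coproducts) completes the argument.

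The main obstacle, and the step I would write out most carefully, is the bookkeeping that makes $\mathcal{G}$ a legitimate set rather than a proper class — i.e., the cardinality control. This is where $\kappa := \sup\{\aleph_0, |Mor(\mathscr{X})|, |K|\}$ enters: one must check that for a fixed $\kappa$, the collection of isomorphism types of cis-comodules with $|el_{\mathscr{X}}(\mathcal{M})|\leq\kappa$ is small, which amounts to noting that the vector-space structure on each $\mathcal{M}_x$, the comodule coaction $\rho^{\mathcal{M}_x}:\mathcal{M}_x\to\mathcal{M}_x\otimes\mathcal{C}_x$, and the family $\{\mathcal{M}_\alpha\}_{\alpha\in Mor(\mathscr{X})}$ are all specified by functions on sets of size $\leq\kappa$. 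A secondary technical point is ensuring that the ``pointwise'' description of colimits is compatible with the structure morphisms $\mathcal{M}_\alpha$ (or $\mathcal{M}^\alpha$) and satisfies the cocycle conditions of Definition \ref{comod-rep}; this is routine but uses crucially that $\alpha^*$, being a left adjoint, commutes with the relevant colimits, so that $\alpha^*\varinjlim \mathcal{M}^{(i)}_x \cong \varinjlim \alpha^*\mathcal{M}^{(i)}_x$ and the structure maps glue. Once these set-theoretic and compatibility checks are in place, the verification of (AB5) and of the generating property both reduce cleanly to facts already proved in the excerpt.
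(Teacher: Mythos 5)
Your proposal is correct and takes essentially the same route as the paper: AB5 via pointwise filtered colimits (using that $\alpha^*$ preserves colimits), and a generating set consisting of isomorphism classes of cis-comodules of cardinality at most $\kappa$, obtained from the subobject construction of Proposition \ref{zzcsub} together with the cardinality bound of Lemma \ref{cardinal}. The only cosmetic difference is that you check generation via the proper-subobject criterion, while the paper expresses every object as a sum of subobjects of cardinality at most $\kappa$; these are equivalent formulations of the same argument.
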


 \begin{proof}
   Since finite limits and filtered colimits in $Com^{cs}$-$\mathcal{C}$ are both computed pointwise, it is clear that they commute in $Com^{cs}$-$\mathcal{C}$.
   
   \smallskip
   Now, let $\mathcal{M}$ be an object in $Com^{cs}$-$\mathcal{C}$ and $m \in el_{\mathscr{X}}(\mathcal{M})$. Then, there exists some $x\in Ob(\mathscr{X})$ such that $m \in \mathcal{M}_x$. By \cite[Corollary 2.2.9]{DNR}, there exists a finite dimensional right $\mathcal{C}_x$-comodule $V'$ and a  morphism $\eta': V'\longrightarrow \mathcal{M}_x$ in $\mathbf M^{\mathcal C_x}$ such that $m\in Im(\eta')$. Since $\mathcal{C}_x$ is semiperfect, we can choose a finite dimensional projective $V$ in $\mathbf M^{\mathcal C_x}$ along with an epimorphism  $V\longrightarrow V'$ in $\mathbf M^{\mathcal C_x}$ (see, for instance, \cite[Corollary 2.4.21]{DNR}). Therefore, we have an induced  morphism $\eta:V\longrightarrow \mathcal{M}_x$ in $\mathbf M^{\mathcal C_x}$ such that $m\in Im(\eta)$. We can now define the subobject $\mathcal{N}\subseteq \mathcal{M}$ in $Com^{cs}$-$\mathcal C$ corresponding to $\eta$ as in \eqref{defN}. From the definition of the canonical morphism ${\eta}'_1:V\longrightarrow \mathcal{N}_x$ induced by \eqref{defN}, it follows that $m\in \mathcal{N}_x$. By Lemma \ref{cardinal}, we also have $|\mathcal{N}|\leq \kappa$.
   
   \smallskip
   We now consider the set of isomorphism classes of objects in $Com^{cs}$-$\mathcal{C}$ having cardinality $\leq \kappa$. From the above, we see that any object in $Com^{cs}$-$\mathcal{C}$ may be expressed as a sum of such objects. This proves the result.
     \end{proof}

 \subsection{Trans-comodules over coalgebra representations}
 Let $C$ be a $K$-coalgebra. An object $M\in \mathbf{M}^C$ is said to be quasi-finite (see, for instance, \cite[$\S$ 12.5]{BW}) if the  tensor functor ${-}\otimes M: Vect\longrightarrow \mathbf{M}^C$ has a left adjoint. In that case, this left adjoint is denoted by $H_C(M,{-}): \mathbf{M}^C\longrightarrow Vect$. We note that the coalgebra $C$ is quasi-finite as a right $C$-comodule.
 
 \smallskip
Suppose that $\alpha: C\longrightarrow D$ is a coalgebra morphism such that $C$ is quasi-finite as a right $D$-comodule. Then, for any $N\in \mathbf M^D$, the space
$H_D(C,N)$ can be equipped with the structure of a right $C$-comodule. This determines a functor $H_D(C,{-}): \mathbf{M}^D\longrightarrow \mathbf{M}^C$ which is the left adjoint to the functor ${-}\square_CC:\mathbf M^C\longrightarrow \mathbf M^D$ (see, for instance, \cite[$\S$ 12.7]{BW}). However, ${-}\square_CC:\mathbf M^C\longrightarrow \mathbf M^D$ is merely the corestriction functor, i.e., we have
 $$\mathbf M^C(H_D(C,V),W)\cong \mathbf M^D(V, W\square_CC)\cong \mathbf M^D(V, \alpha^*(W))$$
 for any $V\in \mathbf{M}^D$ and $W\in \mathbf{M}^C$. In other words, if $C$ is quasi-finite as a right $D$-comodule, then the corestriction functor $\alpha^* :\mathbf{M}^C\longrightarrow \mathbf{M}^D$ has a left adjoint $\alpha^!:= H_D(C,{-}):  \mathbf{M}^D\longrightarrow \mathbf{M}^C$.

 \begin{defn}\label{trans-rep}

  Let $\mathcal{C}: \mathscr{X}\longrightarrow Coalg$ be a coalgebra representation. Suppose that $\mathcal C$ is quasi-finite, i.e., for any morphism $\alpha:x\longrightarrow y$
  in $\mathscr X$, the corresponding morphism $\mathcal C_\alpha:\mathcal C_x\longrightarrow \mathcal C_y$ of coalgebras makes $\mathcal C_x$ quasi-finite as a right 
  $\mathcal C_y$-comodule. A (right) trans-comodule $\mathcal{M}$ over $\mathcal{C}$ will consist of the following data:
  \begin{itemize}
  \item[(1)] For each object $x\in Ob(\mathscr{X})$, a right $\mathcal{C}_x$-comodule $\mathcal{M}_x$
  \item[(2)] For each morphism
    $\alpha: x \longrightarrow y$ in $\mathscr{X}$, a morphism
    $^{\alpha}{\mathcal{M}}:  =\alpha^!\mathcal{M}_y=H_{C_y}(C_x,\mathcal{M}_y)\longrightarrow \mathcal{M}_x$ of right $\mathcal{C}_x$-comodules (equivalently, a morphism $_{\alpha}\mathcal{M}: \mathcal{M}_y\longrightarrow \alpha^*\mathcal{M}_x$ of right $\mathcal{C}_y$-comodules)
  \end{itemize}

  We further assume that $^{id_x}\mathcal{M} = id_{\mathcal{M}_x}$ and for any pair of composable morphisms
  $x\xrightarrow{\alpha} y\xrightarrow{\beta}z$ in $\mathscr{X}$, we have $ ^{\alpha}\mathcal{M}\circ\alpha^!(^{\beta}\mathcal{M})={ ^{\beta\alpha}}\mathcal{M}:(\beta\alpha)^!\mathcal{M}_z=\alpha^!\beta^!\mathcal{M}_z\longrightarrow \alpha^!\mathcal{M}_y\longrightarrow \mathcal{M}_x $. The latter condition can be equivalently expressed as $_{\beta\alpha}\mathcal{M}= \beta^*(_{\alpha}\mathcal{M})\circ {_{\beta}\mathcal{M}}$.\\
  A morphism $\eta: \mathcal{M} \longrightarrow \mathcal{N}$ of trans-comodules over $\mathcal{C}$ consists of morphisms $\eta_x: \mathcal{M}_x\longrightarrow\mathcal{N}_x$ of right $\mathcal{C}_x$-comodules for each $x\in Ob(\mathscr{X})$ such that for each morphism $x\longrightarrow y$ in $\mathscr{X}$ the following diagram commutes

  \[\begin{tikzcd}
 \alpha^!\mathcal{M}_y \arrow{r}{\alpha^!\eta_y} \arrow[swap]{d}{^\alpha\mathcal{M}} &  \alpha^!\mathcal{N}_y \arrow{d}{^\alpha\mathcal{N}} \\
\mathcal{M}_x \arrow{r}{\eta_x} & \mathcal{N}_x
\end{tikzcd}
\]
We denote the category of right trans-comodules by $Com^{tr}$-$\mathcal{C}$. Similarly, we may define the category $\mathcal C$-$Com^{tr}$ of left trans-comodules over $\mathcal C$. 
\end{defn}

We remark that if $\mathcal{C'}: \mathscr{X}\longrightarrow Coalg$ is any  coalgebra representation (not necessarily quasi-finite), we can still define the category $Com^{tr}$-$\mathcal{C'}$ of trans-comodules over $\mathcal C'$ by considering only the set of maps $\{_{\alpha}\mathcal{M}\}_{\alpha
\in Mor(\mathscr X)}$ in Definition \ref{trans-rep} satisfying $_{\beta\alpha}\mathcal{M}= \beta^*(_{\alpha}\mathcal{M})\circ {_{\beta}\mathcal{M}}$ for composable morphisms $\alpha$, $\beta$ in $\mathscr X$. However, the assumption of quasi-finiteness will be necessary to establish most of the properties of trans-comodules that we study in this paper.

\smallskip
Let $\mathcal{C}: \mathscr{X}\longrightarrow Coalg$ be a coalgebra representation that is quasi-finite.
As in the proof of Proposition \ref{Ab}, it follows  from the exactness of the corestriction functor $\alpha^*$ that $Com^{tr}$-$\mathcal{C}$ is an abelian category.  We will now study generators for the category $Com^{tr}$-$\mathcal{C}$. For this, we will suppose that each $\mathcal C_x$ is a right semiperfect coalgebra. Accordingly,  let $\mathcal{M}\in Com^{tr}$-$\mathcal C$. We consider an object $x\in Ob(\mathscr{X})$ and a morphism
\begin{equation}
  \eta: V\longrightarrow  \mathcal{M}_x
\end{equation}
in $\mathbf M^{\mathcal C_x}$, where $V$ is a finite dimensional projective right comodule over $\mathcal{C}_x$. For each $y\in Ob(\mathscr{X})$, we set $\mathcal{N}_y\subseteq \mathcal{M}_y$ to be the image of the family of maps

\begin{align}\label{defN1}
  \mathcal{N}_y&= Im(\bigoplus_{\beta \in \mathscr{X}(y,x)}\beta^!V \xrightarrow{\beta^!\eta}\beta^!\mathcal{M}_x\xrightarrow{^{\beta}\mathcal{M}}\mathcal{M}_y) &= \sum_{\beta \in \mathscr{X}(y,x)}Im(\beta^!V \xrightarrow{\beta^!\eta}\beta^!\mathcal{M}_x\xrightarrow{^{\beta}\mathcal{M}}\mathcal{M}_y)
\end{align}
Let $i_y$ denote the inclusion $\mathcal{N}_y\hookrightarrow \mathcal{M}_y$ and for each $\beta\in \mathscr{X}(y,x)$, we denote by $\eta'_\beta:\beta^!V\longrightarrow \mathcal{N}_y$ the canonical morphism induced from \eqref{defN1}. In a manner similar to the proof of Lemma \ref{lem1}, we can show that for any $\alpha \in \mathscr{X}(z,y)$, the composition
 \begin{equation}
    \beta^!V\xrightarrow{\eta'_\beta} \mathcal{N}_y\xrightarrow{i_y}\mathcal{M}_y\xrightarrow{_\alpha \mathcal{M}}\alpha^*\mathcal{M}_z
  \end{equation}
factors through $\alpha^*(i_z):\alpha^*\mathcal{N}_z\longrightarrow \alpha^*\mathcal{M}_z$. Since each $\mathcal C_x$ is right semiperfect, we can prove the following result, the proof of which is similar to that of Proposition \ref{zzcsub}.

 \begin{thm}\label{trans-sub}
The objects $\{\mathcal{N}_y\in \mathbf M^{\mathcal C_y}\}_{y\in Ob(\mathscr{X})}$ together determine a subobject $\mathcal N\subseteq \mathcal M$ in $Com^{tr}$-$\mathcal C$.

    \end{thm}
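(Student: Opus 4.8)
The plan is to mimic the proof of Proposition \ref{zzcsub} almost verbatim, swapping the role of the corestriction/coinduction pair $(\alpha^*,\alpha_*)$ for the pair $(\alpha^!,\alpha^*)$. Concretely, fix $\alpha\in\mathscr{X}(z,y)$. Since $\alpha^*$ is a right adjoint (to $\alpha^!$), it preserves monomorphisms, so $\alpha^*(i_z):\alpha^*\mathcal{N}_z\hookrightarrow\alpha^*\mathcal{M}_z$ is a monomorphism in $\mathbf{M}^{\mathcal{C}_z}$. The goal is to show that $_\alpha\mathcal{M}:\mathcal{M}_y\longrightarrow\alpha^*\mathcal{M}_z$ restricts to a morphism $_\alpha\mathcal{N}:\mathcal{N}_y\longrightarrow\alpha^*\mathcal{N}_z$, i.e. that the composite $\mathcal{N}_y\xrightarrow{i_y}\mathcal{M}_y\xrightarrow{_\alpha\mathcal{M}}\alpha^*\mathcal{M}_z$ factors through $\alpha^*(i_z)$.

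First I would invoke right semiperfectness of $\mathcal{C}_z$ to fix a set $\{G_k\}_{k\in K}$ of projective generators of $\mathbf{M}^{\mathcal{C}_z}$, and apply \cite[Lemma 3.2]{AB}: it is enough to check that for every $k$ and every $\zeta_k:G_k\longrightarrow\mathcal{N}_y$ there is $\zeta_k':G_k\longrightarrow\alpha^*\mathcal{N}_z$ with $\alpha^*(i_z)\circ\zeta_k'={_\alpha\mathcal{M}}\circ i_y\circ\zeta_k$. By \eqref{defN1} the family $\bigoplus_{\beta\in\mathscr{X}(y,x)}\eta_\beta':\bigoplus_{\beta}\beta^!V\longrightarrow\mathcal{N}_y$ is an epimorphism in $\mathbf{M}^{\mathcal{C}_y}$; since $G_k$ is projective, $\zeta_k$ lifts to $\zeta_k'':G_k\longrightarrow\bigoplus_\beta\beta^!V$ with $\zeta_k=\big(\bigoplus_\beta\eta_\beta'\big)\circ\zeta_k''$. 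Then I would apply the (trans-)analogue of Lemma \ref{lem1} recorded in the paragraph preceding this statement, namely that for each $\beta\in\mathscr{X}(y,x)$ the composite $\beta^!V\xrightarrow{\eta_\beta'}\mathcal{N}_y\xrightarrow{i_y}\mathcal{M}_y\xrightarrow{_\alpha\mathcal{M}}\alpha^*\mathcal{M}_z$ factors through $\alpha^*(i_z)$. Composing with $\zeta_k''$ and using the lift identity, it follows that $_\alpha\mathcal{M}\circ i_y\circ\zeta_k$ factors through $\alpha^*(i_z)$, as desired; this yields $_\alpha\mathcal{N}$ and the commuting square relating $i_y$, $i_z$, $_\alpha\mathcal{M}$, $_\alpha\mathcal{N}$.

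It then remains to verify that the collection $\{\mathcal{N}_y\}$ with the structure maps $\{_\alpha\mathcal{N}\}$ is genuinely an object of $Com^{tr}$-$\mathcal{C}$, i.e. $_{id_x}\mathcal{N}=id$ and the cocycle condition $_{\beta\alpha}\mathcal{N}=\beta^*(_\alpha\mathcal{N})\circ{_\beta\mathcal{N}}$ holds; this is immediate because each $_\alpha\mathcal{N}$ is, by construction, the restriction of $_\alpha\mathcal{M}$ along the monomorphisms $i_y$, $i_z$, and these monomorphisms are compatible, so the identities for $\mathcal{N}$ follow from those for $\mathcal{M}$ by the usual diagram chase (using that $\beta^*$ of a monomorphism is a monomorphism, which lets one cancel $i$'s). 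Finally $i:\mathcal{N}\hookrightarrow\mathcal{M}$ is a monomorphism in $Com^{tr}$-$\mathcal{C}$ since it is a pointwise monomorphism and monomorphisms in this abelian category are computed pointwise.

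I do not anticipate a genuine obstacle here: the only point requiring care is the orientation bookkeeping — in the trans setting the structure maps run $\mathcal{M}_y\longrightarrow\alpha^*\mathcal{M}_z$ (so one uses $\alpha^*$, not $\alpha_*$, on the target), and $\mathscr{X}(y,x)$ rather than $\mathscr{X}(x,y)$ appears in \eqref{defN1}. The adjunction used is $(\alpha^!,\alpha^*)$, whose existence is exactly the quasi-finiteness hypothesis on $\mathcal{C}$; right semiperfectness of $\mathcal{C}_z$ supplies the projective generators $\{G_k\}$, so both standing assumptions are used precisely as in Proposition \ref{zzcsub}.
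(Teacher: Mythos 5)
Your proposal is correct and takes essentially the same route the paper intends (the paper only records that the proof is "similar to Proposition \ref{zzcsub}," i.e.\ the adjunction $(\alpha^!,\alpha^*)$ replacing $(\alpha^*,\alpha_*)$, the epimorphism $\bigoplus_{\beta\in\mathscr{X}(y,x)}\eta'_\beta$, projectivity of the generators, and the trans-analogue of Lemma \ref{lem1}). One index slip to fix: for $\alpha\in\mathscr{X}(z,y)$ the corestriction is $\alpha^*:\mathbf{M}^{\mathcal{C}_z}\longrightarrow\mathbf{M}^{\mathcal{C}_y}$, so the square involving $i_y$, $\alpha^*(i_z)$, $_\alpha\mathcal{M}$ lives in $\mathbf{M}^{\mathcal{C}_y}$, and the projective generators $\{G_k\}$ must therefore be taken in $\mathbf{M}^{\mathcal{C}_y}$ (using right semiperfectness of $\mathcal{C}_y$), not in $\mathbf{M}^{\mathcal{C}_z}$ as you wrote — otherwise $\zeta_k:G_k\longrightarrow\mathcal{N}_y$ is not even a comodule morphism; with that correction your argument goes through verbatim.
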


We also record here the following two equalities, which can be proved in a manner similar to Lemma \ref{lem2} and Lemma \ref{lem2wsf}
\begin{equation}\label{equalz}
 \mathcal{N}_y= Im \left(\bigoplus_{\beta \in \mathscr{X}(y,x)}\beta^!V \xrightarrow{\beta^!\eta_1'}\beta^!\mathcal{N}_x\xrightarrow{^{\beta}\mathcal{N}}\mathcal{N}_y \right)\sum_{\beta\in \mathscr{X}(y,x)}Im\left(\beta^!\mathcal{N}_x\xrightarrow{\beta^!(i_x)}\beta^!\mathcal{M}_x\xrightarrow{^\beta\mathcal{M}}\mathcal{M}_y\right)
\end{equation}
for any $y\in Ob(\mathscr X)$. Here,   $\eta_1': V\longrightarrow \mathcal{N}_x$ is the canonical morphism corresponding to the identity map in $\mathscr{X}(x,x)$.

\begin{lem}\label{shriekp} Let $\alpha:C\longrightarrow D$ be a morphism of coalgebras such that $C$ is quasi-finite as a right $D$-comodule. Let $V\in \mathbf M^D$ be a finite dimensional comodule. Then, $\alpha^!V\in \mathbf M^C$ is finite dimensional. 
\end{lem}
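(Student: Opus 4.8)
The statement to prove is Lemma \ref{shriekp}: if $\alpha \colon C \longrightarrow D$ is a coalgebra morphism making $C$ quasi-finite as a right $D$-comodule, and $V \in \mathbf{M}^D$ is finite dimensional, then $\alpha^! V = H_D(C, V) \in \mathbf{M}^C$ is finite dimensional. The plan is to compute the dimension of $\alpha^! V$ by using the defining adjunction of the co-hom functor together with the fact that $C$ decomposes, as a right $D$-comodule, into finite-dimensional pieces whose multiplicities in any given finite-dimensional comodule are finite.

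First I would recall the adjunction: for $V \in \mathbf{M}^D$ and a $K$-vector space $U$, one has a natural isomorphism $\mathrm{Hom}_K(H_D(C,V), U) \cong \mathbf{M}^D(V, U \otimes C)$, where $U \otimes C$ carries the right $D$-comodule structure via $\alpha^*$ (i.e. $U \otimes C$ coacts through $\mathrm{id}_U \otimes (\mathrm{id}_C \otimes \alpha)\rho^C$, equivalently $U \otimes C = \alpha^*(U \otimes C)$ as a $D$-comodule with $C$ viewed as a right $C$-comodule via $\Delta$). The key point is to estimate $\dim_K H_D(C,V)$, which equals $\dim_K \mathrm{Hom}_K(H_D(C,V), K) = \dim_K \mathbf{M}^D(V, C)$ where here $C$ is regarded as a right $D$-comodule via $\alpha$. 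So it suffices to show that $\mathbf{M}^D(V, C_\alpha)$ is finite dimensional, where $C_\alpha$ denotes $\alpha^*$ applied to $C \in \mathbf{M}^C$.

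Now I would invoke the structure theory of comodules. Write $C$, as a right $D$-comodule, as a filtered union (directed sum of subcomodules) of its finite-dimensional subcomodules $C = \bigcup_i C_i$; since $V$ is finite dimensional, any morphism $V \longrightarrow C_\alpha$ has image a finite-dimensional subcomodule, hence factors through some $C_i$, so $\mathbf{M}^D(V, C_\alpha) = \varinjlim_i \mathbf{M}^D(V, C_i)$. Each $\mathbf{M}^D(V, C_i)$ is finite dimensional (Hom between finite-dimensional comodules over $K$). But a colimit of finite-dimensional spaces need not be finite dimensional, so the real work is to bound this. Here I would use the decomposition of $D$ into a direct sum of indecomposable injective right $D$-comodules $D = \bigoplus_{j \in J} E_j^{(n_j)}$ (or work with the coradical/socle filtration): since $V$ is finite dimensional, $\mathbf{M}^D(V, -)$ commutes with the relevant direct sums when restricted to the finitely many indecomposable injectives appearing in the injective envelope of $V$, and only finitely many such $E_j$ admit a nonzero map from $V$. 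Then $\mathbf{M}^D(V, C_\alpha) = \mathbf{M}^D(V, E(C_\alpha))$ need not hold, so instead I would argue directly: embed $C_\alpha \hookrightarrow E$ into its injective envelope in $\mathbf{M}^D$; then $\mathbf{M}^D(V, C_\alpha) \hookrightarrow \mathbf{M}^D(V, E)$, and $E$ is a direct sum of indecomposable injectives, only finitely many of which receive a nonzero map from the finite-dimensional $V$, each contributing a finite-dimensional Hom space (since $V$ is finite dimensional and $\mathrm{Hom}$ into an indecomposable injective is controlled by $\mathrm{Hom}$ into a single simple — finite dimensional over $K$ as comodules are locally finite). Hence $\mathbf{M}^D(V, E)$, and a fortiori $\mathbf{M}^D(V, C_\alpha)$, is finite dimensional.

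The main obstacle, and the step I would be most careful about, is the claim that $\mathbf{M}^D(V, E)$ is finite dimensional when $E$ is an arbitrary (possibly infinite) direct sum of indecomposable injective right $D$-comodules: this requires (a) that $V$ finite dimensional implies $\mathbf{M}^D(V, -)$ preserves the coproduct $E = \bigoplus E_j$ — true because a finite-dimensional comodule is a finitely generated, hence compact, object of the Grothendieck category $\mathbf{M}^D$ — and (b) that $\mathbf{M}^D(V, E_j) \neq 0$ for only finitely many $j$ and is finite dimensional for each — which follows since $\mathbf{M}^D(V, E_j) \hookrightarrow \mathbf{M}^D(V, E(V)) $ after embedding... more precisely, a nonzero map $V \to E_j$ forces the simple socle of $E_j$ to appear as a composition factor of $V$, and $V$ has finitely many composition factors. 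Once this finiteness is in hand, combining it with the colimit description gives $\dim_K \mathbf{M}^D(V, C_\alpha) < \infty$, hence $\dim_K \alpha^! V < \infty$, completing the proof. Alternatively, and perhaps more cleanly, I would cite the standard fact (e.g. \cite[\S 12.5--12.7]{BW}) that $H_D(C, -)$ sends finite-dimensional comodules to finite-dimensional ones as a consequence of quasi-finiteness, reducing the lemma to an application of the quoted adjunction; but I would include the Hom-space argument above to make the dimension bound explicit.
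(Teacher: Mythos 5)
There is a genuine gap at the crucial finiteness step, and it is exactly where the quasi-finiteness hypothesis should have entered. Your reduction is fine up to the point where everything hinges on showing that $\mathbf M^D(V,C_\alpha)$ is finite dimensional (here $C_\alpha$ denotes $C$ viewed in $\mathbf M^D$ via $\alpha$): the adjunction with $U=K$ gives $\mathrm{Hom}_K(\alpha^!V,K)\cong \mathbf M^D(V,C_\alpha)$, and a space with finite-dimensional dual is finite dimensional. But your direct argument for $\dim_K\mathbf M^D(V,C_\alpha)<\infty$ never uses that $C$ is quasi-finite over $D$, and indeed it proves too much: it would show $\mathbf M^D(V,W)$ is finite dimensional for an arbitrary $W\in\mathbf M^D$, which is false (take $W$ an infinite direct sum of copies of a finite-dimensional comodule admitting a nonzero map from $V$). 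The precise failure is in step (b): a nonzero map $V\longrightarrow E_j$ forces the simple socle of $E_j$ to be a composition factor of $V$, which bounds the number of \emph{isomorphism classes} of indecomposable injectives $E_j$ receiving nonzero maps, but not the number of \emph{indices} $j$. The injective envelope of $C_\alpha$ may contain a given indecomposable injective with infinite multiplicity, in which case $\mathbf M^D(V,E)=\bigoplus_j\mathbf M^D(V,E_j)$ is infinite dimensional even though only one isotype contributes. Finiteness of these multiplicities is precisely what quasi-finiteness of $C_\alpha$ guarantees; equivalently, the standard characterization (cf.\ \cite[\S 12.5]{BW}, going back to Takeuchi) says that $M\in\mathbf M^D$ is quasi-finite if and only if $\mathbf M^D(N,M)$ is finite dimensional for every finite-dimensional $N$. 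If you invoke that characterization, your duality reduction finishes the proof in one line and the whole injective-envelope detour becomes unnecessary; without it, the argument as written does not close. (Your fallback of citing that $H_D(C,-)$ preserves finite dimensionality is essentially citing the lemma itself, so it cannot serve as the proof.)

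For comparison, the paper's proof is purely categorical and avoids dimension counting altogether: it characterizes finite-dimensional comodules as the finitely generated objects (those $U$ for which $\mathbf M^C(U,-)$ preserves filtered colimits of monomorphisms) and then transfers this property from $V$ to $\alpha^!V$ through the adjunction $(\alpha^!,\alpha^*)$, using that $\alpha^*$ preserves all colimits and finite limits. Your dual-space route can be repaired as indicated and is a legitimate alternative, but only once the quasi-finiteness hypothesis is actually brought to bear on the multiplicity bound.
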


\begin{proof}
Since every comodule is a colimit of its finite dimensional subcomodules, a $C$-comodule $U$ is finitely generated as an object of $\mathbf M^C$ (i.e., $\mathbf M^C(U,\_\_):\mathbf M^C
\longrightarrow Vect$ preserves filtered colimits of systems of monomorphisms) if and only if $U$ is finite dimensional. Suppose that $\{W_i\}_{i\in I}$ is a filtered system of comodules in $\mathbf M^C$ connected by monomorphisms and let $W=\underset{i\in I}{\varinjlim}\textrm{ }W_i$. Since $\alpha^*:\mathbf M^C\longrightarrow \mathbf M^D$ preserves all colimits and all finite limits, we note
\begin{equation}\label{2.22dp}
\underset{i\in I}{\varinjlim}\textrm{ }\mathbf M^C(\alpha^!V,W_i)=\underset{i\in I}{\varinjlim}\textrm{ }\mathbf M^D(V,\alpha^* W_i)=\mathbf M^D(V,\underset{i\in I}{\varinjlim}\textrm{ }\alpha^*W_i)=\mathbf M^D(V,\alpha^*W)=\mathbf M^C(\alpha^!V,W)
\end{equation} It follows from \eqref{2.22dp} that $\alpha^!V\in \mathbf M^C$ is finitely generated as an object in $\mathbf M^C$, i.e., it is finite dimensional as a $K$-vector space.
\end{proof}

For $\mathcal{M}\in Com^{tr}$-$\mathcal{C}$, let $el_{\mathscr{X}}(\mathcal{M})$ denote the union $\cup_{x\in Ob(\mathscr{X})} \mathcal{M}_x$. The cardinality of $el_{\mathscr{X}}(\mathcal{M})$ will be denoted by $|\mathcal{M}|$. It is easy to see that for any quotient or subobject $\mathcal{N}$ of $\mathcal{M}\in Com^{tr}$-$\mathcal{C}$, we have $|\mathcal{N}|\leq |\mathcal{M}|$. Now, we set
\begin{equation}\label{crd222}
\kappa:= sup\{\aleph_0, |K|, |Mor(\mathscr X)|\}
\end{equation}

\begin{lem}\label{cardinal1}
Let $\mathcal N$ be as constructed in Proposition \ref{trans-sub}. Then, 
	we have $|\mathcal{N}|\leq \kappa$. 

\end{lem}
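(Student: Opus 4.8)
The plan is to follow the template of Lemma~\ref{cardinal}, the only genuinely new ingredient being Lemma~\ref{shriekp}. First I would fix an object $y\in Ob(\mathscr{X})$ and appeal to the first equality in \eqref{equalz} (equivalently to the defining formula \eqref{defN1}), which exhibits $\mathcal{N}_y$ as an epimorphic image in $\mathbf M^{\mathcal C_y}$ of $\bigoplus_{\beta\in\mathscr{X}(y,x)}\beta^!V$. Hence $|\mathcal N_y|\leq\big|\bigoplus_{\beta\in\mathscr{X}(y,x)}\beta^!V\big|$, and it suffices to bound the cardinality of this coproduct.

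For the individual summands, I would use that $V$ is a finite dimensional right $\mathcal C_x$-comodule and that for each $\beta\in\mathscr{X}(y,x)$ the induced coalgebra morphism $\mathcal C_\beta:\mathcal C_y\longrightarrow\mathcal C_x$ makes $\mathcal C_y$ quasi-finite as a right $\mathcal C_x$-comodule, by the standing quasi-finiteness hypothesis on $\mathcal C$. Lemma~\ref{shriekp} then applies with $\alpha=\mathcal C_\beta$, so $\beta^!V=H_{\mathcal C_x}(\mathcal C_y,V)\in\mathbf M^{\mathcal C_y}$ is finite dimensional as a $K$-vector space, whence $|\beta^!V|\leq\sup\{\aleph_0,|K|\}\leq\kappa$. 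Since the index set $\mathscr{X}(y,x)$ is a subset of $Mor(\mathscr X)$ it has cardinality $\leq\kappa$, and therefore $\big|\bigoplus_{\beta\in\mathscr{X}(y,x)}\beta^!V\big|\leq\kappa\cdot\kappa=\kappa$ because $\kappa\geq\aleph_0$. This yields $|\mathcal N_y|\leq\kappa$ for every $y\in Ob(\mathscr{X})$.

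Finally, noting that $|Ob(\mathscr X)|\leq|Mor(\mathscr X)|\leq\kappa$ (each object contributes its identity morphism), I would conclude
\[
|\mathcal N|=\sum_{y\in Ob(\mathscr{X})}|\mathcal N_y|\leq\kappa\cdot\kappa=\kappa.
\]
The one step that is not pure bookkeeping is the passage through Lemma~\ref{shriekp}: in the cis-case the analogous summands $\beta^*V$ had the same underlying vector space as $V$ and were obviously finite dimensional, whereas here the cohom functor $\beta^!=H_{\mathcal C_x}(\mathcal C_y,-)$ may change the underlying space, so the finiteness of $H_{\mathcal C_x}(\mathcal C_y,V)$ genuinely has to be invoked. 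Everything else is routine cardinal arithmetic with $\kappa$ infinite.
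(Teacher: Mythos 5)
Your argument is correct and matches the paper's proof: both use Lemma \ref{shriekp} (via the quasi-finiteness hypothesis on $\mathcal C$) to see that each $\beta^!V$ is finite dimensional, then the first equality of \eqref{equalz} to exhibit $\mathcal N_y$ as an epimorphic image of $\bigoplus_{\beta\in\mathscr X(y,x)}\beta^!V$, and finish with the same cardinal arithmetic over $Mor(\mathscr X)$ and $Ob(\mathscr X)$. Your explicit remark that the cohom functor, unlike corestriction, changes the underlying vector space is exactly the point the paper's proof hinges on.
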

\begin{proof} We consider some $\beta\in \mathscr X(y,x)$. Since $V$ is a finite dimensional projective in $\mathbf M^{\mathcal C_x}$, it follows from Lemma \ref{shriekp} that
$\beta^!V\in \mathbf M^{\mathcal C_y}$ is finite dimensional.  It is now clear from \eqref{crd222} that $|\beta^!V|\leq \kappa$. 

\smallskip
From \eqref{equalz}, we now have
\begin{equation}
    \mathcal{N}_y= Im \left(\bigoplus_{\beta \in \mathscr{X}(y,x)}\beta^!V \xrightarrow{\beta^!\eta_1'}\beta^!\mathcal{N}_x\xrightarrow{^{\beta}\mathcal{N}}\mathcal{N}_y \right)
  \end{equation}
  Since $\mathcal{N}_y$ is an epimorphic image of $\bigoplus_{\beta \in \mathscr{X}(y,x)}\beta^!V$, we see that
  \begin{equation}
    |\mathcal{N}_y|\leq |\bigoplus_{\beta \in \mathscr{X}(y,x)}\beta^!V|\leq \kappa
  \end{equation}
  Thus, $|\mathcal{N}|=\sum_{y\in Ob(\mathscr{X})}|\mathcal{N}_y|\leq \kappa$.
\end{proof}

\begin{Thm}\label{Groth1}
  Let $\mathcal{C}: \mathscr{X}\longrightarrow Coalg$ be a representation taking values in right semiperfect $K$-coalgebras. Suppose that  $\mathcal C$ is quasi-finite, i.e., for each morphism $\alpha:z\longrightarrow y$ in $\mathscr X$, the induced morphism $\mathcal C_\alpha:\mathcal C_z
  \longrightarrow\mathcal C_y$ of coalgebras makes $\mathcal C_z$ a quasi-finite right $\mathcal C_y$-comodule. Then, the   category $Com^{tr}$-$\mathcal{C}$ of right trans-comodules over $\mathcal{C}$ is a Grothendieck category.
 \end{Thm}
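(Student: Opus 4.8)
The plan is to mirror the proof of Theorem~\ref{Groth} almost verbatim, using the trans-comodule machinery that has already been set up. First I would note that $Com^{tr}$-$\mathcal{C}$ is abelian (remarked after Definition~\ref{trans-rep}), and that finite limits and filtered colimits in $Com^{tr}$-$\mathcal{C}$ are computed pointwise, hence they commute since they do so in each $\mathbf M^{\mathcal C_x}$. This gives exactness of filtered colimits, which is the axiom AB5.

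The remaining task is to exhibit a set of generators. Let $\mathcal{M}\in Com^{tr}$-$\mathcal{C}$ and pick $m\in el_{\mathscr X}(\mathcal M)$, so $m\in \mathcal M_x$ for some $x\in Ob(\mathscr X)$. By \cite[Corollary 2.2.9]{DNR} there is a finite dimensional $V'\in \mathbf M^{\mathcal C_x}$ and a morphism $V'\longrightarrow \mathcal M_x$ whose image contains $m$; since $\mathcal C_x$ is right semiperfect, \cite[Corollary 2.4.21]{DNR} lets us replace $V'$ by a finite dimensional projective $V\in \mathbf M^{\mathcal C_x}$ with an epimorphism onto $V'$, yielding $\eta:V\longrightarrow \mathcal M_x$ with $m\in Im(\eta)$. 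Now apply Proposition~\ref{trans-sub} to obtain the subobject $\mathcal N\subseteq \mathcal M$ in $Com^{tr}$-$\mathcal{C}$ defined by \eqref{defN1}; the canonical morphism $\eta_1':V\longrightarrow \mathcal N_x$ attached to $id_x\in\mathscr X(x,x)$ shows $m\in \mathcal N_x$, and Lemma~\ref{cardinal1} gives $|\mathcal N|\leq \kappa$ with $\kappa$ as in \eqref{crd222}. Consequently every element of every object of $Com^{tr}$-$\mathcal C$ lies in a subobject of cardinality $\leq\kappa$, so the set of isomorphism classes of objects of cardinality $\leq\kappa$ is a generating set, and $Com^{tr}$-$\mathcal C$ is Grothendieck.

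The one point requiring the quasi-finiteness hypothesis, beyond what is needed merely to define $Com^{tr}$-$\mathcal C$, is the control of cardinalities: the functors $\beta^!=H_{\mathcal C_x}(\mathcal C_y,\_\_)$ must send finite dimensional comodules to finite dimensional comodules, which is exactly Lemma~\ref{shriekp}, and this is what makes $\bigoplus_{\beta\in\mathscr X(y,x)}\beta^!V$ have cardinality $\leq\kappa$. So the main (very mild) obstacle is bookkeeping: checking that Proposition~\ref{trans-sub} and the equalities \eqref{equalz} — the trans-analogues of Lemmas~\ref{lem1}, \ref{lem2}, \ref{lem2wsf} — go through with $\alpha^*$ in place of $\alpha_*$ and $\beta^!$ in place of $\beta^*$, using that $(\alpha^!,\alpha^*)$ is an adjoint pair and $\alpha^*$ is exact. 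Everything else is formal and identical to the cis-case. I would therefore write the proof simply as: ``The argument is identical to that of Theorem~\ref{Groth}, using Proposition~\ref{trans-sub}, Lemma~\ref{cardinal1} and \eqref{equalz} in place of Proposition~\ref{zzcsub}, Lemma~\ref{cardinal} and Lemmas~\ref{lem2}, \ref{lem2wsf}; the set of isomorphism classes of trans-comodules of cardinality $\leq\kappa$ forms a generating set, and AB5 holds because finite limits and filtered colimits are computed pointwise.''
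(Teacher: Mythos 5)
Your proposal is correct and follows essentially the same route as the paper's own proof: pointwise computation of filtered colimits and finite limits gives AB5, and the generator set is obtained exactly as you describe, via a finite dimensional projective $V\longrightarrow \mathcal M_x$ (using semiperfectness), the subobject of \eqref{defN1} from Proposition \ref{trans-sub}, and the cardinality bound of Lemma \ref{cardinal1}, whose reliance on Lemma \ref{shriekp} is indeed where quasi-finiteness enters beyond the mere definition of $Com^{tr}$-$\mathcal C$. No gaps to report.
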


 \begin{proof}
   Since filtered colimits and finite limits in $Com^{tr}$-$\mathcal{C}$ are computed pointwise, it is clear that they commute in $Com^{tr}$-$\mathcal{C}$. Now, let $\mathcal{M}$ be an object in $Com^{tr}$-$\mathcal{C}$ and $m \in el_{\mathscr{X}}(\mathcal{M})$. Then, there exists some $x\in Ob(\mathscr{X})$ such that $m \in \mathcal{M}_x$. By \cite[Corollary 2.2.9]{DNR}, there exists a finite dimensional right $\mathcal{C}_x$-comodule $V'$ and a  morphism $\eta': V'\longrightarrow \mathcal{M}_x$ in $\mathbf M^{\mathcal C_x}$ such that $m\in Im(\eta')$. Since $\mathcal{C}_x$ is semiperfect, we can choose a finite dimensional projective $V$ in $\mathbf M^{\mathcal C_x}$ along with an epimorphism  $V\longrightarrow V'$ in $\mathbf M^{\mathcal C_x}$. Therefore, we have an induced morphism $\eta:V\longrightarrow \mathcal{M}_x$ in $\mathbf M^{\mathcal C_x}$ such that $m\in Im(\eta)$. We can now define the subcomodule $\mathcal{N}\subseteq \mathcal{M}$ corresponding to $\eta$ as in \eqref{defN1}. From the definition of the canonical morphism ${\eta_1}':V\longrightarrow \mathcal{N}_x$ in \eqref{equalz}, it follows that $m\in \mathcal{N}_x$. By Lemma \ref{cardinal1}, we also have $|\mathcal{N}|\leq \kappa$. It follows that isomorphism classes of  objects in $Com^{tr}$-$\mathcal{C}$ having cardinality $\leq \kappa$ give a set of generators for $Com^{tr}$-$\mathcal{C}$. \\
 \end{proof}

 \section{Coalgebra representations of a poset and projective generators for comodules}

 Throughout this section, we assume that $\mathscr{X}$ is a partially ordered set. We suppose that $\mathcal{C}: \mathscr{X}\longrightarrow Coalg$ is a coalgebra representation such that $\mathcal C_x$ is right semiperfect for each $x\in Ob(\mathscr X)$. Our objective is to show that under these conditions, both $Com^{cs}$-$\mathcal{C}$ and $Com^{tr}$-$\mathcal{C}$ have projective generators. 

\subsection{Projective generators for cis-comodules}
 \begin{thm}\label{adjoint}
   Let $x\in Ob(\mathscr{X})$. Then, 
   \begin{itemize}
\item[(1)] There is a functor $ex_x^{cs}:\mathbf{M}^{\mathcal{C}_x}\longrightarrow Com^{cs}$-$\mathcal{C}$ defined by setting, for any $y\in Ob(\mathscr X)$:
   \[
     ex_x^{cs}(M)_y=
     \begin{cases}
       \alpha^*(M) &\quad \text{if~} \alpha\in \mathscr{X}(x,y)\\
       0&\quad \text{if~}\mathscr{X}(x,y)=\emptyset\\
     \end{cases}
   \]
 \item[(2)] The evaluation at $x$, i.e., $ev_x^{cs}:Com^{cs}$-$\mathcal{C}\longrightarrow \mathbf{M}^{\mathcal{C}_x}$, $\mathcal{M}\mapsto \mathcal{M}_x$ is an exact functor.
   \item[(3)] $(ex_x^{cs}, ev_x^{cs})$ is a pair of adjoint functors.
     \end{itemize}

\end{thm}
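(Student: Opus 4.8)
The plan is to verify the three assertions in order, treating (1) and (2) as mostly bookkeeping and concentrating effort on the adjunction in (3).

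\textbf{Step 1: the functor $ex_x^{cs}$ is well-defined.} For $M\in\mathbf M^{\mathcal C_x}$ and $y\in Ob(\mathscr X)$ with $\mathscr X(x,y)\neq\emptyset$, since $\mathscr X$ is a poset there is a \emph{unique} morphism $\alpha\in\mathscr X(x,y)$, so $ex_x^{cs}(M)_y:=\alpha^*M$ is unambiguous; when $\mathscr X(x,y)=\emptyset$ we put $ex_x^{cs}(M)_y=0$. For the structure maps, given $\gamma\in\mathscr X(y,z)$ I would define $ex_x^{cs}(M)^\gamma:\gamma^*ex_x^{cs}(M)_y\longrightarrow ex_x^{cs}(M)_z$ as follows: if $\mathscr X(x,y)=\emptyset$ the source is $\gamma^*0=0$ and there is nothing to do; if $\mathscr X(x,y)\neq\emptyset$ then (as $\mathscr X$ is a poset and $\gamma$ composes with the unique $\alpha\in\mathscr X(x,y)$) we also have $\mathscr X(x,z)\neq\emptyset$ with unique element $\beta=\gamma\alpha$, and we take $ex_x^{cs}(M)^\gamma=\mathrm{id}:\gamma^*\alpha^*M=\beta^*M\longrightarrow\beta^*M$, using $\gamma^*\alpha^*=(\gamma\alpha)^*$. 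The cocycle condition $\mathcal M^{\gamma\delta}=\mathcal M^\gamma\circ\gamma^*(\mathcal M^\delta)$ from Definition \ref{comod-rep} is then immediate because everything in sight is an identity (poset: at most one morphism between any two objects forces all coherence diagrams to commute). Functoriality of $ex_x^{cs}$ on morphisms of $\mathbf M^{\mathcal C_x}$ follows by applying the various $\alpha^*$ to a given comodule map.

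\textbf{Step 2: $ev_x^{cs}$ is exact.} This is already recorded in the proof of Proposition \ref{Ab}: kernels and cokernels in $Com^{cs}$-$\mathcal C$ are computed pointwise by \eqref{v2.2y}, so $ev_x^{cs}$ preserves them; it obviously preserves the zero object and biproducts, hence is exact (in particular additive).

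\textbf{Step 3: the adjunction $(ex_x^{cs},ev_x^{cs})$.} I would exhibit a natural isomorphism
\[
Com^{cs}\text{-}\mathcal C\bigl(ex_x^{cs}(M),\mathcal N\bigr)\;\cong\;\mathbf M^{\mathcal C_x}\bigl(M,\mathcal N_x\bigr).
\]
Given $\eta:ex_x^{cs}(M)\to\mathcal N$, send it to $\eta_x:M=ex_x^{cs}(M)_x\to\mathcal N_x$ (note $ex_x^{cs}(M)_x=\mathrm{id}_x^*M=M$). Conversely, given $f:M\to\mathcal N_x$, for each $y$ with unique $\alpha\in\mathscr X(x,y)$ define $\eta_y:=\mathcal N^\alpha\circ\alpha^*(f):\alpha^*M=ex_x^{cs}(M)_y\to\alpha^*\mathcal N_x\to\mathcal N_y$, and $\eta_y=0$ when $\mathscr X(x,y)=\emptyset$. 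One must check $\eta=\{\eta_y\}$ is a morphism in $Com^{cs}$-$\mathcal C$, i.e. compatibility with structure maps along any $\gamma\in\mathscr X(y,z)$: using that $ex_x^{cs}(M)^\gamma$ is an identity, that $\gamma^*$ is a functor, and the cocycle identity $\mathcal N^{\gamma\alpha}=\mathcal N^\gamma\circ\gamma^*(\mathcal N^\alpha)$ for $\mathcal N$, the required square collapses to a tautology (again the poset hypothesis guarantees $\gamma\alpha$ is \emph{the} morphism in $\mathscr X(x,z)$). The two assignments are mutually inverse essentially by construction — one direction is $\eta\mapsto\eta_x\mapsto\{\mathcal N^\alpha\circ\alpha^*(\eta_x)\}$, which equals $\{\eta_y\}$ precisely because $\eta$ being a morphism of cis-comodules forces $\eta_y=\mathcal N^\alpha\circ\alpha^*(\eta_x)$ for the unique $\alpha\in\mathscr X(x,y)$; the other direction is immediate. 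Naturality in $M$ and $\mathcal N$ is routine.

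\textbf{Main obstacle.} There is no deep obstacle here — the content is entirely in bookkeeping — but the one point demanding care is the verification that the counit-type map $\{\mathcal N^\alpha\circ\alpha^*(f)\}_y$ genuinely respects \emph{all} the structure maps of $Com^{cs}$-$\mathcal C$ simultaneously and coherently; this is where the hypothesis that $\mathscr X$ is a \emph{poset} is doing real work, since in a general small category one would need a more elaborate colimit-type formula (summing over $\mathscr X(x,y)$, as in \eqref{defN}) and the naive definition above would fail. I would make this dependence on the poset hypothesis explicit in the writeup.
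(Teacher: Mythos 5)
Your proposal is correct and follows essentially the same route as the paper: structure maps of $ex_x^{cs}(M)$ taken to be identities via the poset hypothesis, exactness of $ev_x^{cs}$ from pointwise (co)limits, and the adjunction exhibited by the mutually inverse assignments $\eta\mapsto\eta_x$ and $f\mapsto\{\mathcal M^\alpha\circ\alpha^*(f)\}_y$, with the same cocycle-identity check of compatibility along $\beta\in\mathscr X(y,y')$. Your explicit remark on where the poset hypothesis is used matches the role it plays in the paper's argument, so no changes are needed.
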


\begin{proof}
(1) Clearly, $ex_x^{cs}(M)_y\in \mathbf{M}^{\mathcal{C}_y}$ for each $y\in Ob(\mathscr{X})$. We consider $\beta:y\longrightarrow y'$. If $x\not\leq y$, then $ex_x^{cs}(M)^\beta=0$. Otherwise, if we have $\alpha:x\longrightarrow y$ and $\gamma:x\longrightarrow y'$, i.e., $\beta\alpha=\gamma$, we note that $id=ex_x^{cs}(M)^\beta:\beta^*\alpha^*(M)\longrightarrow \gamma^*(M)$. It is now clear that $ ex_x^{cs}(M)\in Com^{cs}$-$\mathcal{C}$. 

\smallskip

(2) This follows from the fact that finite limits and finite colimits in $Com^{cs}$-$\mathcal{C}$ are computed pointwise.

\smallskip
(3) Let $\mathcal{M}\in Com^{cs}$-$\mathcal{C}$ and $N\in \mathbf{M}^{\mathcal{C}_x}$. We will show that
 $
Com^{cs}\text{-}\mathcal{C}(ex_x^{cs}(N),\mathcal{M})\cong \mathbf{M}^{\mathcal{C}_x}(N,ev_x^{cs}(\mathcal{M}))
$. We consider  $f:N\longrightarrow \mathcal{M}_x$  in $\mathbf{M}^{\mathcal{C}_x}$. Corresponding to $f$, we now have $\eta^f:ex_x^{cs}(N)\longrightarrow \mathcal M$ defined by setting 
\begin{equation}
\begin{CD}\eta^f_y:ex_x^{cs}(N)_y=\alpha^*N@>\alpha^*f>> \alpha^*\mathcal M_x@>\mathcal M^\alpha>> \mathcal M_y\\
\end{CD}
\end{equation} for $y\in Ob(\mathscr X)$ whenever $\alpha\in \mathscr X(x,y)$ and $\eta^f_y=0$ if $x\not \leq y$. We now take $\beta\in \mathscr X(y,y')$ and claim that $\mathcal M^\beta\circ \beta^*(\eta^f_y)=\eta^f_{y'}\circ ex_x^{cs}(N)^\beta$. If $x\not\leq y$, both sides of this equality vanish. Otherwise, if $\alpha\in \mathscr X(x,y)$, we have the commutative diagram 
\begin{equation}
\begin{CD}
\beta^*ex_x^{cs}(N)_y =\beta^*\alpha^*(N)@>\beta^*(\eta^f_y)>\beta^*(\mathcal M^\alpha)\circ \beta^*(\alpha^*f) > \beta^*\mathcal M_y\\
@Vex_x^{cs}(N)^\beta VidV @V\mathcal M^\beta VV\\
ex_x^{cs}(N)_{y'}= \beta^*\alpha^*(N)@>\eta^f_{y'}>\mathcal M^{\beta\alpha}\circ (\beta^*\alpha^*f)> \mathcal M_{y'}
\end{CD}
\end{equation} which shows that $\eta^f$ is a morphism in  $Com^{cs}$-$\mathcal{C}$. Conversely, given a morphism $\eta:ex_x(N)\longrightarrow \mathcal M$ in  $Com^{cs}$-$\mathcal{C}$, we obtain in particular a morphism $f^\eta:ex_x^{cs}(N)_x=N\longrightarrow \mathcal M_x$ in $\mathbf M^{\mathcal C_x}$. We may  verify directly that these two associations are inverse to each other.

\end{proof}

We observe that the functor $ev_x^{cs}:Com^{cs}\text{-}\mathcal{C} \longrightarrow \mathbf{M}^{\mathcal{C}_x}$  also  has a right adjoint  which we describe below.

\begin{thm}\label{pr3.2py}
 Let $x\in Ob(\mathscr{X})$. Then,

\smallskip
(1) There is a functor $coe_x^{cs}: \mathbf{M}^{\mathcal{C}_x} \longrightarrow Com^{cs}\text{-}\mathcal{C}$ defined by setting, for any $y\in Ob(\mathscr X)$:
\begin{equation*}
coe_x^{cs}(N)_y:=\begin{cases}
\alpha_*N \quad\qquad \text{if}~ \alpha \in \mathscr{X}(y,x)\\
0 \quad \quad \quad \qquad \text{if}~ \mathscr{X}(y,x)=\emptyset
\end{cases}
\end{equation*}

(2) $(ev_x^{cs},coe_x^{cs})$ is a pair of adjoint functors.
\end{thm}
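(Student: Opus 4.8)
The plan is to build the functor $coe_x^{cs}$ in analogy with $ex_x^{cs}$, but using the coinduction functors $\alpha_*$ in place of corestriction, and then verify the adjunction $(ev_x^{cs}, coe_x^{cs})$ directly via the universal property, using repeatedly that $(\alpha^*,\alpha_*)$ is an adjoint pair. Since $\mathscr X$ is a poset, for any $y$ there is at most one morphism $y\to x$, so $coe_x^{cs}(N)_y$ is well-defined. First I would check part (1): for $\beta\colon y'\to y$ in $\mathscr X$ I must produce the structure map. If $y\not\leq x$ then $coe_x^{cs}(N)_{y}=0$ and the map is zero (note $y'\to y\to x$ would force $y'\leq x$, so I need the $\mathcal C_{y'}$-comodule map $coe_x^{cs}(N)_{y'}\to \beta_*coe_x^{cs}(N)_{y}=0$, which is automatic). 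If $y\leq x$ via $\alpha\colon y\to x$, then $y'\leq x$ via $\alpha\beta$, and the required map $coe_x^{cs}(N)_{y'}=(\alpha\beta)_*N\to \beta_*(\alpha_* N)=\beta_*coe_x^{cs}(N)_y$ is the canonical identification coming from $(\alpha\beta)_* = \beta_*\alpha_*$; this is an isomorphism. The cocycle condition $\beta_*(coe_x^{cs}(N)_{\beta'})\circ coe_x^{cs}(N)_{\beta} = coe_x^{cs}(N)_{\beta'\beta}$ and the normalization at identities then hold by functoriality of $\alpha\mapsto\alpha_*$, so $coe_x^{cs}(N)\in Com^{cs}$-$\mathcal C$, and functoriality of $coe_x^{cs}$ in $N$ is clear.

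For part (2), given $\mathcal M\in Com^{cs}$-$\mathcal C$ and $N\in\mathbf M^{\mathcal C_x}$, I would construct mutually inverse bijections
\[
Com^{cs}\text{-}\mathcal C(\mathcal M, coe_x^{cs}(N))\;\cong\;\mathbf M^{\mathcal C_x}(ev_x^{cs}(\mathcal M), N)=\mathbf M^{\mathcal C_x}(\mathcal M_x,N).
\]
From a morphism $\eta\colon\mathcal M\to coe_x^{cs}(N)$ one reads off its component at $x$, which (since $id_x\colon x\to x$ gives $coe_x^{cs}(N)_x=\_\_\square_{\mathcal C_x}\mathcal C_x\simeq N$) is a map $g^\eta\colon\mathcal M_x\to N$ in $\mathbf M^{\mathcal C_x}$. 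Conversely, given $g\colon\mathcal M_x\to N$, define $\eta^g_y\colon\mathcal M_y\to coe_x^{cs}(N)_y$ for $y\leq x$ via $\alpha\colon y\to x$ by
\[
\mathcal M_y \xrightarrow{\ \mathcal M_\alpha\ } \alpha_*\mathcal M_x \xrightarrow{\ \alpha_* g\ } \alpha_* N = coe_x^{cs}(N)_y,
\]
and $\eta^g_y=0$ when $y\not\leq x$. I would then check $\eta^g$ is a morphism in $Com^{cs}$-$\mathcal C$: for $\beta\colon y'\to y$ the square relating $\mathcal M_\beta$, $\eta^g_{y'}$, $\beta_*\eta^g_y$ and the structure map of $coe_x^{cs}(N)$ commutes — when $y\leq x$ this reduces, after expanding both paths, to the cocycle identity $\beta_*(\mathcal M_\alpha)\circ\mathcal M_\beta=\mathcal M_{\alpha\beta}$ for $\mathcal M$ together with functoriality of $(-)_*$ and the identification $(\alpha\beta)_*=\beta_*\alpha_*$; when $y\not\leq x$ both composites land in $0$. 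Finally I would verify that $g\mapsto\eta^g$ and $\eta\mapsto g^\eta$ are inverse: $g^{\eta^g}=g$ follows from $\mathcal M_{id_x}=id$ and the counit identification at $x$, while $\eta^{g^\eta}=\eta$ follows by comparing components at each $y$, using that $\eta$ itself is compatible with the structure map $\mathcal M_\alpha$ for $\alpha\colon y\to x$. Naturality in $\mathcal M$ and $N$ is then routine.

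The one genuine subtlety — the step I expect to be the main obstacle — is bookkeeping the canonical isomorphisms $(\alpha\beta)_*\cong\beta_*\alpha_*$ and $\_\_\square_{\mathcal C_x}\mathcal C_x\cong\mathrm{id}$ coherently, so that the "morphism in $Com^{cs}$-$\mathcal C$" and "inverse bijection" verifications actually close up rather than merely up to a non-identity coherence isomorphism; here the poset hypothesis is what makes this manageable, since there are no nontrivial automorphisms to track and each hom-set has at most one element. Everything else is a formal consequence of the adjunction $(\alpha^*,\alpha_*)$ and the definitions, exactly parallel to the proof of Proposition \ref{adjoint}(3).
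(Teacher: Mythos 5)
Your proposal is correct and follows essentially the same route as the paper: the structure maps of $coe_x^{cs}(N)$ are the identity under the identification $(\alpha\beta)_*=\beta_*\alpha_*$ (using the poset hypothesis), and the adjunction is verified exactly as in Proposition \ref{adjoint}(3), by sending $\eta$ to its component at $x$ and, conversely, sending $g:\mathcal M_x\to N$ to the family $\alpha_*g\circ\mathcal M_\alpha$ (zero when $y\not\leq x$). No substantive differences to report.
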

\begin{proof}

 Clearly, $coe_x^{cs}(N)_y\in \mathbf{M}^{\mathcal{C}_y}$ for each $y\in Ob(\mathscr{X})$. We consider $\beta:y'\longrightarrow y$. If $y\not\leq x$, then $ex_x^{cs}(N)_\beta=0$. Otherwise, if we have $\alpha:y\longrightarrow x$ and $\gamma:y'\longrightarrow x$, i.e., $\alpha\beta=\gamma$, we note that $id=coe_x^{cs}(N)_\beta:\gamma_*(N)\longrightarrow \beta_*\alpha_*(N)$. It is now clear that $ coe_x^{cs}(N)\in Com^{cs}$-$\mathcal{C}$. This proves (1). The adjunction in (2) may be verified directly in a manner similar to the proof of Proposition \ref{adjoint}.

\end{proof}

\begin{cor}\label{proj}
Let $\mathscr{X}$ be a poset and $\mathcal{C}: \mathscr{X}\longrightarrow Coalg$ be a representation taking values in right semiperfect $K$-coalgebras. Let $x\in Ob(\mathscr{X})$. Then, the functor $ex_x^{cs}: \mathbf M^{\mathcal{C}_x}\longrightarrow Com^{cs}$-$\mathcal{C}$ preserves projectives.
\end{cor}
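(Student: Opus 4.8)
The plan is to deduce the statement formally from Proposition \ref{adjoint}: the functor $ex_x^{cs}$ is left adjoint to $ev_x^{cs}$ by part (3), and $ev_x^{cs}$ is exact by part (2). The general principle at work is that a left adjoint of an exact functor between abelian categories preserves projective objects; here $Com^{cs}$-$\mathcal C$ is abelian by Proposition \ref{Ab} and $\mathbf M^{\mathcal C_x}$ is abelian, so the principle applies directly.

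In detail, I would take a projective object $P$ of $\mathbf M^{\mathcal C_x}$ and an arbitrary epimorphism $g:\mathcal M\twoheadrightarrow\mathcal N$ in $Com^{cs}$-$\mathcal C$, and show that $Com^{cs}\text{-}\mathcal C(ex_x^{cs}(P),\_\_)$ carries $g$ to a surjection. By Proposition \ref{adjoint}(2), $ev_x^{cs}$ is exact, so $ev_x^{cs}(g)\colon ev_x^{cs}(\mathcal M)\to ev_x^{cs}(\mathcal N)$ is an epimorphism of $\mathcal C_x$-comodules; since $P$ is projective, the induced map $\mathbf M^{\mathcal C_x}(P,ev_x^{cs}(\mathcal M))\to\mathbf M^{\mathcal C_x}(P,ev_x^{cs}(\mathcal N))$ is surjective. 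The adjunction isomorphism $Com^{cs}\text{-}\mathcal C(ex_x^{cs}(P),\_\_)\cong\mathbf M^{\mathcal C_x}(P,ev_x^{cs}(\_\_))$ of Proposition \ref{adjoint}(3) is natural in the second variable, so it identifies this surjection with the map $Com^{cs}\text{-}\mathcal C(ex_x^{cs}(P),\mathcal M)\to Com^{cs}\text{-}\mathcal C(ex_x^{cs}(P),\mathcal N)$ induced by $g$. Hence that map is surjective, which is precisely the assertion that $ex_x^{cs}(P)$ is projective in $Com^{cs}$-$\mathcal C$.

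I do not expect a genuine obstacle here: the argument is purely formal once one has an adjoint pair whose right adjoint is exact. The only point that needs verification is that $ev_x^{cs}$ really does send the epimorphism $g$ to an epimorphism, which is part of Proposition \ref{adjoint}(2) and ultimately rests on the pointwise computation of kernels and cokernels in $Com^{cs}$-$\mathcal C$ from the proof of Proposition \ref{Ab}. The poset hypothesis on $\mathscr X$ enters only to guarantee that $ex_x^{cs}$ is well defined in the first place (there being at most one morphism $x\to y$), and that has already been arranged in Proposition \ref{adjoint}.
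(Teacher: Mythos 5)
Your proposal is correct and follows exactly the paper's argument: Corollary \ref{proj} is deduced from Proposition \ref{adjoint}(2) and (3) via the standard fact that a left adjoint whose right adjoint is exact preserves projectives. You merely spell out the formal adjunction-and-exactness argument that the paper leaves implicit.
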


\begin{proof}
We know from Proposition \ref{adjoint}(2) that $ev_x^{cs}$ is an exact functor. Since $(ex_x^{cs}, ev_x^{cs})$ is an adjoint pair by Proposition \ref{adjoint}(3), it follows that the left adjoint $ex_x^{cs}$ preserves projective objects.

\end{proof}

\begin{Thm}\label{projgen}
Let $\mathscr{X}$ be a poset and $\mathcal{C}: \mathscr{X}\longrightarrow Coalg$ be a representation taking values in right semiperfect $K$-coalgebras. Then, $Com^{cs}$-$\mathcal{C}$ has a set of projective generators.
  \end{Thm}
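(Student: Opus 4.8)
The plan is to assemble a set of projective generators for $Com^{cs}$-$\mathcal{C}$ out of the projective generators of the individual comodule categories $\mathbf M^{\mathcal C_x}$, pushed forward via the functors $ex_x^{cs}$. Since $\mathscr X$ is a poset and each $\mathcal C_x$ is right semiperfect, for every $x\in Ob(\mathscr X)$ we may fix a set $\{P_{x,j}\}_{j\in J_x}$ of finitely generated projective generators of $\mathbf M^{\mathcal C_x}$ (this set exists by the discussion following Proposition \ref{Ab}). I claim the collection $\mathcal G:=\{ex_x^{cs}(P_{x,j})\mid x\in Ob(\mathscr X),\ j\in J_x\}$ is a set of projective generators of $Com^{cs}$-$\mathcal{C}$.

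First, each $ex_x^{cs}(P_{x,j})$ is projective in $Com^{cs}$-$\mathcal{C}$: this is exactly Corollary \ref{proj}, which says $ex_x^{cs}$ preserves projectives (its right adjoint $ev_x^{cs}$ being exact by Proposition \ref{adjoint}(2)). Second, I need to show $\mathcal G$ generates, i.e. that for any nonzero morphism—equivalently, for any object $\mathcal M$ and any nonzero element $m\in el_{\mathscr X}(\mathcal M)$—there is some $g\in\mathcal G$ and a morphism $g\to\mathcal M$ whose image contains $m$ (hence the canonical map $\bigoplus_{g\in\mathcal G}g^{(\mathrm{Hom}(g,\mathcal M))}\to\mathcal M$ is epic). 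Given $m\in\mathcal M_x$, by \cite[Corollary 2.2.9]{DNR} there is a finite dimensional subcomodule of $\mathcal M_x$ containing $m$, and by right semiperfectness of $\mathcal C_x$ (via \cite[Corollary 2.4.21]{DNR}) some finitely generated projective $P_{x,j}$ in the chosen set admits a morphism $f:P_{x,j}\longrightarrow\mathcal M_x$ with $m\in Im(f)$. By the adjunction $(ex_x^{cs},ev_x^{cs})$ of Proposition \ref{adjoint}(3), $f$ corresponds to a morphism $\eta^f:ex_x^{cs}(P_{x,j})\longrightarrow\mathcal M$ in $Com^{cs}$-$\mathcal C$, and unwinding the construction of $\eta^f$ in the proof of Proposition \ref{adjoint} one has $\eta^f_x=f$, so $m\in Im(\eta^f_x)\subseteq Im(\eta^f)$. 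This shows $\mathcal G$ is a generating set of projectives.

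Finally, $\mathcal G$ is genuinely a \emph{set}: $Ob(\mathscr X)$ is a set and each index set $J_x$ may be taken to be a set (the isomorphism classes of finitely generated projectives in $\mathbf M^{\mathcal C_x}$ form a set). The main point where care is needed is the verification that $\eta^f_x=f$ under the adjunction isomorphism—i.e. that the counit of $(ex_x^{cs},ev_x^{cs})$ evaluated at $x$ is the identity, which is immediate from $ex_x^{cs}(N)_x=id_x^*N=N$ in the construction of Proposition \ref{adjoint}—and beyond that the argument is a routine packaging of the preceding results, so I do not anticipate a serious obstacle. (One could alternatively cite Theorem \ref{Groth}, which already gives a \emph{set} of generators; the added content here is purely that one can choose these generators to be projective, which is what Corollary \ref{proj} supplies.)
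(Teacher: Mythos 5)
Your proposal is correct and follows essentially the same route as the paper: the same generating family $\{ex_x^{cs}(V)\}$ with $V$ running over finite dimensional projectives in $\mathbf M^{\mathcal C_x}$, projectivity via Corollary \ref{proj}, and the adjunction $(ex_x^{cs},ev_x^{cs})$ of Proposition \ref{adjoint} with the observation $\eta^f_x=f$. The only (cosmetic) difference is the final criterion for generation: you verify that the canonical map from a coproduct of members of $\mathcal G$ is a pointwise surjection, hence an epimorphism, whereas the paper exhibits, for each non-invertible monomorphism $\mathcal N\hookrightarrow\mathcal M$, a morphism from some $ex_x^{cs}(V)$ to $\mathcal M$ not factoring through it and invokes \cite[\S 1.9]{Tohuku}; both are standard equivalent characterizations in the cocomplete abelian category $Com^{cs}$-$\mathcal C$.
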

  \begin{proof}
    Let $Proj^f(\mathcal{C}_x)$ denote the set of isomorphism classes of finite dimensonal projective $\mathcal{C}_x$-comodules. Since $\mathcal C_x$ is right semiperfect, we know (see \cite[Corollary 2.4.21]{DNR}) that   $Proj^f(\mathcal{C}_x)$ is a generating set for $\mathbf M^{\mathcal C_x}$. For any $V\in Proj^f(\mathcal{C}_x)$, it follows from 
    Corollary \ref{proj} that  $ex_x^{cs}(V)$ is also projective in $Com^{cs}$-$\mathcal{C}$. We will show that the family 
    \begin{equation}
\mathcal{G}= \{ex_x^{cs}(V)~|~x\in Ob(\mathscr{X}), V\in Proj^f(\mathcal{C}_x)\}
\end{equation}
is a set of projective generators for $Com^{cs}$-$\mathcal{C}$. For this, we consider a  non-invertible monomorphism $i:\mathcal{N}\hookrightarrow \mathcal{M}$ in $Com^{cs}$-$\mathcal{C}$.  Since kernels and cokernels in  $Com^{cs}$-$\mathcal{C}$ are constructed pointwise, there exists some $x\in Ob(\mathscr{X})$ such that $i_x:\mathcal{N}_x\hookrightarrow \mathcal{M}_x$ is a non-invertible monomorphism in $\mathbf M^{\mathcal C_x}$. Since  $Proj^f(\mathcal{C}_x)$ is a generating set of $\mathbf M^{\mathcal C_x}$, we can choose a morphism $f:V\longrightarrow \mathcal{M}_x$ with $V\in Proj^f(\mathcal{C}_x)$ such that $f$ does not factor through $i_x:\mathcal{N}_x\hookrightarrow \mathcal{M}_x$. Since $(ex_x^{cs}, ev_x^{cs})$ is  an adjoint pair, this gives us a morphism $\eta^f: ex_x^{cs}(V)\longrightarrow \mathcal{M}$ in $Com^{cs}$-$\mathcal{C}$ corresponding to $f$, which does not factor through $i:\mathcal{N}\longrightarrow \mathcal{M}$. It  follows from \cite[\S 1.9]{Tohuku} that the family $\mathcal{G}$ is a set of generators for $Com^{cs}$-$\mathcal{C}$.
\end{proof}

\subsection{Projective generators for trans-comodules}
In this subsection, we will show that the category $Com^{tr}\text{-}\mathcal{C}$ of trans-comodules over a quasi-finite representation $\mathcal{C}:\mathscr{X} \longrightarrow Coalg$ has projective generators.

\begin{thm}\label{poprojc}
Let $\mathscr{X}$ be a poset and $\mathcal{C}: \mathscr{X}\longrightarrow Coalg$ be a quasi-finite representation taking values in   right semiperfect $K$-coalgebras. Let $x\in Ob(\mathscr{X})$. Then, 
  
\smallskip
(1) There is a functor $ex^{tr}_x: \mathbf{M}^{\mathcal{C}_x} \longrightarrow Com^{tr}\text{-}\mathcal{C}$ given by setting, for any $y\in Ob(\mathscr X)$:
\begin{equation*}
ex^{tr}_x(M)_y:=\begin{cases}
\alpha^!M=H_{\mathcal{C}_x}(\mathcal{C}_y,M) \quad \text{if}~ \alpha \in \mathscr{X}(y,x)\\
0 \quad \quad \quad \quad \quad \quad \quad \quad \quad \text{if}~ \mathscr{X}(y,x)=\emptyset
\end{cases}
\end{equation*}

\smallskip
(2) The evaluation at $x$, i.e., $ev^{tr}_x:Com^{tr}$-$\mathcal{C}\longrightarrow \mathbf{M}^{\mathcal{C}_x}$, $\mathcal{M}\mapsto \mathcal{M}_x$ is an exact functor.

\smallskip
(3) $(ex^{tr}_x,ev^{tr}_x)$ is a pair of adjoint functors.
\end{thm}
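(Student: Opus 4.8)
The plan is to transcribe the proof of Proposition~\ref{adjoint} to the trans-setting, replacing the corestriction functors $\alpha^*$ throughout by the functors $\alpha^!=H_{\mathcal C_x}(\mathcal C_y,-)$ and reversing the arrows of $\mathscr X$ that occur (so that the cis-condition $x\leq y$ becomes $y\leq x$). Since $\mathscr X$ is a poset, for $y\leq x$ there is a unique $\beta\in\mathscr X(y,x)$, so the rule $y\mapsto\beta^!M$ is unambiguous and $ex^{tr}_x(M)$ is supported on the down-set $\{\,y\in Ob(\mathscr X):y\leq x\,\}$; the hypothesis that $\mathcal C$ is quasi-finite guarantees that each functor $\alpha^!$ exists in the first place.

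For part (1), I would first note that $ex^{tr}_x(M)_y=\alpha^!M=H_{\mathcal C_x}(\mathcal C_y,M)$ carries a natural right $\mathcal C_y$-comodule structure by the discussion preceding Definition~\ref{trans-rep}. For a morphism $\beta\colon y'\to y$ of $\mathscr X$ I would take the structure map ${}^\beta\!\bigl(ex^{tr}_x(M)\bigr)\colon\beta^!\bigl(ex^{tr}_x(M)\bigr)_y\to\bigl(ex^{tr}_x(M)\bigr)_{y'}$ to be the zero map when $y\not\leq x$ (both sides being $0$, since $\beta^!0=0$), and otherwise --- writing $\alpha\in\mathscr X(y,x)$ and $\gamma=\alpha\beta\in\mathscr X(y',x)$ --- to be the canonical isomorphism $\beta^!\alpha^!M\xrightarrow{\ \sim\ }(\alpha\beta)^!M=\gamma^!M$ coming from the composition rule $(\alpha\beta)^!=\beta^!\alpha^!$. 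The axiom ${}^{id_x}\!\bigl(ex^{tr}_x(M)\bigr)=id$ is immediate, and the trans-comodule cocycle condition ${}^\gamma\!\mathcal M\circ\gamma^!({}^\beta\!\mathcal M)={}^{\beta\gamma}\!\mathcal M$ reduces to the coherence of these canonical isomorphisms, the cases in which some intermediate object does not lie below $x$ being trivial since the relevant comodules vanish. Functoriality of $ex^{tr}_x$ in $M$ then follows from functoriality of each $\beta^!$ and naturality of the same isomorphisms. Part (2) is literally the argument of Proposition~\ref{adjoint}(2): $ev^{tr}_x$ is exact since finite limits and finite colimits in $Com^{tr}$-$\mathcal C$ are computed pointwise.

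For part (3), given $f\colon N\to\mathcal M_x$ in $\mathbf M^{\mathcal C_x}$ I would define $\eta^f\colon ex^{tr}_x(N)\to\mathcal M$ by $\eta^f_y=0$ for $y\not\leq x$ and, for $\beta\in\mathscr X(y,x)$, by the composite
\[
\eta^f_y\colon\ \bigl(ex^{tr}_x(N)\bigr)_y=\beta^!N\xrightarrow{\ \beta^!f\ }\beta^!\mathcal M_x\xrightarrow{\ {}^\beta\!\mathcal M\ }\mathcal M_y .
\]
Checking that $\eta^f$ is a morphism of trans-comodules reduces, for composable arrows $z\xrightarrow{\gamma}y\xrightarrow{\beta}x$, to the cocycle identity ${}^{\beta\gamma}\!\mathcal M={}^\gamma\!\mathcal M\circ\gamma^!({}^\beta\!\mathcal M)$ of Definition~\ref{trans-rep} (using that ${}^\gamma\!\bigl(ex^{tr}_x(N)\bigr)$ is a canonical isomorphism), the cases with $y\not\leq x$ again holding because the comodules involved vanish. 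Conversely, from $\eta\colon ex^{tr}_x(N)\to\mathcal M$ one extracts $f^\eta:=\eta_x\colon N=\bigl(ex^{tr}_x(N)\bigr)_x\to\mathcal M_x$. That $f\mapsto\eta^f$ and $\eta\mapsto f^\eta$ are mutually inverse uses, on one side, ${}^{id_x}\!\mathcal M=id$ (which gives $\eta^f_x=f$) and, on the other, the morphism condition for $\eta$ applied to $\beta\colon y\to x$ together with the fact that ${}^\beta\!\bigl(ex^{tr}_x(N)\bigr)=id$; naturality of this bijection in $N$ and in $\mathcal M$ is routine.

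The one step that needs genuine care is part (1): one must confirm that the zero-object padding at objects not below $x$ is compatible with all the structure maps, and --- more importantly --- that the canonical identifications $(\alpha\beta)^!\cong\beta^!\alpha^!$ are coherent enough to serve as structure maps satisfying the trans-comodule cocycle condition. Everything else is a direct transcription of the cis-comodule argument in Proposition~\ref{adjoint}.
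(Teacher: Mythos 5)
Your proposal is correct and follows essentially the same route as the paper's own proof: the structure maps of $ex^{tr}_x(M)$ are given by the identification $\beta^!\alpha^!=(\alpha\beta)^!$ (written as the identity in the paper), exactness of $ev^{tr}_x$ comes from pointwise (co)limits, and the adjunction is established by the same explicit bijection $f\mapsto\eta^f$ with $\eta^f_y={}^{\beta}\mathcal M\circ\beta^!f$ and inverse $\eta\mapsto\eta_x$, verified via the cocycle identity of Definition \ref{trans-rep}.
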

\begin{proof}
(1) Since $\mathcal{C}:\mathscr{X} \longrightarrow Coalg$ is a quasi-finite representation, it follows that $ex^{tr}_x(M)_y=
\alpha^!M=H_{\mathcal{C}_x}(\mathcal{C}_y,M)$ is a right $\mathcal{C}_y$-comodule for each $y \in Ob(\mathscr{X})$ and $\alpha \in \mathscr{X}(y,x)$. 
We consider a morphism $\beta:y' \longrightarrow y$ in $\mathscr X$. If $y\not\leq x$, then ${^\beta}ex_x^{tr}(M)=0$. Otherwise, we have  $\alpha:y\longrightarrow x$ and $\gamma:y'\longrightarrow x$, i.e., $\alpha \beta=\gamma$, which gives $id={^\beta}{ex_x^{tr}(M)}:\beta^!ex_x^{tr}(M)_y=\beta^!\alpha^!(M)\longrightarrow \gamma^!(M)=ex_x^{tr}(M)_{y'}$. It follows that $ex^{tr}_x(M) \in Com^{tr}\text{-}\mathcal{C}$.

\smallskip
(2) Since finite limits and finite colimits in $Com^{tr}\text{-}\mathcal{C}$ are computed pointwise, it follows that the functor $ev^{tr}_x$ is exact.

\smallskip
(3) We will show that there is an isomorphism
$
Com^{tr}\text{-}\mathcal{C}\left(ex^{tr}_x(M), \mathcal{N} \right) \simeq \textbf{M}^{\mathcal{C}_x}(M,ev^{tr}_x(\mathcal{N}))
$
for any $M \in \textbf{M}^{\mathcal{C}_x}$ and $\mathcal{N} \in Com^{tr}\text{-}\mathcal{C}$. We start with $f:M \longrightarrow \mathcal{N}_x$   in $\textbf{M}^{\mathcal{C}_x}$. For each $y \in Ob(\mathscr{X})$ and $\alpha \in \mathscr{X}(y,x)$, we set $\eta^f_y: ex^{tr}_x(M)_y=\alpha^!M \longrightarrow \mathcal{N}_y$ to be the composition
\begin{equation*}
\begin{tikzcd}
\alpha^!M \arrow{r}{\alpha^!f} &\alpha^!\mathcal{N}_x \arrow{r}{{^\alpha}\mathcal{N}} &\mathcal{N}_y
\end{tikzcd}
\end{equation*}
Clearly, each $\eta_y^f$ is a morphism of right $\mathcal{C}_y$-comodules. We now take $\beta:y'\longrightarrow y$ and claim that $\eta^f_{y'}\circ {^\beta}ex_x^{tr}(M) ={^\beta}\mathcal N\circ \beta^!\eta^f_y$. If $y\not\leq x$, then both sides vanish. Otherwise, if $\alpha:y\longrightarrow x$, we have a commutative diagram
\begin{equation}
\begin{CD}
\beta^!ex_x^{tr}(M)_y=\beta^!\alpha^!(M) @>\beta^!\eta^f_y>\beta^!({^\alpha}\mathcal{N})\circ \beta^!(\alpha^!f)> \beta^!\mathcal N_y\\
@V{^\beta}ex_x^{tr}(M)VidV @VV{^\beta}\mathcal NV \\
ex_x^{tr}(M)_{y'} =\beta^!\alpha^!(M)@>\eta^f_{y'}>{^{\alpha\beta}}\mathcal N\circ \beta^!\alpha^!(f)> \mathcal N_{y'}\\
\end{CD}
\end{equation} which shows that $\eta^f:ex^{tr}_x(M)\longrightarrow \mathcal N$ is a morphism in $ Com^{tr}\text{-}\mathcal{C}$. On the other hand, if $\eta:ex^{tr}_x(M)\longrightarrow \mathcal N$ is a morphism in $ Com^{tr}\text{-}\mathcal{C}$, we have in particular a morphism $f^\eta:M\longrightarrow \mathcal N_x$ in $\mathbf M^{\mathcal C_x}$. It is easily seen that these associations are inverse to each other. This proves the result. 
\end{proof}

\begin{cor}\label{presproj}
Let $\mathscr{X}$ be a poset and $\mathcal{C}: \mathscr{X}\longrightarrow Coalg$ be a quasi-finite representation   taking values in   right semiperfect $K$-coalgebras. Then, for each $x \in Ob(\mathscr{X})$, the functor $ex^{tr}_x: \mathbf{M}^{\mathcal{C}_x}\longrightarrow Com^{tr}$-$\mathcal{C}$ preserves projectives.
\end{cor}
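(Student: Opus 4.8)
The plan is to repeat the argument of Corollary \ref{proj} verbatim in the trans-setting, exploiting the general principle that a left adjoint functor whose right adjoint is exact must preserve projective objects. No new ingredients are needed beyond the adjunction and the exactness already established in Proposition \ref{poprojc}.

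First I would record that $Com^{tr}$-$\mathcal{C}$ is abelian (noted right after Definition \ref{trans-rep}), so that projectivity of an object is detected by exactness of the covariant Hom functor out of it. Then I would invoke Proposition \ref{poprojc}(3), which says that $(ex^{tr}_x, ev^{tr}_x)$ is an adjoint pair; fixing a projective $P \in \mathbf{M}^{\mathcal{C}_x}$, this yields a natural isomorphism of functors
\[
Com^{tr}\text{-}\mathcal{C}\bigl(ex^{tr}_x(P), -\bigr) \;\cong\; \mathbf{M}^{\mathcal{C}_x}\bigl(P, ev^{tr}_x(-)\bigr).
\]
Finally, by Proposition \ref{poprojc}(2) the functor $ev^{tr}_x$ is exact, and since $P$ is projective in $\mathbf{M}^{\mathcal{C}_x}$ the functor $\mathbf{M}^{\mathcal{C}_x}(P, -)$ is exact; hence the right-hand composite is exact, and therefore $ex^{tr}_x(P)$ is projective in $Com^{tr}$-$\mathcal{C}$.

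There is no genuine obstacle here. The only points requiring care are exactly the inputs already supplied: that $Com^{tr}$-$\mathcal{C}$ is abelian, and that $ev^{tr}_x$ is exact — the latter because finite limits and colimits in $Com^{tr}$-$\mathcal{C}$ are computed pointwise, which itself rests on the exactness of the corestriction functors $\alpha^*$ under the standing quasi-finiteness hypothesis on $\mathcal{C}$. Once these are in hand, the conclusion is immediate.
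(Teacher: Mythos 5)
Your argument is exactly the paper's proof: Corollary \ref{presproj} is deduced from Proposition \ref{poprojc}, using that $ex^{tr}_x$ is left adjoint to the exact functor $ev^{tr}_x$, so that $Com^{tr}\text{-}\mathcal{C}(ex^{tr}_x(P),-)\cong \mathbf{M}^{\mathcal{C}_x}(P,ev^{tr}_x(-))$ is exact whenever $P$ is projective. The proposal is correct and adds only a harmless spelling-out of the standard adjunction argument.
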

\begin{proof}
The result follows from the fact that the functor $ex^{tr}_x$ is left adjoint to the exact functor $ev^{tr}_x$.
\end{proof}

In a manner similar to Proposition \ref{pr3.2py}, we can show that the functor $ev_x^{tr}$ also has a right adjoint.

\begin{thm}\label{poprojk}
  Let $\mathscr{X}$ be a poset and $\mathcal{C}: \mathscr{X}\longrightarrow Coalg$ be a quasi-finite representation  taking values in   right semiperfect $K$-coalgebras. For each  $x\in Ob(\mathscr{X})$, the functor $ev_x^{tr}:Com^{tr}\text{-}\mathcal{C}\longrightarrow \mathbf M^{\mathcal C_x}$ has a right adjoint $coe^{tr}_x: \mathbf{M}^{\mathcal{C}_x} \longrightarrow Com^{tr}\text{-}\mathcal{C}$ given by setting, for $y\in Ob(\mathscr X)$:
\begin{equation*}
coe^{tr}_x(M)_y:=\begin{cases}
\alpha^*M \quad \text{if}~ \alpha \in \mathscr{X}(x,y)\\
0 \quad \quad  \text{if}~ \mathscr{X}(x,y)=\emptyset
\end{cases}
\end{equation*}
\end{thm}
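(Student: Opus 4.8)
The plan is to imitate the proof of Proposition \ref{pr3.2py}, of which this statement is the straightforward trans-analogue. Throughout, since $\mathscr X$ is a poset, $\mathscr X(x,y)$ has at most one element, so each component $coe^{tr}_x(M)_y$ is unambiguous; moreover the support $S_x:=\{y\in Ob(\mathscr X):x\leq y\}$ of $coe^{tr}_x(M)$ is an up-set, because $x\leq y\leq y'$ forces $x\leq y'$. I will describe the trans-comodule structure of $coe^{tr}_x(M)$ via the maps ${}_\beta\mathcal M:\mathcal M_{y'}\to\beta^*\mathcal M_y$ attached to $\beta:y\to y'$; under the standing quasi-finiteness hypothesis these are interchangeable with the ${}^\beta$-form through the adjunction $(\beta^!,\beta^*)$, but the ${}_\beta$-form is the convenient one here.

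First I would check that $coe^{tr}_x(M)$ is a well-defined object of $Com^{tr}\text{-}\mathcal C$. For $\beta:y\to y'$: if $x\not\leq y'$ both components are $0$; if $x\not\leq y$ but $x\leq y'$ then $\beta^*coe^{tr}_x(M)_y=0$; so in either case ${}_\beta(coe^{tr}_x(M))$ is forced to be the zero map. If $x\leq y$, write $\alpha\in\mathscr X(x,y)$ and $\beta\alpha\in\mathscr X(x,y')$; using $(\beta\alpha)^*=\beta^*\alpha^*$ we get $coe^{tr}_x(M)_{y'}=(\beta\alpha)^*M=\beta^*\alpha^*M=\beta^*coe^{tr}_x(M)_y$, and we take ${}_\beta(coe^{tr}_x(M))=id$. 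The unit axiom ${}_{id_x}(coe^{tr}_x(M))=id$ is then clear, and the cocycle condition ${}_{\beta\alpha}=\beta^*({}_\alpha)\circ{}_\beta$ reduces to a composite of identity maps when the middle object lies in $S_x$ and to a triviality otherwise --- since $S_x$ is an up-set, once some object in a composable chain fails to be $\geq x$ all later ones do too, and the relevant components vanish. Functoriality of $coe^{tr}_x$ in $M$ is immediate from functoriality of each $\alpha^*$.

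Next I would produce the natural isomorphism $Com^{tr}\text{-}\mathcal C(\mathcal N,coe^{tr}_x(M))\cong\mathbf M^{\mathcal C_x}(\mathcal N_x,M)$. Given $g:\mathcal N_x\to M$ in $\mathbf M^{\mathcal C_x}$, define $\eta^g:\mathcal N\to coe^{tr}_x(M)$ by $\eta^g_y=0$ for $x\not\leq y$ and, for $\alpha\in\mathscr X(x,y)$, by the composite
\[
\eta^g_y:\quad \mathcal N_y\xrightarrow{{}_\alpha\mathcal N}\alpha^*\mathcal N_x\xrightarrow{\alpha^*g}\alpha^*M=coe^{tr}_x(M)_y .
\]
Checking that $\eta^g$ is a morphism in $Com^{tr}\text{-}\mathcal C$ amounts, for $\beta:y\to y'$ with $x\leq y$, to the chain $\beta^*(\eta^g_y)\circ{}_\beta\mathcal N=(\beta\alpha)^*g\circ\beta^*({}_\alpha\mathcal N)\circ{}_\beta\mathcal N=(\beta\alpha)^*g\circ{}_{\beta\alpha}\mathcal N=\eta^g_{y'}={}_\beta(coe^{tr}_x(M))\circ\eta^g_{y'}$, whose key step is the cocycle identity ${}_{\beta\alpha}\mathcal N=\beta^*({}_\alpha\mathcal N)\circ{}_\beta\mathcal N$ of Definition \ref{trans-rep}; the degenerate cases ($x\not\leq y$) are again covered by the up-set observation. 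Conversely, send $\eta:\mathcal N\to coe^{tr}_x(M)$ to $\eta_x:\mathcal N_x\to coe^{tr}_x(M)_x=(id_x)^*M=M$.

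Finally I would verify that these two assignments are mutually inverse and natural. One composite returns $\eta^g_x=(id_x)^*g\circ{}_{id_x}\mathcal N=g$ by the counit axiom ${}_{id_x}\mathcal N=id$. For the other, since $\eta$ is a morphism of trans-comodules and ${}_\alpha(coe^{tr}_x(M))=id$, we get $(\eta^{\eta_x})_y=\alpha^*(\eta_x)\circ{}_\alpha\mathcal N={}_\alpha(coe^{tr}_x(M))\circ\eta_y=\eta_y$ on $S_x$, while both sides vanish off $S_x$; hence $\eta^{\eta_x}=\eta$. Naturality in $M$ and in $\mathcal N$ is a direct diagram chase. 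I do not anticipate any essential obstacle; the only point that requires attention is the bookkeeping of the vanishing components, which is controlled uniformly by the fact that the support of $coe^{tr}_x(M)$ is the up-set generated by $x$.
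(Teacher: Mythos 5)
Your proposal is correct and takes essentially the same route the paper intends: Proposition \ref{poprojk} is proved exactly by transcribing the argument of Proposition \ref{pr3.2py} (and the adjunction check in Proposition \ref{poprojc}(3)) to the trans-setting, i.e.\ checking that $coe^{tr}_x(M)$ is a trans-comodule with identity (or forced zero) structure maps and exhibiting the mutually inverse assignments $g\mapsto \eta^g$ and $\eta\mapsto\eta_x$, just as you do. One small remark: your phrase that once an object in a composable chain fails to be $\geq x$ ``all later ones do too'' has the direction reversed (failure propagates to smaller objects in the poset), but this does not affect your argument, since in every degenerate case you only use that the common target component vanishes, which is immediate.
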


\begin{Thm}\label{projgen3.8}
Let $\mathscr{X}$ be a poset and $\mathcal{C}: \mathscr{X}\longrightarrow Coalg$ be a quasi-finite representation taking values in  right semiperfect $K$-coalgebras. Then, the category $Com^{tr}$-$\mathcal{C}$ has a set of projective generators.
\end{Thm}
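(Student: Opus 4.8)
The plan is to mirror the proof of Theorem \ref{projgen} for cis-comodules, substituting the adjunction $(ex_x^{tr},ev_x^{tr})$ of Proposition \ref{poprojc} for the adjunction $(ex_x^{cs},ev_x^{cs})$ of Proposition \ref{adjoint}. First I would recall that since each $\mathcal C_x$ is right semiperfect, the set $Proj^f(\mathcal C_x)$ of isomorphism classes of finite dimensional projective right $\mathcal C_x$-comodules is a generating set for $\mathbf M^{\mathcal C_x}$ (see \cite[Corollary 2.4.21]{DNR}). By Corollary \ref{presproj}, the functor $ex_x^{tr}$ carries each $V\in Proj^f(\mathcal C_x)$ to a projective object of $Com^{tr}$-$\mathcal C$. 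One then proposes the family
\begin{equation*}
\mathcal G=\{ex_x^{tr}(V)\mid x\in Ob(\mathscr X),\ V\in Proj^f(\mathcal C_x)\}
\end{equation*}
as a set of projective generators. Note this is a genuine set, since $Ob(\mathscr X)$ is a set and each $Proj^f(\mathcal C_x)$ is a set.

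Next I would verify the generating property via the criterion of \cite[\S 1.9]{Tohuku}: it suffices to show that for every non-invertible monomorphism $i:\mathcal N\hookrightarrow \mathcal M$ in $Com^{tr}$-$\mathcal C$, some object of $\mathcal G$ admits a morphism to $\mathcal M$ that does not factor through $i$. Since $Com^{tr}$-$\mathcal C$ is abelian with kernels and cokernels computed pointwise (as noted after Definition \ref{trans-rep}), a non-invertible monomorphism $i$ must have $i_x:\mathcal N_x\hookrightarrow\mathcal M_x$ non-invertible for some $x\in Ob(\mathscr X)$. Because $Proj^f(\mathcal C_x)$ generates $\mathbf M^{\mathcal C_x}$, there is $f:V\longrightarrow\mathcal M_x$ with $V\in Proj^f(\mathcal C_x)$ not factoring through $i_x$. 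Under the adjunction isomorphism $Com^{tr}\text{-}\mathcal C(ex_x^{tr}(V),\mathcal M)\cong\mathbf M^{\mathcal C_x}(V,ev_x^{tr}(\mathcal M))$ of Proposition \ref{poprojc}(3), $f$ corresponds to a morphism $\eta^f:ex_x^{tr}(V)\longrightarrow\mathcal M$; applying naturality of the adjunction in $\mathcal M$ to the map $i$, one sees that $\eta^f$ factors through $i$ if and only if $f=ev_x^{tr}(i)\circ g=i_x\circ g$ for some $g$, i.e. iff $f$ factors through $i_x$, which it does not. Hence $\mathcal G$ is a set of generators, and since each member is projective, it is a set of projective generators.

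The only genuinely delicate point — and the main thing worth spelling out rather than citing — is the reduction "a non-invertible monomorphism in $Com^{tr}$-$\mathcal C$ is non-invertible in some fiber." This uses that $i:\mathcal N\hookrightarrow\mathcal M$ is an isomorphism precisely when each $i_x$ is an isomorphism of $\mathcal C_x$-comodules, which in turn relies on kernels and cokernels in $Com^{tr}$-$\mathcal C$ being computed componentwise, together with the transport maps ${^\alpha\mathcal N}$, ${^\alpha\mathcal M}$ being forced to agree once all $i_x$ are invertible (compatibility with the structure morphisms). Everything else is the routine transcription of the argument for Theorem \ref{projgen}, using Corollary \ref{presproj} in place of Corollary \ref{proj} and the adjunction of Proposition \ref{poprojc}(3) in place of that of Proposition \ref{adjoint}(3); I would therefore present the proof tersely, remarking that it proceeds "in a manner entirely analogous to the proof of Theorem \ref{projgen}."
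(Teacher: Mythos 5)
Your proposal is correct and follows essentially the same route as the paper: the paper's proof also takes $\mathcal G=\{ex_x^{tr}(V)\mid x\in Ob(\mathscr X),\ V\in Proj^f(\mathcal C_x)\}$, invokes Corollary \ref{presproj} for projectivity, and verifies the generating property via the adjunction $(ex^{tr}_x,ev^{tr}_x)$ exactly as in the proof of Theorem \ref{projgen}. Your extra remark on reducing non-invertibility of a monomorphism to non-invertibility in some fiber is a correct spelling-out of a step the paper leaves implicit.
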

\begin{proof} 
Since $\mathcal C_x$ is semiperfect for each $x \in Ob(\mathscr{X})$, we know that $Proj^f(\mathcal{C}_x)$ is a generating set for the category $\mathbf{M}^{\mathcal{C}_x}$. By Corollary \ref{presproj}, we also know that $ex^{tr}_x(V)$ is projective in $Com^{tr}$-$\mathcal{C}$ for any projective object $V \in \textbf{M}^{\mathcal{C}_x}$. Since $(ex^{tr}_x, ev^{tr}_x)$ is an adjoint pair, we can now show as in the proof of Theorem \ref{projgen} that 
\begin{equation*}
\mathcal{G}= \{ex^{tr}_x(V)~|~x\in Ob(\mathscr{X}), V\in Proj^f(\mathcal{C}_x)\}
\end{equation*}
is a set of generators for $Com^{tr}$-$\mathcal{C}$. 
\end{proof}

\section{Cartesian comodules over coalgebra representations}

\subsection{Cartesian cis-comodules over coalgebra representations}
We recall that a coalgebra morphism $C\xrightarrow{\alpha} D$ is said to be (left) coflat if the coinduction functor $\alpha_*: \mathcal{M}^D\longrightarrow \mathcal{M}^C$, $M\mapsto M\square_DC$ is exact, i.e., $C$ is coflat as a left $D$-comodule. We will say that a  representation $\mathcal{C}: \mathscr{X}\longrightarrow Coalg$  is left coflat if for each $\alpha: x\longrightarrow y$ in $\mathscr{X}$, we have a left coflat morphism $\mathcal C_\alpha: \mathcal{C}_x\longrightarrow \mathcal{C}_y$ of coalgebras. We will now introduce the category of cartesian cis-comodules over $\mathcal{C}$.

\begin{defn}
  Let $\mathcal{C}: \mathscr{X}\longrightarrow Coalg$ be a left coflat representation taking values in semiperfect $K$-coalgebras. Let $\mathcal{M}\in Com^{cs}$-$\mathcal{C}$. We will say that $\mathcal{M}$ is cartesian if for each $\alpha:x\longrightarrow y$ in $\mathscr{X}$, the morphism $\mathcal{M}_\alpha:\mathcal{M}_x\longrightarrow \alpha_*\mathcal{M}_y= \mathcal{M}_y\square_{\mathcal{C}_y}\mathcal{C}_x$ in $\mathbf{M}^{\mathcal{C}_x}$ is an isomorphism. We let $Com^{cs}_c$-$\mathcal{C}$ denote the full subcategory of $Com^{cs}$-$\mathcal{C}$ consisting of cartesian comodules.
\end{defn}

\begin{lem}\label{cocomplete}
  The category $Com^{cs}_c$-$\mathcal{C}$ is a cocomplete abelian category. Explicitly, the colimit $\mathcal{M}$  of a family $\{\mathcal{M}_i\}_{i\in I}$ of objects in $Com^{cs}_c$-$\mathcal{C}$ is given by setting
  \begin{equation}\label{4.1dpr}
\mathcal{M}_x:= {colim}_{i\in I} {~\mathcal{M}_i}_x\qquad \text{for each } x\in  Ob(\mathscr{X})
    \end{equation}
where the colimit on the right hand side is taken in the category $\mathbf{M}^{\mathcal{C}_x}$.
  \end{lem}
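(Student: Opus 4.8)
The plan is to verify that the pointwise colimit formula \eqref{4.1dpr} indeed lands in $Com^{cs}_c$-$\mathcal{C}$ and then to check it satisfies the universal property of the colimit there, with the only genuine content being the interaction of colimits with the cotensor product $\alpha_*=\_\_\square_{\mathcal{C}_y}\mathcal{C}_x$. First I would recall from Proposition \ref{Ab} that $Com^{cs}$-$\mathcal{C}$ is abelian with kernels and cokernels computed pointwise, and that it is cocomplete: since $\mathscr{X}$ need not be small-free, one forms arbitrary colimits in $Com^{cs}$-$\mathcal{C}$ by taking the pointwise colimit in each $\mathbf{M}^{\mathcal{C}_x}$ (each $\mathbf{M}^{\mathcal{C}_x}$ is Grothendieck, hence cocomplete) and equipping it with the induced structure maps via the fact that $\alpha^*$ preserves all colimits (being a left adjoint). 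Thus $Com^{cs}$-$\mathcal{C}$ is a cocomplete abelian category in which colimits are computed pointwise, and it only remains to see that the full subcategory of cartesian objects is closed under these colimits.

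The key step is the following: for $\alpha:x\longrightarrow y$ in $\mathscr{X}$ and a family $\{\mathcal{M}_i\}_{i\in I}$ of cartesian cis-comodules, I must show that the canonical map
\begin{equation*}
{colim}_{i\in I}\,({\mathcal{M}_i}_y\,\square_{\mathcal{C}_y}\mathcal{C}_x)\longrightarrow \big({colim}_{i\in I}\,{\mathcal{M}_i}_y\big)\,\square_{\mathcal{C}_y}\mathcal{C}_x
\end{equation*}
is an isomorphism, where the colimits are taken in $\mathbf{M}^{\mathcal{C}_y}$ and $\mathbf{M}^{\mathcal{C}_x}$ respectively. Here is where coflatness of $\mathcal{C}_\alpha:\mathcal{C}_x\longrightarrow\mathcal{C}_y$ enters: $\alpha_*=\_\_\square_{\mathcal{C}_y}\mathcal{C}_x$ is a right adjoint (to $\alpha^*$), so it automatically preserves limits, but it is \emph{not} in general cocontinuous; however it does preserve arbitrary direct sums (the cotensor product, being defined by an equalizer of maps out of $\_\_\otimes\mathcal{C}_x$, commutes with coproducts since $\otimes$ does and since in a Grothendieck category coproducts are exact, so the equalizer commutes with the coproduct), and by the coflatness hypothesis $\alpha_*$ is exact, hence preserves cokernels. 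Since every colimit in the Grothendieck category $\mathbf{M}^{\mathcal{C}_y}$ is a cokernel of a map between coproducts, an exact functor that preserves coproducts preserves all colimits. Therefore $\alpha_*$ preserves the colimit defining $\mathcal{M}_x$, and the composite isomorphisms
\begin{equation*}
\mathcal{M}_x = {colim}_{i}\,{\mathcal{M}_i}_x \xrightarrow{\ \sim\ } {colim}_{i}\,\alpha_*{\mathcal{M}_i}_y \xrightarrow{\ \sim\ } \alpha_*\big({colim}_{i}\,{\mathcal{M}_i}_y\big)=\alpha_*\mathcal{M}_y
\end{equation*}
(the first using that each $\mathcal{M}_i$ is cartesian, the second using cocontinuity of $\alpha_*$) exhibit $\mathcal{M}$ as cartesian; one checks this composite is exactly the structure map $\mathcal{M}_\alpha$ by naturality. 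Having shown $\mathcal{M}\in Com^{cs}_c$-$\mathcal{C}$, the universal property is immediate: a cocone from $\{\mathcal{M}_i\}$ to a cartesian $\mathcal{M}'$ is the same as a pointwise cocone, which factors uniquely through the pointwise colimit, and the resulting map automatically commutes with the $\mathcal{M}_\alpha$ by uniqueness; finally $Com^{cs}_c$-$\mathcal{C}$ is abelian because coflatness makes $\alpha_*$ exact, so kernels and cokernels of maps between cartesian objects (computed pointwise) remain cartesian — for a cokernel this uses exactness of $\alpha_*$ applied to the relevant short exact sequences, and for a kernel likewise, so $Com^{cs}_c$-$\mathcal{C}$ is closed under kernels and cokernels in $Com^{cs}$-$\mathcal{C}$.

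The main obstacle is the claim that $\alpha_*=\_\_\square_{\mathcal{C}_y}\mathcal{C}_x$ preserves arbitrary coproducts; this is the one point that does not follow formally from adjointness. I would justify it directly from the equalizer description of the cotensor product: for a family $\{N_i\}$ in $\mathbf{M}^{\mathcal{C}_y}$, the object $(\bigoplus_i N_i)\square_{\mathcal{C}_y}\mathcal{C}_x$ is the equalizer of the two maps $\bigoplus_i N_i\otimes\mathcal{C}_x\rightrightarrows \bigoplus_i N_i\otimes\mathcal{C}_y\otimes\mathcal{C}_x$, and since in $Vect$ (or in a Grothendieck category) filtered colimits and in particular coproducts are exact and commute with finite limits such as equalizers, this equalizer is $\bigoplus_i(N_i\square_{\mathcal{C}_y}\mathcal{C}_x)$. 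Combined with exactness of $\alpha_*$ from the coflatness hypothesis, this yields cocontinuity and completes the argument; everything else is bookkeeping with the pointwise formula and the universal property.
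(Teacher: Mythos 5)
Your proof is correct and follows essentially the same route as the paper: kernels, cokernels and colimits are computed pointwise, exactness of $\alpha_*$ (from coflatness) handles cokernels of cartesian objects, and commutation of the cotensor product with the remaining colimits gives closure of $Com^{cs}_c$-$\mathcal{C}$ under colimits. The only difference is cosmetic — you verify directly from the equalizer description that $\_\_\square_{\mathcal{C}_y}\mathcal{C}_x$ preserves coproducts and then decompose colimits as cokernels of maps between coproducts, whereas the paper decomposes colimits into cokernels and directed colimits and cites \cite[\S 10.5]{BW} for the compatibility of the cotensor product with direct limits.
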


  \begin{proof} Let $\eta: \mathcal{M}\longrightarrow \mathcal{N}$ be a morphism in $Com^{cs}_c$-$\mathcal{C}$. Then, $Ker(\eta), Coker(\eta)\in Com^{cs}$-$\mathcal{C}$ are given by $Ker(\eta)_x= Ker(\eta_x:\mathcal{M}_x\longrightarrow \mathcal{N}_x)$ and $Coker(\eta)_x= Coker(\eta_x:\mathcal{M}_x\longrightarrow \mathcal{N}_x)$ for each $x\in Ob(\mathscr{X})$. Since $\mathcal{C}$ is a coflat representation, for any $\alpha\in \mathscr X(x,y)$, the coinduction functor $\alpha_*$ is exact. Combining with the fact that $\mathcal{M}_\alpha$ and $\mathcal{N}_\alpha$ are isomorphisms, we see that $Ker(\eta)_\alpha$ and $Coker(\eta)_\alpha$ are also isomorphisms. From this, it is also clear that $Coker(Ker(\eta)\longrightarrow \mathcal{M})= Ker(\mathcal{N}\longrightarrow Coker(\eta))$ in $Com^{cs}_c$-$\mathcal{C}$. 
  
  \smallskip
    Let $\{\mathcal{M}_i\}_{i\in I}$ be a directed family of objects in $Com^{cs}_c$-$\mathcal{C}$. Since the cotensor product commutes with direct limits (see, for instance, \cite[\S 10.5]{BW}), it now follows that
    $$(colim_{i\in I}\mathcal{M}_i)_x= colim_{i\in I}{\mathcal{M}_i}_x\xrightarrow {colim_{i\in I}{\mathcal{M}_i}_\alpha}colim_{i\in I}\alpha_*{\mathcal{M}_i}_y= \alpha_*(colim_{i\in I}{\mathcal{M}_i})_y$$ is an isomorphism. This shows that the expression in \eqref{4.1dpr} holds for both directed colimits and cokernels in $Com^{cs}_c$-$\mathcal{C}$. Since any colimit in $Com^{cs}_c$-$\mathcal{C}$ may be expressed in terms of cokernels and directed colimits, this proves the result. \end{proof}

\begin{lem}\label{kappa}
Let $\alpha: C\longrightarrow D$ be a left coflat morphism of semiperfect $K$-coalgebras. Let $\kappa'\geq max\{|K|,\aleph_0\}$. Let $M\in \mathbf{M}^D$ and $A\subseteq \alpha_*M$ be a set of elements such that $|A|\leq \kappa'$. Then, there exists a subcomodule $N\subseteq M$ in $\mathbf{M}^D$ with $|N|\leq \kappa'$ such that $A\subseteq \alpha_*N$.

  \end{lem}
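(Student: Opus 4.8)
The plan is to exploit the concrete description of the cotensor product $\alpha_*M=M\square_DC$ as an equalizer of the two canonical maps $M\otimes_KC\rightrightarrows M\otimes_KD\otimes_KC$, so that in particular $\alpha_*M$ is a $K$-subspace of $M\otimes_KC$, together with the fact (\cite[Corollary 2.2.9]{DNR}) that every comodule is the union of its finite-dimensional subcomodules.

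First I would capture the finitely many elements of $M$ ``used'' by $A$. Each $a\in A\subseteq M\square_DC\subseteq M\otimes_KC$ admits a finite expression $a=\sum_{i=1}^{n_a}m_i^a\otimes c_i^a$ with $m_i^a\in M$ and $c_i^a\in C$. Fixing one such expression for every $a\in A$ and collecting the $M$-components, I set $S:=\{\,m_i^a : a\in A,\ 1\le i\le n_a\,\}\subseteq M$, so that $|S|\le |A|\cdot\aleph_0\le\kappa'$.

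Next I would enlarge $S$ to a $D$-subcomodule of controlled size. By \cite[Corollary 2.2.9]{DNR}, for each $s\in S$ there is a finite-dimensional $D$-subcomodule $N_s\subseteq M$ with $s\in N_s$; put $N:=\sum_{s\in S}N_s$. Then $N$ is a $D$-subcomodule of $M$ with $\dim_KN\le|S|\cdot\aleph_0\le\kappa'$, and hence $|N|\le\kappa'$, since $\kappa'\ge\max\{|K|,\aleph_0\}$ and the underlying set of a $K$-vector space of dimension at most $\kappa'$ has cardinality at most $\kappa'$.

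Finally I would check $A\subseteq\alpha_*N=N\square_DC$. Because $K$ is a field, the maps $N\otimes_KC\hookrightarrow M\otimes_KC$ and $N\otimes_KD\otimes_KC\hookrightarrow M\otimes_KD\otimes_KC$ induced by $N\hookrightarrow M$ are injective, and the coaction of $N$ is the restriction of that of $M$; an element of $N\otimes_KC$ therefore satisfies the equalizer condition defining $N\square_DC$ if and only if its image in $M\otimes_KC$ lies in $M\square_DC$. In other words $N\square_DC=(N\otimes_KC)\cap(M\square_DC)$ as subspaces of $M\otimes_KC$, and the inclusion $\alpha_*N\hookrightarrow\alpha_*M$ induced by the monomorphism $N\hookrightarrow M$ (which is preserved by the right adjoint $\alpha_*$) is precisely this subspace inclusion. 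By construction each $a\in A$ lies in $N\otimes_KC$, and by hypothesis $a\in M\square_DC$, so $a\in\alpha_*N$, which finishes the argument. I do not expect a genuine obstacle: the only point needing care is the bookkeeping in the last step — verifying that passing to the subcomodule $N$ does not change the equalizer condition, which is exactly where exactness of $-\otimes_K-$ over the field $K$ is used — and the cardinal arithmetic, where one invokes $\kappa'=\kappa'\cdot\aleph_0=\kappa'\cdot|K|$. (Left coflatness and semiperfectness of the coalgebras are not actually needed for this particular lemma; they are in force merely as standing hypotheses of the subsection.)
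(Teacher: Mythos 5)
Your argument is correct, but it is genuinely different from the one in the paper. You work inside the explicit equalizer presentation $\alpha_*M=M\square_DC\subseteq M\otimes C$: you collect the finitely many $M$-legs of tensor expressions for the elements of $A$, enclose them in a subcomodule $N$ of dimension at most $\kappa'$ using the fundamental theorem of comodules, and then use exactness of $-\otimes_K-$ over the field to identify $N\square_DC$ with $(N\otimes C)\cap(M\square_DC)$ inside $M\otimes C$; this intersection formula is valid (left exactness of $-\square_DC$ needs no coflatness), so your parenthetical remark is right that neither coflatness nor semiperfectness is actually used, and the lemma holds for an arbitrary coalgebra morphism. The paper instead argues with projectives: semiperfectness of $C$ gives a finite-dimensional projective $V^a\to\alpha_*M$ hitting each $a$, semiperfectness of $D$ gives an epimorphism $\bigoplus_iV_i\twoheadrightarrow M$ from finite-dimensional projectives, and coflatness (exactness of $\alpha_*$, together with its commutation with direct sums) is used to see that $\alpha_*\zeta$ is still an epimorphism, to lift each $\eta^a$ through it, and to identify $\alpha_*N$ with the image of $\alpha_*\zeta'$; the subcomodule $N$ is then the image of the resulting map $\bigoplus_{a}\bigoplus_rV^a_r\to M$. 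What your route buys is elementarity and greater generality, exploiting the element-wise description of the cotensor product. What the paper's route buys is uniformity: the same lifting-through-projectives template is reused verbatim for the trans-comodule analogue (Lemma \ref{transkappa}), where the relevant functor $\alpha^!=H_D(C,-)$ has no comparable element-wise description and one really does need exactness and adjointness, so the paper's proof is the one that transfers to that setting.
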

  \begin{proof}
Let $a\in A\subseteq \alpha_*M$. Since $C$ is semiperfect, there exists a finite dimensional projective right $C$-comodule $V^a$ and a
morphism ${\eta}^a: V^a\longrightarrow \alpha_*M$ in $\mathbf{M}^C$ such that $a\in Im({\eta}^a)$. Since $D$ is semiperfect, there exists an epimorphism $\zeta: \bigoplus_{i\in I} V_i\longrightarrow M$ in $\mathbf{M}^D$ where each $V_i$ is a finite dimensional projective right $D$-comodule. Since $\alpha$ is coflat and $\alpha_*$ commutes with direct sums, $\alpha_*\zeta: \bigoplus_{i\in I}\alpha_*V_i\longrightarrow \alpha_*M$ is also an epimorphism. As $V^a$ is a projective right $C$-comodule, ${\eta}^a: V^a\longrightarrow \alpha_*M$ can be lifted to a morphism ${\eta'}^a: V^a\longrightarrow \bigoplus_{i\in I} \alpha_*V_i$. Moreover, since $V^a$ finitely generated, ${\eta'}^a$ factors through a finite direct sum of objects from the family $\{\alpha_*V_i\}_{i\in I}$, which we denote by $\{\alpha_*V_1^a,\hdots, \alpha_*V^a_{n^a}\}$. Thus, we obtain a morphism ${\zeta'}^a: \bigoplus_{r=1}^{n^a}V_r^a\longrightarrow M$ such that ${\eta}^a$ factors through $\alpha_*{\zeta'}^a$. Now, we set
\begin{equation}
N:= Im\left({\zeta'}:= \bigoplus_{a\in A}{\zeta'}^a: \bigoplus_{a\in A}\bigoplus_{r=1}^{n^a}V_r^a\longrightarrow M\right)
\end{equation}
Since $\alpha_*$ is exact and  commutes with direct sums, we now obtain 
\begin{equation}
\alpha_*(N):= Im\left(\alpha_*\zeta'= \bigoplus_{a\in A}\alpha_*{\zeta'}^a: \bigoplus_{a\in A}\bigoplus_{r=1}^{n^a}\alpha_* V_r^a\longrightarrow \alpha_*M\right)
  \end{equation}
  Since $a\in Im(\eta^a)$ and $\eta^a$ factors through $\alpha_*{\zeta'}^a$, we have $A\subseteq \alpha_*(N)$. Finally, since each $V_r^a$ is finite dimensional, we note that  $|V_r^a|\leq\kappa'$. Since $N$ is a quotient of $\bigoplus_{a\in A}\bigoplus_{r=1}^{n^a}V_r^a$ and $|A|\leq \kappa'$, it follows that $|N|\leq \kappa'$.
  \end{proof}

\begin{lem}\label{double}
  Let $\alpha: C\longrightarrow D$ be a left coflat morphism of semiperfect $K$-coalgebras and let $M\in \mathbf{M}^D$. Let $\kappa'\geq max\{|K|, \aleph_0, |C|\}$.  Let $A\subseteq M$ and $B\subseteq \alpha_*M$ be sets of elements such that $|A|,|B|\leq \kappa'$. Then, there exists a subcomodule $N\subseteq M$ in $ \mathbf{M}^D$ such that
  \begin{itemize}
  \item[(1)] $|N|\leq \kappa'$, $|\alpha_*N|\leq \kappa'$
  \item[(2)] $A\subseteq N$ and $B\subseteq \alpha_*N$
  \end{itemize}
  \end{lem}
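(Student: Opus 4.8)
The plan is to iterate Lemma \ref{kappa} together with the easy observation that any subset of $M$ of cardinality $\leq\kappa'$ sits inside a subcomodule of $M$ of cardinality $\leq\kappa'$ (since $M$ is the filtered union of its finite-dimensional subcomodules and $\kappa'\geq\aleph_0$, a set of $\leq\kappa'$ elements meets only $\leq\kappa'$ of them, and their sum has cardinality $\leq\kappa'$ once we invoke $\kappa'\geq|K|$). The subtlety is that enlarging $N$ so as to capture more of $A$ may throw new elements into $\alpha_*N$, and conversely enlarging $N$ to capture more of $B\subseteq\alpha_*M$ introduces new elements into $N$; so we cannot just apply each construction once. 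Instead I would build an increasing chain of subcomodules $N_0\subseteq N_1\subseteq N_2\subseteq\cdots$ and take the union.

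More precisely, first I would choose $N_0\subseteq M$ a subcomodule with $A\subseteq N_0$ and $|N_0|\leq\kappa'$ (possible as above). Having constructed $N_{2k}\subseteq M$ with $|N_{2k}|\leq\kappa'$, note that $|\alpha_*N_{2k}|\leq\kappa'$: indeed $\alpha_*N_{2k}=N_{2k}\square_DC$ is a subspace of $N_{2k}\otimes_K C$, which has cardinality $\leq\max\{|N_{2k}|,|C|,|K|,\aleph_0\}=\kappa'$ by the hypothesis $\kappa'\geq\max\{|K|,\aleph_0,|C|\}$. Now apply Lemma \ref{kappa} to the subset $B\cup\alpha_*N_{2k}\subseteq\alpha_*M$ (of cardinality $\leq\kappa'$): this yields a subcomodule $N_{2k+1}\subseteq M$ with $|N_{2k+1}|\leq\kappa'$ and $B\cup\alpha_*N_{2k}\subseteq\alpha_*N_{2k+1}$; after replacing $N_{2k+1}$ by $N_{2k}+N_{2k+1}$ we may assume $N_{2k}\subseteq N_{2k+1}$, still of cardinality $\leq\kappa'$. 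Then, to also absorb any new elements of $M$ that might be needed, apply the elementary step again: pick $N_{2k+2}\supseteq N_{2k+1}$ a subcomodule of $M$ of cardinality $\leq\kappa'$ (in fact $N_{2k+1}$ already works, since the only requirement from the $A$-side was $A\subseteq N_0$, which is preserved); so really the recursion only needs the single alternation feeding $\alpha_*N_{2k}$ back in.

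Finally set $N:=\bigcup_{k\geq 0}N_k=\varinjlim_k N_k$, a subcomodule of $M$ since $\mathbf M^D$ is closed under filtered colimits of monomorphisms. Then $|N|\leq\kappa'$ because it is a union of $\aleph_0\leq\kappa'$ sets each of cardinality $\leq\kappa'$. Since the cotensor product commutes with filtered colimits (see, for instance, \cite[\S 10.5]{BW}), $\alpha_*N=\varinjlim_k\alpha_*N_k$, which contains $B$ (as $B\subseteq\alpha_*N_1$) and has cardinality $\leq\kappa'$; alternatively, $|\alpha_*N|\leq\kappa'$ follows directly from $\alpha_*N\subseteq N\otimes_K C$ as above. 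Also $A\subseteq N_0\subseteq N$. This gives (1) and (2).

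The main obstacle is the circularity between conditions on $N$ and on $\alpha_*N$: one must be careful that the back-and-forth actually closes up after $\omega$ steps. The key facts making it work are the exactness and direct-sum-preservation of $\alpha_*$ (already used in Lemma \ref{kappa}) together with the commutation of $\square_D$ with filtered colimits and the crude but sufficient bound $|\alpha_*N|\leq|N\otimes_K C|\leq\kappa'$, which is exactly why the hypothesis includes $|C|$.
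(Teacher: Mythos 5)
Your proof is correct, but it is more elaborate than necessary, and in this respect it diverges from the paper's argument. The circularity you worry about is not actually present: the lemma only asks for the containments $A\subseteq N$ and $B\subseteq \alpha_*N$, and both are monotone in $N$ (since $\alpha_*$, being a right adjoint, preserves monomorphisms), while the bound $|\alpha_*N|\leq\kappa'$ is not something one must arrange along the way --- it comes for free from $\alpha_*N=N\square_DC\subseteq N\otimes C$ once $|N|\leq\kappa'$, which is exactly the crude estimate you note at the end. The paper therefore applies Lemma \ref{kappa} just twice: once to $B\subseteq\alpha_*M$ to obtain $N_1$ with $B\subseteq\alpha_*N_1$, and once with $\alpha=id_D$ to obtain $N_2\supseteq A$, and sets $N:=N_1+N_2$; then $A\subseteq N$, $B\subseteq\alpha_*N_1\subseteq\alpha_*N$, $|N|\leq\kappa'$ since $N$ is a quotient of $N_1\oplus N_2$, and $|\alpha_*N|\leq|N\otimes C|\leq\kappa'$. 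Your countable chain $N_0\subseteq N_1\subseteq\cdots$ and the passage to the union (using that the cotensor product commutes with filtered colimits) are all valid steps --- and your elementary observation that any set of $\leq\kappa'$ elements of $M$ lies in a subcomodule of cardinality $\leq\kappa'$ is a pleasant substitute for the $\alpha=id_D$ instance of Lemma \ref{kappa}, avoiding semiperfectness at that point --- but the $\omega$-step closure buys nothing here; it would only be needed if the conclusion demanded a genuine fixed-point property of $N$ relative to $\alpha_*$, which it does not. You in fact half-notice this yourself when you remark that the recursion degenerates to a single alternation.
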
  

  \begin{proof}
    By Lemma \ref{kappa}, we can obtain a right $D$-comodule $N_1$ such that $|N_1|\leq \kappa'$ and $B\subseteq \alpha_*N_1$. Again, taking $\alpha=id_D: D\longrightarrow D$ in Lemma \ref{kappa}, we can also obtain a right $D$-comodule $N_2$ such that $|N_2|\leq \kappa'$ and $A\subseteq N_2$. We set, $N:= N_1+N_2\subseteq M$. Since $N$ is a quotient of $N_1\oplus N_2$, we have $|N|\leq \kappa'$. Clearly, we also have $ A\subseteq N_2\subseteq N$. Also, since $\alpha_*$ is a right adjoint, it preserves inclusions and hence $B\subseteq \alpha_*N_1\subseteq \alpha_*N$. Finally, since $\alpha_*N= N\square_DC$ is a linear subspace of $N\otimes C$, it follows from the definition of $\kappa'$ that $|\alpha_*N|\leq \kappa'$.
    \end{proof}

    Let  $\mathcal{C}: \mathscr{X}\longrightarrow Coalg$ be a left coflat representation taking values in semiperfect $K$-coalgebras and let $\mathscr{X}$ be a poset. We will now show that $Com_c^{cs}$-$\mathcal C$ has a generator. We will do this in a manner similar to \cite{EV} using transfinite induction (see also \cite{AB}). We begin by setting
    \begin{equation}
\kappa':=sup\{|K|,\aleph_0,|Mor(\mathscr{X})|, |\mathcal{C}_x|,~ x\in Ob(\mathscr{X})\}
\end{equation}
Let $\mathcal{M}\in Com_c^{cs}$-$\mathcal C$ and let $m_0\in el_{\mathscr{X}}(\mathcal{M})$. This means that there exists some $x\in Ob(\mathscr{X})$ such that $m_0\in \mathcal{M}_x$. By \cite[Corollary 2.2.9]{DNR}, there exists a finite dimensional right $\mathcal{C}_x$-comodule $V'$ and a morphism $\eta':V'\longrightarrow \mathcal{M}_x$ in $\mathbf{M}^{\mathcal{C}_x}$ such that $m_0\in im(\eta')$.  As in \eqref{defN}, corresponding to $\eta'$, we can define a subcomodule $\mathcal{N}\subseteq \mathcal{M}$ in $Com^{cs}$-$\mathcal{C}$ such that $m_0\in \mathcal{N}_x$ and  $|\mathcal{N}|\leq \kappa'$ (by Lemma \ref{cardinal}).
We now fix a well ordering of the set $Mor(\mathscr{X})$ and consider the induced lexicographic ordering on $\mathbb{N}\times Mor(\mathscr{X})$.  To each $(n,\alpha: y\longrightarrow z)\in \mathbb{N}\times Mor(\mathscr{X})$, we will associate a subcomodule $\mathcal{P}(n,\alpha)$ of $\mathcal{M}$ in $Com^{cs}$-$\mathcal{C}$ which satisfies the following conditions:
\begin{itemize}
\item[(1)] $m_0\in el_{\mathscr{X}}(\mathcal{P}(1,\alpha_0))$, where $\alpha_0$ is the least element of $Mor(\mathscr{X})$.
\item[(2)] $\mathcal{P}(m,\beta)\subseteq \mathcal{P}(n,\alpha)$ for $(m,\beta)\leq (n,\alpha)$ in $\mathbb{N}\times Mor(\mathscr{X})$.
\item[(3)] For each pair $(n, \alpha:y\longrightarrow z)\in \mathbb{N}\times Mor(\mathscr{X})$, the morphism $ \mathcal{P}(n,\alpha)_\alpha: \mathcal{P}(n,\alpha)_y\longrightarrow \alpha_*\mathcal{P}(n,\alpha)_z$ is an isomorphism in $\mathbf{M}^{\mathcal{C}_y}$.
  \item[(4)] $|\mathcal{P}(n,\alpha)|\leq \kappa'$
  \end{itemize}

First, we fix $(n, \alpha:y\longrightarrow z)\in \mathbb{N}\times Mor(\mathscr{X})$. To define $\mathcal P(n,\alpha)$, we begin by setting 

\begin{equation}\label{4.4rg}
A_0^0(w) = 
     \begin{cases}
      \mathcal{N}_w&\quad\text{if $n=1$ and $\alpha=\alpha_0$}\\
       \bigcup_{(m,\beta)<(n,\alpha)}\mathcal{P}(m,\beta)_w&\quad\text{otherwise}
     \end{cases}
\end{equation}

for each $w\in Ob(\mathscr{X})$. Clearly, each $A_0^0(w)\subseteq \mathcal{M}_w$ and $|A_0^0(w)|\leq \kappa'$. 
For $\alpha:y\longrightarrow z$, we have $A_0^0(z)\subseteq \mathcal{M}_z$ and $A_0^0(y)\subseteq \mathcal{M}_y\cong \alpha_*\mathcal{M}_z$ since $\mathcal{M}$ is cartesian. By  Lemma \ref{double} we can obtain a subcomodule $A_1^0(z)\subseteq \mathcal{M}_z$ in $\mathbf{M}^{\mathcal{C}_z}$ such that
\begin{equation}\label{A10}
  |A_1^0(z)|\leq \kappa'\quad |\alpha_*A_1^0(z)|\leq \kappa'\quad A_0^0(z)\subseteq A_1^0(z)\quad A_0^0(y)\subseteq \alpha_*A_1^0(z)
\end{equation}
Now, we set $A_1^0(y):=\alpha_*A_1^0(z)$ and $A_1^0(w):=A_0^0(w)$ for any $w\neq y,z\in Ob(\mathscr{X})$. We observe that since $\mathscr{X}$ is a poset, $y=z$ implies $\alpha: y\longrightarrow z$ is the identity and hence $A_1^0(y)= A_1^0(z)$. It now follows from \eqref{A10} that for every $w\in Ob(\mathscr{X})$, we have $A_0^0(w)\subseteq A_1^0(w)$  and $|A_1^0(w)|\leq \kappa'$.

 \begin{lem}\label{subcomod}
Let $B\subseteq el_{\mathscr{X}}(\mathcal{M})$ with $|B|\leq \kappa'$. Then, there is a subcomodule $\mathcal{Q}\subseteq\mathcal{M}$ in $Com^{cs}$-$\mathcal{C}$ such that $B\subseteq el_{\mathscr{X}}(\mathcal{Q})$ and $|\mathcal{Q}|\leq \kappa'$. In particular, if $m_0\in B$ is such that $m_0\in \mathcal{M}_x$ for some $x\in Ob(\mathscr{X})$, then $m_0\in \mathcal{Q}_x$.

  \end{lem}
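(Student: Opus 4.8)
The plan is to realise $\mathcal{Q}$ as the pointwise sum of the ``small'' subcomodules furnished by the construction in \eqref{defN}, one for each element of $B$. First, for every $b\in B$ fix an object $x_b\in Ob(\mathscr{X})$ with $b\in \mathcal{M}_{x_b}$, choosing $x_{m_0}=x$ for the distinguished element $m_0$. By \cite[Corollary 2.2.9]{DNR} there is a finite dimensional subcomodule of $\mathcal{M}_{x_b}$ containing $b$, and since $\mathcal{C}_{x_b}$ is semiperfect, \cite[Corollary 2.4.21]{DNR} provides a finite dimensional projective $V^b\in \mathbf{M}^{\mathcal{C}_{x_b}}$ together with a morphism $\eta^b\colon V^b\longrightarrow \mathcal{M}_{x_b}$ in $\mathbf{M}^{\mathcal{C}_{x_b}}$ with $b\in Im(\eta^b)$.

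Next I would apply the construction of \eqref{defN} (\ie the paragraph preceding Proposition \ref{zzcsub}) to each $\eta^b$ in place of $\eta$, obtaining by Proposition \ref{zzcsub} a subobject $\mathcal{N}^b\subseteq \mathcal{M}$ in $Com^{cs}$-$\mathcal{C}$. By Lemma \ref{lem2} the canonical morphism $\eta_1'\colon V^b\longrightarrow \mathcal{N}^b_{x_b}$ satisfies $i_{x_b}\circ\eta_1'=\eta^b$, so $Im(\eta^b)\subseteq \mathcal{N}^b_{x_b}$ and in particular $b\in \mathcal{N}^b_{x_b}$; moreover $|\mathcal{N}^b|\leq \kappa\leq \kappa'$ by Lemma \ref{cardinal}. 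Now set $\mathcal{Q}:=\sum_{b\in B}\mathcal{N}^b$, \ie the subobject of $\mathcal{M}$ with $\mathcal{Q}_w=\sum_{b\in B}\mathcal{N}^b_w\subseteq \mathcal{M}_w$ for each $w\in Ob(\mathscr{X})$. Since kernels, cokernels and images in $Com^{cs}$-$\mathcal{C}$ are computed pointwise, $\mathcal{Q}$ is again an object of $Com^{cs}$-$\mathcal{C}$ and the structure maps $\mathcal{M}_\alpha$ restrict to it. By construction $B\subseteq el_{\mathscr{X}}(\mathcal{Q})$, and $m_0\in \mathcal{N}^{m_0}_x\subseteq \mathcal{Q}_x$.

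It remains to bound $|\mathcal{Q}|$. For each $w\in Ob(\mathscr{X})$, the space $\mathcal{Q}_w$ is a quotient of $\bigoplus_{b\in B}\mathcal{N}^b_w$, hence $|\mathcal{Q}_w|\leq\kappa'$ by standard cardinal arithmetic since $\kappa'\geq\max\{|K|,\aleph_0,|B|\}$ is infinite and each $|\mathcal{N}^b_w|\leq\kappa'$; therefore
\[
|\mathcal{Q}|=\sum_{w\in Ob(\mathscr{X})}|\mathcal{Q}_w|\leq |Ob(\mathscr{X})|\cdot\kappa'\leq |Mor(\mathscr{X})|\cdot\kappa'\leq\kappa',
\]
using $\kappa'\geq\kappa$ and $\kappa'\geq|Mor(\mathscr{X})|$. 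This completes the argument.

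As for the main obstacle: there is essentially none beyond the bookkeeping already carried out in Proposition \ref{zzcsub} and Lemmas \ref{lem2} and \ref{cardinal}. The only two points needing a little care are that the pointwise sum $\sum_{b}\mathcal{N}^b$ is genuinely a subobject of $\mathcal{M}$ in $Com^{cs}$-$\mathcal{C}$ (which holds because all relevant constructions in this category are pointwise), and that the construction of \eqref{defN} records $b$ precisely in the component $\mathcal{N}^b_{x_b}$, which is exactly the content of Lemma \ref{lem2}. Note also that the cartesian hypothesis on $\mathcal{M}$ plays no role here; the statement holds for any $\mathcal{M}\in Com^{cs}$-$\mathcal{C}$.
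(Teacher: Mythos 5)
Your proof is correct and follows essentially the same route as the paper: for each $b\in B$ the paper (citing the proof of Theorem \ref{Groth}) also produces a subcomodule $\mathcal{Q}(b)\subseteq\mathcal{M}$ via the construction in \eqref{defN} with $b\in el_{\mathscr{X}}(\mathcal{Q}(b))$ and $|\mathcal{Q}(b)|\leq\kappa'$, and then takes $\mathcal{Q}:=\sum_{b\in B}\mathcal{Q}(b)$, bounding $|\mathcal{Q}|$ exactly as you do. Your write-up merely makes explicit the steps (DNR Corollary 2.2.9, semiperfectness, Lemma \ref{lem2}, Lemma \ref{cardinal}) that the paper compresses into a reference.
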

  \begin{proof}
    As in the proof of Theorem \ref{Groth}, for any $m_0\in B\subseteq el_\mathscr{X}(\mathcal{M})$ we can choose a subcomodule $\mathcal{Q}(m_0)\subseteq \mathcal{M}$ in $Com^{cs}$-$\mathcal{C}$ such that $m_0\in el_{\mathscr{X}}(\mathcal{Q}(m_0))$ and $|\mathcal{Q}(m_0)|\leq \kappa'$. Now, we set $\mathcal{Q}:= \sum_{m_0\in B}\mathcal{Q}(m_0)$. Since $\mathcal{Q}$ is a quotient of $\oplus_{m_0\in B}\mathcal{Q}(m_0)$ and $|B|\leq \kappa'$, we have $|\mathcal{Q}|\leq \kappa'$. The last statement is clear.
  \end{proof}

We note that we have $A_1^0(w)\subseteq\mathcal{M}_w $ and $|A_1^0(w)|\leq \kappa'$ for all $w\in Ob(\mathscr{X})$. Thus, by the definition of $\kappa'$, we have $\bigcup_{w\in Ob(\mathscr{X})}A_1^0(w)\subseteq el_\mathscr{X}(\mathcal{M})$ and $|\bigcup_{w\in Ob(\mathscr{X})}A_1^0(w)|\leq \kappa'$.  Therefore, by Lemma \ref{subcomod},we can choose a subcomodule $\mathcal{Q}^0(n,\alpha)\subseteq\mathcal{M}$ in $Com^{cs}$-$\mathcal{C}$ such that
 $\bigcup_{w\in Ob(\mathscr{X})}A_1^0(w)\subseteq el_{\mathscr{X}}(\mathcal{Q}^0(n,\alpha))$ and $|\mathcal{Q}^0(n,\alpha)|\leq \kappa'$. We also have $A_1^0(w)\subseteq \mathcal{Q}^0(n,\alpha)_w\subseteq \mathcal{M}_w$ for each $w\in Ob(\mathscr{X})$ by Lemma \ref{subcomod}.  \vspace{2mm}
 
  We will now iterate this construction. Suppose that for each $l\leq r$, we have constructed a subcomodule $\mathcal{Q}^l(n,\alpha)\subseteq\mathcal{M}$ in $Com^{cs}$-$\mathcal{C}$ satisfying  $ \bigcup_{w\in Ob(\mathscr{X})}A_1^l(w)\subseteq el_{\mathscr{X}}(\mathcal{Q}^l(n,\alpha))$ and $|\mathcal{Q}^l(n,\alpha)|\leq \kappa'$. Let $A_0^{r+1}(w):= \mathcal{Q}^r(n,\alpha)_w$ for each $w\in Ob(\mathscr{X})$. Since $A_0^{r+1}(y)\subseteq \mathcal{M}_y\cong \alpha_*(\mathcal{M}_z)$ and $A_0^{r+1}(z)\subseteq \mathcal{M}_z$, using Lemma \ref{double}, we can obtain a subcomodule $A_1^{r+1}(z)\subseteq \mathcal{M}_z$ in $\mathbf{M}^{\mathcal{C}_z}$ such that
 
 \begin{equation}\label{m+1}
  |A_1^{r+1}(z)|\leq \kappa'\quad |\alpha_*A_1^{r+1}(z)|\leq \kappa'\quad A_0^{r+1}(z)\subseteq A_1^{r+1}(z)\quad A_0^{r+1}(y)\subseteq \alpha_*A_1^{r+1}(z)
\end{equation}

Now, we set $A_1^{r+1}(y):= \alpha_*A_1^{r+1}(z)$ and $A_1^{r+1}(w):= A_0^{r+1}(w)$ for any $w\neq y,z\in Ob(\mathscr{X})$. Combining this with \eqref{m+1} it follows that $A_0^{r+1}(w)\subseteq A_1^{r+1}(w)$ for all $w\in Ob(\mathscr{X})$ and each $|A_1^{r+1}(w)|\leq \kappa'$. Therefore, by Lemma \ref{subcomod} we can choose a subcomodule $\mathcal{Q}^{r+1}(n,\alpha)\subseteq \mathcal{M}$ in $Com^{cs}$-$\mathcal{C}$ such that $\bigcup_{w\in Ob(\mathscr{X})}A_1^{r+1}(w)\subseteq el_{\mathscr{X}}(\mathcal{Q}^{r+1}(n,\alpha))$ and $|\mathcal{Q}^{r+1}(n,\alpha)|\leq \kappa'$. In particular, $A_1^{r+1}(w)\subseteq \mathcal{Q}^{r+1}(n,\alpha)_w$ for each $w\in Ob(\mathscr{X})$. Finally, we set

\begin{equation}\label{Definition4.7}
  \mathcal{P}(n,\alpha):=\bigcup_{r\geq 0}\mathcal{Q}^r(n,\alpha)
\end{equation}

\begin{lem}\label{Pnalpha}

The family $\{\mathcal{P}(n,\alpha)~|~(n,\alpha)\in \mathbb{N}\times Mor(\mathscr{X})\}$ satisfies conditions (1)-(4). 

 \end{lem}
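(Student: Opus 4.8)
The plan is to read each of (1)--(4) off the construction, the only step requiring real work being (3). First I would record that the lexicographic order on $\mathbb{N}\times Mor(\mathscr{X})$ is a well-order, inherited from the fixed well-order of $Mor(\mathscr{X})$ (each $(n,\alpha)$ has only a set of predecessors), so the recipe \eqref{4.4rg}--\eqref{Definition4.7} is a legitimate transfinite recursion; and that each $\mathcal{Q}^r(n,\alpha)$, being produced by Lemma \ref{subcomod}, is a subobject of $\mathcal{M}$ in $Com^{cs}$-$\mathcal{C}$, whence $\mathcal{P}(n,\alpha)=\bigcup_{r\ge 0}\mathcal{Q}^r(n,\alpha)$ is one too, since filtered unions of subobjects in $Com^{cs}$-$\mathcal{C}$ are computed pointwise.

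Conditions (1), (2), (4) are then formal. For (1): when $n=1$ and $\alpha=\alpha_0$ one has $m_0\in\mathcal{N}_x=A_0^0(x)\subseteq A_1^0(x)\subseteq\mathcal{Q}^0(1,\alpha_0)_x\subseteq\mathcal{P}(1,\alpha_0)_x$. For (2): if $(m,\beta)<(n,\alpha)$ then $\mathcal{P}(m,\beta)_w\subseteq A_0^0(w)\subseteq\mathcal{Q}^0(n,\alpha)_w\subseteq\mathcal{P}(n,\alpha)_w$ for every $w\in Ob(\mathscr{X})$ by \eqref{4.4rg}, and since both sides are subobjects of $\mathcal{M}$ this is an inclusion in $Com^{cs}$-$\mathcal{C}$; the case $(m,\beta)=(n,\alpha)$ is trivial. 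For (4): $el_{\mathscr{X}}(\mathcal{P}(n,\alpha))=\bigcup_{r\ge 0}el_{\mathscr{X}}(\mathcal{Q}^r(n,\alpha))$ is a countable union of sets of cardinality $\le\kappa'$, and $\kappa'\ge\aleph_0$, so $|\mathcal{P}(n,\alpha)|\le\kappa'$.

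For (3), fix $(n,\alpha\colon y\to z)$. If $y=z$ then $\alpha=\mathrm{id}_y$ (because $\mathscr{X}$ is a poset), $\alpha_*=\mathrm{id}$, and there is nothing to prove, so assume $y\neq z$. Since $\mathcal{M}$ is cartesian, $\mathcal{M}_\alpha\colon\mathcal{M}_y\xrightarrow{\ \sim\ }\alpha_*\mathcal{M}_z$ is an isomorphism, and I would carry out the argument inside $\alpha_*\mathcal{M}_z$ via this identification. From \eqref{m+1}, together with the assignments $A_0^{r+1}(w)=\mathcal{Q}^r(n,\alpha)_w$ and $A_1^{r+1}(y)=\alpha_*A_1^{r+1}(z)$, and using that $\alpha_*$ preserves monomorphisms and that $A_1^{r+1}(z)\subseteq\mathcal{Q}^{r+1}(n,\alpha)_z$, one extracts the two interleaving containments
\begin{gather*}
\mathcal{M}_\alpha\bigl(\mathcal{Q}^r(n,\alpha)_y\bigr)\subseteq\alpha_*A_1^{r+1}(z)\subseteq\alpha_*\mathcal{Q}^{r+1}(n,\alpha)_z,\\
\alpha_*\mathcal{Q}^r(n,\alpha)_z\subseteq\alpha_*A_1^{r+1}(z)=A_1^{r+1}(y)\subseteq\mathcal{M}_\alpha\bigl(\mathcal{Q}^{r+1}(n,\alpha)_y\bigr).
\end{gather*}
The chains $\{\mathcal{Q}^r(n,\alpha)_y\}_r$ and $\{\mathcal{Q}^r(n,\alpha)_z\}_r$ are increasing, and $\alpha_*={-}\,\square_{\mathcal{C}_z}\mathcal{C}_y$ commutes with filtered colimits (cotensor products commute with direct limits, as used in Lemma \ref{cocomplete}); hence passing to the union over $r$ in both displays gives $\mathcal{M}_\alpha(\mathcal{P}(n,\alpha)_y)=\alpha_*\mathcal{P}(n,\alpha)_z$ inside $\alpha_*\mathcal{M}_z$. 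Since $\mathcal{M}_\alpha$ is injective and $\mathcal{P}(n,\alpha)_\alpha$ is, by the subobject structure of $\mathcal{P}(n,\alpha)\subseteq\mathcal{M}$ in $Com^{cs}$-$\mathcal{C}$, precisely the restriction of $\mathcal{M}_\alpha$ to $\mathcal{P}(n,\alpha)_y$ with codomain $\alpha_*\mathcal{P}(n,\alpha)_z$, it is then a bijective morphism of $\mathcal{C}_y$-comodules, hence an isomorphism.

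The main obstacle is exactly closing this back-and-forth: one application of Lemma \ref{double} only produces $A_0^0(y)\subseteq\alpha_*A_1^0(z)$ but not the reverse containment at a finite stage, which is why the construction iterates over $r$ and the cartesian property is recovered only in the colimit; once one grants that $\alpha_*$ commutes with these filtered colimits, everything else is bookkeeping with the inclusions generated by \eqref{m+1}.
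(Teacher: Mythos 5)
Your proof is correct and follows essentially the same route as the paper: (1), (2), (4) read off from \eqref{4.4rg} and \eqref{Definition4.7}, and (3) established by interleaving the chains $\{A_1^r\}$ and $\{\mathcal{Q}^r(n,\alpha)\}$, using $A_1^r(y)=\alpha_*A_1^r(z)$ together with the fact that $\alpha_*$ preserves monomorphisms and direct limits, so that the two filtered unions agree in the colimit. Your phrasing via equality of images inside $\alpha_*\mathcal{M}_z$ under the identification $\mathcal{M}_y\cong\alpha_*\mathcal{M}_z$ is just a restatement of the paper's cofinality argument.
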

 \begin{proof} The conditions (1) and (2) follow immediately from the definitions in \eqref{4.4rg} and \eqref{Definition4.7}.   Since each $|\mathcal{Q}^r(n,\alpha)|\leq \kappa'$, the result of (4) follows from \eqref{Definition4.7}. It remains to show (3). For this, we observe that $\mathcal P(n,\alpha)_z$  can be expressed as the filtered union
   \begin{equation}
A_1^0(z)\hookrightarrow \mathcal{Q}^0(n,\alpha)_z\hookrightarrow A_1^1(z)\hookrightarrow \mathcal{Q}^1(n,\alpha)_z\hookrightarrow\hdots \hookrightarrow A_1^{r+1}(z)\hookrightarrow \mathcal{Q}^{r+1}(n,\alpha)_z\hookrightarrow\hdots
\end{equation}
of objects in $\mathbf{M}^{\mathcal{C}_z}$. Since $\alpha_*$ preserves monomorphisms as well as direct limits, we can also express $\alpha_*\mathcal{P}(n,\alpha)_z$ as the filtered union
\begin{equation}\label{2}
\alpha_*A_1^0(z)\hookrightarrow \alpha_*\mathcal{Q}^0(n,\alpha)_z\hookrightarrow \alpha_*A_1^1(z)\hookrightarrow \alpha_*\mathcal{Q}^1(n,\alpha)_z\hookrightarrow\hdots \hookrightarrow \alpha_*A_1^{r+1}(z)\hookrightarrow \alpha_*\mathcal{Q}^{r+1}(n,\alpha)_z\hookrightarrow\hdots
  \end{equation}
   of objects in $\mathbf{M}^{\mathcal{C}_y}$. Likewise, $\mathcal{P}(n,\alpha)_y$ can be expressed as the filtered union
    \begin{equation}\label{1}
   	A_1^0(y)\hookrightarrow \mathcal{Q}^0(n,\alpha)_y\hookrightarrow A_1^1(y)\hookrightarrow \mathcal{Q}^1(n,\alpha)_y\hookrightarrow\hdots \hookrightarrow A_1^{r+1}(y)\hookrightarrow \mathcal{Q}^{r+1}(n,\alpha)_y\hookrightarrow\hdots
   \end{equation}
of objects in $\mathbf{M}^{\mathcal{C}_y}$.
   Also, by definition, we have isomorphisms $A_1^r(y)=\alpha_*A_1^r(z)$ in $\mathbf M^{\mathcal C_y}$ for each $r\geq 0$. Together,   these  induce an isomorphism $\mathcal{P}(n,\alpha)_\alpha: \mathcal{P}(n,\alpha)_y\longrightarrow \alpha_*\mathcal{P}(n,\alpha)_z$.
 \end{proof}

 \begin{lem}\label{gen}
Let $\mathcal M\in Com^{cs}_c$-$\mathcal{C}$. For any $m_0\in el_\mathscr{X}(\mathcal{M})$ there exists a subcomodule $\mathcal{P}\subseteq \mathcal{M}$ in $Com_c^{cs}$-$\mathcal{C}$ such that $m_0\in el_\mathscr{X}(\mathcal{P})$ and $|\mathcal{P}|\leq \kappa'$.
\end{lem}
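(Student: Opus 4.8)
The plan is to take $\mathcal{P}$ to be the union of the entire family constructed above, namely $\mathcal{P}:=\bigcup_{(n,\alpha)\in\mathbb{N}\times Mor(\mathscr{X})}\mathcal{P}(n,\alpha)$. By Lemma \ref{Pnalpha}(2) this family is a chain with respect to the lexicographic order on $\mathbb{N}\times Mor(\mathscr{X})$, so $\mathcal{P}$ is a filtered union of subcomodules of $\mathcal{M}$ in $Com^{cs}$-$\mathcal{C}$; since filtered colimits in $Com^{cs}$-$\mathcal{C}$ are computed pointwise and a filtered colimit of monomorphisms in each $\mathbf{M}^{\mathcal{C}_w}$ is again a monomorphism, $\mathcal{P}\hookrightarrow\mathcal{M}$ is a subobject in $Com^{cs}$-$\mathcal{C}$. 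The containment $m_0\in el_{\mathscr{X}}(\mathcal{P})$ is immediate from Lemma \ref{Pnalpha}(1). For the cardinality bound, note that $|\mathbb{N}\times Mor(\mathscr{X})|\leq\aleph_0\cdot|Mor(\mathscr{X})|\leq\kappa'$, while each $|\mathcal{P}(n,\alpha)|\leq\kappa'$ by Lemma \ref{Pnalpha}(4); hence $|\mathcal{P}|\leq\kappa'\cdot\kappa'=\kappa'$.

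It then remains to check that $\mathcal{P}$ is cartesian. Fix $\alpha:y\longrightarrow z$ in $\mathscr{X}$, viewed as an element of $Mor(\mathscr{X})$. The key observation is that for every $(m,\beta)\in\mathbb{N}\times Mor(\mathscr{X})$ one has $(m,\beta)<(m+1,\alpha)$ in the lexicographic order, so $\mathcal{P}(m,\beta)\subseteq\mathcal{P}(m+1,\alpha)$ by Lemma \ref{Pnalpha}(2); consequently the subfamily $\{\mathcal{P}(n,\alpha)\}_{n\in\mathbb{N}}$ is cofinal and $\mathcal{P}=\bigcup_{n\in\mathbb{N}}\mathcal{P}(n,\alpha)$. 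Therefore $\mathcal{P}_y=\varinjlim_{n}\mathcal{P}(n,\alpha)_y$ and $\mathcal{P}_z=\varinjlim_{n}\mathcal{P}(n,\alpha)_z$ as filtered colimits along monomorphisms in $\mathbf{M}^{\mathcal{C}_y}$ and $\mathbf{M}^{\mathcal{C}_z}$ respectively. Since $\alpha_*$ preserves monomorphisms and commutes with direct limits (see \cite[\S 10.5]{BW}), we get $\alpha_*\mathcal{P}_z=\varinjlim_{n}\alpha_*\mathcal{P}(n,\alpha)_z$, and the structure map $\mathcal{P}_\alpha:\mathcal{P}_y\longrightarrow\alpha_*\mathcal{P}_z$ is the filtered colimit of the maps $\mathcal{P}(n,\alpha)_\alpha:\mathcal{P}(n,\alpha)_y\longrightarrow\alpha_*\mathcal{P}(n,\alpha)_z$ (each being the restriction of $\mathcal{M}_\alpha$). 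Each $\mathcal{P}(n,\alpha)_\alpha$ is an isomorphism by Lemma \ref{Pnalpha}(3), and a filtered colimit of isomorphisms is an isomorphism, so $\mathcal{P}_\alpha$ is an isomorphism. As $\alpha$ was arbitrary, $\mathcal{P}\in Com^{cs}_c$-$\mathcal{C}$.

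I expect the only genuine content to be the extraction of the cofinal chain $\{\mathcal{P}(n,\alpha)\}_{n\in\mathbb{N}}$ for a fixed $\alpha$ and the transfer of the cartesian property through the resulting filtered colimit; everything else is bookkeeping with the properties (1)--(4) already recorded in Lemma \ref{Pnalpha}. The one point deserving care is that $\alpha_*$ must both preserve the monomorphisms in the chain and commute with the direct limit, which is exactly where the coflatness hypothesis on $\mathcal{C}$ (together with \cite[\S 10.5]{BW}) enters.
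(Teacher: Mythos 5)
Your proposal is correct and follows essentially the same route as the paper's proof: take $\mathcal{P}=\bigcup_{(n,\alpha)}\mathcal{P}(n,\alpha)$, use properties (1), (2) and (4) of Lemma \ref{Pnalpha} for membership of $m_0$ and the cardinality bound, and for each fixed morphism use cofinality of $\{(n,\alpha)\}_{n\geq 1}$ together with the fact that $\alpha_*$ commutes with direct limits to transfer the isomorphisms of Lemma \ref{Pnalpha}(3) to the colimit. Your explicit justification of cofinality and of $\mathcal{P}$ being a subobject only spells out steps the paper leaves implicit.
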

\begin{proof}
  Clearly, the set $\mathbb{N}\times Mor(\mathscr{X})$ with the lexicographic ordering is filtered. We set $
    \mathcal{P}:=\bigcup_{(n,\alpha)\in \mathbb{N}\times Mor(\mathscr{X})}\mathcal{P}(n,\alpha)\subseteq \mathcal{M}
 $
  in $Com^{cs}$-$\mathcal{C}$. By Lemma \ref{Pnalpha}, $m_0\in \mathcal{P}(1,\alpha_0)$ which implies $m_0\in el_{\mathscr{X}}(\mathcal{P})$. Also, since each $|\mathcal{P}(n,\alpha)|\leq \kappa'$, we have $|\mathcal{P}|\leq \kappa'$. Now, we fix any morphism $\gamma: u\longrightarrow v$ in $\mathscr{X}$. We note that the family $\{(m,\gamma)\}_{m\geq 1}$ is cofinal in $\mathbb{N}\times Mor(\mathscr{X})$ and hence we may write 
 $
    \mathcal{P}=\underset{m\geq1}{ \underset{\longrightarrow}{\text{lim}}}\mathcal{P}(m,\gamma)$. 
 Since cotensor product commutes with direct limits (see, for instance, \cite[\S 10.5]{BW}), we have $ \gamma_*(\mathcal{P}_v)=\underset{m\geq1}{ \underset{\longrightarrow}{\text{lim}}} \gamma_*(\mathcal{P}(m,\gamma)_v)$. Since each $\mathcal{P}(m,\gamma)_\gamma: \mathcal{P}(m,\gamma)_u\longrightarrow \gamma_*\mathcal{P}(m,\gamma)_v$ is an isomorphism, it follows that the filtered colimit $\mathcal{P}_\gamma: \mathcal{P}_u\longrightarrow \gamma_*(\mathcal{P}_v)$ is an isomorphism.
\end{proof}

\begin{Thm}\label{T4.8b}
Let $\mathscr{X}$ be a poset and let $\mathcal{C}: \mathscr{X}\longrightarrow Coalg$ be a left coflat representation taking values in semiperfect $K$-coalgebras. Then, $Com^{cs}_c$-$\mathcal{C}$ is a Grothendieck category.
  \end{Thm}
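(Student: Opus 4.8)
The plan is to check the three defining features of a Grothendieck category for $Com^{cs}_c$-$\mathcal{C}$: that it is abelian, that it satisfies AB5 (exactness of filtered colimits), and that it has a generator. The first two are essentially already in place. Lemma \ref{cocomplete} shows that $Com^{cs}_c$-$\mathcal{C}$ is a cocomplete abelian category in which kernels, cokernels and filtered colimits are all computed pointwise, i.e.\ evaluated termwise in the Grothendieck categories $\mathbf{M}^{\mathcal{C}_x}$. Since filtered colimits are exact in each $\mathbf{M}^{\mathcal{C}_x}$ and both finite limits and filtered colimits in $Com^{cs}_c$-$\mathcal{C}$ are detected objectwise, exactness of filtered colimits transfers at once to $Com^{cs}_c$-$\mathcal{C}$. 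The one subtlety, already dealt with in the proof of Lemma \ref{cocomplete}, is that the pointwise-formed kernel of a morphism of cartesian comodules is again cartesian; this is exactly where left coflatness of $\mathcal{C}$, namely exactness of the coinduction functors $\alpha_*$, is used.

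So the real work reduces to producing a set of generators, and here the transfinite construction culminating in Lemma \ref{gen} does everything. Let $\kappa'$ be the cardinal fixed before Lemma \ref{gen}, and let $\mathcal{G}$ be the set of isomorphism classes of objects $\mathcal{P} \in Com^{cs}_c$-$\mathcal{C}$ with $|\mathcal{P}| \le \kappa'$; as in the proof of Theorem \ref{Groth}, this is genuinely a set, since such a $\mathcal{P}$ is determined up to isomorphism by boundedly much data. I claim $\mathcal{G}$ generates $Com^{cs}_c$-$\mathcal{C}$. By the criterion of \cite[\S 1.9]{Tohuku} it suffices to show that for every non-invertible monomorphism $\mathcal{N} \hookrightarrow \mathcal{M}$ in $Com^{cs}_c$-$\mathcal{C}$ there is some $\mathcal{P} \in \mathcal{G}$ and a morphism $\mathcal{P} \to \mathcal{M}$ not factoring through $\mathcal{N}$. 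Because monomorphisms, cokernels, and hence non-isomorphisms, are detected pointwise, such an $\mathcal{N} \hookrightarrow \mathcal{M}$ has $\mathcal{N}_x \subsetneq \mathcal{M}_x$ for some $x \in Ob(\mathscr{X})$; choose $m_0 \in \mathcal{M}_x \setminus \mathcal{N}_x$.

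Now apply Lemma \ref{gen} to $m_0 \in el_{\mathscr{X}}(\mathcal{M})$: it produces a cartesian subcomodule $\mathcal{P} \subseteq \mathcal{M}$ with $m_0 \in \mathcal{P}_x$ and $|\mathcal{P}| \le \kappa'$, so that $\mathcal{P} \in \mathcal{G}$. The inclusion $\mathcal{P} \hookrightarrow \mathcal{M}$ does not factor through $\mathcal{N} \hookrightarrow \mathcal{M}$, since $m_0$ lies in the image of $\mathcal{P}_x \to \mathcal{M}_x$ but not in $\mathcal{N}_x$. Hence $\mathcal{G}$ is a set of generators, and together with the abelian, cocomplete and AB5 properties this shows $Com^{cs}_c$-$\mathcal{C}$ is a Grothendieck category. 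I do not expect any essential obstacle at this stage: the genuinely hard part was already carried out in the transfinite induction of Lemmas \ref{subcomod}--\ref{Pnalpha} that yields bounded-cardinality cartesian subcomodules and hence Lemma \ref{gen}. All that remains here is to package the generator criterion correctly and to keep in mind that coflatness is what makes $Com^{cs}_c$-$\mathcal{C}$ a legitimate abelian subcategory on which filtered colimits remain exact.
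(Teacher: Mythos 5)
Your proposal is correct and follows essentially the same route as the paper: Lemma \ref{cocomplete} gives the cocomplete abelian structure with pointwise (co)limits and hence AB5, and Lemma \ref{gen} supplies the bounded-cardinality cartesian subcomodules that yield a generating set. The only cosmetic difference is that you verify the generator property via the non-invertible-monomorphism criterion of \cite[\S 1.9]{Tohuku}, whereas the paper simply writes each $\mathcal{M}$ as a sum of the subobjects $\mathcal{P}_{m_0}$ from Lemma \ref{gen}; these are equivalent packagings of the same argument.
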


  \begin{proof}
From the description of direct limits in $Com^{cs}_c$-$\mathcal{C}$ given in Lemma \ref{cocomplete},   it follows that they commute with finite limits. Also, given any $\mathcal{M}\in Com_c^{cs}$-$\mathcal{C}$, it follows from Lemma \ref{gen}, that $\mathcal{M}$ can be expressed as a sum of a family $\{\mathcal{P}_{m_0}\}_{m_0\in el_\mathscr{X}(\mathcal{M})}$ of subcomodules of $\mathcal{M}$  such that $\mathcal{P}_{m_0}\in Com^{cs}_c$-$\mathcal{C}$ and $|\mathcal{P}_{m_0}|\leq \kappa'$. Therefore, isomorphism classes of cartesian right cis-comodules $\mathcal{P}$ with $|\mathcal{P}|\leq \kappa'$ form a set of generators for $Com^{cs}_c$-$\mathcal{C}$. 
\end{proof}

\begin{Thm}\label{T4.9b}  Let $\mathscr{X}$ be a poset and let $\mathcal{C}: \mathscr{X}\longrightarrow Coalg$ be a left coflat representation taking values in semiperfect $K$-coalgebras. Then, the inclusion functor $i: Com^{cs}_c$-$\mathcal{C}\hookrightarrow Com^{cs}$-$\mathcal{C}$ has a right adjoint. 
\end{Thm}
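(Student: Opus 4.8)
The plan is to apply the dual form of the Special Adjoint Functor Theorem to the inclusion $i$. The essential point is that $i$ preserves all colimits: by Lemma \ref{cocomplete}, every colimit in $Com^{cs}_c$-$\mathcal{C}$ is computed objectwise in the comodule categories $\mathbf{M}^{\mathcal{C}_x}$, $x\in Ob(\mathscr{X})$, and the same is true in $Com^{cs}$-$\mathcal{C}$, since there filtered colimits and finite limits are objectwise (as in the proof of Theorem \ref{Groth}) and in an abelian category every colimit is built out of cokernels and filtered colimits of finite biproducts. Thus the comparison morphism $i(\mathrm{colim}_{i\in I}\,\mathcal{M}_i)\longrightarrow \mathrm{colim}_{i\in I}\,i(\mathcal{M}_i)$ is an isomorphism for every small diagram in $Com^{cs}_c$-$\mathcal{C}$. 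The one place this deserves a moment's care is coproducts: a coproduct of cartesian cis-comodules is again cartesian because each $\alpha_* = \_\_\square_{\mathcal{C}_y}\mathcal{C}_x$ is exact (coflatness of $\mathcal{C}$) and commutes with direct sums, hence carries a family of isomorphisms $\mathcal{M}_{i,\alpha}$ to an isomorphism on the coproduct.

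Next I would record that $Com^{cs}_c$-$\mathcal{C}$ satisfies the remaining hypotheses. It is cocomplete by Lemma \ref{cocomplete}. Every object $\mathcal{M}$ has only a set of quotient objects, since any quotient $\mathcal{Q}$ of $\mathcal{M}$ satisfies $|\mathcal{Q}|\leq |\mathcal{M}|$ and there is only a set of isomorphism classes of cis-comodules of bounded cardinality. Finally, by Theorem \ref{T4.8b} the isomorphism classes of cartesian cis-comodules $\mathcal{P}$ with $|\mathcal{P}|\leq \kappa'$ form a set of generators for $Com^{cs}_c$-$\mathcal{C}$. Hence $Com^{cs}_c$-$\mathcal{C}$ is a cocomplete, locally small category with a set of generators in which every object has only a set of quotients, and $i$ preserves all colimits; the dual Special Adjoint Functor Theorem then produces a right adjoint $Q$ of $i$.

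Concretely, $Q(\mathcal{M})$ may be described as the colimit, formed in $Com^{cs}_c$-$\mathcal{C}$, of the small diagram of all morphisms $\mathcal{P}\longrightarrow \mathcal{M}$ with $\mathcal{P}$ cartesian and $|\mathcal{P}|\leq\kappa'$; this is the coalgebraic counterpart of the classical quasi-coherator of Illusie. In this form the adjunction isomorphism $Com^{cs}_c\text{-}\mathcal{C}(\mathcal{Q},Q(\mathcal{M}))\cong Com^{cs}\text{-}\mathcal{C}(\mathcal{Q},\mathcal{M})$ becomes transparent: by Lemma \ref{gen} any cartesian $\mathcal{Q}$ is the filtered union of its cartesian subcomodules of cardinality $\leq\kappa'$, so a morphism $\mathcal{Q}\longrightarrow\mathcal{M}$ is exactly a compatible system of morphisms from the objects indexing the diagram defining $Q(\mathcal{M})$. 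There is no serious obstacle left in the argument: the transfinite induction behind Theorem \ref{T4.8b} has already done the only non-formal work, namely producing the generating set, and everything else is a routine verification of the hypotheses of the adjoint functor theorem.
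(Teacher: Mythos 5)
Your core argument is correct and is essentially the paper's own proof: the paper likewise deduces from Lemma \ref{cocomplete} that $i$ preserves all colimits (colimits on both sides being computed pointwise) and then, since $Com^{cs}_c$-$\mathcal{C}$ is a Grothendieck category by Theorem \ref{T4.8b}, invokes \cite[Proposition 8.3.27]{KS}, which is exactly the dual Special Adjoint Functor Theorem statement that you verify by hand (cocompleteness, a generating set, co-well-poweredness via the cardinality bound, local smallness of the target). So the existence assertion is fully proved by your first two paragraphs.

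Be careful, though, with the closing ``concrete description.'' The formula $Q(\mathcal{M})=\mathrm{colim}\,\mathcal{P}$, taken over all morphisms $\mathcal{P}\longrightarrow\mathcal{M}$ with $\mathcal{P}$ cartesian of cardinality $\leq\kappa'$, is not justified by what has been established: Theorem \ref{T4.8b} produces a \emph{generating} set, not a \emph{dense} one, the indexing comma category is not filtered, and while surjectivity of the comparison map $Com^{cs}_c\text{-}\mathcal{C}(\mathcal{Q},Q(\mathcal{M}))\longrightarrow Com^{cs}\text{-}\mathcal{C}(\mathcal{Q},\mathcal{M})$ is immediate, its injectivity --- the part you call transparent --- is precisely what can fail for a non-dense generator (already $\mathbb{Z}$ in abelian groups shows that the analogous canonical colimit need not recover the coreflection, which there is the identity). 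Relatedly, Lemma \ref{gen} gives, for each element, \emph{some} cartesian subcomodule of cardinality $\leq\kappa'$ containing it, but the filteredness of the family of all such subcomodules (that any two sit inside a third) would require rerunning the transfinite construction seeded with their sum, which is not stated. Since the theorem only asserts existence, none of this affects your proof of the statement, but the explicit description of $Q$ should be flagged as unproved rather than presented as an immediate consequence.
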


\begin{proof}
It follows from Lemma \ref{cocomplete} that the inclusion functor $i: Com^{cs}_c$-$\mathcal{C}\hookrightarrow Com^{cs}$-$\mathcal{C}$ preserves all colimits. By Theorem \ref{T4.8b}, $Com^{cs}_c$-$\mathcal{C}$ is a Grothendieck category and it follows (see, for instance, \cite[Proposition 8.3.27]{KS}) that $i$ admits a right adjoint. 
\end{proof}

\begin{rem}
\emph{We note that the right adjoint in Theorem \ref{T4.9b} may be seen as a coalgebraic counterpart of the classical quasi-coherator construction (see \cite[Lemme 3.2]{Ill}), which is right adjoint to the inclusion of quasi-coherent sheaves into the category of modules over a scheme.}
\end{rem}

\subsection{Cartesian trans-comodules over coalgebra representations}
Let $\mathscr{X}$ be a small category and let $\mathcal{C}: \mathscr{X} \longrightarrow Coalg$ be a quasi-finite representation taking values in  right semi-perfect $K$-coalgebras. In this section, we will introduce the category of cartesian trans-comodules over the representation $\mathcal{C}
$. 

\smallskip

Let $\alpha:C \longrightarrow D$ be a coalgebra morphism such that $C$ is quasi-finite as a right $D$-comodule. Then, the functor $\alpha^!=H_D(C,-): \mathbf{M}^D \longrightarrow \mathbf{M}^C$ is exact if and only if the direct sum $C^{(\Lambda)}$ is injective as a right $D$-comodule for any indexing set $\Lambda$ (see  \cite[Corollary 3.10]{takh}). In such a situation, we will say that the morphism  $\alpha:C \longrightarrow D$ of coalgebras is
(right) $\Sigma$-injective. We note in particular that any identity morphism of coalgebras is $\Sigma$-injective. 

\begin{defn}
Let $\mathcal{C}: \mathscr{X} \longrightarrow Coalg$ be a quasi-finite representation   taking values in right semi-perfect coalgebras. Suppose that $\mathcal C$ is  $\Sigma$-injective, i.e., for each 
each $\alpha \in \mathscr{X}(x,y)$, the morphism $\mathcal C_\alpha:\mathcal C_x\longrightarrow \mathcal C_y$ of coalgebras is $\Sigma$-injective. 
 Let $\mathcal{M} \in Com^{tr}\text{-}\mathcal{C}$. Then, we say that $\mathcal{M}$ is cartesian if for each $\alpha \in \mathscr{X}(x,y)$, the morphism $^\alpha \mathcal{M}:\alpha^!\mathcal{M}_y \longrightarrow  \mathcal{M}_x$ is an isomorphism in $\mathbf{M}^{\mathcal{C}_x}$. We will denote the full subcategory of cartesian trans-comodules  by $Com^{tr}_c\text{-}\mathcal{C}$.
\end{defn}

\begin{lem}\label{trans-ab}
The category $Com^{tr}_c$-$\mathcal{C}$ is cocomplete and abelian. Further, if $\mathcal{M}$ denotes the colimit of a family $\{\mathcal{M}_i\}_{i\in I}$ of objects in $Com^{tr}_c$-$\mathcal{C}$, then for each $x\in Ob(\mathscr{X})$, we have $
\mathcal{M}_x= {colim}_{i\in I} {\mathcal{M}_i}_x
$,
where the colimit on the right is taken in the category $\mathbf{M}^{\mathcal{C}_x}$. 
\end{lem}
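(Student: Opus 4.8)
The plan is to mirror the proof of Lemma \ref{cocomplete}, replacing the coinduction functor $\alpha_*$ and its exactness from coflatness by the functor $\alpha^!$ and its exactness from $\Sigma$-injectivity. First I would check that $Com^{tr}_c$-$\mathcal{C}$ is abelian as a full subcategory of the abelian category $Com^{tr}$-$\mathcal{C}$: given $\eta:\mathcal{M}\longrightarrow\mathcal{N}$ in $Com^{tr}_c$-$\mathcal{C}$, its kernel and cokernel in $Com^{tr}$-$\mathcal{C}$ are computed pointwise, $Ker(\eta)_x=Ker(\eta_x)$ and $Coker(\eta)_x=Coker(\eta_x)$. For $\alpha\in\mathscr{X}(x,y)$, the hypothesis that $\mathcal{C}$ is $\Sigma$-injective means $\alpha^!$ is exact, so applying $\alpha^!$ to the defining sequences and using that $^\alpha\mathcal{M}$ and $^\alpha\mathcal{N}$ are isomorphisms, one gets that $^\alpha Ker(\eta)$ and $^\alpha Coker(\eta)$ are isomorphisms; hence $Ker(\eta),Coker(\eta)\in Com^{tr}_c$-$\mathcal{C}$. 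The usual diagram chase then gives $Coker(Ker(\eta)\hookrightarrow\mathcal{M})=Ker(\mathcal{N}\twoheadrightarrow Coker(\eta))$ in $Com^{tr}_c$-$\mathcal{C}$, so it is abelian.

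Next I would address colimits. Since every colimit can be built from coproducts and cokernels, and cokernels are handled above, it suffices to treat coproducts (or more generally directed colimits). Given $\{\mathcal{M}_i\}_{i\in I}$ in $Com^{tr}_c$-$\mathcal{C}$, define $\mathcal{M}_x:=colim_{i\in I}{\mathcal{M}_i}_x$ in $\mathbf{M}^{\mathcal{C}_x}$, with structure maps $_\alpha\mathcal{M}$ induced from the $_\alpha\mathcal{M}_i$. The key point is that $\alpha^!=H_{\mathcal{C}_y}(\mathcal{C}_x,-)$ is a \emph{left} adjoint (to the corestriction functor $\alpha^*$, by the discussion preceding Definition \ref{trans-rep}), hence preserves all colimits. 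Therefore $\alpha^!\mathcal{M}_y=\alpha^!(colim_i\,{\mathcal{M}_i}_y)=colim_i\,\alpha^!{\mathcal{M}_i}_y$, and the map $^\alpha\mathcal{M}:\alpha^!\mathcal{M}_y\longrightarrow\mathcal{M}_x$ is the colimit of the isomorphisms $^\alpha\mathcal{M}_i:\alpha^!{\mathcal{M}_i}_y\longrightarrow{\mathcal{M}_i}_x$, so it is an isomorphism. Thus $\mathcal{M}\in Com^{tr}_c$-$\mathcal{C}$, and one checks routinely that it has the universal property of the colimit, using that colimits in $Com^{tr}$-$\mathcal{C}$ and in each $\mathbf{M}^{\mathcal{C}_x}$ are computed pointwise; this establishes cocompleteness together with the stated formula.

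Since essentially everything here reduces to two facts already available in the excerpt — that $\alpha^!$ is left adjoint to $\alpha^*$ (hence colimit-preserving) and that $\Sigma$-injectivity makes $\alpha^!$ exact — I do not expect a serious obstacle. The only mildly delicate point is bookkeeping: verifying that the induced structure maps $_\alpha\mathcal{M}$ on the colimit satisfy the cocycle condition $_{\beta\alpha}\mathcal{M}=\beta^*(_\alpha\mathcal{M})\circ{_\beta\mathcal{M}}$ of Definition \ref{trans-rep}, which follows since $\beta^*$ preserves colimits (it too is a left adjoint) and the $\mathcal{M}_i$ already satisfy the condition. Note that, in contrast to the cis case in Lemma \ref{cocomplete}, no analogue of Lemma \ref{double} or a transfinite argument is needed \emph{for this lemma}; those tools enter only later when producing a generator of bounded cardinality.
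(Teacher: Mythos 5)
Your proposal is correct and follows essentially the same route as the paper, which proves the lemma by noting that kernels and cokernels are computed pointwise and remain cartesian because $\Sigma$-injectivity makes $\alpha^!$ exact, and that colimits of cartesian objects are cartesian because $\alpha^!$, being a left adjoint, preserves colimits (the paper simply says ``as in the proof of Lemma \ref{cocomplete}'', and your write-up supplies exactly those details). Your closing observation is also accurate: no cardinality or transfinite machinery is needed here.
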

\begin{proof}
Let $\eta:\mathcal{M} \longrightarrow \mathcal{N}$ be a morphism in $Com^{tr}_c$-$\mathcal{C}$. We know that $Ker(\eta)$ and $Coker(\eta)$ in   $Com^{tr}\text{-}\mathcal{C}$ are constructed pointwise. Since $\alpha^!$ is exact, it follows as in the proof of Lemma \ref{cocomplete} that $Ker(\eta)$ and $Coker(\eta)$ lie in  $Com^{tr}_c\text{-}\mathcal{C}$ and hence  $Com^{tr}_c\text{-}\mathcal{C}$ is abelian. Further, since $\alpha^!$ is left adjoint, it preserves colimits and we may verify as in the proof of Lemma \ref{cocomplete} that  $Com^{tr}_c\text{-}\mathcal{C}$  is cocomplete, with all colimits computed pointwise. \end{proof}

We will now look at generators in $Com^{tr}_c$-$\mathcal{C}$.

\begin{lem}\label{transkappa}
Let $\alpha: C\longrightarrow D$ be a quasi-finite and $\Sigma$-injective morphism between right-semiperfect coalgebras. Let $\kappa'\geq max\{|K|,\aleph_0, |D|\}$. Let $M \in \mathbf M^D$ and $A\subseteq \alpha^!M$ be a set of elements such that $|A|\leq \kappa'$. Then, there exists a subcomodule $N\subseteq M$ in $\mathbf M^D$ with $|N|\leq \kappa'$ such that $A\subseteq \alpha^!N$.
\end{lem}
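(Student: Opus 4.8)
The plan is to mimic the proof of Lemma \ref{kappa}, but with the right adjoint $\alpha_*$ replaced throughout by the left adjoint $\alpha^!=H_D(C,-)$, exploiting the hypothesis that $\alpha^!$ is exact (which is exactly what $\Sigma$-injectivity buys us, by \cite[Corollary 3.10]{takh}). First I would pick, for each $a\in A\subseteq \alpha^!M$, a finite dimensional subcomodule $W^a\subseteq \alpha^!M$ in $\mathbf M^C$ with $a\in W^a$; this uses the fact that every comodule is the sum of its finite dimensional subcomodules (\cite[Corollary 2.2.9]{DNR}), so $W^a$ exists without needing projectivity. Next, since $D$ is right semiperfect, fix an epimorphism $\zeta:\bigoplus_{i\in I}V_i\twoheadrightarrow M$ in $\mathbf M^D$ with each $V_i$ a finite dimensional projective right $D$-comodule. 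Because $\alpha^!$ is exact and (being a left adjoint) commutes with direct sums, $\alpha^!\zeta:\bigoplus_{i\in I}\alpha^!V_i\twoheadrightarrow\alpha^!M$ is an epimorphism in $\mathbf M^C$; note that each $\alpha^!V_i$ is finite dimensional by Lemma \ref{shriekp}.

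The next step is the standard finiteness/lifting argument, but it needs a small twist because the $W^a$ are not assumed projective, only finite dimensional. Since $W^a\subseteq\alpha^!M$ is finitely generated (= finite dimensional) as a $C$-comodule and $\alpha^!\zeta$ is an epimorphism, the inclusion $W^a\hookrightarrow\alpha^!M$ lifts, at least on a finitely generated piece: more precisely, pulling back $W^a$ along $\alpha^!\zeta$ and using that $W^a$ is finite dimensional, there is a finite subset $S^a\subseteq I$ and a subcomodule of $\bigoplus_{i\in S^a}\alpha^!V_i$ mapping onto $W^a$. Writing $\{V^a_1,\dots,V^a_{n^a}\}$ for the corresponding finite family $\{V_i\}_{i\in S^a}$, we obtain a morphism $\zeta'^a:\bigoplus_{r=1}^{n^a}V^a_r\longrightarrow M$ in $\mathbf M^D$ such that $W^a\subseteq\mathrm{Im}(\alpha^!\zeta'^a)$. (If one prefers to avoid the pullback argument, one can instead replace $W^a$ by a finite dimensional \emph{projective} $V^a$ with an epimorphism $V^a\twoheadrightarrow W^a$ in $\mathbf M^C$, using that $C$ is right semiperfect and invoking \cite[Corollary 2.4.21]{DNR}, and then lift $V^a\to\alpha^!M$ directly as in Lemma \ref{kappa}; this is closer to the template but introduces an extra comodule.) Now set
\begin{equation}
N:=\mathrm{Im}\left(\zeta':=\bigoplus_{a\in A}\zeta'^a:\bigoplus_{a\in A}\bigoplus_{r=1}^{n^a}V^a_r\longrightarrow M\right)\subseteq M
\end{equation}
in $\mathbf M^D$.

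Since $\alpha^!$ is exact and commutes with direct sums, applying it to $\zeta'$ gives $\alpha^!N=\mathrm{Im}(\alpha^!\zeta')=\mathrm{Im}\bigl(\bigoplus_{a\in A}\alpha^!\zeta'^a:\bigoplus_{a\in A}\bigoplus_{r=1}^{n^a}\alpha^!V^a_r\longrightarrow\alpha^!M\bigr)$, and since $W^a\subseteq\mathrm{Im}(\alpha^!\zeta'^a)$ for each $a$ we get $A\subseteq\alpha^!N$, as required. For the cardinality bound: each $V^a_r$ is finite dimensional, so $|V^a_r|\leq\kappa'$, and $N$ is a quotient of $\bigoplus_{a\in A}\bigoplus_{r=1}^{n^a}V^a_r$ with $|A|\leq\kappa'$ and each index set $\{1,\dots,n^a\}$ finite, so $|N|\leq\kappa'$. (One does not even need the $|D|$ term in the hypothesis on $\kappa'$ for the conclusion "$|N|\leq\kappa'$" itself — that term will matter in the subsequent "double" lemma, when one also needs to bound $|\alpha^!N|$; here I would just note $\alpha^!N$ is a subspace of the finite dimensional $\alpha^!$ applied to something of bounded size, or simply carry the hypothesis.) I expect the only genuinely delicate point to be the lifting step: with $W^a$ merely finite dimensional rather than projective, one must argue that finite generation of $W^a$ lets the epimorphism $\alpha^!\zeta$ be "split off" onto a finite subsum, which is routine but worth stating carefully; everything else is a direct transcription of the proof of Lemma \ref{kappa} with $\alpha_*$ exact replaced by $\alpha^!$ exact.
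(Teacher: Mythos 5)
Your proposal is correct and is essentially the paper's own argument, which simply transcribes the proof of Lemma \ref{kappa} with the exact, colimit-preserving left adjoint $\alpha^!$ in place of $\alpha_*$: choose an epimorphism from a direct sum of finite-dimensional projective $D$-comodules onto $M$, apply $\alpha^!$, cut down to a finite subsum for each $a\in A$, and let $N$ be the image of the assembled map, with exactness of $\alpha^!$ identifying $\alpha^!N$ with the image of $\alpha^!\zeta'$. The only deviation is the approximation step: the paper uses semiperfectness of $C$ to pick a finite-dimensional projective $V^a\longrightarrow \alpha^!M$ whose image contains $a$ and lifts it by projectivity, whereas your main route takes a finite-dimensional subcomodule $W^a\ni a$ and lifts a basis through the epimorphism $\alpha^!\zeta$ (legitimate, since epimorphisms of comodules are surjective and any element of a direct sum lies in a finite subsum), and your parenthetical alternative is exactly the paper's version.
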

\begin{proof}
This follows in a manner similar to Lemma \ref{kappa}, using the fact that $\alpha^!$ is both a left adjoint and assumed to be exact. \end{proof}

\begin{lem}\label{transdouble}
Let $\alpha: C\longrightarrow D$ be a quasi-finite and $\Sigma$-injective morphism between right semiperfect coalgebras. Let $\kappa' \geq max\{|K|, \aleph_0, |C^*|,2^{|D|}\}$. Let $M\in \mathbf M^D$  and let $A\subseteq M$ and $B\subseteq \alpha^! M$ be sets of elements such that $|A|,|B|\leq \kappa'$. Then, there exists a subcomodule $N\subseteq M$ in $\mathbf{M}^D$ such that

\smallskip
(1) $|N|\leq \kappa'$, $|\alpha^! N|\leq \kappa'$

\smallskip
(2) $A\subseteq N$ and $B\subseteq \alpha^!N$
\end{lem}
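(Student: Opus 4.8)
Proof proposal. The plan is to follow the proof of Lemma \ref{double} essentially verbatim, with the coinduction functor $\alpha_*$ replaced by $\alpha^!=H_D(C,-)$ and with appeals to Lemma \ref{kappa} replaced by appeals to Lemma \ref{transkappa}. First I would apply Lemma \ref{transkappa} to the set $B\subseteq\alpha^!M$; this is legitimate because $\kappa'\geq 2^{|D|}\geq|D|$, so the hypotheses of Lemma \ref{transkappa} are met, and it yields a subcomodule $N_1\subseteq M$ in $\mathbf M^D$ with $|N_1|\leq\kappa'$ and $B\subseteq\alpha^!N_1$. To accommodate $A$, I would let $N_2\subseteq M$ be the subcomodule generated by $A$: by \cite[Corollary 2.2.9]{DNR} each element of $A$ lies in a finite dimensional subcomodule of $M$, so $N_2$ is a sum of at most $|A|\leq\kappa'$ finite dimensional subcomodules and hence $|N_2|\leq\kappa'$ (equivalently, one may invoke Lemma \ref{transkappa} with $\alpha=\mathrm{id}_D$, which is $\Sigma$-injective). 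Now set $N:=N_1+N_2\subseteq M$. As a quotient of $N_1\oplus N_2$ we get $|N|\leq\kappa'$, and $A\subseteq N_2\subseteq N$. Since $\alpha$ is $\Sigma$-injective, $\alpha^!$ is exact and in particular preserves monomorphisms, so $\alpha^!N_1\hookrightarrow\alpha^!N$ and therefore $B\subseteq\alpha^!N_1\subseteq\alpha^!N$. This gives (2) and the bound $|N|\leq\kappa'$ in (1).

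The one genuinely new point, compared with Lemma \ref{double}, is the estimate $|\alpha^!N|\leq\kappa'$. There it was immediate because $\alpha_*N=N\square_DC$ sits inside $N\otimes_KC$; here $\alpha^!N=H_D(C,N)$ has no such concrete description, and instead I would argue via finite dimensional approximation. Write $N$ as the filtered union $\bigcup_j N_j$ of its finite dimensional subcomodules, directed by inclusion. Each such $N_j$ is the subcomodule generated by a finite subset of the underlying set of $N$, and since $|N|\leq\kappa'$ and $\kappa'\geq\aleph_0$ there are at most $\kappa'$ finite subsets, hence at most $\kappa'$ of the $N_j$. The functor $\alpha^!=H_D(C,-)$ is a left adjoint, so it preserves this filtered colimit, and it is exact, so it preserves the connecting monomorphisms; consequently $\alpha^!N=\bigcup_j\alpha^!N_j$ is a filtered union of subcomodules in $\mathbf M^C$. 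By Lemma \ref{shriekp} each $\alpha^!N_j$ is finite dimensional, so $|\alpha^!N_j|\leq\max\{|K|,\aleph_0\}\leq\kappa'$, and therefore $|\alpha^!N|\leq\kappa'\cdot\kappa'=\kappa'$. This establishes the remaining assertion of (1).

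The main obstacle is precisely this last cardinality estimate: it is where the $\Sigma$-injectivity hypothesis is indispensable, since without exactness of $\alpha^!$ the image $\alpha^!N$ would only be a filtered \emph{colimit} (not a union) of the finite dimensional pieces $\alpha^!N_j$, and the bound on $|\alpha^!N|$ would break down. Everything else is a routine transcription of the coinduction argument, and the hypothesis $\kappa'\geq\max\{|K|,\aleph_0,|C^*|,2^{|D|}\}$ is more than ample for it; in fact $\kappa'\geq\max\{|K|,\aleph_0,|D|\}$ already suffices for the scheme above, once one notes that quasi-finiteness of $C$ over $D$ together with essentiality of the socle forces $|C|\leq\max\{|K|,\aleph_0,|D|\}$.
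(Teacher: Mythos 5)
Your proof is correct, and while the reduction to $N=N_1+N_2$ via Lemma \ref{transkappa} (applied to $B$, and to $A$ with $\alpha=id_D$) is exactly what the paper does, your treatment of the one nontrivial point, the bound $|\alpha^!N|\leq\kappa'$, is genuinely different. The paper invokes \cite[Corollary 3.12]{takh} to identify $\alpha^!N=H_D(C,N)\cong N\square_D H_D(C,D)$ as a vector space (using exactness of the composite with the forgetful functor) together with Takeuchi's description $H_D(C,D)=\varinjlim_i Hom(D_i,C)^*$ over the finite dimensional subcoalgebras $D_i\subseteq D$; this is precisely where the terms $|C^*|$ and $2^{|D|}$ in the hypothesis on $\kappa'$ get used. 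You instead write $N$ as a filtered union of its at most $\kappa'$ finite dimensional subcomodules, use that $\alpha^!$ is a left adjoint (hence preserves this colimit) and exact (hence preserves the inclusions), and quote Lemma \ref{shriekp} to make each piece $\alpha^!N_j$ finite dimensional. This is more self-contained, stays entirely within tools already in the paper, and, as you observe, shows that $\kappa'\geq\max\{|K|,\aleph_0,|D|\}$ already suffices, so the hypothesis involving $|C^*|$ and $2^{|D|}$ is an artifact of the paper's route; the paper's route, in exchange, yields an explicit cotensor description of $\alpha^!N$ that is reused conceptually elsewhere. One small quibble: your closing claim that $\Sigma$-injectivity is indispensable specifically for the cardinality estimate is overstated. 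Even without exactness, $\alpha^!N$ is still the filtered colimit of the finite dimensional objects $\alpha^!N_j$ (colimit preservation needs only the adjunction), and in the Grothendieck category $\mathbf M^{C}$ such a colimit is a quotient of $\bigoplus_j\alpha^!N_j$, so the bound $|\alpha^!N|\leq\kappa'$ would survive; exactness is what you actually need for $B\subseteq\alpha^!N_1\subseteq\alpha^!N$ and inside Lemma \ref{transkappa}. This does not affect the validity of your argument.
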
  
\begin{proof}
By Lemma \ref{transkappa}, we know that there exists a $D$-subcomodule $N_1 \subseteq M$ such that $|N_1|\leq \kappa'$ and $B\subseteq \alpha^! N_1$. Similarly, taking $\alpha=id_D: D\longrightarrow D$ in Lemma \ref{transkappa}, we can also obtain a $D$-subcomodule $N_2 \subseteq M$ such that $|N_2|\leq \kappa'$ and $A\subseteq N_2$. We set $N:= N_1+N_2 \subseteq M$. Since $N$ is a quotient of $N_1\oplus N_2$, we have $|N|\leq \kappa'$. Also,  $ A\subseteq N_2\subseteq N$. Since $\alpha^!$ is exact, it preserves monomorphisms and thus $B\subseteq \alpha^! N_1\subseteq \alpha^! N$. Finally, we know that the functor given by the composition $\textbf{M}^D \xrightarrow{\alpha^!} \textbf{M}^C \xrightarrow{forget} Vect$ is exact. Therefore, using \cite[Corollary 3.12]{takh}, we have that $\alpha^! N= H_D(C,N)=N \square_D H_D(C,D)$ as a vector space. Moreover, $H_D(C,D)=\underset{i \in I}{\varinjlim}~ Hom(D_i,C)^*$, where $\{D_i\}_{i \in I}$ is a directed family of finite dimensional subcoalgebras of $D$ (see, for instance \cite[Section 1]{takeu}). It now follows that $|\alpha^!N| \leq \kappa'$.\end{proof}

Let $\mathscr X$ be a poset. We continue with  $\mathcal{C}: \mathscr{X} \longrightarrow Coalg$ being a quasi-finite and $\Sigma$-injective representation taking values in right semiperfect coalgebras.   We fix
\begin{equation}
\kappa':=sup\{\aleph_0,|K|,|Mor(\mathscr{X})|, \{2^{|\mathcal{C}_x|}\}_{x\in Ob(\mathscr X)},  \{ |\mathcal{C}_x^*|\}_{x\in Ob(\mathscr X)}\} 
\end{equation}

We now choose a well ordering of the set $Mor(\mathscr{X})$ and consider the induced lexicographic ordering of $\mathbb{N}\times Mor(\mathscr{X})$.  Let $\mathcal{M}\in Com^{tr}_c$-$\mathcal{C}$ and let $m_0 \in el_{\mathscr{X}}(\mathcal{M})$, i.e. $m_0 \in \mathcal{M}_x$ for some $x\in Ob(\mathscr{X})$. We will now define a family of  subcomodules $\{\mathcal{P}(n,\alpha) | (n,\alpha: y\longrightarrow z)\in \mathbb{N}\times Mor(\mathscr{X})\}$ of $\mathcal{M}$ which satisfies the following conditions:

\begin{enumerate}
\item[(1')] $m_0 \in el_{\mathscr{X}}(\mathcal{P}(1,\alpha_0))$, where $\alpha_0$ is the least element of $Mor(\mathscr{X})$.

\item[(2')] $\mathcal{P}(m,\beta)\subseteq \mathcal{P}(n,\alpha)$ whenever $(m,\beta)\leq (n, \alpha)$ in $\mathbb{N}\times Mor(\mathscr{X})$.

\item[(3')]  For each $(n, \alpha:y\longrightarrow z)\in \mathbb{N}\times Mor(\mathscr{X})$, the morphism $^\alpha \mathcal{P}(n,\alpha): \alpha^! \mathcal{P}(n,\alpha)_z\longrightarrow \mathcal{P}(n,\alpha)_y$ is an isomorphism in $\mathbf{M}^{\mathcal{C}_y}$.

\item[(4')] $|\mathcal{P}(n,\alpha)| \leq \kappa'$.
\end{enumerate}

We know that there exists a finite dimensional $\mathcal{C}_x$-comodule $V$ and a morphism $\eta:V \longrightarrow \mathcal{M}_x$ such that $m_0 \in im(\eta)$. Then, we can define the subcomodule $\mathcal{N}\subseteq \mathcal{M}$ corresponding to $\eta$ as in \eqref{defN1} such that $m_0 \in \mathcal{N}_x$. By Lemma \ref{cardinal1}, we also know that $|\mathcal{N}|\leq \kappa'$. 

\smallskip
For each pair $(n,\alpha: y\longrightarrow z)\in \mathbb{N}\times Mor(\mathscr{X})$, we now start constructing the comodule $\mathcal{P}(n,\alpha)$ inductively.  We set
\begin{equation}\label{transA00}
A_0^0(w) = 
\begin{cases}
\mathcal{N}_w&\quad\text{if $n=1$ and $\alpha=\alpha_0$}\\
\bigcup\limits_{(m,\beta)<(n,\alpha)}\mathcal{P}(m,\beta)_w&\quad\text{otherwise}
\end{cases}
\end{equation}
for each $w\in Ob(\mathscr{X})$, where for $(n,\alpha) \neq (1,\alpha_0)$, we assume that we have already constructed the subcomodules $\mathcal{P}(m,\beta)$ satisfying all the properties (1')-(4') for any $(m, \beta) < (n,\alpha) \in \mathbb{N}\times Mor(\mathscr{X})$.

\smallskip
 Clearly, each $A_0^0(w)\subseteq \mathcal{M}_w$ and $|A_0^0(w)|\leq \kappa'$. 
Since $\mathcal{M}$ is cartesian, we know that $\alpha^! \mathcal{M}_z\cong \mathcal{M}_y$. Using Lemma  \ref{transdouble}, we can obtain a comodule $A_1^0(z)\subseteq \mathcal{M}_z$ in $\mathbf{M}^{\mathcal{C}_z}$ such that
\begin{equation}\label{A10trans}
|A_1^0(z)|\leq \kappa' \quad |\alpha^! A_1^0(z)|\leq \kappa' \quad A_0^0(z)\subseteq A_1^0(z)\quad A_0^0(y)\subseteq \alpha^!A_1^0(z)
\end{equation}

We now set $A_1^0(y):=\alpha^!A_1^0(z)$ and $A_1^0(w):=A_0^0(w)$ for any $w\neq y,z \in Ob(\mathscr{X})$. 
 It  follows from \eqref{A10trans} that $A_0^0(w)\subseteq A_1^0(w)$ for every $w\in Ob(\mathscr{X})$ and each $|A_1^0(w)|\leq \kappa'$.

 \begin{lem}\label{subtranscomod}
	Let $B\subseteq el_{\mathscr{X}}(\mathcal{M})$ with $|B|\leq \kappa'$. Then, there is a subcomodule $\mathcal{Q}\subseteq\mathcal{M}$ in $Com^{tr}$-$\mathcal{C}$ such that $B\subseteq el_{\mathscr{X}}(\mathcal{Q})$ and $|\mathcal{Q}|\leq \kappa'$. In particular, if $m_0\in B$ is such that $m_0\in \mathcal{M}_x$ for some $x\in Ob(\mathscr{X})$, then $m_0\in \mathcal{Q}_x$.
\end{lem}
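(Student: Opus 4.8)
The plan is to reproduce the proof of Lemma~\ref{subcomod} essentially verbatim in the trans-comodule setting, substituting the constructions of Section~2.2 for those of Section~2.1. First I would fix an element $m_0 \in B$; since $B \subseteq el_{\mathscr{X}}(\mathcal{M})$ there is an object $x \in Ob(\mathscr{X})$ with $m_0 \in \mathcal{M}_x$. I would then run the argument from the proof of Theorem~\ref{Groth1}: using that $\mathcal{C}_x$ is right semiperfect, choose a finite dimensional projective $V \in \mathbf{M}^{\mathcal{C}_x}$ together with a morphism $\eta : V \longrightarrow \mathcal{M}_x$ whose image contains $m_0$, and form the subobject $\mathcal{N} \subseteq \mathcal{M}$ of \eqref{defN1}. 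By Proposition~\ref{trans-sub} this $\mathcal{N}$ is genuinely a subcomodule in $Com^{tr}$-$\mathcal{C}$, the canonical morphism $\eta'_1 : V \longrightarrow \mathcal{N}_x$ of \eqref{equalz} shows $m_0 \in \mathcal{N}_x$, and Lemma~\ref{cardinal1} gives $|\mathcal{N}| \leq \kappa$, where $\kappa$ is the cardinal of \eqref{crd222}. Writing this subcomodule as $\mathcal{Q}(m_0)$, we have $m_0 \in \mathcal{Q}(m_0)_x$ and $|\mathcal{Q}(m_0)| \leq \kappa \leq \kappa'$.

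Next I would set $\mathcal{Q} := \sum_{m_0 \in B} \mathcal{Q}(m_0) \subseteq \mathcal{M}$, the sum being formed inside $Com^{tr}$-$\mathcal{C}$, which is legitimate because $Com^{tr}$-$\mathcal{C}$ is a Grothendieck category by Theorem~\ref{Groth1}. Since $\mathcal{Q}$ is an epimorphic image of $\bigoplus_{m_0 \in B} \mathcal{Q}(m_0)$ and $|B| \leq \kappa'$ with $\kappa'$ infinite, it follows that $|\mathcal{Q}| \leq \kappa'$. Finally, if $m_0 \in B$ lies in $\mathcal{M}_x$ for some $x$, then $m_0 \in \mathcal{Q}(m_0)_x \subseteq \mathcal{Q}_x$, which yields the last assertion.

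There is no real obstacle here; the lemma is a direct transcription of Lemma~\ref{subcomod}, and the only points to verify are that the subobject machinery of \eqref{defN1}, Proposition~\ref{trans-sub} and Lemma~\ref{cardinal1} applies under the running hypotheses of Section~4.2 (namely $\mathscr{X}$ a poset and $\mathcal{C}$ quasi-finite, $\Sigma$-injective, valued in right semiperfect coalgebras, so that each $\mathbf{M}^{\mathcal{C}_x}$ has finite dimensional projective generators and, via Lemma~\ref{shriekp}, $\beta^! V$ stays finite dimensional), together with the fact that the cardinal $\kappa'$ fixed for the cartesian trans-comodule induction dominates the cardinal $\kappa$ of Lemma~\ref{cardinal1}. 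Both are immediate from the definitions, so the only care needed is purely bookkeeping.
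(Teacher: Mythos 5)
Your proposal is correct and follows essentially the same route as the paper: the paper's proof simply transcribes Lemma \ref{subcomod} to the trans-comodule setting, i.e.\ for each $m_0\in B$ it takes the subcomodule $\mathcal{Q}(m_0)\subseteq\mathcal{M}$ built from \eqref{defN1} as in Theorem \ref{Groth1} (with $|\mathcal{Q}(m_0)|\leq\kappa\leq\kappa'$ by Lemma \ref{cardinal1}) and then sets $\mathcal{Q}:=\sum_{m_0\in B}\mathcal{Q}(m_0)$, bounding $|\mathcal{Q}|$ by the cardinality of $\bigoplus_{m_0\in B}\mathcal{Q}(m_0)$. Your additional bookkeeping remarks (the running hypotheses of Section 4.2 making \eqref{defN1}, Proposition \ref{trans-sub} and Lemma \ref{cardinal1} applicable, and $\kappa'$ dominating $\kappa$) are exactly the points implicitly used by the paper.
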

\begin{proof}
This is similar to the proof of Lemma \ref{subcomod}.
\end{proof}

Applying Lemma \ref{subtranscomod} to $B=\bigcup\limits_{w\in Ob(\mathscr{X})}A_1^0(w)$, we see that there exists a subcomodule $\mathcal{Q}^0(n,\alpha)\hookrightarrow \mathcal{M}$ such that $A_1^0(w)\subseteq \mathcal{Q}^0(n,\alpha)_w \subseteq \mathcal{M}_w$ for each $w\in Ob(\mathscr{X})$ and 
$|\mathcal{Q}^0(n,\alpha)|\leq \kappa'$.
Suppose now that we have constructed a subcomodule $\mathcal{Q}^l(n,\alpha)\hookrightarrow \mathcal{M}$ for every $l\leq r$ such that  $ \bigcup\limits_{w\in Ob(\mathscr{X})}A_1^l(w)\subseteq el_{\mathscr{X}}(\mathcal{Q}^l(n,\alpha))$ and $|\mathcal{Q}^l(n,\alpha)|\leq \kappa'$.
We now set $A_0^{r+1}(w):= \mathcal{Q}^r(n,\alpha)_w$ for each $w\in Ob(\mathscr{X})$. 
Since $A_0^{r+1}(y)\subseteq \mathcal{M}_y\cong \alpha^!\mathcal{M}_z$ and $A_0^{r+1}(z)\subseteq \mathcal{M}_z$, using Lemma \ref{transdouble}, we can obtain a subcomodule $A_1^{r+1}(z)\hookrightarrow \mathcal{M}_z$ in $\mathbf{M}^{\mathcal{C}_z}$ such that
\begin{equation}\label{transm+1}
|A_1^{r+1}(z)|\leq \kappa' \qquad |\alpha^!A_1^{r+1}(z)|\leq \kappa' \qquad A_0^{r+1}(z)\subseteq A_1^{r+1}(z)\qquad A_0^{r+1}(y)\subseteq \alpha^!A_1^{r+1}(z)
\end{equation}
Then, we set $A_1^{r+1}(y):= \alpha^!A_1^{r+1}(z)$ and $A_1^{r+1}(w):= A_0^{r+1}(w)$ for any $w\neq y,z\in Ob(\mathscr{X})$.  It now follows from  \eqref{transm+1} that $A_0^{r+1}(w)\subseteq A_1^{r+1}(w)$ for all $w\in Ob(\mathscr{X})$ and each $|A_1^{r+1}(w)|\leq \kappa'$.  Using Lemma \ref{subtranscomod}, we can obtain a subcomodule $\mathcal{Q}^{r+1}(n,\alpha)\hookrightarrow \mathcal{M}$ such that
$\bigcup_{w\in Ob(\mathscr{X})}A_1^{r+1}(w)\subseteq el_{\mathscr{X}}(\mathcal{Q}^{r+1}(n,\alpha))$ and $|\mathcal{Q}^{r+1}(n,\alpha)|\leq \kappa'$.  
We finally set
\begin{equation}\label{transDefinition}
\mathcal{P}(n,\alpha):=\bigcup_{r\geq 0}\mathcal{Q}^r(n,\alpha)
\end{equation}

\begin{lem}
The family $\{\mathcal{P}(n,\alpha) | (n, \alpha) \in \mathbb{N} \times Mor(\mathscr{X})\}$ satisfies conditions (1')-(4'). 
\end{lem}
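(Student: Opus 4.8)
The plan is to transcribe the proof of Lemma~\ref{Pnalpha} for cis-comodules, replacing the coinduction functor $\alpha_*$ by $\alpha^!$ throughout, and substituting the two facts about $\alpha_*$ that were used there (exactness and commutation with direct limits) by the corresponding facts for $\alpha^!$: since $\mathcal{C}$ is $\Sigma$-injective, each $\alpha^!$ is exact, in particular it preserves monomorphisms; and since $\alpha^!$ is a left adjoint it preserves all colimits, in particular filtered ones.

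Conditions (1') and (2') are immediate from the definitions \eqref{transA00} and \eqref{transDefinition}. For (1'), in the case $(n,\alpha)=(1,\alpha_0)$ we have $A_0^0(x)=\mathcal{N}_x\ni m_0$, and $A_0^0(x)\subseteq A_1^0(x)\subseteq \mathcal{Q}^0(1,\alpha_0)_x\subseteq \mathcal{P}(1,\alpha_0)_x$. For (2'), if $(m,\beta)<(n,\alpha)$ then by the ``otherwise'' branch of \eqref{transA00} we get $\mathcal{P}(m,\beta)_w\subseteq A_0^0(w)\subseteq \mathcal{Q}^0(n,\alpha)_w\subseteq \mathcal{P}(n,\alpha)_w$ for every $w\in Ob(\mathscr{X})$. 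Condition (4') follows since $\mathcal{P}(n,\alpha)=\bigcup_{r\geq 0}\mathcal{Q}^r(n,\alpha)$ is a countable union of objects each of cardinality $\leq\kappa'$ and $\kappa'\geq\aleph_0$.

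The substance of the lemma is condition (3'). Fix $(n,\alpha\colon y\to z)$. Since $Com^{tr}$-$\mathcal{C}$ is a Grothendieck category by Theorem~\ref{Groth1}, colimits in it are computed pointwise, so $\mathcal{P}(n,\alpha)_z$ is the filtered union of the interleaved chain
\[
A_1^0(z)\hookrightarrow \mathcal{Q}^0(n,\alpha)_z\hookrightarrow A_1^1(z)\hookrightarrow \mathcal{Q}^1(n,\alpha)_z\hookrightarrow\cdots
\]
in $\mathbf{M}^{\mathcal{C}_z}$, the subchain $\{\mathcal{Q}^r(n,\alpha)_z\}_{r\geq 0}$ being cofinal. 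Applying the exact, colimit-preserving functor $\alpha^!$ exhibits $\alpha^!\mathcal{P}(n,\alpha)_z$ as the filtered union of $\alpha^!A_1^0(z)\hookrightarrow\alpha^!\mathcal{Q}^0(n,\alpha)_z\hookrightarrow\alpha^!A_1^1(z)\hookrightarrow\cdots$ in $\mathbf{M}^{\mathcal{C}_y}$, while $\mathcal{P}(n,\alpha)_y$ is the filtered union of $A_1^0(y)\hookrightarrow\mathcal{Q}^0(n,\alpha)_y\hookrightarrow A_1^1(y)\hookrightarrow\cdots$. By construction $A_1^r(y)=\alpha^!A_1^r(z)$ for every $r\geq 0$ (note that when $y=z$ the morphism $\alpha$ is the identity and $\alpha^!$ the identity functor, so the double assignment $A_1^r(y):=\alpha^!A_1^r(z)$, $A_1^r(w):=A_0^r(w)$ is consistent), and the inclusions $A_1^r(y)\subseteq\mathcal{Q}^r(n,\alpha)_y\subseteq A_1^{r+1}(y)$ together with $\alpha^!A_1^r(z)\subseteq\alpha^!\mathcal{Q}^r(n,\alpha)_z\subseteq\alpha^!A_1^{r+1}(z)$ show that the subchain $\{A_1^r(y)=\alpha^!A_1^r(z)\}_{r\geq 0}$ is cofinal both in the chain computing $\mathcal{P}(n,\alpha)_y$ and in the one computing $\alpha^!\mathcal{P}(n,\alpha)_z$. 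Hence both are canonically identified with $\mathrm{colim}_r\,\alpha^!A_1^r(z)$, and passing to this colimit the restriction of the coaction $^{\alpha}\mathcal{M}$ to the subcomodule $\mathcal{P}(n,\alpha)\subseteq\mathcal{M}$ yields the asserted isomorphism $^{\alpha}\mathcal{P}(n,\alpha)\colon\alpha^!\mathcal{P}(n,\alpha)_z\to\mathcal{P}(n,\alpha)_y$ in $\mathbf{M}^{\mathcal{C}_y}$.

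I expect the only delicate points to be the following. First, that applying $\alpha^!$ to a filtered union of subcomodules again produces a filtered union of subcomodules: this is exactly where exactness (preservation of monomorphisms, guaranteed by $\Sigma$-injectivity) and left adjointness (preservation of the filtered colimit) are simultaneously used. Second, the bookkeeping needed to check that the isomorphism produced from the cofinal subchains $\{A_1^r(y)=\alpha^!A_1^r(z)\}$ really coincides with the structure morphism $^{\alpha}\mathcal{P}(n,\alpha)$, i.e.\ that all the identifications along the chains are compatible with $^{\alpha}\mathcal{M}$. Everything else is a routine transcription of the cis-comodule argument in Lemma~\ref{Pnalpha}, with Lemmas \ref{transdouble}, \ref{subtranscomod} and \ref{cardinal1} playing the roles of Lemmas \ref{double}, \ref{subcomod} and \ref{cardinal}.
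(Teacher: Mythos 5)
Your proof is correct and is essentially the paper's own argument: the paper proves this lemma by transcribing the proof of Lemma \ref{Pnalpha} for cis-comodules, replacing exactness and colimit-preservation of $\alpha_*$ by exactness of $\alpha^!$ (from $\Sigma$-injectivity) and its preservation of colimits (as a left adjoint), and then identifying $\alpha^!\mathcal{P}(n,\alpha)_z$ and $\mathcal{P}(n,\alpha)_y$ as filtered unions of the same chain via $\alpha^!A_1^r(z)=A_1^r(y)$, exactly as you do. Your checks of (1'), (2'), (4') and the compatibility remark about the case $y=z$ match the intended verification, so no gap remains.
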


\begin{proof}
The idea of the proof is similar to that of Lemma \ref{Pnalpha}, using here the fact that $\alpha^!$ preserves colimits (being a left adjoint) and the assumption that $\alpha^!$ is exact. 
\end{proof}

\begin{lem}\label{transgen}
Let $\mathcal M\in Com^{tr}_c$-$\mathcal{C}$. For any $m_0\in el_\mathscr{X}(\mathcal{M})$ there exists a subcomodule $\mathcal{P}\subseteq \mathcal{M}$ in $Com_c^{tr}$-$\mathcal{C}$ such that $m_0\in el_\mathscr{X}(\mathcal{P})$ and $|\mathcal{P}|\leq \kappa'$.
\end{lem}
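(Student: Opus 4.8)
The plan is to follow the proof of Lemma~\ref{gen} almost verbatim, replacing the coinduction functors $\alpha_*$ by the functors $\alpha^!=H_{\mathcal{C}_y}(\mathcal{C}_x,-)$ and using that each $\alpha^!$, being a left adjoint, preserves all colimits. First I would set
$\mathcal{P}:=\bigcup_{(n,\alpha)\in\mathbb{N}\times Mor(\mathscr{X})}\mathcal{P}(n,\alpha)\subseteq\mathcal{M}$
in $Com^{tr}$-$\mathcal{C}$, where $\{\mathcal{P}(n,\alpha)\}$ is the family constructed above, satisfying conditions (1')--(4'). Since $\mathbb{N}\times Mor(\mathscr{X})$ with the lexicographic order is directed and the $\mathcal{P}(n,\alpha)$ form a directed system of subcomodules by (2'), this union is a well-defined subobject, and as filtered colimits in $Com^{tr}$-$\mathcal{C}$ are computed pointwise, $\mathcal{P}=\varinjlim_{(n,\alpha)}\mathcal{P}(n,\alpha)$ with $\mathcal{P}_w=\varinjlim_{(n,\alpha)}\mathcal{P}(n,\alpha)_w$ for each $w\in Ob(\mathscr{X})$. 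Condition (1') gives $m_0\in\mathcal{P}(1,\alpha_0)\subseteq\mathcal{P}$, so $m_0\in el_{\mathscr{X}}(\mathcal{P})$; and since each $|\mathcal{P}(n,\alpha)|\leq\kappa'$, $|Mor(\mathscr{X})|\leq\kappa'$ and $\kappa'$ is infinite, we get $|\mathcal{P}|\leq\kappa'$.

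The one substantive point is to check that $\mathcal{P}$ is cartesian, so that it actually lies in $Com^{tr}_c$-$\mathcal{C}$. Fix a morphism $\gamma:u\longrightarrow v$ in $\mathscr{X}$. The key observation is that $\{(m,\gamma):m\geq1\}$ is cofinal in $\mathbb{N}\times Mor(\mathscr{X})$ for the lexicographic order --- given any $(n,\alpha)$, taking $m>n$ yields $(m,\gamma)>(n,\alpha)$ since the $\mathbb{N}$-coordinate dominates --- so $\mathcal{P}=\varinjlim_{m\geq1}\mathcal{P}(m,\gamma)$ as well, and in particular $\mathcal{P}_v=\varinjlim_{m}\mathcal{P}(m,\gamma)_v$. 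Because $\gamma^!$ is a left adjoint it commutes with this filtered colimit, giving $\gamma^!\mathcal{P}_v=\varinjlim_{m}\gamma^!\mathcal{P}(m,\gamma)_v$. By condition (3'), each structure map $^{\gamma}\mathcal{P}(m,\gamma):\gamma^!\mathcal{P}(m,\gamma)_v\longrightarrow\mathcal{P}(m,\gamma)_u$ is an isomorphism, and these are compatible with the transition morphisms of the directed system; passing to the colimit, $^{\gamma}\mathcal{P}:\gamma^!\mathcal{P}_v\longrightarrow\mathcal{P}_u$ is a filtered colimit of isomorphisms, hence an isomorphism. As $\gamma$ was arbitrary, $\mathcal{P}\in Com^{tr}_c$-$\mathcal{C}$.

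I do not expect a genuine obstacle here: the entire difficulty --- producing subcomodules of cardinality $\leq\kappa'$ on which the cartesian condition along a fixed arrow holds --- has already been absorbed into the construction of the $\mathcal{P}(n,\alpha)$, which rests on Lemma~\ref{transdouble} together with the exactness of $\alpha^!$ guaranteed by the $\Sigma$-injectivity hypothesis. The only things that need care in this final step are the cofinality claim (immediate from the lexicographic order prioritizing the $\mathbb{N}$-coordinate) and the fact that $\gamma^!$ genuinely commutes with the relevant filtered colimit (automatic, since it is a left adjoint); neither uses anything beyond what is built into the definitions, so the argument closes exactly as in the cis case.
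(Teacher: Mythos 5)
Your argument is correct and is essentially the paper's own proof: both take $\mathcal{P}$ to be the filtered union of the $\mathcal{P}(n,\alpha)$, use conditions (1')--(4') for membership of $m_0$ and the cardinality bound, and verify the cartesian property along a fixed arrow $\gamma$ via the cofinality of $\{(m,\gamma)\}_{m\geq 1}$ together with the fact that $\gamma^!$, being a left adjoint, preserves the filtered colimit of the isomorphisms $^{\gamma}\mathcal{P}(m,\gamma)$. No gaps to report.
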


\begin{proof}
Since $\mathbb{N}\times Mor(\mathscr{X})$ with the lexicographic ordering is filtered, we set $\mathcal{P}:=\bigcup_{(n,\alpha)\in \mathbb{N}\times Mor(\mathscr{X})}\mathcal{P}(n,\alpha)\subseteq \mathcal{M}$. The proof now follows as in Lemma \ref{gen}, using the fact that $\alpha^!$ is a left adjoint and that  $\{(m,\beta)\}_{m\geq 1}$ is cofinal in $\mathbb{N}\times Mor(\mathscr{X})$ for any morphism $\beta$ in $\mathscr X$.
\end{proof} 

\begin{Thm}
Let $\mathscr{X}$ be a poset. Let $\mathcal{C}: \mathscr{X}\longrightarrow Coalg$ be a quasi-finite and $\Sigma$-injective representation taking values in right semiperfect coalgebras.  Then, $Com^{tr}_c$-$\mathcal{C}$ is a Grothendieck category.
\end{Thm}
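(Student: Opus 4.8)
The plan is to follow the proof of Theorem \ref{T4.8b} essentially verbatim, replacing the coinduction functors $\alpha_*$ by the functors $\alpha^!$ throughout. First I would invoke Lemma \ref{trans-ab}, which tells us that $Com^{tr}_c$-$\mathcal C$ is a cocomplete abelian category in which all colimits --- in particular filtered colimits --- are computed pointwise, i.e. $(\mathrm{colim}_{i}\,\mathcal{M}_i)_x = \mathrm{colim}_{i}\,(\mathcal{M}_i)_x$ in $\mathbf{M}^{\mathcal{C}_x}$ for each $x\in Ob(\mathscr{X})$. Since each $\mathbf{M}^{\mathcal{C}_x}$ is a Grothendieck category, filtered colimits are exact there; because exactness of a sequence in $Com^{tr}_c$-$\mathcal C$ is detected objectwise and finite limits are likewise computed pointwise, filtered colimits commute with finite limits in $Com^{tr}_c$-$\mathcal C$, which is the condition AB5.

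Next I would produce a set of generators. By Lemma \ref{transgen}, for any $\mathcal{M}\in Com^{tr}_c$-$\mathcal C$ and any $m_0\in el_{\mathscr{X}}(\mathcal{M})$ there is a cartesian subcomodule $\mathcal{P}\subseteq\mathcal{M}$ with $m_0\in el_{\mathscr{X}}(\mathcal{P})$ and $|\mathcal{P}|\leq\kappa'$; summing over all such $m_0$ exhibits $\mathcal{M}$ as a sum (indeed a filtered union) of cartesian subcomodules each of cardinality $\leq\kappa'$. Hence a set of representatives of the isomorphism classes of objects of $Com^{tr}_c$-$\mathcal C$ of cardinality $\leq\kappa'$ is a generating set; this really is a \emph{set}, because a cartesian trans-comodule is determined up to isomorphism by a bounded amount of data once its underlying cardinality is bounded by $\kappa'$. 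Combined with AB5 and cocompleteness, this shows that $Com^{tr}_c$-$\mathcal C$ is a Grothendieck category.

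The genuinely hard work has already been carried out in the lemmas leading up to Lemma \ref{transgen} --- namely the transfinite construction of the subcomodules $\mathcal{P}(n,\alpha)$ that close up simultaneously under the structure maps $^{\alpha}\mathcal{M}$ and their inverses, which rests on the $\Sigma$-injectivity hypothesis (exactness of $\alpha^!$, so that $\alpha^!$ preserves both monomorphisms and colimits) together with the presentability estimate $|\alpha^!N|\leq\kappa'$ of Lemma \ref{transdouble}. Assembling these facts into the statement, as indicated above, is routine and parallels the proof of Theorem \ref{T4.8b} line for line; I do not anticipate any further obstacle.

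\begin{proof}
From Lemma \ref{trans-ab}, $Com^{tr}_c$-$\mathcal{C}$ is cocomplete and abelian with all colimits computed pointwise; since each $\mathbf{M}^{\mathcal{C}_x}$ is a Grothendieck category and finite limits in $Com^{tr}_c$-$\mathcal{C}$ are also computed pointwise, filtered colimits commute with finite limits. Given $\mathcal{M}\in Com^{tr}_c$-$\mathcal{C}$, Lemma \ref{transgen} shows that $\mathcal{M}$ is the sum of the family $\{\mathcal{P}_{m_0}\}_{m_0\in el_{\mathscr{X}}(\mathcal{M})}$ of cartesian subcomodules with $|\mathcal{P}_{m_0}|\leq\kappa'$. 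Therefore the isomorphism classes of cartesian right trans-comodules $\mathcal{P}$ with $|\mathcal{P}|\leq\kappa'$ form a set of generators for $Com^{tr}_c$-$\mathcal{C}$, which is consequently a Grothendieck category.
\end{proof}
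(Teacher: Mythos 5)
Your proposal is correct and follows essentially the same route as the paper's own proof: pointwise computation of filtered colimits and finite limits gives AB5, and Lemma \ref{transgen} provides the decomposition of any cartesian trans-comodule into cartesian subcomodules of cardinality at most $\kappa'$, whose isomorphism classes then form a generating set.
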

\begin{proof} Since filtered colimits and finite limits in $Com^{tr}_c$-$\mathcal{C}$ are both computed pointwise, it is clear that they commute in $Com^{tr}_c$-$\mathcal{C}$.
  It also follows from Lemma \ref{transgen} that any $\mathcal{M}\in Com^{tr}_c$-$\mathcal{C}$ can be expressed as a sum of a family $\{\mathcal{P}_{m_0}\}_{m_0\in el_\mathscr{X}(\mathcal{M})}$ of cartesian subcomodules such that each $|\mathcal{P}_{m_0}|\leq \kappa'$. Therefore, the isomorphism classes of cartesian comodules $\mathcal{P}$ with $|\mathcal{P}|\leq \kappa'$ form a set of generators for $Com^{tr}_c$-$\mathcal{C}$. 
\end{proof}

\begin{Thm}
Let $\mathscr{X}$ be a poset. Let $\mathcal{C}: \mathscr{X}\longrightarrow Coalg$ be a quasi-finite and $\Sigma$-injective representation taking values in  right semiperfect coalgebras. Then, the inclusion functor $i:Com^{tr}_c$-$\mathcal{C} \longrightarrow Com^{tr}$-$\mathcal{C}$ has a right adjoint.
\end{Thm}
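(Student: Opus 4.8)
The plan is to argue exactly as in the proof of Theorem~\ref{T4.9b}, with the coinduction functors $\alpha_*$ replaced throughout by the functors $\alpha^!$. First I would record, using Lemma~\ref{trans-ab}, that $Com^{tr}_c$-$\mathcal{C}$ is a cocomplete abelian category in which every small colimit is computed pointwise, i.e.\ component-wise one simply takes the colimit in $\mathbf{M}^{\mathcal{C}_x}$. Since colimits in the ambient category $Com^{tr}$-$\mathcal{C}$ are also computed pointwise, this immediately yields that the inclusion functor $i:Com^{tr}_c$-$\mathcal{C}\hookrightarrow Com^{tr}$-$\mathcal{C}$ preserves all small colimits.

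Next I would invoke the fact just established, namely that $Com^{tr}_c$-$\mathcal{C}$ is a Grothendieck category; in particular it is cocomplete, well-powered, and has a set of generators (concretely, the isomorphism classes of cartesian trans-comodules $\mathcal{P}$ with $|\mathcal{P}|\leq \kappa'$, as supplied by Lemma~\ref{transgen}). With a colimit-preserving functor out of such a category, the existence of a right adjoint is then purely formal: by the special adjoint functor theorem in the form of \cite[Proposition 8.3.27]{KS}, the functor $i$ admits a right adjoint. This finishes the proof, and the resulting right adjoint may again be regarded as a coalgebraic analogue of the quasi-coherator, now in the trans-setting.

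I do not expect a genuine obstacle here, since the two substantive ingredients are already in hand: Lemma~\ref{trans-ab} provides the pointwise description of colimits (hence the cocontinuity of $i$), and the transfinite-induction argument culminating in Lemma~\ref{transgen} provides the Grothendieck property, which is precisely what makes the adjoint functor theorem applicable. The only point deserving a line of care is the verification that the pointwise colimit of cartesian trans-comodules is again cartesian, so that colimits in $Com^{tr}_c$-$\mathcal{C}$ really agree with those computed in $Com^{tr}$-$\mathcal{C}$; but this is exactly the content of Lemma~\ref{trans-ab}, which relies on each $\alpha^!$ being simultaneously exact and a left adjoint, hence commuting with the relevant kernels, cokernels and directed colimits.
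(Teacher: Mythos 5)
Your proposal is correct and follows essentially the same route as the paper: Lemma \ref{trans-ab} gives that colimits in $Com^{tr}_c$-$\mathcal{C}$ are computed pointwise, so the inclusion preserves colimits, and since $Com^{tr}_c$-$\mathcal{C}$ is a Grothendieck category (via Lemma \ref{transgen}), the right adjoint exists by \cite[Proposition 8.3.27]{KS}. Nothing further is needed.
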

\begin{proof}
Using Lemma \ref{trans-ab}, we know that the inclusion functor $i:Com^{tr}_c$-$\mathcal{C} \longrightarrow Com^{tr}$-$\mathcal{C}$ preserves colimits. Since $Com^{tr}_c$-$\mathcal{C}$ is a Grothendieck category, it follows that $i$ has a right adjoint.
\end{proof}

\section{Contramodules over coalgebra representations}
Let $C$ be a $K$-coalgebra having coproduct $\Delta_C$ and counit $\epsilon_C$. A contramodule over $C$  (see, for instance, \cite[\S 0.2.4]{semicontra}) consists of a $K$-space $M$ along with a $K$-linear  ``contraction map"  $\pi_M:Hom_K(C,M)\longrightarrow M$ such that the following diagrams

\begin{equation}\label{Feq5.0}
\xymatrix{
	& M\ar[d]_{Hom({\epsilon_C,M})} \ar[dr]^{id}&  \\ 
	& Hom_K(C,M)\ar[r]_{~~~~~~~\pi_M} & M}
\xymatrix{
	& Hom_K(C, Hom_K(C,M))\cong Hom_K(C\otimes C, M) \ar[d]_{\pi_M\circ{-}} \ar[r]^{~~~~~~~~~~~~~~~~~~~~~{-}\circ \Delta_C}
	& Hom_K(C,M) \ar[d]_{\pi_M} \\ 
	& Hom_K(C,M)\ar[r]_{\pi_M} 
	& M}
\end{equation}
commute.  If the isomorphism $Hom_K(C, Hom_K(C,M))\cong Hom_K(C\otimes C, M)$ in the diagram above is obtained from the adjointness of ${-}\otimes C$ and $Hom_K(C,{-}) $ (resp. the adjointness of  $C\otimes {-}$ and $Hom_K(C,{-}) $), then $M$ is a right (resp. left) $C$-contramodule. 

\smallskip
Equivalently, right $C$-contramodules  may be identified with objects in the Eilenberg-Moore category of modules over the following monad: let $T_C$ denote  the endofunctor  
\begin{equation}T_C:Vect\longrightarrow Vect~~~~~~~~~~ M \mapsto Hom_K(C,M)
\end{equation}
on the category of $K$-vector spaces. Then, there is a natural transformation $\eta: 1_{Vect}\longrightarrow T_C$ given by 
\begin{equation}\eta(M): M\longrightarrow Hom(C, M)\ \ \ \ \ \ m\mapsto \epsilon_C\cdot m\end{equation}
for each vector space $M$, where $(\epsilon_C\cdot m)(c):= \epsilon_C(c)m$ for each  $c\in C$.
There is also a natural transformation $\mu: T_C\circ T_C\longrightarrow T_C$ given by 
\begin{equation}\label{Feq5.1} Hom_K(C,Hom_K(C,M))\cong Hom_K(C\otimes C,M)\longrightarrow Hom(C,M)~~~~~~f\mapsto f\circ\Delta_C
\end{equation}
where  the isomorphism $Hom_K(C,Hom_K(C,M))\cong Hom_K(C\otimes C,M)$ in \eqref{Feq5.1} comes from the adjointness of ${-}\otimes C$ and $Hom_K(C,{-}) $. 
It may be verified that $(T_C, \mu, \eta)$ is a monad on the category of $K$-vector spaces. Accordingly, a right $C$-contramodule may be described (see, for instance, \cite[$\S$ 4.4]{BBW}) as a pair
$(M,\pi_M)$ satisfying the conditions in \eqref{Feq5.0}.  We will denote the category of right $C$-contramodules by $\mathbf M_{[C,\_\_]}$.

\smallskip
The free contradmodules are of the form $T_C(V)$ for some vector space $V$. Further, the free functor $T_C:Vect\longrightarrow \mathbf M_{[C,\_\_]}$ is left adjoint to the forgetful functor, which gives natural isomorphisms
 (see \cite[\S 0.2.4]{semicontra})
\begin{equation}\label{5adjcontra}
	\mathbf M_{[C,\_\_]}(T_C(V), M)\cong Hom_K(V,M)
\end{equation} for any vector space $V$ and any $C$-contramodule $M$. We mention that the forgetful functor $\mathbf M_{[C,\_\_]}\longrightarrow Vect$
is exact (see \cite[$\S$ 3.1.2]{semicontra}). From \eqref{5adjcontra}, we also note that direct sums of free contramodules can be expressed as
\begin{equation}\label{dirsumfreecm}
\underset{i\in I}{\bigoplus} \textrm{ }T_C(V_i)=T_C\left(\underset{i\in I}{\bigoplus}V_i\right)
\end{equation} for any family of vector spaces $\{V_i\}_{i\in I}$. 
We know  (see  \cite[\S 0.2.4]{semicontra}) that the category $\mathbf M_{[C,\_\_]}$ is abelian, has enough projectives and that the projective objects
in $\mathbf M_{[C,\_\_]}$ correspond to direct summands of free contramodules.  We note in particular that $C^*=Hom_K(C,K)$ is  a projective (free) $C$-contramodule.

\smallskip
Let $\alpha:C\longrightarrow D$ be a morphism of $K$-coalgebras. Then, we have a functor 
\begin{equation}
	\alpha_\bullet: \mathbf M_{[C,\_\_]}\longrightarrow \mathbf M_{[D,\_\_]}~~~~~~(\pi_M^C:Hom_K(C,M)\longrightarrow M)\mapsto (\pi_M^D:Hom_K(D,M)\xrightarrow{\_\_ \circ \alpha}Hom_K(C,M)\xrightarrow{\pi_M^C}M)
\end{equation}
This is called {\it contrarestriction of scalars}. We note that $\alpha_\bullet$ is exact.
The contrarestriction functor $\alpha_\bullet$ has a left adjoint $\alpha^\bullet$ called the {\it contraextension} of scalars \cite[$\S$ 4.8]{Pmem}. One can first define it on the free $D$-contramodules by setting
\begin{equation}\label{free}
	\alpha^\bullet(T_D(V)):=T_C(V)
\end{equation} for any vector space $V$. Then, $\alpha^\bullet$ may be extended to $\mathbf M_{[D,\_\_]}$ by using the fact that $\alpha^\bullet$ is right exact and that every $D$-contramodule may be expressed as a cokernel of free $D$-contramodules.

\smallskip
There is an equivalent definition of the contraextension of scalars which we describe now (see, \cite[\S 5.3]{co-contra}): Let $(P,\rho_P)$ be a right $C$-comodule and $(M,\pi_M)$ be a right $C$-contramodule. The $K$-space of cohomomorphisms $Cohom_C(P,M)$ is defined to be the coequalizer of the maps
$$\begin{tikzcd}[column sep=3cm, row sep=0.7cm]
Hom_K(P \otimes C,M) \cong Hom_K(P,Hom_K(C,M))\ar[r,shift left=.75ex,"{Hom_K(\rho_P,M)}"]
  \ar[r,shift right=.75ex,swap,"{Hom_K(P,\pi_M)}"]
&
Hom_K(P,M)
\\
\end{tikzcd}
$$
The contraextension $\alpha^\bullet:\mathbf M_{[D,\_\_]} \longrightarrow \mathbf M_{[C,\_\_]}$ may be defined as $\alpha^\bullet M=Cohom_D(C,M)$, where $C$ is considered as a right $D$-comodule. The right $C$-contramodule structure on $Cohom_D(C,M)$ is induced by the left $C$-comodule structure of $C$.

\begin{defn}\label{cont-rep}
	Let $\mathcal{C}: \mathscr{X}\longrightarrow Coalg$ be a coalgebra representation. A right trans-contramodule $\mathcal{M}$ over $\mathcal{C}$ will consist of the following data:
	\begin{itemize}
		\item[(1)] For each object $x\in Ob(\mathscr{X})$, a right $\mathcal{C}_x$-contramodule $\mathcal{M}_x$
		\item[(2)] For each morphism
		$\alpha: x \longrightarrow y$ in $\mathscr{X}$, a morphism
		$_{\alpha}{\mathcal{M}}: \mathcal{M}_y\longrightarrow {\alpha}_\bullet\mathcal{M}_x$ of right $\mathcal{C}_y$-contramodules (equivalently, a morphism $^{\alpha}\mathcal{M}: {\alpha}^\bullet\mathcal{M}_y\longrightarrow \mathcal{M}_x$ of right $\mathcal{C}_x$-contramodules)
	\end{itemize}
We further assume that $_{id_x}\mathcal{M} = id_{\mathcal{M}_x}$ and for any pair of composable morphisms
$x\xrightarrow{\alpha} y\xrightarrow{\beta}z$ in $\mathscr{X}$, we have $\beta_\bullet(_{\alpha}\mathcal{M})\circ {_{\beta}}\mathcal{M}= {_{\beta\alpha}}\mathcal{M}:\mathcal{M}_z\longrightarrow \beta_\bullet\mathcal{M}_y\longrightarrow \beta_\bullet\alpha_\bullet\mathcal{M}_x=(\beta\alpha)_\bullet\mathcal{M}_x$. The latter condition can be equivalently expressed as $^{\beta\alpha}\mathcal{M}={^{\alpha}}\mathcal{M}\circ \alpha^\bullet({^\beta}\mathcal{M})$.\\
A morphism $\eta: \mathcal{M} \longrightarrow \mathcal{N}$ of trans-contramodules over $\mathcal{C}$ consists of morphisms $\eta_x: \mathcal{M}_x\longrightarrow\mathcal{N}_x$ of right $\mathcal{C}_x$-contramodules for each $x\in Ob(\mathscr{X})$ such that for each morphism $x\longrightarrow y$ in $\mathscr{X}$ the following diagram commutes

\[\begin{tikzcd}
	\mathcal{M}_y \arrow{r}{\eta_y} \arrow[swap]{d}{_\alpha\mathcal{M}} & \mathcal{N}_y \arrow{d}{_\alpha\mathcal{N}} \\
	\alpha_\bullet\mathcal{M}_x\arrow{r}{\alpha_\bullet\eta_x} & \alpha_\bullet\mathcal{N}_x
\end{tikzcd}
\]
We denote this category of right trans-contramodules by $Cont^{tr}$-$\mathcal{C}$.
\end{defn}

\begin{rem} \emph{Unlike in the case of comodules, we only define trans-contramodules over coalgebra representations and not cis-contramodules. This is because, to the knowledge of the authors, there do not appear to be standard conditions in the literature that would make the contrarestriction functor $\alpha_\bullet$ a left adjoint. We note that if $\alpha=\epsilon_C:C\longrightarrow K$ is the counit, the contrarestriction $\alpha_\bullet$ reduces to the forgetful functor $\mathbf M_{[C,\_\_]}\longrightarrow Vect$ which does not preserve colimits in general.}

\end{rem}

\begin{thm}\label{Abcont}
	
	Let $\mathcal{C}: \mathscr{X}\longrightarrow Coalg$ be a  coalgebra representation. Then, $Cont^{tr}$-$\mathcal{C}$  is an abelian category.
	
\end{thm}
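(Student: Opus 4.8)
The plan is to follow, almost verbatim, the argument used to prove Proposition \ref{Ab}, with the role played there by the exact corestriction functor $\alpha^*$ now taken over by the contrarestriction functor $\alpha_\bullet$, which is exact. Recall from the discussion preceding Definition \ref{cont-rep} that each category $\mathbf M_{[\mathcal C_x,\_\_]}$ is abelian, and that for every $\alpha\in\mathscr X(x,y)$ the functor $\alpha_\bullet:\mathbf M_{[\mathcal C_x,\_\_]}\longrightarrow\mathbf M_{[\mathcal C_y,\_\_]}$ is exact; in particular it is additive, hence preserves the zero object and finite biproducts. So I would first note that the pointwise zero contramodule (with zero structure maps) is a zero object of $Cont^{tr}$-$\mathcal C$, and that the pointwise biproduct $(\mathcal M\oplus\mathcal N)_x:=\mathcal M_x\oplus\mathcal N_x$, equipped with $_\alpha(\mathcal M\oplus\mathcal N)={_\alpha\mathcal M}\oplus{_\alpha\mathcal N}$ under $\alpha_\bullet(\mathcal M_x\oplus\mathcal N_x)\cong\alpha_\bullet\mathcal M_x\oplus\alpha_\bullet\mathcal N_x$, is a biproduct. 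It then remains to build kernels and cokernels pointwise and to check that coimages agree with images.

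For a morphism $\eta:\mathcal M\longrightarrow\mathcal N$ in $Cont^{tr}$-$\mathcal C$, I would set ${Ker(\eta)}_x:=Ker(\eta_x)$ in $\mathbf M_{[\mathcal C_x,\_\_]}$. Since $\alpha_\bullet$ is exact, $\alpha_\bullet{Ker(\eta)}_x=Ker(\alpha_\bullet\eta_x)$. Using the commuting square in Definition \ref{cont-rep}, namely $\alpha_\bullet(\eta_x)\circ{_\alpha\mathcal M}={_\alpha\mathcal N}\circ\eta_y$, the composite ${Ker(\eta)}_y\hookrightarrow\mathcal M_y\xrightarrow{_\alpha\mathcal M}\alpha_\bullet\mathcal M_x\xrightarrow{\alpha_\bullet\eta_x}\alpha_\bullet\mathcal N_x$ factors through $\mathcal N_y$ via $\eta_y$ and hence vanishes on ${Ker(\eta)}_y$; therefore it factors uniquely through the kernel $\alpha_\bullet{Ker(\eta)}_x$, yielding $_\alpha{Ker(\eta)}:{Ker(\eta)}_y\to\alpha_\bullet{Ker(\eta)}_x$. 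The identity condition and the cocycle identity $_{\beta\alpha}{Ker(\eta)}=\beta_\bullet(_\alpha{Ker(\eta)})\circ{_\beta{Ker(\eta)}}$ follow from those for $\mathcal M$, because every map involved is determined by the monomorphism ${Ker(\eta)}_x\hookrightarrow\mathcal M_x$ (and $\alpha_\bullet$ preserves monomorphisms). One then checks that $(Ker(\eta)\hookrightarrow\mathcal M)$ has the universal property of a kernel in $Cont^{tr}$-$\mathcal C$, since that property is tested pointwise.

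Dually, I would set ${Coker(\eta)}_x:=Coker(\eta_x)$, so $\alpha_\bullet{Coker(\eta)}_x=Coker(\alpha_\bullet\eta_x)$ by exactness of $\alpha_\bullet$. The composite $\mathcal M_y\xrightarrow{\eta_y}\mathcal N_y\xrightarrow{_\alpha\mathcal N}\alpha_\bullet\mathcal N_x\twoheadrightarrow\alpha_\bullet{Coker(\eta)}_x$ equals $\mathcal M_y\xrightarrow{_\alpha\mathcal M}\alpha_\bullet\mathcal M_x\xrightarrow{\alpha_\bullet\eta_x}\alpha_\bullet\mathcal N_x\twoheadrightarrow\alpha_\bullet{Coker(\eta)}_x$, which is zero, so the universal property of $Coker(\eta_y)$ produces a unique map $_\alpha{Coker(\eta)}:{Coker(\eta)}_y\to\alpha_\bullet{Coker(\eta)}_x$, and the identity and cocycle conditions are inherited from $\mathcal N$ (using that $\alpha_\bullet$ preserves epimorphisms). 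Finally, since kernels and cokernels in $Cont^{tr}$-$\mathcal C$ are computed pointwise and each $\mathbf M_{[\mathcal C_x,\_\_]}$ is abelian, the canonical morphism $Coker\big({Ker(\eta)}\hookrightarrow\mathcal M\big)\longrightarrow Ker\big(\mathcal N\twoheadrightarrow{Coker(\eta)}\big)$ is an isomorphism because it is one in degree $x$ for every $x\in Ob(\mathscr X)$. Hence $Cont^{tr}$-$\mathcal C$ is abelian. There is no serious obstacle here; the one point to be careful about — and the reason I would phrase everything through the maps $_\alpha\mathcal M$ rather than the equivalent maps $^\alpha\mathcal M:\alpha^\bullet\mathcal M_y\to\mathcal M_x$ — is that the contraextension functor $\alpha^\bullet$ is only right exact and so does not transport kernels, whereas the exact functor $\alpha_\bullet$ lets both kernels and cokernels descend.
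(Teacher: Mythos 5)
Your proposal is correct and follows essentially the same route as the paper: kernels and cokernels are formed pointwise, the exactness of the contrarestriction functor $\alpha_\bullet$ is what transports the structure maps $_\alpha\mathcal M$ to the kernels and cokernels, and the coincidence of image and coimage is checked pointwise in each abelian category $\mathbf M_{[\mathcal C_x,\_\_]}$. Your closing remark — that one must phrase the argument through $\alpha_\bullet$ rather than the merely right exact $\alpha^\bullet$ — is exactly the point the paper's proof exploits, just spelled out more explicitly.
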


\begin{proof} For any morphism $\eta:\mathcal{M}\longrightarrow \mathcal{N}$ in $Cont^{tr}$-$\mathcal{C}$, we define the kernel and cokernel of $\eta$ by setting
	\begin{equation}\label{Feq5.8}
		{Ker(\eta)}_x:=Ker(\eta_x:\mathcal{M}_x\longrightarrow \mathcal{N}_x)\qquad {Coker(\eta)}_x:= Coker(\eta_x: \mathcal{M}_x\longrightarrow \mathcal{N}_x)
	\end{equation}
	for each $x\in \mathscr{X}$. For $\alpha:x\longrightarrow y$ in $\mathscr{X}$, the exactness of the contrarestriction functor $\alpha_\bullet: \mathbf M_{[\mathcal C_x,\_\_]}\longrightarrow \mathbf M_{[\mathcal C_y,\_\_]}$ induces the morphisms ${Ker(\eta)}_{\alpha}: {Ker(\eta)}_y\longrightarrow \alpha_\bullet{Ker(\eta)}_x$ and ${Coker(\eta)}_{\alpha}: {Coker(\eta)}_x\longrightarrow \alpha_\bullet{Coker(\eta)}_x$. Since each
	$\mathbf M_{[\mathcal C_x,\_\_]}$ is abelian, it also follows from \eqref{Feq5.8} that $Coker(Ker(\eta)\hookrightarrow \mathcal{M})= Ker (\mathcal{N}\twoheadrightarrow  Coker(\eta))$.

\end{proof}

We recall (see \cite[$\S$ 4.4]{BBW}) that for any coalgebra $C$ over a field $K$, the category of right $C$-contramodules is generated by the free (projective) right $C$-contramodule $C^*= Hom_K(C,K)$. Let $\mathcal{C}: \mathscr{X}\longrightarrow Coalg$ be a coalgebra representation. Let $\mathcal{M}\in Cont^{tr}$-$\mathcal{C}$. We consider an object $x\in Ob(\mathscr{X})$ and a morphism
\begin{equation}
	\eta: {\mathcal{C}_x^*}\longrightarrow  \mathcal{M}_x
\end{equation} in $\mathbf M_{[\mathcal C_x,\_\_]}$. 
For each object $y\in Ob(\mathscr{X})$, we set $\mathcal{N}_y\subseteq \mathcal{M}_y$ to be the image in $\mathbf M_{[\mathcal C_y,\_\_]}$ of the family of maps

\begin{align}\label{defN-cont}
	\mathcal{N}_y= Im\left(\bigoplus_{\beta \in \mathscr{X}(y,x)}\beta^\bullet({\mathcal{C}_x^*})=\bigoplus_{\beta \in \mathscr{X}(y,x)}\mathcal{C}_y^* \xrightarrow{\beta^\bullet\eta}\beta^\bullet\mathcal{M}_x\xrightarrow{^{\beta}\mathcal{M}}\mathcal{M}_y\right)
\end{align}
where the equality in \eqref{defN-cont} follows from \eqref{free}.
Let $i_y$ denote the inclusion $\mathcal{N}_y\hookrightarrow \mathcal{M}_y$  in $\mathbf M_{[\mathcal C_y,\_\_]}$  and for each $\beta\in \mathscr{X}(y,x)$, we denote by $\eta'_\beta:\beta^\bullet ({\mathcal{C}_x^*})={\mathcal{C}_y^*}\longrightarrow \mathcal{N}_y$ the canonical morphism induced from \eqref{defN-cont}.

\smallskip
For $\alpha \in \mathscr{X}(z,y)$ and $\beta\in \mathscr{X}(y,x)$, we may now verify as in the proof of Lemma \ref{lem1} that the following composition  in $\mathbf M_{[\mathcal C_y,\_\_]}$ 
	\begin{equation}\label{5.11Feq}
		\beta^\bullet({\mathcal{C}_x^*})={\mathcal{C}_y^*}\xrightarrow{\eta'_\beta} \mathcal{N}_y\xrightarrow{i_y}\mathcal{M}_y\xrightarrow{_\alpha\mathcal{M} }\alpha_\bullet\mathcal{M}_z
	\end{equation}
	factors through $\alpha_\bullet(i_z):\alpha_\bullet\mathcal{N}_z\longrightarrow \alpha_\bullet\mathcal{M}_z$.

	  \begin{thm}\label{Thm5.4R}
		Let $\mathcal{C}: \mathscr{X}\longrightarrow Coalg$ be a coalgebra representation and  let  $\mathcal{M}\in Cont^{tr}$-$\mathcal{C}$.  Then, the objects $\{\mathcal{N}_y\in \mathbf M_{[\mathcal C_y,\_\_]}\}_{y\in Ob(\mathscr{X})}$ together determine a subobject $\mathcal{N}\subseteq \mathcal M$ in $Cont^{tr}$-$\mathcal{C}$.		
	\end{thm}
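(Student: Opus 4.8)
The plan is to mimic the argument for cis-comodules in Proposition~\ref{zzcsub}, transported to the contramodule setting via the adjunctions $(\alpha^\bullet,\alpha_\bullet)$ and $(T_C,\text{forget})$. Fix $\alpha\in\mathscr X(z,y)$; we must show the morphism $_\alpha\mathcal M:\mathcal M_y\longrightarrow\alpha_\bullet\mathcal M_z$ restricts to a morphism $_\alpha\mathcal N:\mathcal N_y\longrightarrow\alpha_\bullet\mathcal N_z$ fitting into a commuting square with the inclusions $i_y$ and $\alpha_\bullet(i_z)$. First note $\alpha_\bullet(i_z)$ is a monomorphism since $\alpha_\bullet$ is exact, so such a lift is unique if it exists and automatically makes the square commute; thus everything reduces to a factorization statement.

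The key step is the following lifting criterion: since $\mathbf M_{[\mathcal C_y,\_\_]}$ is generated by the free contramodule $\mathcal C_y^*=T_{\mathcal C_y}(K)$, and since $\mathcal C_y^*$ is projective, a morphism $f:\mathcal N_y\to\alpha_\bullet\mathcal M_z$ factors through the monomorphism $\alpha_\bullet(i_z)$ as soon as, for every morphism $\zeta:\mathcal C_y^*\to\mathcal N_y$, the composite $f\circ\zeta$ factors through $\alpha_\bullet(i_z)$. Here one uses the presentation \eqref{defN-cont}: the canonical epimorphism $\bigoplus_{\beta\in\mathscr X(y,x)}\eta'_\beta:\bigoplus_\beta\beta^\bullet(\mathcal C_x^*)\twoheadrightarrow\mathcal N_y$ is an epimorphism of contramodules (its codomain is by definition the image), and by \eqref{dirsumfreecm} its source $\bigoplus_\beta\mathcal C_y^*=T_{\mathcal C_y}\big(\bigoplus_\beta K\big)$ is a free, hence projective, contramodule; therefore any $\zeta:\mathcal C_y^*\to\mathcal N_y$ lifts along this epimorphism to $\zeta'':\mathcal C_y^*\to\bigoplus_\beta\beta^\bullet(\mathcal C_x^*)$. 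It then suffices to check the factorization on each summand, i.e.\ to show that $_\alpha\mathcal M\circ i_y\circ\eta'_\beta$ factors through $\alpha_\bullet(i_z)$ — and this is exactly the statement already established in \eqref{5.11Feq}. (Strictly, one should observe that $\zeta''$ lands in a \emph{finite} sub-sum, or simply invoke that $_\alpha\mathcal M\circ i_y$ composed with the full direct sum of the $\eta'_\beta$ factors through $\alpha_\bullet(i_z)$ because $\alpha_\bullet$ preserves the relevant coproduct-image — since $\alpha_\bullet$ is exact it commutes with images, and one forms the image of $\bigoplus_\beta(\alpha_\bullet(i_z)\text{-factored maps})$ inside $\alpha_\bullet\mathcal M_z$.)

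Having produced $_\alpha\mathcal N:\mathcal N_y\to\alpha_\bullet\mathcal N_z$ for every $\alpha$, the cocycle/functoriality conditions $_{id_x}\mathcal N=id$ and $_{\beta\alpha}\mathcal N=\beta_\bullet(_\alpha\mathcal N)\circ{_\beta\mathcal N}$ follow by post-composing with the monomorphism into $\mathcal M$ and using that $\mathcal M$ itself satisfies them (again uniqueness of lifts along monomorphisms); thus $\{\mathcal N_y\}$ assembles into an object $\mathcal N\in Cont^{tr}$-$\mathcal C$, and the inclusions $i_y$ constitute a monomorphism $\mathcal N\hookrightarrow\mathcal M$ in $Cont^{tr}$-$\mathcal C$.

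The main obstacle is the same one flagged in the paper's introduction: colimits (in particular images and direct sums) in $\mathbf M_{[C,\_\_]}$ are \emph{not} computed on underlying vector spaces, so one cannot check the factorization elementwise. The resolution is to work entirely with projective (free) generators and the adjunction \eqref{5adjcontra}, exactly as above: replace "pick an element" by "pick a map out of $\mathcal C_y^*$", and replace "generating set $\{G_k\}$" in the proof of Proposition~\ref{zzcsub} by the single projective generator $\mathcal C_y^*$. I expect the argument to be otherwise completely parallel to Proposition~\ref{zzcsub}, with Lemma~\ref{lem1}'s role played by \eqref{5.11Feq}.
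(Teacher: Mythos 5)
Your argument is correct and follows the paper's proof essentially verbatim: reduce to the factorization criterion for maps out of the projective generator $\mathcal C_y^*$ (the paper cites \cite[Lemma 3.2]{AB} for this), lift $\zeta$ along the epimorphism $\bigoplus_{\beta}\eta'_\beta$ using projectivity and \eqref{dirsumfreecm}, and conclude from the factorization \eqref{5.11Feq} for each $\eta'_\beta$. Your parenthetical worry is unneeded -- no finiteness or commutation of $\alpha_\bullet$ with images is required, since the componentwise factorizations through $\alpha_\bullet(i_z)$ assemble directly by the universal property of the coproduct -- but this does not affect the correctness of the argument.
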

	
	\begin{proof}
		Let $\alpha\in \mathscr{X}(z,y)$. Since $\alpha_\bullet$ is a right adjoint, it preserves monomorphisms and hence $\alpha_\bullet(i_z): \alpha_\bullet\mathcal{N}_z\longrightarrow \alpha_\bullet\mathcal{M}_z$ is a monomorphism in $\mathbf M_{[\mathcal C_y,\_\_]}$. We will now show that the morphism $_\alpha\mathcal{M}: \mathcal{M}_y\longrightarrow \alpha_\bullet\mathcal{M}_z$ restricts to a morphism $_\alpha\mathcal{N}: \mathcal{N}_y\longrightarrow \alpha_\bullet\mathcal{N}_z$ giving us a commutative diagram 
		
		\[\begin{tikzcd}
			\mathcal{N}_y \arrow{r}{i_y} \arrow[swap]{d}{_\alpha\mathcal{N}} & \mathcal{M}_y \arrow{d}{_\alpha\mathcal{M}} \\
			\alpha_\bullet\mathcal{N}_z \arrow{r}{\alpha_\bullet(i_z)} & \alpha_\bullet\mathcal{M}_z
		\end{tikzcd}
		\]
		
		We recall that the abelian category $\mathbf M_{[\mathcal C_y,\_\_]}$ has a projective generator $\mathcal{C}_y^*$. It is clear from the proof of \cite[Lemma 3.2]{AB} that  it suffices to show that for any morphism $\zeta:\mathcal{C}_y^*\longrightarrow \mathcal{N}_y$ in $\mathbf M_{[\mathcal C_y,\_\_]}$ there exists $\zeta':\mathcal{C}_y^*\longrightarrow \alpha_\bullet\mathcal{N}_z$ such that $\alpha_\bullet(i_z)\circ \zeta'= {_\alpha}\mathcal{M}\circ i_y\circ \zeta$. By \ref{defN-cont}, we have an epimorphism
		\begin{equation}
			\bigoplus_{\beta \in \mathscr{X}(y,x)}\eta'_\beta: \bigoplus_{\beta\in \mathscr{X}(y,x)}{\mathcal{C}_y^*}\longrightarrow \mathcal{N}_y
		\end{equation}
		 in $\mathbf M_{[\mathcal C_y,\_\_]}$. Since $\mathcal{C}_y^*$ is projective, the morphism $\zeta:\mathcal{C}_y^*\longrightarrow \mathcal{N}_y$ can be lifted to a morphism  $ {\zeta}'': {\mathcal{C}_y^*} \longrightarrow \bigoplus_{\beta\in \mathscr{X}(y,x)}{\mathcal{C}_y^*}$ such that
		\begin{equation}\label{eq1-cont}
			\zeta = \left(\bigoplus_{\beta \in \mathscr{X}(y,x)}{\eta_\beta}'\right)\circ \zeta''
		\end{equation}
		We know from \eqref{5.11Feq} that $_\alpha\mathcal{M}\circ i_y\circ \eta_\beta'$ factors through $\alpha_\bullet(i_z):\alpha_\bullet\mathcal{N}_z\longrightarrow \alpha_\bullet\mathcal{M}_z$ for each $\beta \in \mathscr{X}(y,x)$. Therefore, it follows from \eqref{eq1-cont} that $_\alpha\mathcal{M}\circ i_y\circ \zeta$ factors through $\alpha_\bullet(i_z)$ as required. This proves the result.
		
	\end{proof}

\begin{thm}\label{lem2-cont}
	Let $\eta_1': {\mathcal{C}_x^*}\longrightarrow \mathcal{N}_x$ be the canonical morphism corresponding to the identity map in $\mathscr{X}(x,x)$. Then, for any $y\in Ob(\mathscr{X})$, we have
	\begin{equation}
		\mathcal{N}_y= Im \left(	\bigoplus_{\beta \in \mathscr{X}(y,x)}\ {\mathcal{C}_y^*}=\bigoplus_{\beta \in \mathscr{X}(y,x)}\beta^\bullet({\mathcal{C}_x^*}) 
	\xrightarrow{\beta^\bullet\eta_1'}\beta^\bullet\mathcal{N}_x\xrightarrow{^{\beta}\mathcal{N}}\mathcal{N}_y \right)
	\end{equation}
	
\end{thm}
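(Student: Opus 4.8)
The plan is to transpose the proof of Lemma \ref{lem2} to the contramodule setting, replacing the corestriction functors $\beta^*$ by the contraextension functors $\beta^\bullet$ and the finite‑dimensional projective comodule $V$ by the projective generator $\mathcal{C}_x^*$; recall that by \eqref{free} one has $\beta^\bullet(\mathcal{C}_x^*)=\mathcal{C}_y^*$ for $\beta\in\mathscr{X}(y,x)$, which is the identification already built into \eqref{defN-cont}. The only structural inputs needed are that $\mathcal{N}\subseteq\mathcal{M}$ is a genuine subobject in $Cont^{tr}$-$\mathcal{C}$ (Proposition \ref{Thm5.4R}) and that each ambient category $\mathbf M_{[\mathcal C_y,\_\_]}$ is abelian (Proposition \ref{Abcont}); everything else is a diagram chase.

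First I would fix $\beta\in\mathscr{X}(y,x)$ and record the commutative diagram
\[
\begin{tikzcd}
\beta^\bullet(\mathcal{C}_x^*) \arrow{r}{\beta^\bullet\eta_1'} & \beta^\bullet\mathcal{N}_x \arrow{r}{{}^\beta\mathcal{N}} \arrow[swap]{d}{\beta^\bullet i_x} & \mathcal{N}_y \arrow{d}{i_y}\\
& \beta^\bullet\mathcal{M}_x \arrow{r}{{}^\beta\mathcal{M}} & \mathcal{M}_y
\end{tikzcd}
\]
in $\mathbf M_{[\mathcal C_y,\_\_]}$. The right‑hand square commutes because $i:\mathcal{N}\hookrightarrow\mathcal{M}$ is a morphism in $Cont^{tr}$-$\mathcal{C}$, i.e. $i_y\circ{}^\beta\mathcal{N}={}^\beta\mathcal{M}\circ\beta^\bullet(i_x)$. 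Since $\eta_1'$ is by construction the corestriction of $\eta$ through $\mathcal{N}_x$, we have $i_x\circ\eta_1'=\eta$, and applying $\beta^\bullet$ gives $\beta^\bullet(i_x)\circ\beta^\bullet(\eta_1')=\beta^\bullet(\eta)$. Chasing the diagram then yields
\[
Im\!\left({}^\beta\mathcal{M}\circ\beta^\bullet\eta\right)=Im\!\left({}^\beta\mathcal{M}\circ\beta^\bullet(i_x)\circ\beta^\bullet(\eta_1')\right)=Im\!\left(i_y\circ{}^\beta\mathcal{N}\circ\beta^\bullet(\eta_1')\right)=Im\!\left({}^\beta\mathcal{N}\circ\beta^\bullet(\eta_1')\right),
\]
the last equality because $i_y$ is a monomorphism in $\mathbf M_{[\mathcal C_y,\_\_]}$.

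Finally I would sum over $\beta\in\mathscr{X}(y,x)$: since the image of a morphism out of a direct sum is the sum of the images of its components, \eqref{defN-cont} reads $\mathcal{N}_y=\sum_{\beta\in\mathscr{X}(y,x)}Im({}^\beta\mathcal{M}\circ\beta^\bullet\eta)$, and by the displayed identity this equals $\sum_{\beta\in\mathscr{X}(y,x)}Im({}^\beta\mathcal{N}\circ\beta^\bullet(\eta_1'))$, which is exactly the image of the map $\bigoplus_{\beta\in\mathscr{X}(y,x)}\beta^\bullet(\mathcal{C}_x^*)\to\mathcal{N}_y$ appearing in the statement. I do not anticipate a genuine obstacle here: the argument is essentially formal once Propositions \ref{Abcont} and \ref{Thm5.4R} are available, the only mild point of attention being to track the variance of $\beta^\bullet$ (the indexing $\beta\in\mathscr{X}(y,x)$ together with $\beta^\bullet$ landing in $\mathbf M_{[\mathcal C_y,\_\_]}$) so that all the composites above are well typed.
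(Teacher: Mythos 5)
Your proof is correct and follows essentially the same route as the paper: the same commutative diagram, the identity $i_x\circ\eta_1'=\eta$ pushed through $\beta^\bullet$, and monicity of $i_y$ in $\mathbf M_{[\mathcal C_y,\_\_]}$. The only cosmetic difference is that you argue componentwise and then take the sum of images, whereas the paper keeps the single map out of the direct sum throughout; since images and sums of subobjects are taken in the abelian, cocomplete category $\mathbf M_{[\mathcal C_y,\_\_]}$, these coincide and your version is fine.
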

\begin{proof}
	Let $\beta \in \mathscr{X}(y,x)$. We consider the following commutative diagram:
	\begin{equation}\label{commdiag-cont}
		\begin{tikzcd}
			\beta^\bullet({\mathcal{C}_x^*}) \arrow{r}{\beta^\bullet\eta_1'} & \beta^\bullet\mathcal{N}_x \arrow{r}{^\beta\mathcal{N}} \arrow[swap]{d}{\beta^\bullet  i_x} & \mathcal{N}_y \arrow{d}{i_y}\\
			& \beta^\bullet\mathcal{M}_x \arrow{r}{^\beta\mathcal{M}} & \mathcal{M}_y
		\end{tikzcd}
	\end{equation}
	Since $i_x \circ \eta_1'=\eta$, we have $(\beta^\bullet i_x) \circ (\beta^\bullet\eta_1')=\beta^\bullet\eta$. This gives
	\begin{equation*}
	\begin{array}{ll}
		Im\left(\bigoplus_{\beta \in \mathscr{X}(y,x)}{^\beta}\mathcal{M} \circ (\beta^\bullet\eta) \right)&=Im\left(\bigoplus_{\beta \in \mathscr{X}(y,x)}{^\beta}\mathcal{M} \circ  (\beta^\bullet i_x) \circ (\beta^\bullet\eta_1') \right)\\&=Im \left(\bigoplus_{\beta \in \mathscr{X}(y,x)}i_y \circ  {^\beta}\mathcal{N} \circ (\beta^\bullet\eta_1') \right)=Im \left(\bigoplus_{\beta \in \mathscr{X}(y,x)} {^\beta}\mathcal{N} \circ (\beta^\bullet\eta_1') \right)\\
		\end{array}
	\end{equation*}
	where the last equality follows from the fact that $i_y$ is a monomorphism of contramodules. The result now follows using the definition in \eqref{defN-cont}.
\end{proof}

For $\mathcal{M}\in Cont^{tr}$-$\mathcal{C}$,  let  $el_{\mathscr{X}}(\mathcal{M})$ denote the union $\cup_{x\in Ob(\mathscr{X})} \mathcal{M}_x$ as sets. The cardinality of $el_{\mathscr{X}}(\mathcal{M})$ will be denoted by $|\mathcal{M}|$. For any coalgebra $C$, we recall that the forgetful functor $\mathbf M_{[C,\_\_]}\longrightarrow Vect$ is exact. Hence, epimorphisms and monomorphisms in $\mathbf M_{[C,\_\_]}$ correspond respectively to epimorphisms and monomorphisms in $Vect$. It follows  that for any quotient or subobject $\mathcal{N}$ of $\mathcal{M}\in Cont^{tr}$-$\mathcal{C}$, we have $|\mathcal{N}|\leq |\mathcal{M}|$. Now, we set
\begin{equation}
	\kappa:=sup\{\aleph_0, (|K|^{|Mor(\mathscr{X})|})^{|\mathcal{C}_y|}~|~ y\in Ob(\mathscr{X})\}
\end{equation}

\begin{lem}\label{cardinal-cont} Let $\mathcal N$ be as constructed in Proposition \ref{Thm5.4R}. Then, 
	we have $|\mathcal{N}|\leq \kappa$. 
	
\end{lem}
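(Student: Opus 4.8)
The plan is to bound $|\mathcal{N}_y|$ for each $y\in Ob(\mathscr{X})$ and then sum. By Lemma \ref{lem2-cont}, $\mathcal{N}_y$ is an epimorphic image of the direct sum $\bigoplus_{\beta\in\mathscr{X}(y,x)}\beta^\bullet(\mathcal{C}_x^*)$; since each $\beta^\bullet(\mathcal{C}_x^*)=\mathcal{C}_y^*$ by \eqref{free}, this is a direct sum of at most $|Mor(\mathscr{X})|$ copies of $\mathcal{C}_y^*$ in $\mathbf{M}_{[\mathcal{C}_y,\_\_]}$. Because the forgetful functor $\mathbf{M}_{[\mathcal{C}_y,\_\_]}\longrightarrow Vect$ is exact, an epimorphism of contramodules is surjective on underlying vector spaces, so $|\mathcal{N}_y|$ is at most the cardinality of the underlying vector space of $\bigoplus_{\beta\in\mathscr{X}(y,x)}\mathcal{C}_y^*$. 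Here is the one subtle point: direct sums of contramodules do \emph{not} agree with direct sums of underlying vector spaces. By \eqref{dirsumfreecm}, a direct sum of free contramodules $\bigoplus_{i\in I}T_{\mathcal{C}_y}(V_i)$ equals the free contramodule $T_{\mathcal{C}_y}(\bigoplus_i V_i)$, whose underlying space is $Hom_K(\mathcal{C}_y,\bigoplus_i V_i)$. Applying this with each $V_i=K$ and $|I|\le|Mor(\mathscr{X})|$, the underlying space of $\bigoplus_{\beta}\mathcal{C}_y^*$ is $Hom_K(\mathcal{C}_y, K^{(Mor(\mathscr{X}))})$, which has cardinality at most $(|K|^{|Mor(\mathscr{X})|})^{|\mathcal{C}_y|}\le\kappa$.

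So the key steps, in order, are: (1) invoke Lemma \ref{lem2-cont} to realize $\mathcal{N}_y$ as a quotient of $\bigoplus_{\beta\in\mathscr{X}(y,x)}\beta^\bullet(\mathcal{C}_x^*)$ in $\mathbf{M}_{[\mathcal{C}_y,\_\_]}$; (2) use \eqref{free} to rewrite each summand as $\mathcal{C}_y^*$, and \eqref{dirsumfreecm} to identify the direct sum with the single free contramodule $T_{\mathcal{C}_y}(K^{(\mathscr{X}(y,x))})$, whose underlying vector space is $Hom_K(\mathcal{C}_y,K^{(\mathscr{X}(y,x))})$; (3) estimate $|Hom_K(\mathcal{C}_y,K^{(\mathscr{X}(y,x))})|\le(|K|^{|Mor(\mathscr{X})|})^{|\mathcal{C}_y|}\le\kappa$, using that $|\mathscr{X}(y,x)|\le|Mor(\mathscr{X})|$; (4) since the forgetful functor is exact, epimorphisms are surjective, so $|\mathcal{N}_y|\le\kappa$; (5) conclude $|\mathcal{N}|=\sum_{y\in Ob(\mathscr{X})}|\mathcal{N}_y|\le|Ob(\mathscr{X})|\cdot\kappa\le\kappa$, noting $|Ob(\mathscr{X})|\le|Mor(\mathscr{X})|\le\kappa$ (identifying objects with their identity morphisms).

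The main obstacle — really the only place where care is needed — is step (2): one must resist the temptation to compute the direct sum of contramodules vector-space-wise, and instead use the monad/free-contramodule identity \eqref{dirsumfreecm} to get the correct underlying space $Hom_K(\mathcal{C}_y,\bigoplus K)$ rather than the naive $\bigoplus Hom_K(\mathcal{C}_y,K)$. This is exactly why the cardinal $\kappa$ is defined with the iterated exponential $(|K|^{|Mor(\mathscr{X})|})^{|\mathcal{C}_y|}$ rather than the simpler expression that would suffice in the comodule setting (compare Lemma \ref{cardinal}). Everything else is a routine cardinal arithmetic bookkeeping: $|K|^{|Mor(\mathscr{X})|}\ge|K|$, the exponent $|\mathcal{C}_y|$ dominates finite powers, and the supremum over $y$ together with the factor $|Ob(\mathscr{X})|$ is absorbed into $\kappa$ since $\kappa\ge\aleph_0$ and $\kappa\ge|Mor(\mathscr{X})|$.
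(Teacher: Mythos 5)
Your proposal is correct and matches the paper's own argument: both realize $\mathcal{N}_y$ as an epimorphic image of $\bigoplus_{\beta\in\mathscr{X}(y,x)}\mathcal{C}_y^*$ via Proposition \ref{lem2-cont}, identify that direct sum of free contramodules with $Hom_K(\mathcal{C}_y,\bigoplus_{\beta}K)$ using \eqref{dirsumfreecm} (equivalently, that $T_{\mathcal{C}_y}$ preserves direct sums), bound its cardinality by $(|K|^{|Mor(\mathscr{X})|})^{|\mathcal{C}_y|}\leq\kappa$, and sum over $y\in Ob(\mathscr{X})$. Your explicit emphasis on why the contramodule direct sum must not be computed vector-space-wise is exactly the point the paper's definition of $\kappa$ is designed for, so there is nothing to correct.
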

\begin{proof}
	Let $y\in Ob(\mathscr{X})$. From Proposition \ref{lem2-cont}, we have 
	\begin{equation}
		\mathcal{N}_y= Im \left(\bigoplus_{\beta \in \mathscr{X}(y,x)}\mathcal{C}_y^* \xrightarrow{\beta^\bullet\eta_1'}\beta^\bullet\mathcal{N}_x\xrightarrow{{^\beta}\mathcal{N}}\mathcal{N}_y \right)
	\end{equation}
	From the adjunction in \eqref{5adjcontra}, we know that the left adjoint $T_{\mathcal C_y}: Vect\longrightarrow \mathbf M_{[\mathcal C_y,\_\_]}$ preserves direct sums. Since $\mathcal C_y^*=Hom_K(\mathcal C_y,K)$ is a free contramodule, we therefore have
	\begin{equation} \bigoplus_{\beta \in \mathscr{X}(y,x)}\mathcal{C}_y^*=Hom_K\left(\mathcal{C}_y, \bigoplus_{\beta \in \mathscr{X}(y,x)}K\right)
	\end{equation}  Since $\mathcal{N}_y$ is an epimorphic image of $\bigoplus_{\beta \in \mathscr{X}(y,x)}\mathcal{C}_y^*$, we now have
	\begin{equation}\label{directsumoffreecont}
		|\mathcal{N}_y|\leq |\bigoplus_{\beta \in \mathscr{X}(y,x)}\mathcal{C}_y^*|= |Hom_K(\mathcal{C}_y, \bigoplus_{\beta \in \mathscr{X}(y,x)}K)|\leq \kappa
	\end{equation}
Thus, $|\mathcal{N}|=\sum_{y\in Ob(\mathscr{X})}|\mathcal{N}_y|\leq \kappa$.
\end{proof}

\begin{Thm}\label{Grothcont}
	Let $\mathcal{C}: \mathscr{X}\longrightarrow Coalg$ be a coalgebra representation. Then, $Cont^{tr}$-$\mathcal{C}$ is cocomplete and has a set of generators.
\end{Thm}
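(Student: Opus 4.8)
The plan is to handle the two assertions — cocompleteness and the existence of a set of generators — separately, in both cases working with the description of a trans-contramodule via the maps $^{\alpha}\mathcal M\colon\alpha^\bullet\mathcal M_y\longrightarrow\mathcal M_x$ rather than via $_\alpha\mathcal M\colon\mathcal M_y\longrightarrow\alpha_\bullet\mathcal M_x$, the point being that $\alpha^\bullet$ is a left adjoint whereas $\alpha_\bullet$ need not be.

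\emph{Cocompleteness.} Each category $\mathbf M_{[\mathcal C_x,\_\_]}$ is cocomplete: it is abelian, and it has arbitrary coproducts since a coproduct of free contramodules is again free by \eqref{dirsumfreecm} while every contramodule is a cokernel of a morphism of free contramodules, and an abelian category with coproducts has all small colimits. I would then show that a small diagram $\{\mathcal M^j\}_{j\in\mathcal J}$ in $Cont^{tr}$-$\mathcal C$ has a colimit computed pointwise: set $\mathcal M_x:=\mathrm{colim}_{j}\,\mathcal M^j_x$ in $\mathbf M_{[\mathcal C_x,\_\_]}$ for each $x\in Ob(\mathscr X)$; for $\alpha\in\mathscr X(x,y)$, since $\alpha^\bullet$ preserves colimits we have $\alpha^\bullet\mathcal M_y=\mathrm{colim}_j\,\alpha^\bullet\mathcal M^j_y$, so the maps $^{\alpha}\mathcal M^j$ assemble into $^{\alpha}\mathcal M\colon\alpha^\bullet\mathcal M_y\longrightarrow\mathcal M_x$; the cocycle identity $^{\beta\alpha}\mathcal M={^{\alpha}}\mathcal M\circ\alpha^\bullet({^{\beta}}\mathcal M)$ and the required universal property follow from the corresponding facts for the pointwise colimits (again using that $\alpha^\bullet$ preserves colimits). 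Combined with Proposition \ref{Abcont}, this shows $Cont^{tr}$-$\mathcal C$ is cocomplete, with all colimits computed pointwise.

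\emph{Generators.} Here I would argue exactly as in the proofs of Theorem \ref{Groth} and Theorem \ref{Groth1}. Fix $\mathcal M\in Cont^{tr}$-$\mathcal C$ and $m\in el_{\mathscr X}(\mathcal M)$, say $m\in\mathcal M_x$. By the free--forgetful adjunction \eqref{5adjcontra} with $V=K$, morphisms $\mathcal C_x^*=T_{\mathcal C_x}(K)\longrightarrow\mathcal M_x$ in $\mathbf M_{[\mathcal C_x,\_\_]}$ correspond bijectively to elements of $\mathcal M_x$, and the morphism $\eta$ attached to $m$ satisfies $m\in\mathrm{Im}(\eta)$. Applying Proposition \ref{Thm5.4R} to this $\eta$ produces a subobject $\mathcal N\subseteq\mathcal M$ in $Cont^{tr}$-$\mathcal C$; since $i_x\circ\eta_1'=\eta$ by construction (see \eqref{defN-cont} and Proposition \ref{lem2-cont}), we get $m\in\mathrm{Im}(\eta)\subseteq\mathcal N_x$, and $|\mathcal N|\leq\kappa$ by Lemma \ref{cardinal-cont}. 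Now let $\mathcal G$ be a set of representatives for the isomorphism classes of objects of $Cont^{tr}$-$\mathcal C$ of cardinality $\leq\kappa$. For any $\mathcal M$, choosing such a subobject $\mathcal N^{(m)}\subseteq\mathcal M$ for every $m\in el_{\mathscr X}(\mathcal M)$, the canonical morphism $\bigoplus_{m}\mathcal N^{(m)}\longrightarrow\mathcal M$ is an epimorphism: since cokernels (Proposition \ref{Abcont}) and coproducts in $Cont^{tr}$-$\mathcal C$ are both computed pointwise, and at each $y\in Ob(\mathscr X)$ one has $\sum_m\mathcal N^{(m)}_y=\mathcal M_y$ (taking $m=m'$ shows $m'\in\mathcal N^{(m')}_y$ for every $m'\in\mathcal M_y$), the map is pointwise epic, hence epic. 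Therefore $\mathcal G$ is a set of generators for $Cont^{tr}$-$\mathcal C$, which completes the proof.

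The step I expect to require the most care is the claim that colimits in $Cont^{tr}$-$\mathcal C$ can be computed pointwise, since this is precisely where the asymmetry noted in the excerpt — $\alpha^\bullet$ being a left adjoint while $\alpha_\bullet$ need not be — is essential, and it is also what makes trans-contramodules (rather than cis-contramodules) the natural objects to work with. Once this is in place, the remainder is a transcription of the arguments already carried out for comodules.
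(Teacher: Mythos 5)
Your proposal is correct and follows essentially the same route as the paper: the paper's proof likewise notes that colimits in $Cont^{tr}$-$\mathcal{C}$ exist and are computed pointwise, then uses the adjunction \eqref{5adjcontra} to produce $\eta:\mathcal C_x^*\longrightarrow\mathcal M_x$ hitting a given element, builds the subobject via \eqref{defN-cont} with the cardinality bound of Lemma \ref{cardinal-cont}, and concludes by summing these subobjects and taking isomorphism classes of objects of cardinality $\leq\kappa$. The only difference is that you spell out the pointwise-colimit argument (via $\alpha^\bullet$ being a left adjoint) in more detail than the paper, which simply asserts it.
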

\begin{proof} We note that colimits in $Cont^{tr}$-$\mathcal{C}$  exist and can be computed pointwise. 
	  Let $\mathcal{M}$ be an object in $Cont^{tr}$-$\mathcal{C}$ and $m \in el_{\mathscr{X}}(\mathcal{M})$. Then, there exists some $x\in Ob(\mathscr{X})$ such that $m \in \mathcal{M}_x$. From \eqref{5adjcontra}, we note that $\mathbf M_{[\mathcal C_x,\_\_]}(\mathcal C_x^*,M)\cong Vect(K,\mathcal M_x)=\mathcal M_x$ and hence  there exists a morphism $\eta: \mathcal{C}_x^*\longrightarrow \mathcal{M}_x$ in $\mathbf M_{[\mathcal C_x,\_\_]}$ such that $m\in Im(\eta)$. Thus, it follows from \eqref{defN-cont} that we can define 
	  $\mathcal N\subseteq \mathcal M$ in $Cont^{tr}$-$\mathcal{C}$ such that
	\begin{equation}
		m\in Im\left(\eta: \mathcal{C}_x^*\longrightarrow \mathcal{M}_x\right)\subseteq Im\left(\bigoplus_{\beta \in \mathscr{X}(x,x)}\beta^\bullet ({\mathcal{C}_x^*}) \xrightarrow{^\beta\mathcal{M}\circ \beta^\bullet\eta}\mathcal{M}_x\right)= \mathcal{N}_x
	\end{equation}
	
    By Lemma \ref{cardinal-cont}, we also have  
    $|\mathcal{N}|\leq \kappa$. For convenience, let us denote this subobject  $\mathcal{N}\subseteq \mathcal{M}$ by $\mathcal{N}_m$. We claim that $\bigoplus_{m'\in el_{\mathscr{X}}(\mathcal M)}{\mathcal{N}_{m'}}\longrightarrow \mathcal{M}$ is an epimorphism  in $Cont^{tr}$-$\mathcal{C}$. For this,  it is enough to show $\bigoplus_{m'\in el_{\mathscr{X}}(\mathcal M)}{\mathcal{N}_{m'x}}\longrightarrow \mathcal{M}_x$ is an epimorphism in $\mathbf M_{[\mathcal C_x,\_\_]}$ for each $x\in Ob(\mathscr{X})$. If $m\in \mathcal{M}_x$ for some $x\in Ob(\mathscr{X})$, clearly we have
    \begin{equation}
    	m\in Im\left({\mathcal{N}_{mx}}\hookrightarrow \mathcal{M}_x\right)\subseteq Im\left(\bigoplus_{m'\in el_{\mathscr X}(\mathcal M)}\mathcal{N}_{m'x}\longrightarrow \mathcal{M}_x\right)
    \end{equation} 
Thus, by choosing one object from each isomorphism class of objects in $Cont^{tr}$-$\mathcal{C}$ having cardinality $\leq \kappa$, we obtain a set of generators for $Cont^{tr}$-$\mathcal{C}$. This proves the result.
	
\end{proof}

We continue with $\mathcal{C}: \mathscr{X}\longrightarrow Coalg$ being a coalgebra representation. For the rest of this section, we assume that $\mathscr X$ is a poset. We will now show that $Cont^{tr}$-$\mathcal{C}$ is a locally presentable category with projective generators.

\begin{thm}\label{adjoint-cont}
	Let $\mathscr{X}$ be a poset and $\mathcal{C}: \mathscr{X}\longrightarrow Coalg$ be a coalgebra representation. Let $x\in Ob(\mathscr{X})$. Then,
	
	\smallskip
(1) There is a functor $ex_x:\mathbf M_{[\mathcal C_x,\_\_]}\longrightarrow Cont^{tr}$-$\mathcal{C}$ defined by setting, for any $y\in Ob(\mathscr X)$, 
		\[
		(ex_x(N))_y=
		\begin{cases}
			\alpha^\bullet(N) &\quad \text{if~} \alpha\in \mathscr{X}(y,x)\\
			0&\quad \text{if~}\mathscr{X}(y,x)=\emptyset\\
		\end{cases}
		\]
		(2) The evaluation at $x$, $ev_x:Cont^{tr}$-$\mathcal{C}\longrightarrow \mathbf M_{[\mathcal C_x,\_\_]}$, $\mathcal{M}\mapsto \mathcal{M}_x$ is an exact functor.
		
		\smallskip
		(3) $(ex_x, ev_x)$ is a pair of adjoint functors.
		
		\smallskip

		\smallskip
		(4) The functor $ev_x$ also has a right adjoint, $coe_x:  \mathbf M_{[\mathcal C_x,\_\_]}\longrightarrow Cont^{tr}\text{-}\mathcal{C}$ given by setting
		
		\begin{equation*}
		(coe_x(N))_y:=\begin{cases}
			\alpha_\bullet N \quad \quad\text{if}~ \alpha \in \mathscr{X}(x,y)\\
			0 \quad \quad \quad \quad \text{if}~ \mathscr{X}(x,y)=\emptyset
		\end{cases}
	\end{equation*}

	\end{thm}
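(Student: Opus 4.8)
The plan is to prove Proposition \ref{adjoint-cont} by establishing each of the four assertions in turn, closely mirroring the already-proved results for cis-comodules (Propositions \ref{adjoint} and \ref{pr3.2py}) and for trans-comodules (Propositions \ref{poprojc} and \ref{poprojk}), with the contramodule functors $\alpha^\bullet$ and $\alpha_\bullet$ playing the roles of $\alpha^!$ and $\alpha^*$ respectively. The key structural facts I would invoke are: $\alpha_\bullet$ is exact, $(\alpha^\bullet,\alpha_\bullet)$ is an adjoint pair, colimits (resp.\ finite limits) in $Cont^{tr}$-$\mathcal C$ are computed pointwise (from Proposition \ref{Grothcont} and Proposition \ref{Abcont}), and $\mathscr X$ being a poset guarantees that for composable $\alpha\colon x\to y$ and the relevant hom-sets $\mathscr X(y,x)$, $\mathscr X(x,y)$ are either empty or singletons, which is what makes the transition maps well-defined identities.

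For (1), I would check that $ex_x(N)$ is a genuine object of $Cont^{tr}$-$\mathcal C$: each $(ex_x(N))_y = \alpha^\bullet(N)$ is a $\mathcal C_y$-contramodule (when $\alpha\in\mathscr X(y,x)$), and for $\beta\colon y'\to y$, if $y'\not\le x$ the structure map is $0$, while if $y'\le x$ (so there are unique $\alpha\colon y\to x$, $\gamma\colon y'\to x$ with $\alpha\beta=\gamma$) the structure map ${}^\beta ex_x(N)\colon \beta^\bullet\alpha^\bullet(N)=\gamma^\bullet(N)\to(ex_x(N))_{y'}$ is the identity via the canonical isomorphism $\beta^\bullet\alpha^\bullet\cong(\alpha\beta)^\bullet$; the cocycle condition ${}^{\beta\alpha'}\mathcal M = {}^{\alpha'}\mathcal M\circ\alpha'^\bullet({}^\beta\mathcal M)$ of Definition \ref{cont-rep} then holds trivially since all maps are identities. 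For (2), exactness of $ev_x$ is immediate from the pointwise computation of kernels and cokernels in $Cont^{tr}$-$\mathcal C$ (Proposition \ref{Abcont}). For (3), given $f\colon N\to\mathcal M_x$ in $\mathbf M_{[\mathcal C_x,\_\_]}$, I would define $\eta^f\colon ex_x(N)\to\mathcal M$ by $\eta^f_y\colon \alpha^\bullet(N)\xrightarrow{\alpha^\bullet f}\alpha^\bullet\mathcal M_x\xrightarrow{{}^\alpha\mathcal M}\mathcal M_y$ for $\alpha\in\mathscr X(y,x)$ and $\eta^f_y=0$ otherwise, verify compatibility with the transition maps using a commutative square exactly as in the proof of Proposition \ref{adjoint}(3), and conversely send $\eta$ to $f^\eta\colon N=(ex_x(N))_x\xrightarrow{\eta_x}\mathcal M_x$; these are mutually inverse, giving the natural isomorphism $Cont^{tr}\text{-}\mathcal C(ex_x(N),\mathcal M)\cong\mathbf M_{[\mathcal C_x,\_\_]}(N,ev_x(\mathcal M))$. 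For (4), since $\alpha_\bullet$ is exact and in particular preserves monomorphisms, the dual argument (as in Proposition \ref{pr3.2py}) shows $coe_x(N)$ is a trans-contramodule — for $\beta\colon y'\to y$ with $y\le x$ the structure map $(coe_x(N))_\beta\colon\gamma_\bullet N\to\beta_\bullet\alpha_\bullet N$ is the identity — and the adjunction $Cont^{tr}\text{-}\mathcal C(\mathcal M,coe_x(N))\cong\mathbf M_{[\mathcal C_x,\_\_]}(\mathcal M_x,N)$ is verified directly in the same manner.

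Since everything here is a routine transcription of arguments already carried out for comodules, there is no genuine obstacle; the one point that deserves a word of care is that, unlike the comodule cases where one had a triple of adjoints, here there is only the single adjunction $(\alpha^\bullet,\alpha_\bullet)$, so $ex_x$ uses $\alpha^\bullet$ over hom-sets $\mathscr X(y,x)$ (giving $ex_x$ as left adjoint to $ev_x$) while $coe_x$ uses $\alpha_\bullet$ over hom-sets $\mathscr X(x,y)$ (giving $coe_x$ as right adjoint to $ev_x$) — one must keep the variance straight so that the transition-map directions in Definition \ref{cont-rep} are respected. I expect the write-up to be short, citing Propositions \ref{adjoint} and \ref{pr3.2py} for the analogous computations and merely indicating the substitutions $\alpha^*\rightsquigarrow\alpha_\bullet$, $\alpha_*\rightsquigarrow$ (the right adjoint role), $\alpha^!\rightsquigarrow\alpha^\bullet$.
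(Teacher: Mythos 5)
Your proposal is correct and is essentially the paper's own proof, which for this proposition simply states that (1)--(3) follow as in Proposition \ref{poprojc} and (4) as in Proposition \ref{poprojk}, i.e.\ the trans-comodule arguments transcribed with $\alpha^\bullet$ in place of $\alpha^!$ and $\alpha_\bullet$ in place of $\alpha^*$, exactly as you describe. The only (harmless) slips are in your case analysis of the transition maps: for $ex_x$ the identity case is governed by $y\le x$ rather than $y'\le x$ (when $y'\le x$ but $y\not\le x$ the structure map is the zero map out of $\beta^\bullet 0=0$), and for $coe_x$, which is indexed over $\mathscr{X}(x,y)$, the identity case is $x\le y'$, giving $(\beta\gamma)_\bullet N \xrightarrow{\ \mathrm{id}\ } \beta_\bullet\gamma_\bullet N$ --- neither correction affects the argument.
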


\begin{proof} (1), (2) and (3) follow as in the proof of Proposition \ref{poprojc}. The result of (4) is proved in a manner similar to Proposition \ref{poprojk}.
\end{proof}

\begin{cor}\label{proj-cont}
Let $\mathscr{X}$ be a poset and $\mathcal{C}: \mathscr{X}\longrightarrow Coalg$ be a coalgebra representation. For $x\in Ob(\mathscr{X})$, the functor $ex_x: \mathbf M_{[\mathcal C_x,\_\_]}\longrightarrow Cont^{tr}$-$\mathcal{C}$ preserves projectives.
\end{cor}

\begin{proof}
We know from Proposition \ref{adjoint-cont}(2) that $ev_x:Cont^{tr}$-$\mathcal{C}\longrightarrow \mathbf M_{[\mathcal C_x,\_\_]}$ is an exact functor. By Proposition \ref{adjoint-cont}(3), we know that $(ex_x, ev_x)$ is an adjoint pair and hence the left adjoint $ex_x$ preserves projective objects.
\end{proof}	

We recall that an object $X$ in a category $\mathscr{D}$  is said to be $\lambda$-presentable for some regular cardinal $\lambda$ if the representable functor $\mathscr{D}(X,{-})$ preserves $\lambda$-directed colimits (see \cite[$\S$ 1.13]{AR}). We note that for regular cardinals $\lambda\leq \lambda'$, any $\lambda$-presentable object is also $\lambda'$-presentable. A category $\mathscr{D}$ is called locally $\lambda$-presentable if it is cocomplete and it has a set of $\lambda$-presentable generators. A category $\mathscr{D}$ is said to be  locally presentable if $\mathscr{D}$ is   locally $\lambda$-presentable for some regular cardinal $\lambda$. For further details on locally presentable categories, we refer the reader to \cite{AR}.

\begin{Thm}\label{projgen-cont}
Let $\mathscr{X}$ be a poset and $\mathcal{C}: \mathscr{X}\longrightarrow Coalg$ be a coalgebra representation. Then, $Cont^{tr}$-$\mathcal{C}$ is a locally presentable abelian category with projective generators.
  \end{Thm}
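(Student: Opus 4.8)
The plan is to combine the results already assembled in this section about $Cont^{tr}$-$\mathcal{C}$ with the standard characterization of locally presentable categories as cocomplete categories possessing a set of strong generators that are $\lambda$-presentable for some regular cardinal $\lambda$. First I would recall that $Cont^{tr}$-$\mathcal{C}$ is abelian by Proposition \ref{Abcont}, and that it is cocomplete with colimits computed pointwise by Theorem \ref{Grothcont}. Next, for each $x\in Ob(\mathscr{X})$, choose a regular cardinal $\lambda_x$ such that $\mathcal{C}_x^{*}$ is $\lambda_x$-presentable in $\mathbf M_{[\mathcal{C}_x,\_\_]}$; such a cardinal exists because $\mathbf M_{[\mathcal{C}_x,\_\_]}$ is itself a locally presentable category (it is the Eilenberg--Moore category of the accessible monad $T_{\mathcal{C}_x}$ on $Vect$, hence locally presentable). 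Since $\mathscr{X}$ is a poset, $Mor(\mathscr{X})$ is a set, so I may set $\lambda := \sup_{x\in Ob(\mathscr{X})}\lambda_x$ (taking $\lambda$ regular, replacing it by its successor if necessary), which bounds all the $\lambda_x$ simultaneously.

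The core of the argument is then to show that the objects $ex_x(\mathcal{C}_x^{*})$, ranging over $x\in Ob(\mathscr{X})$, form a set of $\lambda$-presentable projective generators. Projectivity follows from Corollary \ref{proj-cont}, since $\mathcal{C}_x^{*}$ is a (free) projective object of $\mathbf M_{[\mathcal{C}_x,\_\_]}$ and $ex_x$ preserves projectives. That they generate: given a non-invertible monomorphism $i:\mathcal{N}\hookrightarrow\mathcal{M}$ in $Cont^{tr}$-$\mathcal{C}$, by the pointwise construction of kernels and cokernels there is some $x$ with $i_x:\mathcal{N}_x\hookrightarrow\mathcal{M}_x$ non-invertible in $\mathbf M_{[\mathcal{C}_x,\_\_]}$; since $\mathcal{C}_x^{*}$ generates $\mathbf M_{[\mathcal{C}_x,\_\_]}$ there is a morphism $\mathcal{C}_x^{*}\to\mathcal{M}_x$ not factoring through $i_x$, and by the adjunction $(ex_x,ev_x)$ of Proposition \ref{adjoint-cont}(3) this yields a morphism $ex_x(\mathcal{C}_x^{*})\to\mathcal{M}$ not factoring through $i$, so by \cite[\S 1.9]{Tohuku} the family is a generating set. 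Presentability: for a $\lambda$-directed colimit $\mathcal{M}=\varinjlim_{j}\mathcal{M}_j$ in $Cont^{tr}$-$\mathcal{C}$, the colimit is computed pointwise (Theorem \ref{Grothcont}), so
\[
Cont^{tr}\text{-}\mathcal{C}\bigl(ex_x(\mathcal{C}_x^{*}),\varinjlim_j\mathcal{M}_j\bigr)\;\cong\;\mathbf M_{[\mathcal{C}_x,\_\_]}\bigl(\mathcal{C}_x^{*},\varinjlim_j (\mathcal{M}_j)_x\bigr)\;\cong\;\varinjlim_j\,\mathbf M_{[\mathcal{C}_x,\_\_]}\bigl(\mathcal{C}_x^{*},(\mathcal{M}_j)_x\bigr)\;\cong\;\varinjlim_j\,Cont^{tr}\text{-}\mathcal{C}\bigl(ex_x(\mathcal{C}_x^{*}),\mathcal{M}_j\bigr),
\]
using the adjunction in the first and last isomorphisms and $\lambda$-presentability of $\mathcal{C}_x^{*}$ (and $\lambda\geq\lambda_x$) in the middle one. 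Hence $ex_x(\mathcal{C}_x^{*})$ is $\lambda$-presentable. Having a set of $\lambda$-presentable generators in a cocomplete category makes $Cont^{tr}$-$\mathcal{C}$ locally $\lambda$-presentable, hence locally presentable, and we have exhibited these generators as projective; combined with abelianness this gives the claim.

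The main obstacle I anticipate is the justification that $ev_x$ preserves $\lambda$-directed colimits, equivalently that colimits (not just filtered ones, but in particular $\lambda$-directed ones) in $Cont^{tr}$-$\mathcal{C}$ are computed pointwise — this is precisely what Theorem \ref{Grothcont} provides, but one should be slightly careful that the pointwise colimit, equipped with the induced structure maps $^\alpha\mathcal{M}$ (which exist because each $\alpha^\bullet$ is a left adjoint and hence preserves colimits), genuinely is the colimit in $Cont^{tr}$-$\mathcal{C}$; this is exactly the content already recorded in the proof of Theorem \ref{Grothcont}, so no new work is needed. A secondary point worth stating explicitly is why each $\mathbf M_{[\mathcal{C}_x,\_\_]}$ is locally presentable (so that $\lambda_x$ exists): this follows since $T_{\mathcal{C}_x}=Hom_K(\mathcal{C}_x,\_\_)$ is an accessible endofunctor of the locally presentable category $Vect$ and the Eilenberg--Moore category of an accessible monad on a locally presentable category is locally presentable. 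Everything else is a routine assembly of the adjunctions and generator arguments established earlier in the section.
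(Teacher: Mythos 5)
Your proposal is correct and follows essentially the same route as the paper: projectivity of $ex_x(\mathcal{C}_x^*)$ via Corollary \ref{proj-cont}, the generating property via pointwise kernels/cokernels, the adjunction $(ex_x,ev_x)$ and \cite[\S 1.9]{Tohuku}, and $\lambda$-presentability via pointwise colimits and the same chain of isomorphisms. The only cosmetic difference is that you justify local presentability of each $\mathbf M_{[\mathcal{C}_x,\_\_]}$ by the accessible-monad argument, whereas the paper simply cites \cite[$\S$ 5]{P2}; both are fine.
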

  \begin{proof}
  If $C$ is any $K$-coalgebra, we know (see \cite[$\S$ 5]{P2}) that the category $\mathbf M_{[C,\_\_]}$ of contramodules is locally presentable. Indeed, $C^*=Hom_K(C,K)$ is itself a $\lambda$-presentable generator
  in $\mathbf M_{[C,\_\_]}$ for some $\lambda$. We suppose therefore that for each $x\in Ob(\mathscr X)$, the object $\mathcal C_x^*$ is a $\lambda_x$-presentable generator for $\mathbf M_{[\mathcal C_x,\_\_]}$.  We set
  \begin{equation*}
  \lambda_0:= sup\{\lambda_x~|~ x\in \mathscr{X}\}
  \end{equation*} Then, it is clear that each $\mathcal C_x^*$ is a $\lambda_0$-presentable generator in $\mathbf M_{[\mathcal C_x,\_\_]}$.  Since $\mathcal C_x^*$ is projective in $\mathbf M_{[\mathcal C_x,\_\_]}$, it follows by Corollary \ref{proj-cont} that $ex_x(\mathcal{C}_x^*)$ is projective in $Cont^{tr}$-$\mathcal{C}$. From the definitions and by \eqref{free}, we know that
 \[
		(ex_x(\mathcal{C}_x^*))_y =
		\begin{cases}
			\alpha^\bullet(\mathcal{C}_x^*)=\mathcal{C}_y^* &\quad \text{if~} \alpha\in \mathscr{X}(y,x)\\
			0&\quad \text{if~}\mathscr{X}(y,x)=\emptyset\\
		\end{cases}
		\]
for $y\in Ob(\mathscr X)$. We claim that the family 
    \begin{equation}
\mathcal{G}= \{ex_x(\mathcal{C}_x^*)~|~x\in Ob(\mathscr{X})\}
\end{equation}
is a set of $\lambda_0$-presentable generators for $Cont^{tr}$-$\mathcal{C}$. For this, we consider a non-invertible monomorphism $i:\mathcal{N}\hookrightarrow \mathcal{M}$ in $Cont^{tr}$-$\mathcal{C}$. Since kernels and cokernels in  $Cont^{tr}$-$\mathcal{C}$ are constructed pointwise, there exists some $x\in Ob(\mathscr{X})$ such that $i_x:\mathcal{N}_x\hookrightarrow \mathcal{M}_x$ is a non-invertible monomorphism in $\mathbf M_{[\mathcal C_x,\_\_]}$. Since
$\mathcal{C}_x^*$ is a generator of $\mathbf M_{[\mathcal C_x,\_\_]}$, we can choose a morphism $f:\mathcal{C}_x^*\longrightarrow \mathcal{M}_x=ev_x(\mathcal M)$ such that $f$ does not factor through $i_x:\mathcal{N}_x\hookrightarrow \mathcal{M}_x$. Since $(ex_x, ev_x)$ is an adjoint pair, this induces a morphism $\eta_f: ex_x(\mathcal{C}_x^*)\longrightarrow \mathcal{M}$ in $Cont^{tr}$-$\mathcal{C}$ corresponding to $f$, which does not factor through $i:\mathcal{N}\longrightarrow \mathcal{M}$. Therefore, it follows from \cite[\S 1.9]{Tohuku} that the family $\mathcal{G}$ is a set of generators for $Cont^{tr}$-$\mathcal{C}$.

\smallskip
Finally, let $\{\mathcal M_i\}_{i\in I}$ be a $\lambda_0$-directed system of objects in $Cont^{tr}$-$\mathcal{C}$. Since each $\mathcal C_x^*$ is $\lambda_0$-presentable in $\mathbf M_{[\mathcal C_x,\_\_]}$ and colimits in $Cont^{tr}$-$\mathcal{C}$ are computed pointwise, we note that
\begin{equation*}
\begin{array}{ll}
Cont^{tr}\text{-}\mathcal C(ex_x(\mathcal{C}_x^*),\underset{i\in I}{\varinjlim}\textrm{ }\mathcal M_i)&=\mathbf M_{[\mathcal C_x,\_\_]}(\mathcal C_x^*,ev_x(\underset{i\in I}{\varinjlim}\textrm{ }\mathcal M_i))=\mathbf M_{[\mathcal C_x,\_\_]}(\mathcal C_x^*,\underset{i\in I}{\varinjlim}\textrm{ }ev_x(\mathcal M_i))\\
& =\underset{i\in I}{\varinjlim}\textrm{ }\mathbf M_{[\mathcal C_x,\_\_]}(\mathcal C_x^*,ev_x(\mathcal M_i))=\underset{i\in I}{\varinjlim}\textrm{ }Cont^{tr}\text{-}\mathcal C(ex_x(\mathcal{C}_x^*),\mathcal M_i)\\
\end{array}
\end{equation*} This shows that each $ex_x(\mathcal{C}_x^*)$ is $\lambda_0$-presentable in  $Cont^{tr}$-$\mathcal{C}$. This proves the result.
\end{proof}

\section{Cartesian trans-contramodules over coalgebra representations}	
Let  $\mathcal{C}:\mathscr{X} \longrightarrow Coalg$ be a coalgebra representation. We will now introduce the category  $Cont^{tr}_c\text{-}\mathcal{C}$  of cartesian trans-contramodules over $\mathcal{C}$ and study its generators.  While the outline is roughly similar to that in Section 4.2, we rely extensively on  presentable objects in contramodule categories to set up the transfinite induction argument. This is because of the fact that colimits in the category of contramodules do not correspond in general to colimits of underlying vector spaces.

\begin{lem}\label{coextex}
Let $\alpha:C \longrightarrow D$ be a right coflat morphism of coalgebras, i.e., $C$ is coflat as a right $D$-comodule. Then, the contraextension functor $\alpha^\bullet:\mathbf M_{[D,\_\_]} \longrightarrow \mathbf M_{[ C,\_\_]}$ is exact.
\end{lem}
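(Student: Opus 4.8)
The plan is to pass through the alternative description of the contraextension functor as a cohomomorphism functor, $\alpha^\bullet M = Cohom_D(C, M)$, and exploit coflatness there. Recall from the excerpt that $\alpha^\bullet M$ is the coequalizer of the pair
\[
\begin{tikzcd}[column sep=2.5cm]
Hom_K(C \otimes D, M) \cong Hom_K(C, Hom_K(D,M))\ar[r,shift left=.75ex,"Hom_K(\rho_C,M)"]\ar[r,shift right=.75ex,swap,"Hom_K(C,\pi_M)"] & Hom_K(C,M)
\end{tikzcd}
\]
where $\rho_C : C \to C \otimes D$ is the right $D$-coaction on $C$. The first thing I would record is that $\alpha^\bullet$ is always right exact, since it is a left adjoint (to the contrarestriction $\alpha_\bullet$); so the only content is left exactness, i.e.\ that $\alpha^\bullet$ preserves monomorphisms and, more precisely, kernels. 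Because the forgetful functor $\mathbf M_{[C,\_\_]} \to Vect$ is exact and faithful, it suffices to check exactness after forgetting to $Vect$, i.e.\ to show that for a short exact sequence $0 \to M' \to M \to M'' \to 0$ of $D$-contramodules, the sequence $0 \to Cohom_D(C,M') \to Cohom_D(C,M) \to Cohom_D(C,M'') \to 0$ of underlying vector spaces is exact.

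Next I would relate $Cohom_D(C,-)$ to the cotensor product at the level of vector spaces. The key classical fact (used already in the excerpt, via \cite[Corollary 3.12]{takh} and the surrounding discussion on $H_D(C,-)$) is that when $C$ is quasi-finite one has $Cohom_D(C,M) \cong M \,\square_D\, H_D(C,D)$ as vector spaces, with $H_D(C,D) = \varinjlim_i Hom_K(D_i,C)^*$ a directed colimit over finite-dimensional subcoalgebras $D_i \subseteq D$. However, coflatness of $C$ as a right $D$-comodule is precisely the statement that the functor $-\,\square_D\, C$ is exact, and one can avoid the quasi-finiteness hypothesis entirely: the defining coequalizer above exhibits $Cohom_D(C,M)$ as a cokernel, and the two parallel maps are built from the coaction $\rho_C$ and the contraction $\pi_M$. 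I would instead argue directly that $Cohom_D(C, M)$ can be identified, naturally in $M$, with the cotensor product $C \,\square_D\, M$ formed with respect to the \emph{left} $D$-comodule $C$ and treating the $D$-contramodule $M$ as playing the role of the cotensoring partner — this is exactly the content of \cite[\S 5.3]{co-contra}. Granting that identification, exactness of $\alpha^\bullet$ on underlying spaces becomes exactness of $C \,\square_D\, (-)$ on $D$-contramodules, which follows from right coflatness of $\alpha$ once one checks that cotensoring a short exact sequence of contramodules with a coflat comodule stays exact — and this reduces, via the defining equalizer of $\square_D$, to exactness of $-\otimes C$ over $K$ (automatic) together with the coflatness hypothesis.

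Concretely, the steps in order: (1) invoke right exactness of $\alpha^\bullet$ for free, as a left adjoint; (2) reduce exactness to exactness after the exact faithful forgetful functor to $Vect$; (3) identify $\alpha^\bullet M \cong Cohom_D(C,M)$ and then, via \cite[\S 5.3]{co-contra}, realize $Cohom_D(C,M)$ through the coequalizer/cotensor description so that the underlying vector space is a cotensor product involving the left $D$-comodule $C$; (4) take a short exact sequence $0 \to M' \to M \to M'' \to 0$ in $\mathbf M_{[D,\_\_]}$, apply the cotensor construction, and use that $C$ is coflat as a right $D$-comodule to conclude the resulting sequence is exact; (5) translate back to conclude $\alpha^\bullet$ is exact. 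I expect the main obstacle to be step (3)–(4): making the identification of $Cohom_D(C,-)$ with a suitable cotensor product precise and functorial, and then transporting the coflatness hypothesis (stated in terms of comodules) across this identification so that it governs exactness of a functor on \emph{contramodules}. In particular one must be careful that "coflat as a right $D$-comodule" is the correct variance to feed into the cohomomorphism construction, where $C$ appears simultaneously as a right $D$-comodule (through $\rho_C$) and carries a left $C$-comodule structure inducing the target $C$-contramodule structure; checking that exactness is controlled purely by the right $D$-comodule structure of $C$ is the delicate point.
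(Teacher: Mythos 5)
Your steps (1) and (2) are fine: right exactness is free since $\alpha^\bullet$ is left adjoint to $\alpha_\bullet$, and since the forgetful functors to $Vect$ are exact and faithful it suffices to check exactness on underlying vector spaces. But the step that carries all the weight, your (3)--(4), has a genuine gap. The claimed identification of $Cohom_D(C,M)$ with a cotensor product $C\,\square_D\,M$ ``treating the $D$-contramodule $M$ as the cotensoring partner'' is not meaningful: the cotensor product is the equalizer of two maps built from \emph{coactions} on both factors, and a contramodule $M$ has a contraaction $Hom_K(D,M)\to M$, not a coaction $M\to D\otimes M$, so there is no such object to identify $Cohom_D(C,M)$ with. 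The reference you invoke (\cite[\S 5.3]{co-contra}) only supplies the coequalizer definition of $Cohom$ that is already quoted in the paper; it does not provide this identification. Nor can you import the formula $H_D(C,N)\cong N\,\square_D\,H_D(C,D)$ from \cite[Corollary 3.12]{takh}: that statement concerns the co-hom functor $H_D(C,-)$ on \emph{comodules} in the quasi-finite setting, not $Cohom_D(C,-)$ on contramodules, so the coflatness hypothesis cannot be transported along it either. What one does have in general is only the duality $Cohom_D(C,Hom_K(L,V))\cong Hom_K(L\,\square_D\,C,V)$ for a left $D$-comodule $L$, which covers contramodules of the special co-induced form and, combined with free presentations, reproduces right exactness but not the left exactness you need.

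The standard way to close the gap (and essentially what the paper's one-line proof delegates to \cite[\S 3.1, Lemma]{P}) is: over a field, a coflat right $D$-comodule is injective, hence $C$ is a direct summand, as a right $D$-comodule, of a cofree comodule $W\otimes D$; then $Cohom_D(W\otimes D,M)\cong Hom_K(W,M)$ naturally in $M$, which is an exact functor of $M$, and $Cohom_D(C,-)$ is a natural retract of it, hence also exact; finally the exact faithful forgetful functor $\mathbf M_{[C,\_\_]}\to Vect$ upgrades this to exactness of $\alpha^\bullet$. Note also that the variance issue you flag at the end is harmless: the only structure of $C$ entering the coequalizer defining $Cohom_D(C,M)$ is its right $D$-comodule structure $(\mathrm{id}_C\otimes\alpha)\circ\Delta_C$, the left $C$-comodule structure only serving to endow the output with its $C$-contramodule structure, which is irrelevant after forgetting to $Vect$.
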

\begin{proof}
This follows from \cite[\S 3.1, Lemma ]{P}.
\end{proof}

\begin{defn}
Let $\mathcal{C}:\mathscr{X} \longrightarrow Coalg$ be a right coflat representation. Let $\mathcal{M}\in Cont^{tr}$-$\mathcal{C}$. We will say that $\mathcal{M}$ is cartesian if for each $\alpha:x \longrightarrow y$ in $\mathscr{X}$, the morphism $^\alpha\mathcal{M}:\alpha^\bullet\mathcal{M}_y=Cohom_{\mathcal{C}_y}(\mathcal{C}_x,\mathcal{M}_y) \longrightarrow \mathcal{M}_x$ in $\mathbf M_{[\mathcal C_x,\_\_]}$ is an isomorphism.
We will denote by $Cont^{tr}_c\text{-}\mathcal{C}$ the full subcategory of $Cont^{tr}\text{-}\mathcal{C}$ whose objects are cartesian trans-contramodules over $\mathcal{C}$.
\end{defn}

\begin{lem}
Let $\mathcal{C}:\mathscr{X} \longrightarrow Coalg$ be a right coflat representation. Then, $Cont^{tr}_c\text{-}\mathcal{C}$ is a cocomplete abelian category.
\end{lem}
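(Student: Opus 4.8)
The plan is to follow closely the arguments used for Lemma \ref{cocomplete} and Lemma \ref{trans-ab}, with the contraextension functors $\alpha^\bullet$ now playing the role previously played by $\alpha_*$, respectively $\alpha^!$. Two facts make this work. First, since $\mathcal{C}$ is right coflat, each $\alpha^\bullet:\mathbf M_{[\mathcal C_y,\_\_]}\longrightarrow \mathbf M_{[\mathcal C_x,\_\_]}$ is exact by Lemma \ref{coextex}. Second, each $\alpha^\bullet$, being left adjoint to the contrarestriction $\alpha_\bullet$, preserves all colimits. Recall also that $Cont^{tr}$-$\mathcal{C}$ is abelian (Proposition \ref{Abcont}), with kernels and cokernels computed pointwise, and cocomplete with all colimits computed pointwise (Theorem \ref{Grothcont}).

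For the abelian part, I would take a morphism $\eta:\mathcal{M}\longrightarrow\mathcal{N}$ in $Cont^{tr}_c$-$\mathcal{C}$. By Proposition \ref{Abcont}, $Ker(\eta)$ and $Coker(\eta)$ formed in $Cont^{tr}$-$\mathcal{C}$ are given pointwise, $Ker(\eta)_x = Ker(\eta_x:\mathcal{M}_x\longrightarrow\mathcal{N}_x)$ and $Coker(\eta)_x = Coker(\eta_x:\mathcal{M}_x\longrightarrow\mathcal{N}_x)$. Fixing $\alpha:x\longrightarrow y$ in $\mathscr{X}$ and applying the exact functor $\alpha^\bullet$ to the exact sequences $0\longrightarrow Ker(\eta_y)\longrightarrow\mathcal{M}_y\longrightarrow Im(\eta_y)\longrightarrow 0$ and $0\longrightarrow Im(\eta_y)\longrightarrow\mathcal{N}_y\longrightarrow Coker(\eta_y)\longrightarrow 0$, and using that the structure isomorphisms of $\mathcal{M}$ and $\mathcal{N}$ identify $\alpha^\bullet\mathcal{M}_y\cong\mathcal{M}_x$ and $\alpha^\bullet\mathcal{N}_y\cong\mathcal{N}_x$, a short diagram chase shows that the induced structure maps $\alpha^\bullet Ker(\eta)_y\longrightarrow Ker(\eta)_x$ and $\alpha^\bullet Coker(\eta)_y\longrightarrow Coker(\eta)_x$ are again isomorphisms. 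Hence $Ker(\eta),Coker(\eta)\in Cont^{tr}_c$-$\mathcal{C}$, and the identity $Coker(Ker(\eta)\hookrightarrow\mathcal{M}) = Ker(\mathcal{N}\twoheadrightarrow Coker(\eta))$, which holds in $Cont^{tr}$-$\mathcal{C}$, then holds in the full subcategory; together with the finite biproducts (computed pointwise, hence preserved by the additive functors $\alpha^\bullet$, so cartesian again) this shows $Cont^{tr}_c$-$\mathcal{C}$ is abelian.

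For cocompleteness, let $\{\mathcal{M}_i\}_{i\in I}$ be a diagram in $Cont^{tr}_c$-$\mathcal{C}$ and let $\mathcal{M}$ be its colimit in $Cont^{tr}$-$\mathcal{C}$, computed pointwise, so $\mathcal{M}_x = \mathrm{colim}_{i\in I}(\mathcal{M}_i)_x$ in $\mathbf M_{[\mathcal C_x,\_\_]}$. Since $\alpha^\bullet$ preserves colimits, for each $\alpha:x\longrightarrow y$ the structure map $\alpha^\bullet\mathcal{M}_y = \mathrm{colim}_{i\in I}\,\alpha^\bullet(\mathcal{M}_i)_y \longrightarrow \mathrm{colim}_{i\in I}(\mathcal{M}_i)_x = \mathcal{M}_x$ is the colimit of the maps $\alpha^\bullet(\mathcal{M}_i)_y\longrightarrow(\mathcal{M}_i)_x$, each an isomorphism, hence itself an isomorphism; so $\mathcal{M}\in Cont^{tr}_c$-$\mathcal{C}$ and the inclusion $Cont^{tr}_c$-$\mathcal{C}\hookrightarrow Cont^{tr}$-$\mathcal{C}$ reflects this colimit. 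Thus $Cont^{tr}_c$-$\mathcal{C}$ is cocomplete with colimits computed pointwise. I do not expect a serious obstacle here; the one point requiring care — and the reason to invoke the left-adjoint property of $\alpha^\bullet$ rather than reason with underlying vector spaces — is that colimits of contramodules need not coincide with colimits of the underlying spaces, but since colimits in $Cont^{tr}$-$\mathcal{C}$ are already known to be pointwise this causes no trouble.
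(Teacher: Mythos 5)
Your proposal is correct and follows essentially the same route as the paper: exactness of $\alpha^\bullet$ (Lemma \ref{coextex}) shows pointwise kernels and cokernels remain cartesian, and the left-adjoint property of $\alpha^\bullet$ shows pointwise colimits of cartesian objects are cartesian. The only cosmetic difference is that the paper treats direct sums and cokernels separately and assembles general colimits from them, whereas you verify the cartesian condition for an arbitrary pointwise colimit directly; the underlying argument is identical.
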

\begin{proof}
Let $\eta: \mathcal{M}\longrightarrow \mathcal{N}$ be a morphism in $Cont^{tr}_c\text{-}\mathcal{C}$. Then, its kernel and cokernel in $Cont^{tr}\text{-}\mathcal{C}$ are respectively given by 
\begin{equation*}
Ker(\eta)_x= Ker(\eta_x:\mathcal{M}_x\longrightarrow \mathcal{N}_x)\qquad Coker(\eta)_x= Coker(\eta_x:\mathcal{M}_x\longrightarrow \mathcal{N}_x)
\end{equation*}
 Since $\mathcal{C}$ is a coflat representation,   it follows by Lemma \ref{coextex}  that the contraextension functor $\alpha^\bullet:\mathbf M_{[\mathcal C_y,\_\_]} \longrightarrow \mathbf M_{[\mathcal C_x,\_\_]}$ is exact for any $\alpha:x\longrightarrow y$ in $\mathscr X$. From this, it is clear that $^\alpha Ker(\eta): \alpha^\bullet Ker(\eta)_y \longrightarrow Ker(\eta)_x$ and $^\alpha Coker(\eta): \alpha^\bullet Coker(\eta)_y \longrightarrow Coker(\eta)_x$ are isomorphisms.   It follows that  $Cont^{tr}_c\text{-}\mathcal{C}$  is abelian.
 
 \smallskip
Now let $\{\mathcal{M}_i\}_{i\in I}$ be a  family of objects in $Cont^{tr}_c\text{-}\mathcal{C}$ and take $\alpha \in \mathscr{X}(x,y)$.   Since $\alpha^\bullet$ is a left adjoint, it preserves direct sums. Using the fact that $ ^\alpha\mathcal{M}_i:\alpha^\bullet {\mathcal{M}_i}_y \longrightarrow {\mathcal{M}_i}_x$ is an isomorphism for each $i \in I$, we see that
\begin{equation*}
\begin{tikzcd}[row sep=large, column sep=8ex]
\alpha^\bullet \left(\bigoplus\limits_{i \in I} \mathcal{M}_i\right)_y=\bigoplus\limits_{i \in I} \alpha^\bullet {\mathcal{M}_i}_y \arrow{r}{\bigoplus\limits_{i \in I} {^\alpha}\mathcal{M}_i}&  \bigoplus\limits_{i \in I} {\mathcal{M}_i}_x= \left(\bigoplus\limits_{i \in I}  {\mathcal{M}_i} \right)_x
\end{tikzcd}
\end{equation*}
is an isomorphism in $\mathbf M_{[\mathcal C_x,\_\_]}$. Combining with the fact that $Cont^{tr}_c\text{-}\mathcal{C}$  has cokernels, it follows that $Cont^{tr}_c\text{-}\mathcal{C}$  is cocomplete.
\end{proof}

\begin{lem}\label{sumcontrcard}
(a) Let $C$ be a $K$-coalgebra and let $\{M_i\}_{i\in I}$ be a system of objects in $\mathbf M_{[C,\_\_]}$. Let $\kappa'$ be a cardinal such that $\kappa'\geq sup\{\aleph_0,|K|,|I|,|M_i|,i\in I\}$ Then, if $M=\underset{i\in I}{colim}\textrm{ }M_i$, we have $|M|\leq \kappa'^{dim(C)}$.

\smallskip
(b) Let  $\mathcal{C}:\mathscr{X} \longrightarrow Coalg$ be a coalgebra representation and $\{\mathcal M_i\}_{i\in I}$ be a system of objects in $Cont^{tr}\text{-}\mathcal{C}$.  Let $\kappa'\geq sup\{\aleph_0,|K|,|I|,|Mor(\mathscr X)|,|\mathcal M_i|,i\in I\}$ and $\lambda\geq sup\{\aleph_0, dim(\mathcal C_x), x\in Ob(\mathscr X)\}$. Then, if $\mathcal M=\underset{i\in I}{colim}\textrm{ }\mathcal M_i$, we have $|\mathcal M|\leq \kappa'^{\lambda}$.
\end{lem}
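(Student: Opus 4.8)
The plan is to prove (a) directly and then deduce (b) by applying (a) to each coordinate of the pointwise-computed colimit.

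For (a), the crucial point I would establish first is that the colimit $M=\underset{i\in I}{colim}\textrm{ }M_i$ is \emph{generated}, as a $C$-contramodule, by the subset $S:=\bigcup_{i\in I}\mathrm{im}(U\phi_i)$ of its underlying vector space $UM$, where $\phi_i:M_i\longrightarrow M$ are the colimit coprojections and $U:\mathbf M_{[C,\_\_]}\longrightarrow Vect$ is the forgetful functor. This is a formal argument: if $N\subseteq M$ denotes the subcontramodule generated by $S$, then each $\phi_i$ factors through $N$, and since $N\hookrightarrow M$ is a monomorphism these factorizations assemble into a cocone on $\{M_i\}_{i\in I}$, so the universal property of the colimit produces a morphism $M\longrightarrow N$ splitting the inclusion; hence $N=M$. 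Because the forgetful functor $U$ is exact, one has $|\mathrm{im}(U\phi_i)|\leq|M_i|\leq\kappa'$, so $|S|\leq|I|\cdot\kappa'=\kappa'$.

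Next I would bound $|M|$ using this generating set. Being generated by $S$, the contramodule $M$ is a quotient of the free contramodule $T_C(K^{(S)})=Hom_K(C,K^{(S)})$: the linear map $K^{(S)}\longrightarrow M$ carrying the canonical basis to the chosen generators corresponds under the adjunction \eqref{5adjcontra} to a contramodule morphism $T_C(K^{(S)})\longrightarrow M$ whose image contains $S$, hence is all of $M$; and since $U$ is exact this is surjective on underlying spaces. Therefore $|M|\leq|Hom_K(C,K^{(S)})|=|K^{(S)}|^{dim(C)}\leq\kappa'^{dim(C)}$, where I use $|K^{(S)}|\leq\sup\{\aleph_0,|K|,|S|\}\leq\kappa'$. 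This gives (a).

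For (b), I would invoke the fact (established in the proof of Theorem \ref{Grothcont}) that colimits in $Cont^{tr}$-$\mathcal C$ are computed pointwise, so $\mathcal M_x=\underset{i\in I}{colim}\textrm{ }(\mathcal M_i)_x$ in $\mathbf M_{[\mathcal C_x,\_\_]}$ for each $x\in Ob(\mathscr X)$. Since $(\mathcal M_i)_x\subseteq el_{\mathscr X}(\mathcal M_i)$, we have $|(\mathcal M_i)_x|\leq|\mathcal M_i|\leq\kappa'$, so part (a) applies to the family $\{(\mathcal M_i)_x\}_{i\in I}$ with the same $\kappa'$ and yields $|\mathcal M_x|\leq\kappa'^{dim(\mathcal C_x)}\leq\kappa'^{\lambda}$. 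Finally, identity morphisms embed $Ob(\mathscr X)$ into $Mor(\mathscr X)$, so $|Ob(\mathscr X)|\leq|Mor(\mathscr X)|\leq\kappa'$, whence $|\mathcal M|=|\bigcup_{x\in Ob(\mathscr X)}\mathcal M_x|\leq\kappa'\cdot\kappa'^{\lambda}=\kappa'^{\lambda}$. The one step that needs genuine care is the ``generated by images'' claim in (a): because arbitrary colimits (and even coproducts) of contramodules are not computed on the underlying vector spaces, one cannot write down the colimit explicitly — but the argument above uses only the monomorphism $N\hookrightarrow M$ and the universal property of the colimit, together with exactness of $U$ to transfer cardinality bounds between $\mathbf M_{[C,\_\_]}$ and $Vect$.
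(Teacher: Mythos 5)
Your argument is correct, and it reaches the paper's bound by a slightly different route. The paper writes the colimit $M$ as an epimorphic image of the coproduct $N=\bigoplus_{i\in I}M_i$, covers each $M_i$ by a free contramodule $C^{*(M_i)}$ using $\mathbf M_{[C,\_\_]}(C^*,M_i)\cong M_i$, and then invokes \eqref{dirsumfreecm} to see that $\bigoplus_{i\in I}C^{*(M_i)}=T_C(V)$ with $dim(V)\leq\kappa'$, so that $|M|\leq |T_C(V)|\leq \kappa'^{dim(C)}$. You instead prove a small formal lemma — that $M$ is generated, as a contramodule, by the union $S$ of the images of the colimit coprojections — and then present $M$ as a quotient of the single free contramodule $T_C(K^{(S)})$ via the adjunction \eqref{5adjcontra}; the endgame ($|Hom_K(C,K^{(S)})|\leq\kappa'^{dim(C)}$) is the same. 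Your generation lemma is proved correctly: the factorizations through the generated subobject form a cocone precisely because the inclusion is a monomorphism, and the induced map splits it. Two points you rightly flag but should keep explicit are the existence of the subcontramodule generated by $S$ (harmless here, since subobjects of a contramodule are exactly the subspaces closed under the contraaction, so arbitrary intersections exist) and the transfer of surjectivity/cardinality through the exact, faithful forgetful functor; the paper's route sidesteps the generation lemma by leaning on the standard fact that a colimit is an epimorphic image of the coproduct, at the cost of invoking \eqref{dirsumfreecm}. Your treatment of (b) — pointwise colimits as in Theorem \ref{Grothcont}, $|\mathcal M_x|\leq\kappa'^{dim(\mathcal C_x)}\leq\kappa'^{\lambda}$, and $|Ob(\mathscr X)|\leq|Mor(\mathscr X)|\leq\kappa'$ — just makes explicit what the paper leaves as ``follows directly from (a)''.
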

\begin{proof} We set $N:=\bigoplus_{i\in I}M_i$. Then, there is an epimorphism $N\longrightarrow M$ in $\mathbf M_{[C,\_\_]}$ and it therefore suffices to show that $|N|\leq \kappa'^{dim(C)}$. For each $i\in I$, any element $m\in M_i=Vect(K,M_i)=\mathbf M_{[C,\_\_]}(C^*,M_i)$ corresponds to a morphism $C^*\longrightarrow M_i$ whose image contains $m$. Together, these induce an epimorphism $C^{*(M_i)}\longrightarrow M_i$ in $\mathbf M_{[C,\_\_]}$. Since direct sums preserve epimorphisms, we see that $N$ becomes a quotient
of $\bigoplus_{i\in I}C^{*(M_i)}$ in $\mathbf M_{[C,\_\_]}$. Applying \eqref{dirsumfreecm}, we see that the direct sum of free contramodules $\bigoplus_{i\in I}C^{*(M_i)}=T_C(V)$, where $V$ is a vector space of dimension 
$\sum_{i\in I}|M_i|\leq \kappa'$.  It follows that $|M|\leq |N|\leq |T_C(V)|\leq \kappa'^{dim(C)}$. This proves (a). The result of (b) follows directly from (a).
\end{proof}

\begin{lem}\label{transkappanew}
Let $\alpha: C\longrightarrow D$ be a right coflat morphism of $K$-coalgebras. Let $\lambda_C$ be a regular cardinal such that $C^*$ is $\lambda_C$-presentable in $\mathbf M_{[C,\_\_]}$. Let $\kappa'\geq max\{\aleph_0, \lambda_C , |K|\}$. Let $M \in \mathbf M_{[D,\_\_]}$ and $A\subseteq \alpha^\bullet M$ be a set of elements such that $|A|\leq \kappa'$. Then, there exists a subobject $N\subseteq M$ in $\mathbf M_{[D,\_\_]}$ with $|N|\leq \kappa'^{dim(D)}$ such that $A\subseteq \alpha^\bullet N$.
\end{lem}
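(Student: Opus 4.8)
The plan is to run the same kind of argument used in Lemma \ref{kappa} (and in Lemma \ref{transkappa}), but adapted to the contramodule setting where colimits and underlying vector spaces diverge, so that we must track presentability rather than cardinality of underlying spaces directly. First I would fix, for each $a\in A\subseteq \alpha^\bullet M$, the corresponding element as a morphism: since $C^*$ is the free (projective) generator of $\mathbf M_{[C,\_\_]}$, by \eqref{5adjcontra} an element $a\in\alpha^\bullet M = \mathbf M_{[C,\_\_]}(C^*,\alpha^\bullet M)$ viewed via $Hom_K(K,\alpha^\bullet M)$, giving a morphism $\eta^a\colon C^*\longrightarrow \alpha^\bullet M$ in $\mathbf M_{[C,\_\_]}$ with $a\in Im(\eta^a)$. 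Next, using that $D^*$ generates $\mathbf M_{[D,\_\_]}$, I would pick an epimorphism $\zeta\colon D^{*(J)}\twoheadrightarrow M$ in $\mathbf M_{[D,\_\_]}$; by Lemma \ref{coextex} the contraextension $\alpha^\bullet$ is exact, and being a left adjoint it preserves the coproduct $D^{*(J)}$ and sends each $D^*$ to $C^*$ by \eqref{free}, so $\alpha^\bullet\zeta\colon C^{*(J)}\twoheadrightarrow \alpha^\bullet M$ is an epimorphism.

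The key step is to lift $\eta^a$ through $\alpha^\bullet\zeta$ and then cut down to a small piece. Since $C^*$ is projective, $\eta^a$ lifts to $\eta'^a\colon C^*\longrightarrow C^{*(J)}$ with $(\alpha^\bullet\zeta)\circ\eta'^a=\eta^a$. Here I cannot merely say ``$C^*$ is finitely generated,'' because in contramodule categories finite generation does not cut $C^*$ down to a finite subsum; instead I would use that $C^*$ is $\lambda_C$-presentable, hence $\kappa'$-presentable since $\kappa'\geq\lambda_C$, and that the coproduct $C^{*(J)}$ is the $\kappa'$-directed colimit of its subcoproducts $C^{*(J')}$ over subsets $J'\subseteq J$ with $|J'|<\kappa'$ (this is a $\kappa'$-directed system). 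Therefore $\eta'^a$ factors through some $C^{*(J^a)}$ with $|J^a|<\kappa'$, and correspondingly $\eta^a$ factors through $\alpha^\bullet$ of the subobject $M^a:=Im(D^{*(J^a)}\to M)$. Then I would set $N:=\sum_{a\in A}M^a=Im\bigl(\bigoplus_{a\in A}D^{*(J^a)}\to M\bigr)$, a subobject of $M$ in $\mathbf M_{[D,\_\_]}$. Since $\alpha^\bullet$ is exact and preserves direct sums, $\alpha^\bullet N = Im\bigl(\bigoplus_{a\in A}\alpha^\bullet D^{*(J^a)}\to\alpha^\bullet M\bigr)=Im\bigl(\bigoplus_{a\in A}C^{*(J^a)}\to\alpha^\bullet M\bigr)$, which contains each $a$, so $A\subseteq\alpha^\bullet N$.

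Finally I would bound $|N|$. The total index set $\coprod_{a\in A}J^a$ has cardinality $\leq\kappa'\cdot\kappa'=\kappa'$, so $N$ is a quotient of $\bigoplus_{a\in A}D^{*(J^a)} = D^{*(L)}$ with $|L|\leq\kappa'$, and by \eqref{dirsumfreecm} this equals $T_D(W)$ for a vector space $W$ with $\dim W = |L|\cdot\dim D^* \leq\kappa'\cdot 2^{\dim D}$; more simply, $N$ is a quotient of $T_D(W')$ with $\dim W'\leq\kappa'$, so $|N|\leq |T_D(W')| = |Hom_K(D,W')|\leq (\kappa')^{\dim D} = \kappa'^{dim(D)}$ (using $|K|\leq\kappa'$ and $\aleph_0\leq\kappa'$). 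This gives the required bound. The main obstacle is the middle step: one must replace the ``finitely generated $\Rightarrow$ finite subsum'' reasoning valid for comodules with the $\kappa'$-presentability of $C^*$ together with the observation that $C^{*(J)}$ is genuinely a $\kappa'$-directed colimit of its small subcoproducts, and one must be careful that $\alpha^\bullet$ commuting with these colimits (as a left adjoint) is what transports the factorization back to $M$.
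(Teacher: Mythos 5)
Your proposal follows essentially the same route as the paper's proof: represent each $a\in A$ by a morphism $\eta^a:C^*\longrightarrow \alpha^\bullet M$ via \eqref{5adjcontra}, lift it through $\alpha^\bullet$ of an epimorphism $D^{*(I)}\twoheadrightarrow M$ using projectivity of $C^*$ and exactness/colimit-preservation of $\alpha^\bullet$, cut the lift down to a small subcoproduct by presentability of $C^*$, take $N$ to be the image of the resulting free contramodule of small rank, and bound $|N|$ via \eqref{dirsumfreecm} by $|T_D(V)|\leq \kappa'^{dim(D)}$. The one step whose wording would not go through as written is the reduction to a small subcoproduct: you pass from $\lambda_C$-presentability to $\kappa'$-presentability and assert that the subcoproducts $C^{*(J')}$ with $|J'|<\kappa'$ form a $\kappa'$-directed system; both of these invocations use regularity of $\kappa'$, which the hypotheses do not grant ($\kappa'$ is only assumed to dominate $\aleph_0$, $\lambda_C$ and $|K|$). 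The repair is exactly what the paper does and is available to you: since $\lambda_C$ is regular by hypothesis, the subsets of the index set of cardinality $<\lambda_C$ already form a $\lambda_C$-directed system, and $\lambda_C$-presentability of $C^*$ gives a factorization through some $C^{*(J_a)}$ with $|J_a|<\lambda_C\leq\kappa'$, which suffices. Finally, your first count $\dim W=|L|\cdot \dim D^*$ is off (the coproduct $D^{*(L)}$ equals $T_D(K^{(L)})$ by \eqref{dirsumfreecm}, not $T_D$ of a space of that dimension), but the ``more simply'' computation you give next, $|N|\leq |Hom_K(D,W')|\leq \kappa'^{dim(D)}$ with $\dim W'\leq\kappa'$, is the correct one and coincides with the paper's.
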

\begin{proof}
Let $a \in A \subseteq \alpha^\bullet M$.  From \eqref{5adjcontra}, we note that $\mathbf M_{[C,\_\_]}(C^*,\alpha^\bullet M)\cong Vect(K,\alpha^\bullet M)=\alpha^\bullet M$ and hence there exists a morphism $\eta^a\in \mathbf M_{[C,\_\_]}(C^*,\alpha^\bullet M)$ such that $a \in Im(\eta^a)$. Since $D^*$ is a generator in $\mathbf M_{[D,\_\_]}$, we can choose an epimorphism $\zeta: D^{*(I)} \longrightarrow M$ in $\mathbf M_{[D,\_\_]}$ for some indexing set $I$. We may suppose that $|I|>\lambda_C$. Since $\alpha^\bullet$ is a left adjoint and therefore preserves colimits, we see that $\alpha^\bullet\zeta: (\alpha^\bullet D^*)^{(I)} \longrightarrow \alpha^\bullet M$ is an epimorphism in $\mathbf M_{[C,\_\_]}$.  Since $C^*$ is projective, the morphism $\eta^a$ can be lifted to a morphism $\eta'^{a}:C^* \longrightarrow (\alpha^\bullet D^*)^{(I)}=C^{*(I)} $.

\smallskip
Because $\lambda_C$ is a regular cardinal, we note that the collection of subsets of $I$ with cardinality $<\lambda_C$ is a $\lambda_C$-directed system. 
Because $C^*$ is $\lambda_C$-presentable in $\mathbf M_{[C,\_\_]}$, there exists a subset $J_a\subseteq I$ with $|J_a| < \lambda_C$ such that $\eta'^{a}$ factors through ${C^*}^{(J_a)}$ and we obtain the following commutative diagram:
\begin{equation*}
\begin{tikzcd}[row sep=large, column sep=14ex]
C^* \ar{r}{} \ar[swap]{d}{\eta^a}  \ar{rd}{\eta'^a}&(\alpha^\bullet D^*)^{(J_a)} =C^{*(J_a)} \ar{d}\\
\alpha^\bullet M &(\alpha^\bullet D^*)^{(I)} =C^{*(I)}\ar{l}{\alpha^\bullet \zeta}
\end{tikzcd}
\end{equation*}
Thus, we obtain a morphism $\zeta'^a: D^{*(J_a)}\longrightarrow M$ in $\mathbf M_{[D,\_\_]}$ such that $\eta^a$ factors through $\alpha^\bullet\zeta'^a$. In  $\mathbf M_{[D,\_\_]}$, we set
\begin{equation*}
N:=Im~\left(\zeta'=\bigoplus_{a \in A} \zeta'^a: \bigoplus_{a \in A} D^{*(J_a)} \longrightarrow M   \right)
\end{equation*}

Since $\alpha^\bullet$ is exact and preserves colimits, we also have 
\begin{equation*}
\alpha^\bullet N:=Im~\left(\alpha^\bullet \zeta'=\bigoplus_{a \in A} \alpha^\bullet\zeta'^a: \bigoplus_{a \in A} \alpha^\bullet D^{*(J_a)} \longrightarrow \alpha^\bullet M   \right)
\end{equation*}
 Since $a \in Im(\eta^a)$ and $\eta^a$ factors through $\alpha^\bullet\zeta'^a$, we have $a \in \alpha^\bullet N$. Thus, $A \subseteq \alpha^\bullet N$. By \eqref{dirsumfreecm}, we know that the direct sum $\bigoplus_{a \in A}D^{*(J_a)}$ of free contramodules is given by $T_D(V)$, where $V$ is the vector space
 $\bigoplus_{a \in A}K^{|J_a|}$. Since $N$ is a quotient of $T_D(V)$ and $|V|\leq \kappa'$, it follows  that $|N| \leq \kappa'^{dim (D)}$.
\end{proof}

\begin{lem}\label{transdoublecont}
  Let  $\alpha: C\longrightarrow D$ be a right coflat morphism of $K$-coalgebras and let $M\in \mathbf M_{[D,\_\_]}$. Let $\lambda$ be a regular cardinal such that $C^*$ and $D^*$ are $\lambda$-presentable in $\mathbf M_{[C,\_\_]}$ and $\mathbf M_{[D,\_\_]}$ respectively. We also choose cardinals 
  \begin{equation*} \kappa'\geq max\{\aleph_0, \lambda,|K|\}\qquad \mu\geq max\{dim(C),dim(D),\aleph_0\}
  \end{equation*} Let $A\subseteq M$ and $B\subseteq \alpha^\bullet M$ be such that $|A|,|B|\leq \kappa'^\mu$. Then, there exists a subobject $N\subseteq M$ in $\mathbf M_{[D,\_\_]}$ such that
  \begin{itemize}
  \item[(1)] $A\subseteq N$ and $B\subseteq \alpha^\bullet N$
  \item[(2)] $|N|\leq \kappa'^\mu$, $|\alpha^\bullet N|\leq \kappa'^\mu$ 
  \end{itemize}
  \end{lem}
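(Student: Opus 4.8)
The statement is the ``double'' lemma for cartesian trans-contramodules, analogous to Lemma~\ref{double} in the comodule setting, and the strategy is to obtain $N$ as a union of two subobjects, one handling the set $A\subseteq M$ directly and one handling $B\subseteq\alpha^\bullet M$ via the preceding Lemma~\ref{transkappanew}. First I would apply Lemma~\ref{transkappanew} to the set $B\subseteq\alpha^\bullet M$: since $|B|\leq\kappa'^\mu$ and $\kappa'^\mu$ still satisfies $\kappa'^\mu\geq\max\{\aleph_0,\lambda,|K|\}$, there is a subobject $N_1\subseteq M$ in $\mathbf M_{[D,\_\_]}$ with $|N_1|\leq(\kappa'^\mu)^{\dim(D)}$ and $B\subseteq\alpha^\bullet N_1$. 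Since $\mu\geq\dim(D)$ and $\mu$ is infinite (being $\geq\aleph_0$), we have $(\kappa'^\mu)^{\dim(D)}=\kappa'^{\mu\cdot\dim(D)}=\kappa'^\mu$, so in fact $|N_1|\leq\kappa'^\mu$. Similarly, applying Lemma~\ref{transkappanew} with $\alpha=\mathrm{id}_D:D\longrightarrow D$ (noting that $\mathrm{id}_D$ is trivially right coflat and $D^*$ is $\lambda$-presentable) to the set $A\subseteq\mathrm{id}_D^\bullet M=M$, we obtain a subobject $N_2\subseteq M$ with $A\subseteq N_2$ and $|N_2|\leq\kappa'^\mu$ by the same cardinal arithmetic.

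Then I would set $N:=N_1+N_2\subseteq M$. Since $N$ is a quotient of $N_1\oplus N_2$ in $\mathbf M_{[D,\_\_]}$, Lemma~\ref{sumcontrcard}(a) applied with the index set $I=\{1,2\}$ and the system $\{N_1,N_2\}$ gives $|N|\leq(\kappa'^\mu)^{\dim(D)}=\kappa'^\mu$, which is condition~(2) for $N$ itself. Condition~(1) for $A$ is immediate: $A\subseteq N_2\subseteq N$. For $B$, I would use that $\alpha^\bullet$ is exact by Lemma~\ref{coextex}, hence preserves the monomorphism $N_1\hookrightarrow N$, so $\alpha^\bullet N_1\subseteq\alpha^\bullet N$ and therefore $B\subseteq\alpha^\bullet N_1\subseteq\alpha^\bullet N$.

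It remains to bound $|\alpha^\bullet N|$. Here I would invoke the explicit description $\alpha^\bullet N=Cohom_D(C,N)$, which as a vector space is a cokernel of $K$-linear maps between $Hom_K(C\otimes N,N)\cong Hom_K(C,Hom_K(N,N))$-type spaces; more directly, $\alpha^\bullet N$ is a quotient of $Hom_K(C,N)$, whose cardinality is at most $|N|^{\dim(C)}\leq(\kappa'^\mu)^{\dim(C)}=\kappa'^{\mu\cdot\dim(C)}=\kappa'^\mu$ since $\mu\geq\dim(C)$ and $\mu$ is infinite. Alternatively, and perhaps more cleanly, one observes that $\alpha^\bullet N$ is a colimit (cokernel) of free contramodules $T_C(V)$ with $\dim(V)\leq|N|\leq\kappa'^\mu$, so Lemma~\ref{sumcontrcard}(a) again yields $|\alpha^\bullet N|\leq(\kappa'^\mu)^{\dim(C)}=\kappa'^\mu$. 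This completes~(2).

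\textbf{Main obstacle.} The only genuinely delicate point is the cardinality bound on $\alpha^\bullet N$: one must make sure that the passage through the contraextension does not blow up the cardinal beyond $\kappa'^\mu$, and this is precisely why the exponent $\mu$ is chosen to dominate both $\dim(C)$ and $\dim(D)$ — so that the arithmetic identity $(\kappa'^\mu)^{\dim(C)}=\kappa'^\mu$ (valid for infinite $\mu\geq\dim(C)$) absorbs the extra exponentiation. All the rest — the two applications of Lemma~\ref{transkappanew}, the sum, and the inclusions — is routine and parallels the proof of Lemma~\ref{double} almost verbatim. One should also double-check that $\kappa'^\mu$ genuinely satisfies the hypotheses of Lemma~\ref{transkappanew} (namely $\kappa'^\mu\geq\max\{\aleph_0,\lambda,|K|\}$), which is automatic since $\kappa'^\mu\geq\kappa'\geq\max\{\aleph_0,\lambda,|K|\}$.
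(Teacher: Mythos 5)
Your proposal is correct and follows essentially the same route as the paper: two applications of Lemma \ref{transkappanew} (once for $B$, once with $\alpha=id_D$ for $A$), setting $N=N_1+N_2$, bounding $|N|$ via Lemma \ref{sumcontrcard}(a), using exactness of $\alpha^\bullet$ for $B\subseteq\alpha^\bullet N$, and bounding $|\alpha^\bullet N|$ by viewing $Cohom_D(C,N)$ as a quotient of $Hom_K(C,N)$ so that $|\alpha^\bullet N|\leq |N|^{\dim(C)}\leq\kappa'^\mu$. Your explicit cardinal-arithmetic justification $(\kappa'^\mu)^{\dim(D)}=\kappa'^{\mu\cdot\dim(D)}=\kappa'^\mu$ and the check that $\kappa'^\mu$ satisfies the hypotheses of Lemma \ref{transkappanew} are slightly more careful than the paper's wording, but the argument is the same.
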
  
\begin{proof}
By Lemma \ref{transkappanew}, we know that there exists a $D$-subcontramodule $N_1 \subseteq M$ such that $|N_1| \leq (\kappa'^\mu)^{dim(D)}=\kappa'^\mu$ and $B \subseteq \alpha^\bullet N_1$.  Taking $\alpha=id_D: D\longrightarrow D$ in Lemma \ref{transkappanew}, we can also obtain a $D$-subcontramodule $N_2 \subseteq M$ such that $|N_2|\leq  (\kappa'^\mu)^{dim(D)}=\kappa'^\mu$ and $A\subseteq N_2$. We set $N:= N_1+N_2\subseteq M$. Since $N$ is a quotient of $N_1\oplus N_2$, it follows from Lemma \ref{sumcontrcard}(a) that $|N|\leq |N_1\oplus N_2|\leq (\kappa'^\mu)^{dim(D)}=\kappa'^\mu$. We also have $ A\subseteq N_2\subseteq N$. Since  $\alpha^\bullet$ is exact, it preseves monomorphims and hence $B\subseteq \alpha^\bullet N_1\subseteq \alpha^\bullet N$. By definition, we know that $\alpha^\bullet N=Cohom_D(C,N)$ is the cokernel of the difference of the maps
\begin{equation*}
\begin{tikzcd}
 Hom_K(C \otimes C,N) \simeq Hom_K(C,Hom_K(C,N)) \ar[r,shift left=.75ex,""]
  \ar[r,shift right=.75ex,swap,""]
&
Hom_K(C,N)
\end{tikzcd}
\end{equation*}
Therefore, $|\alpha^\bullet N|=|Cohom_D(C,N)| \leq |Hom_K(C,N)| = |N|^{dim(C)}\leq (\kappa'^{\mu})^{dim(C)}=\kappa'^\mu$. 
\end{proof}

Let  $\mathscr X$ be a poset and let $\mathcal{C}:\mathscr{X} \longrightarrow Coalg$ be a right coflat representation. We choose $\lambda$ such that each $\mathcal C_x^*$ is $\lambda$-presentable in 
$\mathbf M_{[\mathcal C_x,\_\_]}$ for  $x\in Ob(\mathscr X)$. We now set
\begin{eqnarray*}
&	\kappa:= sup\{\aleph_0, |K|,  \lambda, |Mor(\mathscr{X})|, (|K|^{|Mor(\mathscr{X})|})^{|\mathcal{C}_x|}, x\in Ob(\mathscr{X})\} \\
&\mu:=sup\{{\aleph_0,dim(\mathcal{C}_x)}, x \in Ob(\mathscr{X})\}
\end{eqnarray*}

We now choose a well ordering of the set $Mor(\mathscr{X})$ and consider the induced lexicographic ordering of $\mathbb{N}\times Mor(\mathscr{X})$.  Let $\mathcal{M}\in Cont^{tr}_c$-$\mathcal{C}$ and let $m_0 \in el_{\mathscr{X}}(\mathcal{M})$, i.e. $m_0 \in \mathcal{M}_x$ for some $x\in Ob(\mathscr{X})$.
We will now define a family of subobjects  $\{\mathcal{P}(n,\alpha) | (n,\alpha: y\longrightarrow z)\in \mathbb{N}\times Mor(\mathscr{X})\}$ of $\mathcal{M}$ in $Cont^{tr}\text{-}\mathcal{C}$ which satisfies the following conditions:

\smallskip
(1'') $m_0 \in el_{\mathscr{X}}(\mathcal{P}(1,\alpha_0))$, where $\alpha_0$ is the least element of $Mor(\mathscr{X})$.

\smallskip
(2'') $\mathcal{P}(n,\alpha)\subseteq \mathcal{P}(m,\beta)$ whenever $(n,\alpha)\leq (m,\beta)$ in $\mathbb{N}\times Mor(\mathscr{X})$.

\smallskip 
(3'') For each $(n, \alpha:y\longrightarrow z)\in \mathbb{N}\times Mor(\mathscr{X})$, the morphism $ ^\alpha\mathcal{P}(n,\alpha): \alpha^\bullet \mathcal{P}(n,\alpha)_z\longrightarrow \mathcal{P}(n,\alpha)_y$ is an isomorphism in $\mathbf M_{[\mathcal C_y,\_\_]}$.

\smallskip
(4'') $|\mathcal{P}(n,\alpha)| \leq \kappa^\mu$.

\smallskip
For each pair $(n,\alpha: y\longrightarrow z)\in \mathbb{N}\times Mor(\mathscr{X})$, we now start constructing  $\mathcal{P}(n,\alpha)$.
We know that there exists a morphism $\eta:\mathcal{C}_x^* \longrightarrow \mathcal{M}_x$   in $\mathbf M_{[\mathcal C_x,\_\_]}$ such that $m_0 \in Im(\eta)$. Then, we can define the subobject $\mathcal{N} \subseteq \mathcal{M}$ in $Cont^{tr}\text{-}\mathcal{C}$ as in \eqref{defN-cont}
such that $m_0 \in \mathcal{N}_x$. We also know by Lemma \ref{cardinal-cont} that $|\mathcal{N}| \leq \kappa\leq \kappa^\mu$.

\smallskip
For $(n, \alpha:y\longrightarrow z)\in \mathbb{N}\times Mor(\mathscr{X})$, we set
\begin{equation}\label{transA00cont}
A_0^0(w) = 
\begin{cases}
\mathcal{N}_w&\quad\text{if $n=1$ and $\alpha=\alpha_0$}\\
\bigcup\limits_{(m,\beta)<(n,\alpha)}\mathcal{P}(m,\beta)_w&\quad\text{otherwise}
\end{cases}
\end{equation}
for each $w\in Ob(\mathscr{X})$, where for $(m, \beta) <(n,\alpha)$, we assume that   $\mathcal{P}(m,\beta)$ satisfies all the properties (1'')-(4''). Clearly, $A_0^0(w)\subseteq \mathcal{M}_w$ and $|A_0^0(w)|\leq \kappa^\mu $ for each $w\in Ob(\mathscr{X})$.

\smallskip
Since $\mathcal{M}\in Cont^{tr}_c\text{-}\mathcal{C}$, we have $\alpha^\bullet \mathcal{M}_z\cong \mathcal{M}_y$. Using Lemma \ref{transdoublecont}, we can obtain a contramodule $A_1^0(z)\subseteq \mathcal{M}_z$ in $\mathbf M_{[\mathcal C_z,\_\_]}$ such that
\begin{equation}\label{A10transcont}
|A_1^0(z)|\leq \kappa^\mu \qquad |\alpha^\bullet A_1^0(z)|\leq \kappa^\mu   \qquad A_0^0(z)\subseteq A_1^0(z)\qquad A_0^0(y)\subseteq \alpha^\bullet A_1^0(z)
\end{equation}
We now set $A_1^0(y):=\alpha^\bullet A_1^0(z)$ and $A_1^0(w):=A_0^0(w)$ for any $w\neq y,z \in Ob(\mathscr{X})$. 
 It now follows from \eqref{A10transcont} that $A_0^0(w)\subseteq A_1^0(w)$ for every $w\in Ob(\mathscr{X})$ and each $|A_1^0(w)|\leq \kappa^\mu$.

 \begin{lem}\label{subcontra}
Let $B\subseteq el_{\mathscr{X}}(\mathcal{M})$ with $|B|\leq \kappa^\mu$. Then, there is a subobject $\mathcal{Q}\hookrightarrow \mathcal{M}$ in $Cont^{tr}$-$\mathcal{C}$ such that $B\subseteq el_{\mathscr{X}}(\mathcal{Q})$ and $|\mathcal{Q}|\leq \kappa^\mu$.
\end{lem}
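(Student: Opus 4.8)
The plan is to mimic the proof of Lemma \ref{subcomod}, forming $\mathcal Q$ as the sum of small subobjects picked out by the individual elements of $B$, but replacing the naive direct-sum cardinality bound by the contramodule cardinality estimate of Lemma \ref{sumcontrcard}(b), since direct sums of contramodules do not compute direct sums of underlying vector spaces.

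First I would fix, for each $m\in B$, some $x=x(m)\in Ob(\mathscr X)$ with $m\in\mathcal M_x$. By \eqref{5adjcontra} the element $m$ corresponds to a morphism $\eta\colon\mathcal C_x^*\longrightarrow\mathcal M_x$ in $\mathbf M_{[\mathcal C_x,\_\_]}$ with $m\in Im(\eta)$. Running the construction of \eqref{defN-cont} with this $\eta$, exactly as in the proof of Theorem \ref{Grothcont}, produces a subobject $\mathcal N_m\subseteq\mathcal M$ in $Cont^{tr}$-$\mathcal C$ with $m\in(\mathcal N_m)_x$, and Lemma \ref{cardinal-cont} gives $|\mathcal N_m|\leq\kappa\leq\kappa^\mu$. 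Then I would set $\mathcal Q:=\sum_{m\in B}\mathcal N_m$, that is, the image of the canonical morphism $\bigoplus_{m\in B}\mathcal N_m\longrightarrow\mathcal M$ in $Cont^{tr}$-$\mathcal C$; this is legitimate because $Cont^{tr}$-$\mathcal C$ is abelian and cocomplete by Proposition \ref{Abcont} and Theorem \ref{Grothcont}. By construction $\mathcal Q$ is a subobject of $\mathcal M$ through which each $\mathcal N_m$ factors, so $B\subseteq el_{\mathscr X}(\mathcal Q)$, and for $m\in B$ with $m\in\mathcal M_x$ we even get $m\in(\mathcal N_m)_x\subseteq\mathcal Q_x$, which also yields the ``in particular'' clause as in the analogous lemmas.

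The remaining point is the cardinality bound, and this is where the argument genuinely departs from the comodule case. Since $\mathcal Q$ is an epimorphic image of the colimit $\bigoplus_{m\in B}\mathcal N_m$ and the forgetful functors $\mathbf M_{[\mathcal C_x,\_\_]}\longrightarrow Vect$ are exact, we have $|\mathcal Q|\leq|\bigoplus_{m\in B}\mathcal N_m|$. I would then apply Lemma \ref{sumcontrcard}(b) with indexing set $B$, the role of $\kappa'$ played by $\kappa^\mu$ (observe $|B|\leq\kappa^\mu$, $|K|,|Mor(\mathscr X)|\leq\kappa\leq\kappa^\mu$, and $|\mathcal N_m|\leq\kappa\leq\kappa^\mu$ for all $m$), and the role of $\lambda$ played by $\mu$ (which dominates $\aleph_0$ and all $dim(\mathcal C_x)$ by definition). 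This gives $|\bigoplus_{m\in B}\mathcal N_m|\leq(\kappa^\mu)^\mu=\kappa^{\mu\cdot\mu}=\kappa^\mu$, and hence $|\mathcal Q|\leq\kappa^\mu$. The main obstacle is precisely this estimate: one cannot use $|\bigoplus_{m\in B}\mathcal N_m|\leq\sum_{m\in B}|\mathcal N_m|$ as in the comodule setting, so Lemma \ref{sumcontrcard}(b) is the essential input, and everything else is a routine transcription of Lemma \ref{subcomod}.
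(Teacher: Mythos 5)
Your proof is correct and follows essentially the same route as the paper: for each $m\in B$ you take the subobject produced via \eqref{defN-cont} as in Theorem \ref{Grothcont} with $|\mathcal N_m|\leq\kappa$, form $\mathcal Q=\sum_{m\in B}\mathcal N_m$ as a quotient of $\bigoplus_{m\in B}\mathcal N_m$, and invoke Lemma \ref{sumcontrcard}(b) to get $|\mathcal Q|\leq(\kappa^\mu)^\mu=\kappa^\mu$, which is exactly the paper's argument. Your emphasis on why the naive direct-sum cardinality estimate fails for contramodules is precisely the point the paper addresses with Lemma \ref{sumcontrcard}(b).
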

\begin{proof}
As in the proof of Theorem \ref{Grothcont}, for any $m_0\in B\subseteq el_\mathscr{X}(\mathcal{M})$ we can choose a subobject $\mathcal{Q}(m_0)\subseteq \mathcal{M}$ in $Cont^{tr}$-$\mathcal{C}$ such that $m_0\in el_{\mathscr{X}}(\mathcal{Q}(m_0))$ and $|\mathcal{Q}(m_0)|\leq \kappa \leq \kappa^\mu$. Now, we set $\mathcal{Q}:= \sum_{m_0\in B}\mathcal{Q}(m_0)\subseteq 
\mathcal M$. Since $\mathcal{Q}$ is a quotient of $\oplus_{m_0\in B}\mathcal{Q}(m_0)$ and $|B|\leq \kappa^\mu$, it follows from Lemma \ref{sumcontrcard}(b) that
$|\mathcal Q|\leq |\oplus_{m_0\in B}\mathcal{Q}(m_0)|\leq (\kappa^\mu)^\mu=\kappa^\mu$.
\end{proof}

Using Lemma \ref{subcontra} with $B=\bigcup_{w\in Ob(\mathscr{X})}A_1^0(w)$, we know that there exists  $\mathcal{Q}^0(n,\alpha)\hookrightarrow \mathcal{M}$  in $Cont^{tr}$-$\mathcal{C}$ such that
$\bigcup_{w\in Ob(\mathscr{X})}A_1^0(w)\subseteq el_{\mathscr{X}}(\mathcal{Q}^0(n,\alpha))$ and $|\mathcal{Q}^0(n,\alpha)|\leq \kappa^\mu$. In particular, we have $A_1^0(w)\subseteq \mathcal{Q}^0(n,\alpha)_w \subseteq \mathcal{M}_w$ for each $w\in Ob(\mathscr{X})$.

\smallskip
We will now iterate this construction. Suppose that we have constructed $\mathcal{Q}^l(n,\alpha)\hookrightarrow \mathcal{M}$  in $Cont^{tr}$-$\mathcal{C}$ for every $l\leq r$ such that  $ \bigcup_{w\in Ob(\mathscr{X})}A_1^l(w)\subseteq el_{\mathscr{X}}(\mathcal{Q}^l(n,\alpha))$ and $|\mathcal{Q}^l(n,\alpha)|\leq \kappa^\mu$.
We now set $A_0^{r+1}(w):= \mathcal{Q}^r(n,\alpha)_w$ for each $w\in Ob(\mathscr{X})$.  Since $A_0^{r+1}(y)\subseteq \mathcal{M}_y\cong \alpha^\bullet\mathcal{M}_z$ and $A_0^{r+1}(z)\subseteq \mathcal{M}_z$, using Lemma \ref{transdoublecont}, we can obtain $A_1^{r+1}(z)\hookrightarrow \mathcal{M}_z$ in $\mathbf M_{[\mathcal{C}_z,\_\_]}$ such that
\begin{equation}\label{transm+1cont}
|A_1^{r+1}(z)|\leq \kappa^\mu  \qquad |\alpha^\bullet A_1^{r+1}(z)|\leq \kappa^\mu  \qquad A_0^{r+1}(z)\subseteq A_1^{r+1}(z)\qquad A_0^{r+1}(y)\subseteq \alpha^\bullet A_1^{r+1}(z)
\end{equation}
Then, we set $A_1^{r+1}(y):= \alpha^\bullet A_1^{r+1}(z)$ and $A_1^{r+1}(w):= A_0^{r+1}(w)$ for any $w\neq y,z\in Ob(\mathscr{X})$.  It now follows from  \eqref{transm+1cont} that $A_0^{r+1}(w)\subseteq A_1^{r+1}(w)$ for all $w\in Ob(\mathscr{X})$ and each $|A_1^{r+1}(w)|\leq \kappa^\mu$.  

\smallskip
Again using Lemma \ref{subcontra} with $B=\bigcup_{w\in Ob(\mathscr{X})}A_1^{r+1}(w)$, we can obtain   $\mathcal{Q}^{r+1}(n,\alpha)\hookrightarrow \mathcal{M}$ in $Cont^{tr}$-$\mathcal{C}$ such that
$\bigcup_{w\in Ob(\mathscr{X})}A_1^{r+1}(w)\subseteq el_{\mathscr{X}}(\mathcal{Q}^{r+1}(n,\alpha))$ and $|\mathcal{Q}^{r+1}(n,\alpha)|\leq \kappa^\mu$.  In particular, $A_1^{r+1}(w)\subseteq \mathcal{Q}^{r+1}(n,\alpha)_w$ for each $w\in Ob(\mathscr{X})$. We now define
\begin{equation}\label{transDefinitioncont}
\mathcal{T}(n,\alpha):=\varinjlim_{r\geq 0}\textrm{ }\mathcal{Q}^r(n,\alpha)\qquad \mathcal{P}(n,\alpha):=Im\left(\varinjlim_{r\geq 0}\textrm{ }\mathcal{Q}^r(n,\alpha)\longrightarrow \mathcal M\right)=Im\left(\mathcal T(n,\alpha)\longrightarrow \mathcal M\right)
\end{equation}
in $Cont^{tr}$-$\mathcal{C}$. 

\begin{lem}\label{constcont-1}
The  morphism $ ^\alpha\mathcal{T}(n,\alpha): \alpha^\bullet \mathcal{T}(n,\alpha)_z\longrightarrow \mathcal{T}(n,\alpha)_y$ is an isomorphism in $\mathbf M_{[\mathcal{C}_y,\_\_]}$. Also, 
$|\mathcal{T}(n,\alpha)| \leq \kappa^\mu$.
\end{lem}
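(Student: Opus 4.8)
The proof runs exactly parallel to that of Lemma \ref{Pnalpha}, with the coinduction functor $\alpha_*$ there replaced by the contraextension functor $\alpha^\bullet$; the two facts that make the argument go through are that $\alpha^\bullet$ preserves monomorphisms and filtered colimits (being a left adjoint) and that it is moreover exact (Lemma \ref{coextex}, using that $\mathcal C$ is right coflat). Recall from \eqref{transDefinitioncont} that $\mathcal T(n,\alpha)=\varinjlim_{r\geq 0}\mathcal Q^r(n,\alpha)$, a colimit taken in $Cont^{tr}$-$\mathcal C$, so that $\mathcal T(n,\alpha)_w=\varinjlim_{r\geq 0}\mathcal Q^r(n,\alpha)_w$ for every $w\in Ob(\mathscr X)$.

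First I would record, for the morphism $\alpha:y\longrightarrow z$ under consideration, the two towers of monomorphisms $A_1^0(z)\hookrightarrow\mathcal Q^0(n,\alpha)_z\hookrightarrow A_1^1(z)\hookrightarrow\mathcal Q^1(n,\alpha)_z\hookrightarrow\cdots$ in $\mathbf M_{[\mathcal C_z,\_\_]}$ and $A_1^0(y)\hookrightarrow\mathcal Q^0(n,\alpha)_y\hookrightarrow A_1^1(y)\hookrightarrow\mathcal Q^1(n,\alpha)_y\hookrightarrow\cdots$ in $\mathbf M_{[\mathcal C_y,\_\_]}$; these are legitimate since $A_1^r(w)\subseteq\mathcal Q^r(n,\alpha)_w$ and $\mathcal Q^r(n,\alpha)_w=A_0^{r+1}(w)\subseteq A_1^{r+1}(w)$ for each $w$, and their colimits compute $\mathcal T(n,\alpha)_z$ and $\mathcal T(n,\alpha)_y$ by the pointwise description above. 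In each tower the subtower $\{A_1^r(-)\}_{r\geq 0}$ is cofinal, so $\mathcal T(n,\alpha)_z=\varinjlim_r A_1^r(z)$ and $\mathcal T(n,\alpha)_y=\varinjlim_r A_1^r(y)$.

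Next I would apply $\alpha^\bullet$ to the first tower; since it preserves monomorphisms and filtered colimits, $\alpha^\bullet\mathcal T(n,\alpha)_z=\varinjlim_r\alpha^\bullet A_1^r(z)$, with transition maps the images under $\alpha^\bullet$ of the inclusions. By construction $A_1^r(y):=\alpha^\bullet A_1^r(z)$, the identification being the one induced by the isomorphism ${}^\alpha\mathcal M:\alpha^\bullet\mathcal M_z\xrightarrow{\sim}\mathcal M_y$ available because $\mathcal M$ is cartesian; and $\alpha^\bullet$ applied to the inclusion $A_1^r(z)\hookrightarrow A_1^{r+1}(z)$ is, under these identifications, the inclusion $A_1^r(y)\hookrightarrow A_1^{r+1}(y)$. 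Hence $\varinjlim_r\alpha^\bullet A_1^r(z)=\varinjlim_r A_1^r(y)$, giving $\alpha^\bullet\mathcal T(n,\alpha)_z\cong\mathcal T(n,\alpha)_y$. To see that this isomorphism is precisely ${}^\alpha\mathcal T(n,\alpha)$, note that the latter equals $\varinjlim_r{}^\alpha\mathcal Q^r(n,\alpha)$, each ${}^\alpha\mathcal Q^r(n,\alpha)$ being the restriction of the isomorphism ${}^\alpha\mathcal M$; thus the restriction of ${}^\alpha\mathcal Q^r(n,\alpha)$ along $\alpha^\bullet A_1^r(z)\hookrightarrow\alpha^\bullet\mathcal Q^r(n,\alpha)_z$ is exactly the identification $\alpha^\bullet A_1^r(z)=A_1^r(y)$, and passing to the colimit over the cofinal subsystem $\{A_1^r(-)\}_r$ exhibits ${}^\alpha\mathcal T(n,\alpha)$ as an isomorphism.

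For the cardinality bound I would apply Lemma \ref{sumcontrcard}(b) to the family $\{\mathcal Q^r(n,\alpha)\}_{r\geq 0}$ indexed by $I=\mathbb N$: each $|\mathcal Q^r(n,\alpha)|\leq\kappa^\mu$, so with $\kappa':=\kappa^\mu$ (which dominates $\aleph_0$, $|K|$, $|Mor(\mathscr X)|$, $|I|$ and every $|\mathcal Q^r(n,\alpha)|$) and $\lambda:=\mu$ we obtain $|\mathcal T(n,\alpha)|\leq(\kappa^\mu)^\mu=\kappa^{\mu\cdot\mu}=\kappa^\mu$, using $\mu\geq\aleph_0$. The one point that requires care — and the step I expect to be the main obstacle — is the compatibility in the third paragraph: one must check that $\alpha^\bullet$ carries the interleaved tower for $z$ onto the interleaved tower for $y$ on the nose, using the fixed identification ${}^\alpha\mathcal M$ throughout, so that the resulting colimit isomorphism really is the structure map ${}^\alpha\mathcal T(n,\alpha)$ and not merely some abstract isomorphism. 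This is precisely the bookkeeping carried out in the proof of Lemma \ref{Pnalpha}, and I would spell it out in the same way, the only new ingredients being Lemma \ref{coextex} and the fact that $\alpha^\bullet$ is a left rather than a right adjoint.
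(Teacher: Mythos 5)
Your proposal is correct and follows essentially the same route as the paper: write $\mathcal T(n,\alpha)_z$ as the colimit of the interleaved tower $A_1^0(z)\rightarrow\mathcal Q^0(n,\alpha)_z\rightarrow A_1^1(z)\rightarrow\cdots$, use that $\alpha^\bullet$ (a left adjoint, exact by Lemma \ref{coextex}) commutes with this colimit together with the identifications $\alpha^\bullet A_1^r(z)=A_1^r(y)$ to conclude that $^\alpha\mathcal T(n,\alpha)$ is an isomorphism, and then bound $|\mathcal T(n,\alpha)|$ by $(\kappa^\mu)^\mu=\kappa^\mu$ via Lemma \ref{sumcontrcard}(b). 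Your extra bookkeeping verifying that the resulting colimit isomorphism is exactly the structure map $^\alpha\mathcal T(n,\alpha)$ is a welcome (and harmless) elaboration of what the paper leaves implicit.
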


\begin{proof}
By definition, $\mathcal T(n,\alpha)_z$ is the following colimit in $\mathbf M_{[\mathcal{C}_z,\_\_]}$.
\begin{equation*}
\mathcal T(n,\alpha)_z=colim\left(A_1^0(z)\longrightarrow \mathcal{Q}^0(n,\alpha)_z\longrightarrow A_1^1(z)\longrightarrow \mathcal{Q}^1(n,\alpha)_z\longrightarrow\hdots \longrightarrow A_1^{r+1}(z)\longrightarrow \mathcal{Q}^{r+1}(n,\alpha)_z\longrightarrow \hdots \right)
\end{equation*} Since $\alpha^\bullet$ is a left adjoint, we have therefore
\begin{equation*}
\alpha^\bullet\mathcal T(n,\alpha)_z=colim\left(\alpha^\bullet A_1^0(z)\longrightarrow \alpha^\bullet\mathcal{Q}^0(n,\alpha)_z\longrightarrow  \hdots \longrightarrow \alpha^\bullet A_1^{r+1}(z)\longrightarrow \alpha^\bullet \mathcal{Q}^{r+1}(n,\alpha)_z\longrightarrow \hdots\right)
\end{equation*} in $\mathbf M_{[\mathcal{C}_y,\_\_]}$. We also know that $\alpha^\bullet A_1^r(z)= A_1^r(y)$ for each $r\geq 0$. Writing $\mathcal T(n,\alpha)_y$ as the colimit
of $\{A_1^r(y)\}_{r\geq 0}$  in $\mathbf M_{[\mathcal{C}_y,\_\_]}$, we see that   $^\alpha\mathcal T(n,\alpha)$ is an isomorphism. Finally, we note that
each $|\mathcal Q^r(n,\alpha)|\leq \kappa^\mu$. Applying Lemma \ref{sumcontrcard}(b), we now obtain $|\mathcal{T}(n,\alpha)|\leq (\kappa^\mu)^\mu \leq \kappa^\mu$.
\end{proof}

\begin{lem}\label{constcont}
The family $\{\mathcal{P}(n,\alpha) | (n, \alpha) \in \mathbb{N} \times Mor(\mathscr{X})\}$ satisfies  conditions (1'')--(4'').
\end{lem}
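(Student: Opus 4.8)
The plan is to verify conditions (1'')--(4'') one at a time, following the template of Lemma \ref{Pnalpha}, with the one genuinely new ingredient being that $\mathcal{P}(n,\alpha)$ is now defined as the \emph{image} of a colimit $\mathcal{T}(n,\alpha)$ rather than as a filtered union of subobjects of $\mathcal{M}$; this reflects the fact that colimits of contramodules are not computed at the level of underlying vector spaces. First I would dispose of (1'') and (2''): by the definition in \eqref{transA00cont}, when $n=1$ and $\alpha=\alpha_0$ we have $m_0\in\mathcal{N}_x=A_0^0(x)\subseteq A_1^0(x)\subseteq el_{\mathscr{X}}(\mathcal{Q}^0(1,\alpha_0))$, and since $\mathcal{Q}^0(1,\alpha_0)$ maps into $\mathcal{T}(1,\alpha_0)$ and thence onto $\mathcal{P}(1,\alpha_0)$, and the forgetful functor to $Vect$ is exact (so images are computed on underlying spaces), $m_0$ survives into $\mathcal{P}(1,\alpha_0)_x$; for (2''), if $(m,\beta)<(n,\alpha)$ then $\mathcal{P}(m,\beta)_w\subseteq A_0^0(w)\subseteq\mathcal{Q}^0(n,\alpha)_w$ for all $w$, and $\mathcal{Q}^0(n,\alpha)$ maps into $\mathcal{T}(n,\alpha)$ which surjects onto $\mathcal{P}(n,\alpha)$, giving $\mathcal{P}(m,\beta)\subseteq\mathcal{P}(n,\alpha)$ as subobjects of $\mathcal{M}$ (here one uses that each transition map in the colimit system is a monomorphism and that the forgetful functor detects monomorphisms). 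Condition (4'') is immediate: $\mathcal{P}(n,\alpha)$ is a quotient of $\mathcal{T}(n,\alpha)$, so $|\mathcal{P}(n,\alpha)|\leq|\mathcal{T}(n,\alpha)|\leq\kappa^\mu$ by Lemma \ref{constcont-1}.

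The substance is condition (3''), the cartesian property, and here the argument must be phrased with care because $\mathcal{P}(n,\alpha)$ is an image, not the colimit itself. The clean route is to invoke Lemma \ref{constcont-1}, which already tells us $^\alpha\mathcal{T}(n,\alpha)\colon\alpha^\bullet\mathcal{T}(n,\alpha)_z\to\mathcal{T}(n,\alpha)_y$ is an isomorphism, and then transfer this along the surjection $\mathcal{T}(n,\alpha)\twoheadrightarrow\mathcal{P}(n,\alpha)$. Concretely: consider the morphism of short exact sequences in $Cont^{tr}$-$\mathcal{C}$ given by $0\to\mathcal{K}\to\mathcal{T}(n,\alpha)\to\mathcal{P}(n,\alpha)\to0$, where $\mathcal{K}$ is the kernel of $\mathcal{T}(n,\alpha)\to\mathcal{M}$. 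Evaluating at $z$ and applying $\alpha^\bullet$ (which is exact by Lemma \ref{coextex}, since $\mathcal{C}$ is a right coflat representation) and evaluating at $y$, one gets a commutative diagram with exact rows relating $\alpha^\bullet\mathcal{K}_z\to\alpha^\bullet\mathcal{T}(n,\alpha)_z\to\alpha^\bullet\mathcal{P}(n,\alpha)_z$ to $\mathcal{K}_y\to\mathcal{T}(n,\alpha)_y\to\mathcal{P}(n,\alpha)_y$; the middle vertical is the isomorphism from Lemma \ref{constcont-1}, and the left vertical $^\alpha\mathcal{K}$ is an isomorphism because $\mathcal{K}$ is itself built as a colimit of the kernels at each stage (or, alternatively, because $\mathcal{K}_w$ is the intersection pattern forced by the same $A_1^r$ isomorphisms). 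A short diagram chase then forces the right vertical $^\alpha\mathcal{P}(n,\alpha)$ to be an isomorphism as well.

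I expect the main obstacle to be precisely the handling of the kernel $\mathcal{K}$ — showing that $^\alpha\mathcal{K}$ is an isomorphism so that the diagram chase closes. The cleanest way around this is probably to avoid $\mathcal{K}$ altogether and instead observe directly that $\mathcal{P}(n,\alpha)_z$ can be written as the filtered colimit in $\mathbf{M}_{[\mathcal{C}_z,\_\_]}$ of the images $Im(\mathcal{Q}^r(n,\alpha)_z\to\mathcal{M}_z)$, and similarly for the $A_1^r(z)$, so that $\mathcal{P}(n,\alpha)_z=colim_r\, Im(A_1^r(z)\to\mathcal{M}_z)$; since $\alpha^\bullet$ is exact it preserves these images, and $\alpha^\bullet A_1^r(z)=A_1^r(y)$ with $A_1^r(y)\subseteq\mathcal{M}_y$, so $^\alpha\mathcal{P}(n,\alpha)$ is the filtered colimit of the isomorphisms $\alpha^\bullet Im(A_1^r(z)\to\mathcal{M}_z)\xrightarrow{\sim}Im(A_1^r(y)\to\mathcal{M}_y)$ and hence an isomorphism. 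This parallels the proof of Lemma \ref{Pnalpha} almost verbatim, with "filtered union" replaced by "filtered colimit of images" and with exactness of $\alpha^\bullet$ (in place of left-exactness of $\alpha_*$) doing the work. Given the remark in the text that the proof is "similar to that of Lemma \ref{Pnalpha}, using here the fact that $\alpha^\bullet$ preserves colimits (being a left adjoint) and the assumption that $\alpha^\bullet$ is exact," I would present the proof at roughly that level of detail, spelling out only the image-colimit reformulation and the reduction of (1'')--(2'')--(4'') to the exactness of the forgetful functor and Lemma \ref{constcont-1}.

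\begin{proof}
Conditions (1'') and (2'') follow as in Lemma \ref{Pnalpha} from the definitions in \eqref{transA00cont} and \eqref{transDefinitioncont}, using that the forgetful functor $\mathbf M_{[\mathcal C_w,\_\_]}\longrightarrow Vect$ is exact, so that images and monomorphisms in the contramodule categories are detected on underlying vector spaces, and that each transition map in the colimit system defining $\mathcal T(n,\alpha)$ is a monomorphism. Condition (4'') is immediate: $\mathcal P(n,\alpha)$ is a quotient of $\mathcal T(n,\alpha)$, so $|\mathcal P(n,\alpha)|\leq |\mathcal T(n,\alpha)|\leq \kappa^\mu$ by Lemma \ref{constcont-1}.

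It remains to prove (3''). Fix $(n,\alpha:y\longrightarrow z)\in\mathbb N\times Mor(\mathscr X)$. By construction, $\mathcal P(n,\alpha)_z$ is the filtered colimit in $\mathbf M_{[\mathcal C_z,\_\_]}$ of the images
$$
Im\left(A_1^0(z)\to\mathcal M_z\right)\longrightarrow Im\left(\mathcal Q^0(n,\alpha)_z\to\mathcal M_z\right)\longrightarrow Im\left(A_1^1(z)\to\mathcal M_z\right)\longrightarrow\cdots,
$$
and in particular $\mathcal P(n,\alpha)_z=\varinjlim_{r\geq 0} Im\left(A_1^r(z)\to\mathcal M_z\right)$; likewise $\mathcal P(n,\alpha)_y=\varinjlim_{r\geq 0} Im\left(A_1^r(y)\to\mathcal M_y\right)$. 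Since $\mathcal C$ is a right coflat representation, the contraextension functor $\alpha^\bullet$ is exact by Lemma \ref{coextex}, and being a left adjoint it preserves all colimits; in particular it preserves these filtered colimits of images. By definition $\alpha^\bullet A_1^r(z)=A_1^r(y)$ for each $r\geq 0$, and exactness of $\alpha^\bullet$ gives isomorphisms $\alpha^\bullet Im\left(A_1^r(z)\to\mathcal M_z\right)\xrightarrow{\ \sim\ } Im\left(A_1^r(y)\to\mathcal M_y\right)$ compatible with the transition maps. Passing to the filtered colimit over $r$, the induced map $^\alpha\mathcal P(n,\alpha)\colon\alpha^\bullet\mathcal P(n,\alpha)_z\longrightarrow\mathcal P(n,\alpha)_y$ is an isomorphism in $\mathbf M_{[\mathcal C_y,\_\_]}$. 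This proves (3'') and completes the proof.
\end{proof}
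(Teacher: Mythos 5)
Your handling of (1\textquotesingle\textquotesingle), (2\textquotesingle\textquotesingle) and (4\textquotesingle\textquotesingle) is fine and agrees with the paper. The problem is in your proof of (3\textquotesingle\textquotesingle), at the very first displayed claim: you assert that $\mathcal P(n,\alpha)_z$ \emph{is} the filtered colimit, taken in $\mathbf M_{[\mathcal C_z,\_\_]}$, of the images $Im(A_1^r(z)\to\mathcal M_z)$. Since the maps $A_1^r(z)\to\mathcal M_z$ and $\mathcal Q^r(n,\alpha)_z\to\mathcal M_z$ are monomorphisms, those images are just $A_1^r(z)$ and $\mathcal Q^r(n,\alpha)_z$ themselves, so your identity reads $\mathcal P(n,\alpha)_z=\mathcal T(n,\alpha)_z$, i.e.\ it asserts that the canonical map $\mathcal T(n,\alpha)_z\to\mathcal M_z$ is a monomorphism. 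That is exactly what is not known: filtered colimits in contramodule categories are neither exact nor computed on underlying vector spaces, which is the very reason the paper defines $\mathcal P(n,\alpha)$ as $Im(\mathcal T(n,\alpha)\to\mathcal M)$ rather than as a filtered union (in contrast with Lemma \ref{Pnalpha}, where unions of subcomodules do work). So, as written, the crucial step of (3\textquotesingle\textquotesingle) is unjustified; if that identification were available, the image construction would have been unnecessary in the first place.

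The repair is the route you yourself sketched in your preamble, and it is the paper's argument: transfer the isomorphism of Lemma \ref{constcont-1} along the epimorphism $\mathcal T(n,\alpha)\twoheadrightarrow\mathcal P(n,\alpha)$ using only that $\alpha^\bullet$ is exact, hence commutes with images. Since kernels and cokernels in $Cont^{tr}$-$\mathcal C$ are computed pointwise, $\alpha^\bullet\mathcal P(n,\alpha)_z=\alpha^\bullet Im(\mathcal T(n,\alpha)_z\to\mathcal M_z)=Im(\alpha^\bullet\mathcal T(n,\alpha)_z\to\alpha^\bullet\mathcal M_z)$, and the isomorphisms $^\alpha\mathcal T(n,\alpha)$ (Lemma \ref{constcont-1}) and $^\alpha\mathcal M$ (cartesianness of $\mathcal M$) identify this naturally with $Im(\mathcal T(n,\alpha)_y\to\mathcal M_y)=\mathcal P(n,\alpha)_y$; note that the cartesianness of $\mathcal M$ enters here explicitly, whereas your written proof of (3\textquotesingle\textquotesingle) never invokes it. Your alternative kernel-and-five-lemma route from the preamble also closes, but not for the reason you give: the stagewise maps are monomorphisms, so the ``kernels at each stage'' vanish while $\mathcal K=Ker(\mathcal T(n,\alpha)\to\mathcal M)$ need not; rather, $\alpha^\bullet\mathcal K_z\cong\mathcal K_y$ follows directly from exactness of $\alpha^\bullet$ together with the isomorphisms on $\mathcal T(n,\alpha)$ and on $\mathcal M$.
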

\begin{proof}
We know that $m_0 \in \mathcal{N}_x$. For $n=1$ and $\alpha=\alpha_0$, we have $\mathcal{N}_x= A_0^0(x)\subseteq A_1^0(x)\subseteq  \mathcal{Q}^0(1,\alpha_0)_x$. Since $\mathcal Q^0(1,\alpha_0)\hookrightarrow\mathcal M$ factors through $\mathcal T(1,\alpha_0)$, it follows that $m_0\in  \mathcal{Q}^0(1,\alpha_0)_x\subseteq Im(\mathcal T(1,\alpha_0)
\longrightarrow \mathcal M)_x=\mathcal P(1,\alpha_0)_x$.   Thus, condition {(1'')} is satisfied. 

\smallskip 
Now, for any $(m,\beta)<(n,\alpha)$, it again follows by \eqref{transA00cont} and \eqref{transDefinitioncont} that we have $\mathcal{P}(m,\beta)_w\subseteq A_0^0(w)\subseteq A_1^0(w)\subseteq \mathcal{Q}^0(n,\alpha)_w\subseteq Im(\mathcal T(n,\alpha)\longrightarrow\mathcal M)_w=\mathcal{P}(n,\alpha)_w$ for every $w\in Ob(\mathscr{X})$. This shows that condition {(2'')} is satisfied. The condition {{(4'')}} follows from Lemma \ref{constcont-1}
 and the fact that $\mathcal P(n,\alpha)$ is a quotient of $\mathcal T(n,\alpha)$. 

\smallskip
 For $(n, \alpha:y\longrightarrow z)\in \mathbb{N}\times Mor(\mathscr{X})$, we know from Lemma \ref{constcont-1} that   $ ^\alpha\mathcal{T}(n,\alpha): \alpha^\bullet \mathcal{T}(n,\alpha)_z\longrightarrow \mathcal{T}(n,\alpha)_y$ is an isomorphism in $\mathbf M_{[\mathcal{C}_y,\_\_]}$. Since $\mathcal M$ is cartesian, we also know that 
 $^\alpha\mathcal M:\alpha^\bullet\mathcal M_z\longrightarrow \mathcal M_y$ is an isomorphism. Since $\alpha^\bullet$ is exact and $\mathcal P(n,\alpha)=Im(\mathcal T(n,\alpha) \longrightarrow \mathcal M)$, it follows that $\alpha^\bullet\mathcal P(n,\alpha)_z\cong \mathcal P(n,\alpha)_y$.  This proves condition {(3'')}.
\end{proof}

\begin{lem}\label{gencont}
Let $\mathscr{X}$ be a poset and let $\mathcal{C}:\mathscr{X} \longrightarrow Coalg$ be a right coflat representation. Let $\mathcal{M} \in Cont^{tr}_c\text{-}\mathcal{C}$ and $m_0 \in el_{\mathscr{X}}(\mathcal{M})$. 
Then, there exists   $\mathcal{P} \subseteq \mathcal{M}$ in $ Cont^{tr}_c\text{-}\mathcal{C}$ with $m_0 \in el_{\mathscr{X}}(\mathcal{P})$ such that $|\mathcal{P}| \leq \kappa^\mu$.
\end{lem}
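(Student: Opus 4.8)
The plan is to follow the template of Lemma \ref{gen} and Lemma \ref{transgen}, the only genuinely new feature being that colimits of contramodules need not agree with colimits of the underlying vector spaces, so that cardinalities have to be tracked through Lemma \ref{sumcontrcard}(b) rather than by a naive set-theoretic union. First I would invoke the inductive construction carried out just before Lemma \ref{constcont} to produce, for the given element $m_0\in el_{\mathscr X}(\mathcal M)$, the directed family $\{\mathcal P(n,\alpha)\mid(n,\alpha)\in\mathbb N\times Mor(\mathscr X)\}$ of subobjects of $\mathcal M$ in $Cont^{tr}\text{-}\mathcal C$ satisfying (1'')--(4'') (Lemma \ref{constcont}). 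Since the lexicographic order on $\mathbb N\times Mor(\mathscr X)$ is filtered and the family is directed by (2''), I would then set $\mathcal T:=\varinjlim_{(n,\alpha)}\mathcal P(n,\alpha)$ in $Cont^{tr}\text{-}\mathcal C$ (this colimit exists and is computed pointwise by Theorem \ref{Grothcont}) and define $\mathcal P:=Im\bigl(\mathcal T\longrightarrow\mathcal M\bigr)$, in exact analogy with the passage from $\mathcal T(n,\alpha)$ to $\mathcal P(n,\alpha)$ in \eqref{transDefinitioncont}; the compatible monomorphisms $\mathcal P(n,\alpha)\hookrightarrow\mathcal M$ induce the canonical map $\mathcal T\to\mathcal M$.

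Next I would verify the three required properties of $\mathcal P$. That $m_0\in el_{\mathscr X}(\mathcal P)$ is immediate from (1''): $m_0\in el_{\mathscr X}(\mathcal P(1,\alpha_0))$ and $\mathcal P(1,\alpha_0)$ maps into $\mathcal P$, just as $\mathcal Q^0(1,\alpha_0)$ mapped into $\mathcal P(1,\alpha_0)$ in the proof of Lemma \ref{constcont}. For the cardinality bound, I would use that $|\mathbb N\times Mor(\mathscr X)|\le\kappa\le\kappa^\mu$ and that each $|\mathcal P(n,\alpha)|\le\kappa^\mu$ by (4''); Lemma \ref{sumcontrcard}(b), applied to the system $\{\mathcal P(n,\alpha)\}$, then gives $|\mathcal T|\le(\kappa^\mu)^\mu=\kappa^\mu$, and since $\mathcal P$ is a quotient of $\mathcal T$ we conclude $|\mathcal P|\le\kappa^\mu$.

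Finally, to see that $\mathcal P$ is cartesian I would fix an arbitrary morphism $\gamma:u\longrightarrow v$ in $\mathscr X$ and use that $\{(m,\gamma)\}_{m\ge 1}$ is cofinal in $\mathbb N\times Mor(\mathscr X)$, so that $\mathcal T=\varinjlim_{m\ge 1}\mathcal P(m,\gamma)$. Since $\gamma^\bullet$ is a left adjoint it preserves this colimit, and (3'') says each $^{\gamma}\mathcal P(m,\gamma)\colon \gamma^\bullet\mathcal P(m,\gamma)_v\longrightarrow\mathcal P(m,\gamma)_u$ is an isomorphism, so passing to the colimit yields an isomorphism $\gamma^\bullet\mathcal T_v\xrightarrow{\sim}\mathcal T_u$. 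Then, since $\mathcal M$ is cartesian, $^{\gamma}\mathcal M\colon \gamma^\bullet\mathcal M_v\longrightarrow\mathcal M_u$ is an isomorphism, and since $\gamma^\bullet$ is exact (Lemma \ref{coextex}) and $\mathcal P=Im(\mathcal T\to\mathcal M)$, the argument from the last paragraph of the proof of Lemma \ref{constcont} shows $^{\gamma}\mathcal P\colon \gamma^\bullet\mathcal P_v\longrightarrow\mathcal P_u$ is an isomorphism; hence $\mathcal P\in Cont^{tr}_c\text{-}\mathcal C$, which completes the proof.

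The step I expect to require the most care is the cardinality estimate: because colimits in $\mathbf M_{[\mathcal C_x,\_\_]}$ are not computed on underlying vector spaces, one cannot bound $|\mathcal P|$ simply by $\sum_{(n,\alpha)}|\mathcal P(n,\alpha)|$, and it is precisely the exponent $\kappa^\mu$ (forced by Lemma \ref{sumcontrcard}) that makes the whole transfinite construction — already used to produce the $\mathcal P(n,\alpha)$ with $|\mathcal P(n,\alpha)|\le\kappa^\mu$ — close up consistently. Everything else is a routine transcription of the comodule arguments, with $\alpha^\bullet$ playing the role previously played by $\alpha^!$ or $\alpha_*$.
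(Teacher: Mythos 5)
Your proposal is correct and follows essentially the same route as the paper's proof: form $\mathcal T=\varinjlim_{(n,\alpha)}\mathcal P(n,\alpha)$ and $\mathcal P=Im(\mathcal T\to\mathcal M)$, get $m_0\in\mathcal P_x$ from (1''), bound $|\mathcal P|\leq|\mathcal T|\leq(\kappa^\mu)^\mu=\kappa^\mu$ via Lemma \ref{sumcontrcard}(b), and prove cartesianness by cofinality of $\{(s,\beta)\}_{s\geq 1}$ together with $\beta^\bullet$ preserving colimits, the exactness of $\beta^\bullet$, and the cartesianness of $\mathcal M$. Your closing remark about why the naive sum of cardinalities fails and the exponent $\kappa^\mu$ is forced is exactly the point the paper is implicitly handling through Lemma \ref{sumcontrcard}.
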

\begin{proof}
We note that the set $\mathbb{N}\times Mor(\mathscr{X})$ with the lexicographic ordering is filtered. We  set
\begin{equation}\label{6.5tre}
\mathcal T:=\underset{(n,\alpha)\in \mathbb{N}\times Mor(\mathscr{X})}{\varinjlim}\mathcal P(n,\alpha)\qquad \mathcal{P}:=Im(\mathcal T\longrightarrow \mathcal M)
\end{equation} in $Cont^{tr}$-$\mathcal{C}$. 
By Lemma \ref{constcont}, we know that $m_0\in \mathcal{P}(1,\alpha_0)_x\subseteq Im(\mathcal T_x\longrightarrow \mathcal M_x)=\mathcal P_x$. Thus, $m_0\in el_{\mathscr{X}}(\mathcal{P})$. By Lemma \ref{constcont}, we also know that each $|\mathcal P(n,\alpha)|\leq \kappa^\mu$. Applying Lemma \ref{sumcontrcard}(b) to the colimit in \eqref{6.5tre}, we see that
$|\mathcal P|\leq |\mathcal T|\leq (\kappa^\mu)^\mu=\kappa^\mu$. 

\smallskip
It remains to show that $\mathcal P$ is cartesian. For this, we consider a morphism $\beta: z\longrightarrow w$ in $\mathscr{X}$. Then, the family $\{(s,\beta)\}_{s\geq 1}$ is cofinal in $\mathbb{N}\times Mor(\mathscr{X})$ and therefore we may express
\begin{equation}
\mathcal T:=\underset{s\geq 1}{\varinjlim}\textrm{ }\mathcal P(s,\beta)
\end{equation}
Since $\beta^\bullet$ is a left adjoint, we have $\beta^\bullet\mathcal T_w=\beta^\bullet\left(\underset{s\geq 1}{\varinjlim}\textrm{ }\mathcal P(s,\beta)_w\right) = \underset{s\geq 1}{\varinjlim}\textrm{ }\beta^\bullet\mathcal P(s,\beta)_w$. By Lemma \ref{constcont}, each 
$^\beta\mathcal{P}(s,\beta): \beta^\bullet\mathcal{P}(s,\beta)_w\longrightarrow \mathcal{P}(s,\beta)_z$ is an isomorphism and it follows that $^\beta \mathcal T:\beta^\bullet \mathcal T_w\longrightarrow \mathcal T_z$ is an isomorphism. In other words, $\mathcal T\in Cont^{tr}_c\text{-}\mathcal{C}$. It follows that $\mathcal P=Im(\mathcal T\longrightarrow \mathcal M)$
also lies in $Cont^{tr}_c\text{-}\mathcal{C}$.
\end{proof}

\begin{Thm}\label{T6.11exes}
Let $\mathscr{X}$ be a poset and let $\mathcal{C}:\mathscr{X} \longrightarrow Coalg$ be a right coflat representation.
 Then, the category $Cont^{tr}_c\text{-}\mathcal{C}$ of cartesian trans-contramodules over $\mathcal{C}$ has a set of generators.
\end{Thm}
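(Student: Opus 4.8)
The plan is to mimic the proof of Theorem \ref{Groth} (and its trans-analogue), now using the machinery assembled in Lemmas \ref{coextex}--\ref{gencont}. The whole point of the preceding lemmas was to produce, for each object $\mathcal M\in Cont^{tr}_c\text{-}\mathcal{C}$ and each element $m_0\in el_{\mathscr X}(\mathcal M)$, a cartesian subobject $\mathcal P\subseteq \mathcal M$ with $m_0\in el_{\mathscr X}(\mathcal P)$ and $|\mathcal P|\leq \kappa^\mu$; that is precisely the content of Lemma \ref{gencont}. So the only thing left is to package this ``local smallness'' statement into a genuine set of generators.

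Concretely, I would argue as follows. Fix $\mathcal M\in Cont^{tr}_c\text{-}\mathcal{C}$. For every $m_0\in el_{\mathscr X}(\mathcal M)$, Lemma \ref{gencont} gives a cartesian subobject $\mathcal P_{m_0}\subseteq\mathcal M$ with $m_0\in el_{\mathscr X}(\mathcal P_{m_0})$ and $|\mathcal P_{m_0}|\leq \kappa^\mu$. Since $Cont^{tr}_c\text{-}\mathcal{C}$ is cocomplete with colimits computed pointwise (and, being an \emph{image} in $Cont^{tr}\text{-}\mathcal{C}$, the cartesianness is preserved — this is exactly the argument at the end of Lemma \ref{gencont}), the canonical map $\bigoplus_{m_0\in el_{\mathscr X}(\mathcal M)}\mathcal P_{m_0}\longrightarrow\mathcal M$ is an epimorphism: for each $x\in Ob(\mathscr X)$ the pointwise map $\bigoplus_{m_0}(\mathcal P_{m_0})_x\longrightarrow\mathcal M_x$ is epi in $\mathbf M_{[\mathcal C_x,\_\_]}$ because every element of $\mathcal M_x$ lies in some $(\mathcal P_{m_0})_x$, and epimorphisms in $\mathbf M_{[\mathcal C_x,\_\_]}$ are detected on underlying vector spaces by exactness of the forgetful functor. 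Hence $\mathcal M$ is a quotient of a direct sum of cartesian trans-contramodules of cardinality $\leq\kappa^\mu$. Choosing one representative from each isomorphism class of objects of $Cont^{tr}_c\text{-}\mathcal{C}$ of cardinality $\leq\kappa^\mu$ yields a \emph{set} $\mathcal G$, and the argument above shows every object of $Cont^{tr}_c\text{-}\mathcal{C}$ is a quotient of a coproduct of members of $\mathcal G$. By \cite[\S 1.9]{Tohuku}, $\mathcal G$ is a set of generators.

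The one point that needs a word of care — and the step I would flag as the main (mild) obstacle — is the passage from ``$\mathcal P_{m_0}$ is a subobject of $\mathcal M$ in $Cont^{tr}_c\text{-}\mathcal C$'' to ``the coproduct-to-$\mathcal M$ map is epi in $Cont^{tr}_c\text{-}\mathcal C$.'' In $Cont^{tr}\text{-}\mathcal{C}$ this is transparent since colimits and epimorphisms are pointwise and reduce to $Vect$; but one must know that the coproduct $\bigoplus_{m_0}\mathcal P_{m_0}$ formed inside $Cont^{tr}_c\text{-}\mathcal C$ agrees with the one formed in $Cont^{tr}\text{-}\mathcal C$. This follows because the inclusion $Cont^{tr}_c\text{-}\mathcal C\hookrightarrow Cont^{tr}\text{-}\mathcal C$ preserves all colimits (shown in the proof of the cocompleteness of $Cont^{tr}_c\text{-}\mathcal C$, using that each $\alpha^\bullet$ is a left adjoint and hence preserves direct sums). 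With that identification in hand the epimorphism claim is immediate and the proof closes.

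I would therefore write the proof of Theorem \ref{T6.11exes} in essentially two sentences: invoke Lemma \ref{gencont} to get the small cartesian subobjects covering each element, note via pointwise colimits and exactness of the forgetful functors that the induced map from their direct sum is epi, and conclude by \cite[\S 1.9]{Tohuku} that the (set-indexed) family of isomorphism-class representatives of cardinality $\leq\kappa^\mu$ generates. No new estimates are needed — all cardinality bookkeeping has already been done in fixing $\kappa$ and $\mu$ and in Lemmas \ref{sumcontrcard} and \ref{gencont}.
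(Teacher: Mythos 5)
Your proposal is correct and follows essentially the same route as the paper: invoke Lemma \ref{gencont} to write any $\mathcal M\in Cont^{tr}_c\text{-}\mathcal{C}$ as a quotient of $\bigoplus_{m_0\in el_{\mathscr X}(\mathcal M)}\mathcal P_{m_0}$ with each $\mathcal P_{m_0}$ cartesian of cardinality $\leq \kappa^\mu$, and conclude that the isomorphism classes of such objects form a set of generators. Your extra remarks (pointwise detection of epimorphisms via the exact forgetful functor to $Vect$, and the agreement of coproducts in $Cont^{tr}_c\text{-}\mathcal{C}$ with those in $Cont^{tr}\text{-}\mathcal{C}$) are exactly the points already settled in the proofs of Theorem \ref{Grothcont} and the cocompleteness lemma, so no new argument is needed.
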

\begin{proof}
It follows from Lemma \ref{gencont} that any $\mathcal{M}\in Cont^{tr}_c\text{-}\mathcal{C}$ can be expressed as a quotient of a direct sum $\bigoplus_{m_0 \in el_\mathscr{X}(\mathcal{M})}\mathcal{P}_{m_0}$ of cartesian subcontramodules with each $|\mathcal{P}_{m_0}| \leq \kappa^\mu$. Therefore, the isomorphism classes of cartesian contramodules $\mathcal{P}$ with $|\mathcal{P}| \leq \kappa^\mu$ form a set  of generators for $Cont^{tr}_c\text{-}\mathcal{C}$.
\end{proof}

\section{Rational pairings, torsion classes, functors between comodules and contramodules}

In this final section, we relate modules over algebra representations to comodules and contramodules over coalgebra representations. An algebra representation will be a  functor
$\mathcal{A}: \mathscr{X}\longrightarrow Alg$, where $Alg$ is the category of $K$-algebras. In \cite{EV}, Estrada and Virili considered modules over a representation of a small category $\mathscr X$ taking values in small preadditive categories, i.e., in algebras with several objects. Our ``cis-modules'' over $\mathcal A$ will be identical to the modules of Estrada and Virili \cite{EV} in the   case where the representation in \cite{EV} takes values in $K$-algebras. We will observe that cis-modules over $\mathcal A$ are related to trans-comodules over its finite dual representation and vice versa. 

\subsection{Modules over algebra representations}

\smallskip
For any algebra $A$, let $\mathbf M_A$ denote the category of right  $A$-modules. 
Corresponding to any algebra morphism $\alpha:A \longrightarrow B$, there is a restriction of scalars $\alpha_\circ: \mathbf M_B\longrightarrow \mathbf M_A$ and an extension of scalars
$\alpha^\circ:\mathbf M_A\longrightarrow \mathbf M_B$. The functor $\alpha_\circ$ also has a right adjoint
		\begin{align*}
& \alpha^\dagger:\mathbf M_A \longrightarrow\mathbf M_B \qquad \qquad  N \mapsto Hom_A(B,N)
\end{align*}
Let $\mathcal A:\mathscr X\longrightarrow Alg$ be an algebra representation. In particular, for each object $x\in Ob(\mathscr{X})$, we have an algebra $\mathcal{A}_x$ and for any   $\alpha \in \mathscr{X}(x,y)$, we have a morphism $\mathcal{A}_{\alpha}: \mathcal{A}_x\longrightarrow \mathcal{A}_y$ of $K$-algebras. As with coalgebra representations, we will abuse notation to write $\alpha^\circ$, $\alpha_\circ$ and $\alpha^\dagger$ respectively for functors $\mathcal A_\alpha^\circ$, $\mathcal A_{\alpha\circ}$, $\mathcal A_\alpha^\dagger$ for any morphism $\alpha$ in $\mathscr X$.

\begin{defn}
Let $\mathcal{A}: \mathscr{X}\longrightarrow Alg$ be an algebra representation. A (right) cis-module $\mathcal{M}$ over $\mathcal{A}$ will consist of the following data:
  \begin{itemize}
  \item[(1)] For each object $x\in Ob(\mathscr{X})$, a right $\mathcal{A}_x$-module $\mathcal{M}_x$
  \item[(2)] For each morphism
    $\alpha: x \longrightarrow y$ in $\mathscr{X}$, a morphism
    ${\mathcal{M}}_{\alpha}: \mathcal{M}_x\longrightarrow \alpha_\circ\mathcal{M}_y$ of right $\mathcal{A}_x$-modules (equivalently, a morphism $\mathcal{M}^{\alpha}: \alpha^\circ\mathcal{M}_x\longrightarrow \mathcal{M}_y$ of right $\mathcal{A}_y$-modules).
  \end{itemize}

  We further assume that $\mathcal{M}_{id_x} = id_{\mathcal{M}_x}$ for any $x\in Ob(\mathscr X)$ and $\mathcal{M}^{\beta\alpha}=\mathcal{M}^{\beta}\circ \beta^\circ (\mathcal{M}^\alpha)$ for composable morphisms $\alpha$, $\beta$ in $\mathscr X$ (equivalently, $\alpha_\circ(\mathcal{M}_{\beta})\circ \mathcal{M}_{\alpha}= \mathcal{M}_{\beta\alpha}$). 
 A morphism $\eta: \mathcal{M} \longrightarrow \mathcal{N}$ of cis-modules over $\mathcal{A}$ consists of morphisms $\eta_x: \mathcal{M}_x\longrightarrow\mathcal{N}_x$ of right $\mathcal{A}_x$-modules for each $x\in Ob(\mathscr{X})$ compatible with the morphisms in (2). 
We denote this category of right cis-modules by $Mod^{cs}$-$\mathcal{A}$. Similarly, we may define the category $\mathcal A$-$Mod^{cs}$ of left cis-modules over $\mathcal A$. 

\smallskip
We will say that $\mathcal M\in Mod^{cs}$-$\mathcal{A}$ is cartesian if each $\mathcal M^\alpha$ is an isomorphism. The full subcategory of cartesian cis-modules will be denoted by $Mod^{cs}_c$-$\mathcal{A}$. 
\end{defn}

 The following results now follow from \cite[Theorem 3.18, Theorem 3.23]{EV}.
\begin{Thm}
  Let $\mathcal{A}: \mathscr{X}\longrightarrow Alg $ be an algebra representation. Then,
\begin{itemize}
\item[(1)] $Mod^{cs}$-$\mathcal{A}$ is a Grothendieck category. If  $\mathscr{X}$ is a poset, then $Mod^{cs}$-$\mathcal{A}$ has a projective generator.
\item[(2)] Suppose that $\mathcal{A}: \mathscr{X}\longrightarrow Alg $ is left  flat, i.e., each $\alpha^\circ=\_\_\otimes_{\mathcal A_x}\mathcal A_y:\mathbf M_{\mathcal A_x}\longrightarrow \mathbf M_{\mathcal A_y}$ is exact for $\alpha\in \mathscr X(x,y)$.  Then,  $Mod^{cs}_c$-$\mathcal{A}$ is a Grothendieck category.
  \end{itemize}
\end{Thm}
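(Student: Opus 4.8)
The plan is to deduce both statements directly from the work of Estrada and Virili \cite{EV}, by identifying $Mod^{cs}$-$\mathcal{A}$ with the category of modules studied there. First I would recall the elementary dictionary between $K$-algebras and $K$-linear categories with a single object: a $K$-algebra $A$ is the same datum as a one-object small preadditive category whose endomorphism ring is $A$, and a right $A$-module is precisely a right module over that category. Under this dictionary an algebra representation $\mathcal{A}: \mathscr{X}\longrightarrow Alg$ is a special case of a representation $\mathscr{X}\longrightarrow Add$ in the sense of \cite{EV}; for $\alpha\in\mathscr{X}(x,y)$ the induced morphism $\mathcal{A}_\alpha:\mathcal{A}_x\longrightarrow\mathcal{A}_y$ corresponds to a functor between the associated one-object categories, and the extension of scalars $\alpha^\circ={-}\otimes_{\mathcal{A}_x}\mathcal{A}_y$ together with the restriction $\alpha_\circ$ agree with the induction and restriction functors of \cite{EV}.

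Next I would observe that, under this identification, the data and axioms in the definition of a (right) cis-module over $\mathcal{A}$ --- a family $\{\mathcal{M}_x\}_{x\in Ob(\mathscr{X})}$ of right $\mathcal{A}_x$-modules together with structure maps $\mathcal{M}^\alpha:\alpha^\circ\mathcal{M}_x\longrightarrow\mathcal{M}_y$ satisfying $\mathcal{M}^{\mathrm{id}_x}=\mathrm{id}$ and the cocycle relation $\mathcal{M}^{\beta\alpha}=\mathcal{M}^\beta\circ\beta^\circ(\mathcal{M}^\alpha)$ --- coincide verbatim with the definition of an $\mathcal{A}$-module in \cite{EV}, and likewise morphisms of cis-modules are morphisms of $\mathcal{A}$-modules. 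Hence $Mod^{cs}$-$\mathcal{A}$ is literally the category $Mod$-$\mathcal{A}$ considered in \cite{EV}. Part (1) is then exactly \cite[Theorem 3.18, Theorem 3.23]{EV}: one gets that $Mod$-$\mathcal{A}$ is a Grothendieck category for an arbitrary small category $\mathscr{X}$, while the existence of a projective generator is the statement obtained in the case where $\mathscr{X}$ is a poset (the poset hypothesis being exactly the one under which the explicit projective generators are produced, in parallel with the construction recalled for comodules in Section 3 above).

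For part (2), a cis-module $\mathcal{M}$ is by definition cartesian precisely when every structure map $\mathcal{M}^\alpha:\alpha^\circ\mathcal{M}_x\longrightarrow\mathcal{M}_y$ is an isomorphism, which is exactly the condition defining a cartesian object (the analogue of a quasi-coherent module) in \cite{EV}; and the left flatness hypothesis on $\mathcal{A}$, that each $\alpha^\circ={-}\otimes_{\mathcal{A}_x}\mathcal{A}_y$ is exact, is precisely the flatness assumption of \cite[Theorem 3.23]{EV}. Thus $Mod^{cs}_c$-$\mathcal{A}$ is the category of cartesian modules of \cite{EV}, and \cite[Theorem 3.23]{EV} gives that it is a Grothendieck category.

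The only real content is bookkeeping, and the one point that genuinely needs checking is that all the conventions line up: that ``right'' cis-modules correspond to ``right'' $\mathcal{A}$-modules, that the direction of the structure maps $\mathcal{M}_\alpha:\mathcal{M}_x\longrightarrow\alpha_\circ\mathcal{M}_y$ (equivalently $\mathcal{M}^\alpha$) matches the one used in \cite{EV}, and that the three adjoint functors $(\alpha^\circ,\alpha_\circ,\alpha^\dagger)$ of this paper are the same as those appearing there. Once this translation is made, the definition of a cis-module over $\mathcal{A}$ becomes literally \cite{EV}'s definition of an $\mathcal{A}$-module and there is nothing further to prove; this is also where I expect the (very minor) obstacle to lie --- there is no substantive mathematical difficulty, only the need to be careful with orientations and with the hypotheses on $\mathscr{X}$.
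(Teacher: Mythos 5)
Your proposal is correct and is essentially the paper's own argument: the paper proves this statement simply by observing (as stated in Section 7.1 and in the Introduction) that cis-modules over $\mathcal{A}$ recover the modules of Estrada and Virili when their representation takes values in $K$-algebras viewed as one-object preadditive categories, and then quoting \cite[Theorem 3.18, Theorem 3.23]{EV}. Your additional care about matching orientations, the adjoint triple $(\alpha^\circ,\alpha_\circ,\alpha^\dagger)$, and the poset hypothesis for the projective generator is exactly the bookkeeping implicit in the paper's citation.
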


We now introduce the notion of trans-module over an algebra representation.
\begin{defn}
Let $\mathcal{A}: \mathscr{X}\longrightarrow Alg$ be an algebra representation. A (right) trans-module $\mathcal{M}$ over $\mathcal{A}$ will consist of the following data:
  \begin{itemize}
  \item[(1)] For each object $x\in Ob(\mathscr{X})$, a right $\mathcal{A}_x$-module $\mathcal{M}_x$
  \item[(2)] For each morphism
    $\alpha: x \longrightarrow y$ in $\mathscr{X}$, a morphism
    $_\alpha{\mathcal{M}}: \mathcal{M}_y\longrightarrow \alpha^\dagger\mathcal{M}_x=Hom_{\mathcal{A}_x}(\mathcal{A}_y, \mathcal{M}_x)$ of right $\mathcal{A}_y$-modules (equivalently, a morphism $^{\alpha}\mathcal{M}: \alpha_\circ\mathcal{M}_y\longrightarrow \mathcal{M}_x$ of right $\mathcal{A}_x$-modules) 
  \end{itemize}

  We further assume that $_{id_x}\mathcal{M} = id_{\mathcal{M}_x}$ and $_{\beta\alpha}\mathcal M=\beta^\dagger({_\alpha}\mathcal M)\circ {_\beta}\mathcal M$ for   composable morphisms $\alpha$, $\beta$ in $\mathscr X$ (equivalently, $^{\beta\alpha}\mathcal M={^\alpha}\mathcal M\circ \alpha_\circ({_\beta}\mathcal M)$).  A morphism $\eta: \mathcal{M} \longrightarrow \mathcal{N}$ of trans-modules over $\mathcal{A}$ consists of morphisms $\eta_x: \mathcal{M}_x\longrightarrow\mathcal{N}_x$ of right $\mathcal{A}_x$-modules for each $x\in Ob(\mathscr{X})$ compatible with the morphisms in (2). 
We denote this category of right trans-modules by $Mod^{tr}$-$\mathcal{A}$. Similarly, we may define the category $\mathcal A$-$Mod^{tr}$ of left trans-modules over $\mathcal A$. 
 
 \smallskip
We will say that $\mathcal M\in Mod^{tr}$-$\mathcal{A}$ is cartesian if each ${_\alpha}\mathcal M$ is an isomorphism. The full subcategory of cartesian trans-modules will be denoted by $Mod^{tr}_c$-$\mathcal{A}$. 
\end{defn}

We may easily verify that $Mod^{tr}$-$\mathcal{A}$ is a cocomplete abelian category. To study 
generators in $Mod^{tr}$-$\mathcal{A}$, we consider $\mathcal{M} \in Mod^{tr}\text{-}\mathcal{A}$ and $m_0\in \mathcal M_x$ for some $x \in Ob(\mathscr{X})$. If we take a morphism 
$
\eta:\mathcal{A}_x \longrightarrow \mathcal{M}_x
$
in $Mod\text{-}\mathcal{A}_x$ such that  $m_0\in Im(\eta)$, we can set for each $y \in Ob(\mathscr{X})$ 
\begin{equation}\label{modtrx}
\mathcal{N}_y:=Im\left(\bigoplus\limits_{\beta \in \mathscr{X}(y,x)} \beta_\circ\mathcal{A}_x \xrightarrow{\beta_\circ\eta} \beta_\circ\mathcal{M}_x   \xrightarrow{^\beta{\mathcal{M}}} \mathcal{M}_y   \right)
\end{equation} As in previous sections, we can show that the  family $\{\mathcal{N}_y \in \mathbf M_{\mathcal A_y}\}_{y \in Ob(\mathscr{X})}$ determines an object $\mathcal N$ in $Mod^{tr}\text{-}\mathcal{A}$. Moreover, $m_0\in \mathcal N_x$.  Now, we set
\begin{equation*}
\kappa:= sup\{\aleph_0, |Mor(\mathscr X)|, |K|, |\mathcal{A}_x|, {x \in Ob(\mathscr{X})}  \}
\end{equation*} Then, if $|\mathcal N|$ denotes the cardinality of the union $\bigcup_{y\in Ob(\mathscr X)}\mathcal N_y$, we have $|\mathcal N|\leq \kappa$.  This proves the following result.

\begin{Thm}\label{Grothmodg}
Let $\mathcal{A}:\mathscr{X} \longrightarrow Alg$ be an algebra representation. Then, $Mod^{tr}\text{-}\mathcal{A}$ is a Grothendieck category.
\end{Thm}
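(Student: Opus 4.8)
The plan is to verify the Grothendieck conditions for $Mod^{tr}\text{-}\mathcal{A}$ by combining the pointwise construction of (co)limits with a cardinality argument exactly analogous to those used for $Com^{cs}\text{-}\mathcal C$ (Theorem \ref{Groth}), $Com^{tr}\text{-}\mathcal C$ (Theorem \ref{Groth1}) and $Cont^{tr}\text{-}\mathcal C$ (Theorem \ref{Grothcont}). First I would record that $Mod^{tr}\text{-}\mathcal{A}$ is abelian with kernels and cokernels computed pointwise: for $\eta\colon\mathcal M\to\mathcal N$ one sets $Ker(\eta)_x=Ker(\eta_x)$ and $Coker(\eta)_x=Coker(\eta_x)$, and the structure maps are induced because the restriction functor $\alpha_\circ\colon\mathbf M_{\mathcal A_y}\to\mathbf M_{\mathcal A_x}$ is exact (it is both a left and a right adjoint). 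Filtered colimits are likewise computed pointwise in each $\mathbf M_{\mathcal A_x}$, using that $\alpha_\circ$ preserves filtered colimits; since filtered colimits and finite limits commute in each module category $\mathbf M_{\mathcal A_x}$ and everything is pointwise, AB5 holds in $Mod^{tr}\text{-}\mathcal{A}$.

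The remaining task is to produce a generating set. Given $\mathcal M\in Mod^{tr}\text{-}\mathcal A$ and $m_0\in\mathcal M_x$, I would take the obvious morphism $\eta\colon\mathcal A_x\to\mathcal M_x$ of right $\mathcal A_x$-modules with $m_0\in Im(\eta)$ (send $1\mapsto m_0$), and define $\mathcal N_y$ by the formula \eqref{modtrx}. The key lemma, proved just as Lemma \ref{lem1} and Proposition \ref{trans-sub}, is that for $\alpha\in\mathscr X(z,y)$ the composite $\beta_\circ\mathcal A_x\to\mathcal N_y\hookrightarrow\mathcal M_y\xrightarrow{^\alpha\mathcal M}\ldots$ is controlled so that $^\alpha\mathcal M$ restricts to $\mathcal M_z$-module maps $^\alpha\mathcal N\colon\alpha_\circ\mathcal N_y\to\mathcal N_z$; here one uses that $\mathcal A_x$ is projective in $\mathbf M_{\mathcal A_x}$ (so the analogue of the lifting step in Proposition \ref{zzcsub} is immediate, and in fact cleaner than in the comodule case, since $\mathbf M_{\mathcal A_x}$ is generated by the single projective $\mathcal A_x$). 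One also records, as in Lemmas \ref{lem2} and \ref{lem2wsf}, that $\mathcal N_y$ is the image of the family of maps factoring through $\mathcal N_x$. Then $\mathcal N\subseteq\mathcal M$ is a subobject in $Mod^{tr}\text{-}\mathcal A$ containing $m_0$ in degree $x$.

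For the cardinality bound, since $\beta_\circ\mathcal A_x$ is $\mathcal A_x$ viewed as an $\mathcal A_y$-module, $|\beta_\circ\mathcal A_x|\le|\mathcal A_x|\le\kappa$, and $\mathcal N_y$ is a quotient of $\bigoplus_{\beta\in\mathscr X(y,x)}\beta_\circ\mathcal A_x$, so $|\mathcal N_y|\le\kappa$ and hence $|\mathcal N|=\sum_{y}|\mathcal N_y|\le\kappa$ by the choice of $\kappa$. Thus every object of $Mod^{tr}\text{-}\mathcal A$ is the sum (union) of its subobjects of cardinality $\le\kappa$, and choosing one representative from each isomorphism class of objects of cardinality $\le\kappa$ yields a generating set. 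Combined with AB5, this shows $Mod^{tr}\text{-}\mathcal A$ is a Grothendieck category. I do not anticipate a genuine obstacle here: the only point requiring care is the verification that the $\mathcal N_y$ assemble into a trans-module (the subobject claim), and this is routine once the adjunction $(\alpha_\circ,\alpha^\dagger)$ and the projectivity of $\mathcal A_x$ are used as in the earlier sections — which is presumably why the excerpt states it without a separate proof.
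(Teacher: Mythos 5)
Your proposal is correct and follows essentially the same route as the paper: pointwise (co)limits giving an AB5 abelian category, the subobject $\mathcal N\subseteq\mathcal M$ built from \eqref{modtrx} using the adjunction $(\alpha_\circ,\alpha^\dagger)$ and the fact that $\mathcal A_y$ is a projective generator of $\mathbf M_{\mathcal A_y}$, and the cardinality bound $|\mathcal N|\leq\kappa$ with $\kappa=\sup\{\aleph_0,|Mor(\mathscr X)|,|K|,|\mathcal A_x|\}$ yielding a generating set of isomorphism classes of objects of cardinality $\leq\kappa$. This matches the paper's (sketched) argument, which likewise defers the verification that the $\mathcal N_y$ assemble into a trans-submodule to the analogous lemmas of the earlier sections.
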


As in Section 3.2 and Section 5, we may show that when $\mathscr X$ is a poset,  the evaluation functor $ev_x^{tr}:Mod^{tr}\text{-}\mathcal{A}\longrightarrow \mathbf M_{\mathcal A_x}$ for each $x\in Ob(\mathscr X)$ has both a left adjoint $ex_x^{tr}:\mathbf M_{\mathcal A_x}\longrightarrow Mod^{tr}\text{-}\mathcal{A}$ and a right adjoint $coe^{tr}_x:\mathbf M_{\mathcal A_x}\longrightarrow Mod^{tr}\text{-}\mathcal{A}$. Since each $\mathcal A_x$ is a projective generator in $\mathbf M_{\mathcal A_x}$, we may prove the following result in a manner similar to Theorem \ref{projgen3.8}.

\begin{Thm}\label{modtrproj}
Let $\mathscr{X}$ be a poset and $\mathcal{A}: \mathscr{X}\longrightarrow Alg$ be an algebra representation. Then,  $\{ex^{tr}_x(\mathcal A_x)\}_{x\in Ob(\mathscr X)}$ is a set of projective generators for  $Mod^{tr}\text{-}\mathcal{A}$.
\end{Thm}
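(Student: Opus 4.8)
The plan is to follow the pattern already established for $Com^{tr}$-$\mathcal C$ in Section 3.2, adapting the three ingredients: the existence of adjoints to the evaluation functors, the fact that the left adjoint of an exact functor preserves projectives, and the standard generator criterion of Grothendieck \cite[\S 1.9]{Tohuku}. First I would record explicitly the functor $ex^{tr}_x:\mathbf M_{\mathcal A_x}\longrightarrow Mod^{tr}\text{-}\mathcal{A}$, defined for a poset $\mathscr X$ by setting, for $y\in Ob(\mathscr X)$,
\[
ex^{tr}_x(M)_y=
\begin{cases}
\alpha_\circ M=Hom_{\mathcal A_x}(\mathcal A_y,\ \text{---})\text{-type restriction along }\alpha & \text{if }\alpha\in\mathscr X(y,x)\\
0 & \text{if }\mathscr X(y,x)=\emptyset,
\end{cases}
\]
with structure maps given by identities $\beta_\circ\alpha_\circ M=(\alpha\beta)_\circ M$ whenever $\beta\in\mathscr X(y',y)$ and $\alpha\in\mathscr X(y,x)$ (the poset hypothesis guarantees at most one such composite). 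This is the exact analogue of Proposition \ref{poprojc}(1); the verification that $ex^{tr}_x(M)\in Mod^{tr}\text{-}\mathcal{A}$ and the adjunction $Mod^{tr}\text{-}\mathcal{A}(ex^{tr}_x(M),\mathcal N)\cong\mathbf M_{\mathcal A_x}(M,\mathcal N_x)$ are carried out exactly as in the proof of Proposition \ref{poprojc}(3), using here the adjoint triple $(\alpha^\circ,\alpha_\circ,\alpha^\dagger)$ in place of $(\alpha^!,\alpha^*,\text{---})$. Since the excerpt states that this adjunction (and the right adjoint $coe^{tr}_x$) already hold, I would simply invoke them.

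Next I would observe that the evaluation functor $ev^{tr}_x:Mod^{tr}\text{-}\mathcal{A}\longrightarrow\mathbf M_{\mathcal A_x}$ is exact, because kernels and cokernels in $Mod^{tr}\text{-}\mathcal{A}$ are computed pointwise (as noted just before Theorem \ref{Grothmodg}); hence its left adjoint $ex^{tr}_x$ preserves projective objects, by the standard fact that a left adjoint of an exact functor sends projectives to projectives. Since $\mathcal A_x$ is a projective generator of $\mathbf M_{\mathcal A_x}$, each $ex^{tr}_x(\mathcal A_x)$ is projective in $Mod^{tr}\text{-}\mathcal{A}$. For the generating property: given a non-invertible monomorphism $i:\mathcal N\hookrightarrow\mathcal M$ in $Mod^{tr}\text{-}\mathcal{A}$, pointwise computation of kernels and cokernels yields some $x\in Ob(\mathscr X)$ with $i_x:\mathcal N_x\hookrightarrow\mathcal M_x$ non-invertible; since $\mathcal A_x$ generates $\mathbf M_{\mathcal A_x}$, pick $f:\mathcal A_x\longrightarrow\mathcal M_x$ not factoring through $i_x$, and transport $f$ across the adjunction $(ex^{tr}_x,ev^{tr}_x)$ to get $\eta_f:ex^{tr}_x(\mathcal A_x)\longrightarrow\mathcal M$ not factoring through $i$. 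By \cite[\S 1.9]{Tohuku}, $\{ex^{tr}_x(\mathcal A_x)\}_{x\in Ob(\mathscr X)}$ is a set of generators, and being composed of projectives, it is a set of projective generators.

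There is no real obstacle here — every step is a transcription of an argument already executed in the excerpt for $Com^{tr}$-$\mathcal{C}$ (Theorems \ref{poprojc}, \ref{projgen3.8}) and for $Com^{cs}$-$\mathcal{C}$ (Theorem \ref{projgen}). The only point requiring a touch of care is the explicit description of the structure maps of $ex^{tr}_x(\mathcal A_x)$ and the check that composition of identities is consistent, which is where the poset hypothesis is used; but this is routine. Accordingly the write-up can be kept short, mostly citing the parallel proofs. The proof is: by the remarks preceding Theorem \ref{Grothmodg}, $ev^{tr}_x$ is exact, so its left adjoint $ex^{tr}_x$ preserves projectives; since $\mathcal A_x$ is a projective generator of $\mathbf M_{\mathcal A_x}$, each $ex^{tr}_x(\mathcal A_x)$ is projective in $Mod^{tr}\text{-}\mathcal{A}$, and arguing as in Theorem \ref{projgen3.8} (transporting non-factoring morphisms $\mathcal A_x\to\mathcal M_x$ across the adjunction and applying \cite[\S 1.9]{Tohuku}) shows the family generates.
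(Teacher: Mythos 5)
Your proof is correct and is essentially the paper's own argument: the paper establishes this theorem precisely by transporting to $Mod^{tr}\text{-}\mathcal{A}$ the adjunction $(ex^{tr}_x, ev^{tr}_x)$ with $ex^{tr}_x(M)_y=\alpha_\circ M$ for $\alpha\in\mathscr X(y,x)$ (and $0$ otherwise), the exactness of $ev^{tr}_x$ coming from pointwise kernels and cokernels, the projectivity of $\mathcal A_x$ as a generator of $\mathbf M_{\mathcal A_x}$, and the generator criterion of \cite[\S 1.9]{Tohuku}, exactly as in Theorems \ref{poprojc} and \ref{projgen3.8}. One small slip worth fixing: $\alpha_\circ M$ is plain restriction of scalars along $\mathcal A_y\longrightarrow\mathcal A_x$, not a $Hom_{\mathcal A_x}(\mathcal A_y,\_\_)$-type functor (that is the right adjoint $\alpha^\dagger$), but since your argument only uses that $ex^{tr}_x$ is left adjoint to the exact $ev^{tr}_x$, this does not affect the proof.
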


Let $\mathscr{X}$ be a poset and $\mathcal{A}:\mathscr{X} \longrightarrow Alg$ be an algebra representation such that for each $\alpha \in \mathscr{X}(x,y)$, the $K$-algebra $\mathcal{A}_y$ is finitely generated and projective as an $\mathcal{A}_x$-module. Then, each $\alpha^\dagger$ is exact and commutes with colimits. Accordingly, we may verify that the category $Mod^{tr}_c\text{-}\mathcal{A}$ of cartesian trans-modules over $\mathcal A$ is abelian and cocomplete. 

\smallskip
We take $\mathcal M\in Mod^{tr}_c\text{-}\mathcal{A}$ and consider  $m_0\in \mathcal M_x$ for some $x\in Ob(\mathscr X)$. Using a transfinite induction argument similar to Section 4.2, we can obtain a subobject $\mathcal P\hookrightarrow \mathcal M$ in $Mod^{tr}_c\text{-}\mathcal{A}$ such that $m_0\in 
\mathcal P_x$. Accordingly, we can prove the following result.

\begin{Thm}\label{T6.11xs}
Let $\mathcal{A}:\mathscr{X} \longrightarrow Alg$ be an algebra representation such that for each $\alpha \in \mathscr{X}(x,y)$, the $K$-algebra $\mathcal{A}_y$ is finitely generated and projective as an $\mathcal{A}_x$-module. Then, $Mod^{tr}_c$-$\mathcal{A}$ is a Grothendieck category.
\end{Thm}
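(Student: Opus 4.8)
The plan is to check the Grothendieck axioms for $Mod^{tr}_c$-$\mathcal{A}$ directly, with the finitely‑generated‑projective hypothesis on $\mathcal{A}$ playing the role that coflatness plays in Section 4.1. By the discussion preceding the statement, $Mod^{tr}_c$-$\mathcal{A}$ is already abelian and cocomplete, and its colimits are computed pointwise: for $\alpha\in\mathscr{X}(x,y)$ the functor $\alpha^\dagger=Hom_{\mathcal{A}_x}(\mathcal{A}_y,-)$ preserves all colimits because $\mathcal{A}_y$ is finitely generated projective as a right $\mathcal{A}_x$-module (via $\mathcal{A}_\alpha$), and it is exact for the same reason, so kernels of morphisms of cartesian trans-modules are again cartesian and are also computed pointwise. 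Hence finite limits and filtered colimits in $Mod^{tr}_c$-$\mathcal{A}$ are both computed pointwise, and since they commute in each $\mathbf{M}_{\mathcal{A}_x}$ they commute in $Mod^{tr}_c$-$\mathcal{A}$. So the only real work is to exhibit a set of generators.

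For this I would set $\kappa:=\sup\{\aleph_0,|Mor(\mathscr{X})|,|K|,|\mathcal{A}_x|~|~x\in Ob(\mathscr{X})\}$ and prove: for every $\mathcal{M}\in Mod^{tr}_c$-$\mathcal{A}$ and every $m_0\in\mathcal{M}_x$ there is a cartesian subobject $\mathcal{P}\subseteq\mathcal{M}$ with $m_0\in\mathcal{P}_x$ and $|\mathcal{P}|\le\kappa$. As indicated above, this is obtained by repeating the transfinite induction of Section 4.1 with $\alpha^\dagger$ in place of the coinduction functor $\alpha_*$. First I would use the construction in \eqref{modtrx} to produce a (not necessarily cartesian) subobject $\mathcal{N}\subseteq\mathcal{M}$ in $Mod^{tr}$-$\mathcal{A}$ with $m_0\in\mathcal{N}_x$ and $|\mathcal{N}|\le\kappa$. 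Then, fixing a well-ordering of $Mor(\mathscr{X})$ and the induced lexicographic order on $\mathbb{N}\times Mor(\mathscr{X})$, I would inductively build submodules $\mathcal{P}(n,\alpha)\subseteq\mathcal{M}$ that ``correct'' $\mathcal{N}$ one morphism at a time, so that each structure map ${}_\alpha\mathcal{P}(n,\alpha)\colon\mathcal{P}(n,\alpha)_z\to\alpha^\dagger\mathcal{P}(n,\alpha)_y$ becomes an isomorphism in the limit, and set $\mathcal{P}:=\bigcup_{(n,\alpha)}\mathcal{P}(n,\alpha)$; the cofinality of $\{(m,\gamma)\}_{m\ge 1}$ together with the fact that $\alpha^\dagger$ preserves monomorphisms and filtered colimits then shows $\mathcal{P}$ is cartesian, exactly as in Lemma \ref{gen}.

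The two ingredients that must be reproved in this setting are the analogues of Lemma \ref{kappa} and Lemma \ref{double}. For a morphism $\alpha\colon A\to B$ of $K$-algebras with $B$ finitely generated projective as a right $A$-module and a cardinal $\kappa'\ge\max\{\aleph_0,|K|,|A|\}$: given $M\in\mathbf{M}_A$ and $S\subseteq\alpha^\dagger M=Hom_A(B,M)$ with $|S|\le\kappa'$, each $s\in S$ is the image of a morphism $B\to\alpha^\dagger M$; lifting it along the epimorphism $(\alpha^\dagger A)^{(I)}=\alpha^\dagger(A^{(I)})\twoheadrightarrow\alpha^\dagger M$ obtained by applying the exact, colimit-preserving functor $\alpha^\dagger$ to a free presentation $A^{(I)}\twoheadrightarrow M$, and using that $B$ is finitely generated to factor the lift through a finite subsum $\alpha^\dagger(A^{(J_s)})$ with $J_s$ finite, one finds that $s$ lies in $\alpha^\dagger N_s$ for $N_s:=Im(A^{(J_s)}\to M)$; then $N:=\sum_{s\in S}N_s$ satisfies $|N|\le\kappa'$ and $S\subseteq\alpha^\dagger N$. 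The ``double'' version follows as in Lemma \ref{double} by taking also $\alpha=\mathrm{id}_A$ and summing; here the second bound is automatic, since $B$ being a direct summand of some $A^n$ gives $|\alpha^\dagger N|=|Hom_A(B,N)|\le|N|$. With these two lemmas, the analogues of Lemmas \ref{subcomod}, \ref{Pnalpha} and \ref{gen} transfer verbatim.

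Finally, the isomorphism classes of cartesian trans-modules $\mathcal{P}$ with $|\mathcal{P}|\le\kappa$ form a set, and every $\mathcal{M}\in Mod^{tr}_c$-$\mathcal{A}$ is the sum of its subobjects of this form, so this set generates; together with the pointwise description of limits and colimits this makes $Mod^{tr}_c$-$\mathcal{A}$ a Grothendieck category. I do not anticipate a genuine obstacle: the only point that is new relative to Section 4.1 is recognizing that the finitely‑generated‑projective hypothesis, rather than a coflatness hypothesis, is precisely what supplies exactness and colimit‑preservation of $\alpha^\dagger$ together with the needed cardinality bounds — and it in fact makes the ``double'' lemma slightly shorter, since $|\alpha^\dagger N|\le|N|$ comes for free.
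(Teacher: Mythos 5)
Your proposal is correct and follows essentially the same route as the paper: the authors likewise note that the finitely generated projective hypothesis makes each $\alpha^\dagger$ exact and colimit-preserving, prove module analogues of Lemmas \ref{kappa} and \ref{double} by lifting against $\alpha^\dagger$ applied to a free presentation, and run the same transfinite induction over $\mathbb{N}\times Mor(\mathscr{X})$ to produce cartesian subobjects of bounded cardinality containing a given element, which then supply the set of generators. The only cosmetic difference is that you model the argument on Section 4.1 while the paper cites Section 4.2; the pattern is identical, so nothing substantive changes.
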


\subsection{Rational modules, torsion classes, functors between comodules and contramodules}

Let $C$ be a $K$-coalgebra and let $A$ be a $K$-algebra. We recall (see, for instance, \cite[$\S$ 4.18]{BW}) that a rational pairing of $C$ and $A$ consists of a morphism $\varphi: C\otimes A\longrightarrow K$ such that:

\smallskip
(i) For any vector space $V$, the following linear map is injective
\begin{equation*} \delta_V:V \otimes C \longrightarrow Hom_K(A,V)\qquad  v \otimes c \longmapsto [a \mapsto \varphi(c\otimes a)v]
\end{equation*} 

\smallskip
(ii) The morphism $A\longrightarrow C^*$ induced by $\varphi:C\otimes A\longrightarrow K$ is a morphism of $K$-algebras. 

\smallskip
We note in particular that the induced map $C\longrightarrow A^*$ is injective. For more on rational pairings, we refer the reader, for instance, to 
\cite{AGL}. 

\smallskip
Now let  $\varphi: C\otimes A\longrightarrow K$ be a rational pairing, let $\mathbf M^C$ denote the category of right $C$-comodules and $_A\mathbf M$ denote the category of left $A$-modules. We note that any right $C$-comodule with structure map $\rho^M:M\longrightarrow M\otimes C$  may be treated as a left $A$-module by setting
$am:=\sum m_0\varphi(m_1\otimes a)$ for each $a\in A$ and $m\in A$, where $\rho^M(m)=\sum m_0\otimes m_1\in M\otimes C$. In fact, this embeds
$\mathbf M^C$ as the smallest full Grothendieck subcategory of $_A\mathbf M$ containing $C$ (see \cite[$\S$ 4.19]{BW}). Accordingly, the inclusion
$I_\varphi: \mathbf M^C\longrightarrow {_A}\mathbf M$ has a right adjoint $R_\varphi: {_A}\mathbf M\longrightarrow \mathbf M^C$ given by setting (see 
\cite[$\S$ 41.1]{BW})
\begin{equation}
R_\varphi(N):=\sum \{\mbox{$Im(f)$ $\vert$  $f\in {_A}\mathbf M(I_\varphi(M), N)$, $M\in \mathbf M^C$}\}
\end{equation}  A left $A$-module $N$ is said to be rational if it satisfies $R_\varphi(N)=N$. Since the collection of rational modules is closed under quotients and subobjects (see \cite[Corollary 2.4]{Rad}), it follows that for any $N\in {_A}\mathbf M$, $R_\varphi(N)$ is the sum of all rational submodules of $N$.  

\begin{defn}\label{funcratpr}
	Let $\mathcal{C}:\mathscr{X} \longrightarrow Coalg$ be a coalgebra representation and let $\mathcal{A}:\mathscr{X}^{op} \longrightarrow Alg$ be an algebra representation.  Then, a rational pairing $(\mathcal{C},\mathcal{A},\Phi=\{\varphi_x\}_{x\in Ob(\mathscr X)})$  is a triple such that
	
	\smallskip
	(1) for each $x \in Ob(\mathscr{X})$, $\varphi_x:\mathcal C_x\otimes\mathcal A_x\longrightarrow K$ is a rational pairing of a coalgebra with an algebra
	
	\smallskip
	(2) for each $\alpha \in \mathscr{X}(x,y)$, we have $\varphi_y(\mathcal C_\alpha(c)\otimes a)=\varphi_x(c\otimes\mathcal A_\alpha(a))$ for any
	$c\in \mathcal C_x$ and $a\in \mathcal A_y$. 
\end{defn}

\begin{defn}
		Let $(\mathcal{C},\mathcal{A},\Phi)$ be a rational pairing of  $\mathcal{C}:\mathscr{X} \longrightarrow Coalg$ with $\mathcal{A}:\mathscr{X}^{op} \longrightarrow Alg$.
	A (left) trans-module $\mathcal{M}$ over the algebra representation $\mathcal{A}$ is said to be rational if $R_{\varphi_x}(\mathcal{M}_x)=
	\mathcal M_x$  for each $x \in Ob(\mathscr{X})$.
	
	\smallskip
	Similarly, we can define rational cis-modules.
We will denote by $\mathcal{A}\text{-}Rat^{tr}$ (resp.  $\mathcal{A}\text{-}Rat^{cs}$) the full subcategory of $\mathcal{A}\text{-}Mod^{tr}$ (resp.  $\mathcal{A}\text{-}Mod^{cs}$) whose objects are rational trans-modules (resp. rational cis-modules) over $\mathcal{A}$. 
\end{defn}

\begin{Thm}
Let $(\mathcal{C},\mathcal{A},\Phi)$ be a rational pairing. Then, 

\smallskip	
(1)	the inclusion functor $I_\Phi^{tr}:\mathcal{A}\text{-}Rat^{tr} \longrightarrow \mathcal{A}\text{-}Mod^{tr}$ admits a right adjoint $R_\Phi^{tr}:\mathcal{A}\text{-}Mod^{tr} \longrightarrow \mathcal{A}\text{-}Rat^{tr}$.

\smallskip
(2) $\mathcal{N} \in \mathcal{A}\text{-}Mod^{tr}$ is rational if and only if $R_\Phi^{tr}(\mathcal{N})=\mathcal{N}$.
	\end{Thm}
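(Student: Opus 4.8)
The plan is to construct the right adjoint $R_\Phi^{tr}$ pointwise using the rationalization functors $R_{\varphi_x}$ already available from the single-coalgebra theory, and then to show this assignment is compatible with the trans-structure maps. First I would define, for $\mathcal N\in\mathcal A\text{-}Mod^{tr}$, the object $R_\Phi^{tr}(\mathcal N)$ by setting $R_\Phi^{tr}(\mathcal N)_x:=R_{\varphi_x}(\mathcal N_x)$ for each $x\in Ob(\mathscr X)$; since $R_{\varphi_x}(\mathcal N_x)\subseteq\mathcal N_x$ is the sum of all rational submodules, the main point is to verify that the structure morphisms $^{\alpha}\mathcal N:\alpha_\circ\mathcal N_y\longrightarrow\mathcal N_x$ (equivalently $_{\alpha}\mathcal N:\mathcal N_y\longrightarrow\alpha^\dagger\mathcal N_x$) restrict appropriately. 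The key observation here is Definition \ref{funcratpr}(2): the compatibility $\varphi_y(\mathcal C_\alpha(c)\otimes a)=\varphi_x(c\otimes\mathcal A_\alpha(a))$ forces the restriction of scalars functor $\alpha_\circ:\mathcal A_y\text{-}Mod\longrightarrow\mathcal A_x\text{-}Mod$ (note here $\mathcal A:\mathscr X^{op}\to Alg$, so $\alpha\in\mathscr X(x,y)$ gives $\mathcal A_\alpha:\mathcal A_y\to\mathcal A_x$) to send rational $\mathcal A_y$-modules to rational $\mathcal A_x$-modules, because a $C_y$-comodule structure compatible with the $\mathcal A_y$-action pushes forward along $\mathcal C_\alpha$ to a $C_x$-comodule structure compatible with the $\mathcal A_x$-action. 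Hence $^{\alpha}\mathcal N$ maps the rational submodule $\alpha_\circ(R_{\varphi_y}(\mathcal N_y))$ into a rational submodule of $\mathcal N_x$, which therefore lands inside $R_{\varphi_x}(\mathcal N_x)$; this gives the induced structure map making $R_\Phi^{tr}(\mathcal N)$ a trans-module, and functoriality in $\mathcal N$ is then immediate.

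Next I would establish the adjunction $\mathcal A\text{-}Mod^{tr}(I_\Phi^{tr}(\mathcal M),\mathcal N)\cong\mathcal A\text{-}Rat^{tr}(\mathcal M,R_\Phi^{tr}(\mathcal N))$ for $\mathcal M\in\mathcal A\text{-}Rat^{tr}$ and $\mathcal N\in\mathcal A\text{-}Mod^{tr}$. Given a morphism $\eta:I_\Phi^{tr}(\mathcal M)\to\mathcal N$ in $\mathcal A\text{-}Mod^{tr}$, each component $\eta_x:\mathcal M_x\to\mathcal N_x$ is $\mathcal A_x$-linear with $\mathcal M_x$ rational, so by the single-coalgebra adjunction $(I_{\varphi_x},R_{\varphi_x})$ it factors uniquely through $R_{\varphi_x}(\mathcal N_x)$, yielding $\tilde\eta_x:\mathcal M_x\to R_\Phi^{tr}(\mathcal N)_x$. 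The family $\{\tilde\eta_x\}$ is automatically compatible with the trans-structure maps because the inclusions $R_{\varphi_x}(\mathcal N_x)\hookrightarrow\mathcal N_x$ are monomorphisms and the relevant square commutes after composing with these monos (which we already used to define $R_\Phi^{tr}(\mathcal N)$'s structure maps). Conversely, composing any $\mathcal M\to R_\Phi^{tr}(\mathcal N)$ with the componentwise inclusions recovers a morphism $I_\Phi^{tr}(\mathcal M)\to\mathcal N$, and the two assignments are mutually inverse by the uniqueness in the pointwise adjunctions; naturality in both variables likewise reduces to the pointwise case.

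Finally, part (2) is essentially a formal consequence: by construction $R_\Phi^{tr}(\mathcal N)_x=R_{\varphi_x}(\mathcal N_x)\subseteq\mathcal N_x$, and a left trans-module $\mathcal N$ is rational precisely when $R_{\varphi_x}(\mathcal N_x)=\mathcal N_x$ for all $x$, i.e.\ when the counit $I_\Phi^{tr}R_\Phi^{tr}(\mathcal N)\to\mathcal N$ is an isomorphism, which happens if and only if $R_\Phi^{tr}(\mathcal N)=\mathcal N$ (identifying $I_\Phi^{tr}$ with the inclusion). One should also remark that $\mathcal A\text{-}Rat^{tr}$ is indeed closed under subobjects and quotients in $\mathcal A\text{-}Mod^{tr}$, since these are computed pointwise and each $\mathcal A_x\text{-}Rat$ has this closure property by \cite[Corollary 2.4]{Rad}; this justifies treating $\mathcal A\text{-}Rat^{tr}$ as a full subcategory on which the adjunction makes sense. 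The main obstacle I anticipate is the verification that restriction of scalars along $\mathcal C_\alpha$ preserves rationality and hence that $R_\Phi^{tr}(\mathcal N)$ genuinely carries a trans-module structure — everything else is a routine transport of the known single-object statements through the pointwise description of $\mathcal A\text{-}Mod^{tr}$.
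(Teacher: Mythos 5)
Your construction is correct in substance but follows a genuinely different route from the paper's. The paper does not take the pointwise rational part: it sets $R_\Phi^{tr}(\mathcal{N})_y=\{n_y\in\mathcal{N}_y \mid {}^{\alpha}\mathcal{N}(n_y)\in R_{\varphi_x}(\mathcal{N}_x)\ \text{for all}\ \alpha\in\mathscr{X}^{op}(x,y)\}$, which is closed under the trans-structure maps essentially by the composition rule ${}^{\beta\alpha}\mathcal{N}={}^{\alpha}\mathcal{N}\circ\alpha_\circ({}^{\beta}\mathcal{N})$, and then checks the universal property by a diagram chase using only that $Im(\eta_x)\subseteq R_{\varphi_x}(\mathcal{N}_x)$ when $\mathcal{M}_x$ is rational. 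You instead define $R_\Phi^{tr}(\mathcal{N})_x:=R_{\varphi_x}(\mathcal{N}_x)$ and front-load a preservation lemma: the restriction-of-scalars functor occurring in the trans-structure maps carries rational modules to rational modules, so that ${}^{\alpha}\mathcal{N}$ restricts; combined with closure of rational modules under quotients this makes your $R_\Phi^{tr}$ a well-defined right adjoint, and in fact your functor coincides with the paper's, since the lemma shows $R_{\varphi_y}(\mathcal{N}_y)$ already satisfies the paper's defining condition, while taking $\alpha=id_y$ gives the reverse inclusion. The lemma is true and is exactly the mechanism the paper uses inside the proof of Theorem \ref{Theorem7.10yc}, so your approach is legitimate; it makes the adjunction step cleaner at the cost of this extra verification, and it has the side benefit of exhibiting $R_\Phi^{tr}(\mathcal{N})_y$ as an $\mathcal{A}_y$-submodule at once.

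One caveat you must repair: your direction bookkeeping is internally inconsistent. With $\alpha\in\mathscr{X}(x,y)$ one has $\mathcal{C}_\alpha:\mathcal{C}_x\to\mathcal{C}_y$ and $\mathcal{A}_\alpha:\mathcal{A}_y\to\mathcal{A}_x$; corestriction along $\mathcal{C}_\alpha$ turns $\mathcal{C}_x$-comodules into $\mathcal{C}_y$-comodules (not $\mathcal{C}_y$-comodules into $\mathcal{C}_x$-comodules, as you wrote), and restriction along $\mathcal{A}_\alpha$ is a functor ${}_{\mathcal{A}_x}\mathbf{M}\to{}_{\mathcal{A}_y}\mathbf{M}$, which by condition (2) of Definition \ref{funcratpr} sends rational $\mathcal{A}_x$-modules to rational $\mathcal{A}_y$-modules. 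The structure maps of a left trans-module over $\mathcal{A}:\mathscr{X}^{op}\to Alg$ are indexed by morphisms of $\mathscr{X}^{op}$, and the functor $\alpha_\circ$ appearing in ${}^{\alpha}\mathcal{N}:\alpha_\circ\mathcal{N}_y\to\mathcal{N}_x$ is restriction along $\mathcal{A}_\alpha:\mathcal{A}_x\to\mathcal{A}_y$ for $\alpha\in\mathscr{X}^{op}(x,y)$; for that functor the needed statement is that rational $\mathcal{A}_y$-modules go to rational $\mathcal{A}_x$-modules, proved by corestricting along the coalgebra map $\mathcal{C}_y\to\mathcal{C}_x$ and invoking the pairing compatibility. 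Once the lemma is stated with these orientations, the remainder of your argument (factoring each $\eta_x$ through $R_{\varphi_x}(\mathcal{N}_x)$, compatibility via the componentwise monomorphisms, and part (2)) goes through as you describe.
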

	\begin{proof}
(1) Let $\mathcal{N} \in \mathcal{A}\text{-}Mod^{tr}$. For each $y\in Ob(\mathscr X)$, we define
	\begin{equation}\label{7.3gt}
	R_\Phi^{tr}(\mathcal{N})_y:=\{n_y \in \mathcal{N}_y~ |~ {^\alpha}{\mathcal{N}}(n_y)  \in R_{\varphi_x}(\mathcal{N}_x)~ \forall~ \alpha \in \mathscr{X}^{op}(x,y)\}
	\end{equation}
	In particular, we note that $R_\Phi^{tr}(\mathcal N)_y\subseteq R_{\varphi_y}(\mathcal N_y)$. Since the class of rational modules in ${_{\mathcal A_y}}\mathbf M$ is closed under subobjects, it follows that each $R_\Phi^{tr}(\mathcal{N})_y$ is a rational $\mathcal A_y$-module.  From \eqref{7.3gt}, it is also clear that for any morphism $\beta\in \mathscr X^{op}(y,z)$, the morphism $^\beta\mathcal N:\beta_\circ\mathcal N_z\longrightarrow \mathcal N_y$
	restricts to a morphism $^\beta R_\Phi^{tr}(\mathcal N):\beta_\circ R_\Phi^{tr}(\mathcal N)_z\longrightarrow R_\Phi^{tr}(\mathcal N)_y$. 
It follows that $R_\Phi^{tr}(\mathcal{N}) \in \mathcal{A}\text{-}Rat^{tr}$. 

\smallskip
We consider now some $\mathcal M\in  \mathcal{A}\text{-}Rat^{tr}$ and a morphism $\eta: I_\Phi(\mathcal M)\longrightarrow \mathcal N$ in $ \mathcal{A}\text{-}Mod^{tr}$. We claim that $\eta_y(\mathcal M_y)\subseteq R_\Phi^{tr}(\mathcal N)_y$ for each $y\in Ob(\mathscr X)$. Accordingly, for each $\alpha \in \mathscr{X}^{op}(x,y)$, we consider the following commutative diagram in $_{\mathcal A_x}\mathbf M$:
\begin{equation}\label{7.4fgt}
\begin{CD}
\alpha_\circ\mathcal M_y @>\alpha_\circ\eta_y>>\alpha_\circ\mathcal N_y\\
@V^\alpha\mathcal MVV @VV^\alpha\mathcal NV \\ 
\mathcal M_x @>\eta_x>> \mathcal N_x\\
\end{CD}
\end{equation} We consider $m_y\in\mathcal M_y$. By \eqref{7.4fgt}, we have $(^\alpha\mathcal N)(\alpha_\circ\eta_y(m_y))=\eta_x(^\alpha\mathcal M(m_y))$. Since $\mathcal M_x$ is a rational $\mathcal A_x$-module, we know that $Im(\eta_x)\subseteq R_{\varphi_x}(\mathcal N_x)$. It follows that 
$(^\alpha\mathcal N)(\alpha_\circ\eta_y(m_y))\in R_{\varphi_x}(\mathcal N_x)$ for each $\alpha \in \mathscr{X}^{op}(x,y)$, i.e., $\eta_y(m_y)
\in R_\Phi^{tr}(\mathcal N)_y$. The result is now clear.

\smallskip
(2) Let $\mathcal{N} \in \mathcal{A}\text{-}Mod^{tr}$ be rational. Then, by definition, each $\mathcal{N}_x=R_{\varphi_x}(\mathcal N_x)$ for $x\in Ob(\mathscr X)$. From \eqref{7.3gt}, it is immediate that $R_\Phi^{tr}(\mathcal N)=\mathcal N$. Conversely, suppose that $\mathcal{N}=R_\Phi^{tr}(\mathcal{N})$. Then, each $\mathcal{N}_x=R_\Phi^{tr}(\mathcal{N})_x \subseteq R_{\varphi_x}(\mathcal{N}_x)$ is rational.
	\end{proof}

\begin{Thm}\label{Theorem7.10yc}
Let  $(\mathcal{C},\mathcal{A},\Phi)$ be a rational pairing. Then, the categories $Com^{cs}\text{-}\mathcal{C}$ and $\mathcal{A}\text{-}Rat^{tr}$ are isomorphic.
\end{Thm}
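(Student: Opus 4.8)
The plan is to promote the classical isomorphism $\mathbf M^{\mathcal C_x}\cong {_{\mathcal A_x}}\mathbf M\text{-}Rat$ coming from each rational pairing $\varphi_x$ to an isomorphism of the ``glued'' categories. First I would set up the functor $F:Com^{cs}\text{-}\mathcal C\longrightarrow \mathcal A\text{-}Rat^{tr}$. Given $\mathcal M\in Com^{cs}\text{-}\mathcal C$, for each $x\in Ob(\mathscr X)$ one views $\mathcal M_x$ via the embedding $I_{\varphi_x}$ as a rational left $\mathcal A_x$-module $F(\mathcal M)_x$. The key structural point is that the cis-comodule datum $\mathcal M^\alpha:\alpha^*\mathcal M_x\longrightarrow\mathcal M_y$ for $\alpha\in\mathscr X(x,y)$ must be converted into a trans-module datum $^\alpha F(\mathcal M):\alpha_\circ F(\mathcal M)_y\longrightarrow F(\mathcal M)_x$ for the induced algebra morphism $\mathcal A_\alpha:\mathcal A_y\longrightarrow\mathcal A_x$ in $\mathscr X^{op}$. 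Here I would observe that, because of compatibility condition (2) in Definition \ref{funcratpr}, corestriction of scalars $\alpha^*$ along $\mathcal C_\alpha$ corresponds exactly, under the embeddings, to restriction of scalars $\alpha_\circ$ along $\mathcal A_\alpha$: if $M$ is a right $\mathcal C_x$-comodule regarded as a left $\mathcal A_x$-module, then $\alpha^*M$ regarded as a left $\mathcal A_y$-module is precisely $\mathcal A_\alpha$-restriction of that $\mathcal A_x$-module, since $a\cdot m=\sum m_0\varphi_y(\mathcal C_\alpha(m_1)\otimes a)=\sum m_0\varphi_x(m_1\otimes\mathcal A_\alpha(a))$. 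Hence the $\mathcal C_y$-comodule map $\mathcal M^\alpha$ is literally an $\mathcal A_x$-linear map $\alpha_\circ F(\mathcal M)_y\to F(\mathcal M)_x$, which is the trans-module structure; the cocycle/unit conditions translate directly. One checks $F(\mathcal M)$ is rational since each $F(\mathcal M)_x=I_{\varphi_x}(\mathcal M_x)$ is rational by construction.

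Next I would construct the inverse functor $G:\mathcal A\text{-}Rat^{tr}\longrightarrow Com^{cs}\text{-}\mathcal C$. Given $\mathcal N\in\mathcal A\text{-}Rat^{tr}$, each $\mathcal N_x$ is a rational left $\mathcal A_x$-module, hence carries a unique right $\mathcal C_x$-comodule structure $G(\mathcal N)_x$ with $I_{\varphi_x}(G(\mathcal N)_x)=\mathcal N_x$. The trans-module map $^\alpha\mathcal N:\alpha_\circ\mathcal N_y\longrightarrow\mathcal N_x$ is $\mathcal A_x$-linear, and by the correspondence above $\alpha_\circ\mathcal N_y$ as an $\mathcal A_x$-module is the rational module underlying the $\mathcal C_y$-comodule $G(\mathcal N)_y$ with its $\mathcal C_x$-comodule structure via $\alpha^*$; since $\mathbf M^{\mathcal C_x}$ is embedded as a \emph{full} subcategory, $\mathcal A_x$-linear maps between rational modules are the same as $\mathcal C_x$-comodule maps, so $^\alpha\mathcal N$ is a morphism $\alpha^*G(\mathcal N)_x\to$ wait—more precisely it gives $\mathcal M^\alpha:=\,^\alpha\mathcal N:\alpha^*G(\mathcal N)_y$ — I should be careful: the contravariance means $\alpha\in\mathscr X(x,y)$ corresponds to $\alpha\in\mathscr X^{op}(y,x)$, so the trans-module map $^\alpha\mathcal N$ for this opposite-morphism is $\alpha_\circ\mathcal N_x\to\mathcal N_y$ in $\mathcal A_y$-modules, which translates to $\alpha^*G(\mathcal N)_x\to G(\mathcal N)_y$, exactly the cis-comodule datum. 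Fullness of the embedding $\mathbf M^{\mathcal C_y}\hookrightarrow {_{\mathcal A_y}}\mathbf M$ is what guarantees this map is $\mathcal C_y$-colinear. The associativity/unitality axioms carry over since they are equalities of underlying $K$-linear maps.

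Then I would check $F$ and $G$ are mutually inverse on objects and on morphisms. On objects this is immediate from the classical equivalence $\mathbf M^{\mathcal C_x}\cong {_{\mathcal A_x}}\mathbf M\text{-}Rat$ applied pointwise, together with the fact that the transition maps are literally the same underlying linear maps under both descriptions. On morphisms: a morphism $\eta:\mathcal M\to\mathcal M'$ in $Com^{cs}\text{-}\mathcal C$ is a family of $\mathcal C_x$-colinear $\eta_x$ commuting with the $\mathcal M^\alpha$; under the embedding each $\eta_x$ is $\mathcal A_x$-linear and the squares still commute, giving a morphism in $\mathcal A\text{-}Rat^{tr}$, and conversely by fullness. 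So $F$ and $G$ are functors with $GF=\mathrm{id}$, $FG=\mathrm{id}$ on the nose (not merely up to natural isomorphism), yielding an isomorphism of categories.

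The main obstacle is bookkeeping the variance: the coalgebra representation is covariant on $\mathscr X$ while the algebra representation is covariant on $\mathscr X^{op}$, and one must verify that corestriction of scalars $\alpha^*$ (a left adjoint, built from $\mathcal C_\alpha:\mathcal C_x\to\mathcal C_y$) matches restriction of scalars $\alpha_\circ$ (a right adjoint on module categories, built from $\mathcal A_\alpha:\mathcal A_y\to\mathcal A_x$) under the embeddings — this is exactly where hypothesis (2) of Definition \ref{funcratpr} is used, and getting the direction of every arrow consistent is the delicate part. Once that single compatibility is nailed down, everything else is a routine transport of the pointwise equivalence $I_{\varphi_x}\dashv R_{\varphi_x}$ through the limit construction defining $Com^{cs}\text{-}\mathcal C$ and $\mathcal A\text{-}Rat^{tr}$.
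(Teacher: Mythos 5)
Your proposal is correct and takes essentially the same route as the paper's proof: each $\mathcal M_x$ is viewed as a rational $\mathcal A_x$-module via $I_{\varphi_x}$, condition (2) of Definition \ref{funcratpr} identifies the corestriction $\alpha^*$ with the restriction $\alpha_\circ$ so that the structure maps are literally the same underlying linear maps, and fullness of $\mathbf M^{\mathcal C_y}\hookrightarrow {_{\mathcal A_y}}\mathbf M$ transfers the trans-module maps back to comodule maps, the two assignments being inverse on the nose. The only slip is an index transposition in your first paragraph: the trans-module datum attached to $\alpha\in\mathscr X(x,y)$ is the $\mathcal A_y$-linear map $\alpha_\circ F(\mathcal M)_x\longrightarrow F(\mathcal M)_y$ (exactly as you correctly state later when constructing $G$), not an $\mathcal A_x$-linear map $\alpha_\circ F(\mathcal M)_y\longrightarrow F(\mathcal M)_x$.
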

\begin{proof}
	Let $\mathcal{M} \in Com^{cs}\text{-}\mathcal{C}$. Then, for each $x\in Ob(\mathscr X)$, $\mathcal M_x$ carries the structure of a rational $\mathcal A_x$-module. Further, given any $\alpha\in \mathscr X(x,y)=\mathscr X^{op}(y,x)$, the morphism $\alpha^*\mathcal M_x\longrightarrow \mathcal M_y$ of right $\mathcal C_y$-comodules induces a morphism $\alpha_\circ\mathcal M_x\longrightarrow \mathcal M_y$ of left $\mathcal A_y$-modules. As such, $\mathcal M$ may be treated as an object of $\mathcal{A}\text{-}Rat^{tr}$.

	\smallskip 
	Conversely, suppose we have $\mathcal{N} \in \mathcal{A}\text{-}Rat^{tr}$. Then, for each $x\in Ob(\mathscr X)$, the rational $\mathcal A_x$-module $\mathcal N_x$ can be equipped with the structure of a right $\mathcal C_x$-comodule. For $\alpha\in \mathscr X(x,y)=\mathscr X^{op}(y,x)$, we also have the morphism $^\alpha\mathcal N:\alpha_\circ\mathcal N_x\longrightarrow \mathcal N_y$ of left $\mathcal A_y$-modules. From condition (2) in Definition 
	\ref{funcratpr}, it follows that the left $\mathcal A_y$-module structure on $\alpha_\circ\mathcal N_x$ can be obtained from a right $\mathcal C_y$-comodule structure, i.e., $\alpha_\circ\mathcal N_x$ is a rational $\mathcal A_y$-module. Since  $\mathbf M^{\mathcal C_y}$ is a full subcategory of ${_{\mathcal A_y}}\mathbf M$, we see that $^\alpha\mathcal N:\alpha_\circ\mathcal N_x\longrightarrow \mathcal N_y$ may now be treated as a morphism
	of right $\mathcal C_y$-comodules. Hence, $\mathcal N$ may be treated as an object of $Com^{cs}\text{-}\mathcal{C}$.
\end{proof}

Similarly, given a rational pairing  $(\mathcal{C},\mathcal{A},\Phi)$, we can show that the inclusion $I_\Phi^{cs}: \mathcal{A}\text{-}Rat^{cs}\hookrightarrow  \mathcal{A}\text{-}Mod^{cs}$ admits a right adjoint $R_\Phi^{cs}$ and that the categories $Com^{tr}\text{-}\mathcal{C}$ and $ \mathcal{A}\text{-}Rat^{cs}$ are isomorphic.

\smallskip
We know in particular that if $C$ is any $K$-coalgebra, then $(C,C^*)$ is a rational pairing. Similarly, for any $K$-algebra $A$, it is well known that we have a rational pairing
$(A^\circ,A)$, where $A^\circ$ is the finite dual coalgebra of $A$ given by setting (see, for instance, \cite[Lemma 1.5.2]{DNR})
\begin{equation}
A^\circ=\{f \in A^*=Hom_K(A,K) ~ |~  Ker( f)~ \text{contains an ideal of finite codimension}  \}
\end{equation} Accordingly, we have the following result.

\begin{cor}\label{C7.11bq} (a) Let $\mathcal{C}:\mathscr{X} \longrightarrow Coalg$ be a coalgebra representation and $\mathcal C^*:\mathscr X^{op}\longrightarrow Alg$ be its linear dual representation. Then, $Com^{cs}\text{-}\mathcal{C} \simeq \mathcal{C}^*\text{-}Rat^{tr}$ and $Com^{tr}\text{-}\mathcal{C} \simeq \mathcal{C}^*\text{-}Rat^{cs}$.

\smallskip
(b) Let $\mathcal{A}:\mathscr{X}^{op} \longrightarrow Alg$ be an algebra representation and $\mathcal A^\circ:\mathscr X\longrightarrow Coalg$ be its finite dual representation. Then,  $Com^{cs}\text{-}\mathcal{A}^\circ \simeq \mathcal{A}\text{-}Rat^{tr}$ and $Com^{tr}\text{-}\mathcal{A}^\circ \simeq \mathcal{A}\text{-}Rat^{cs}$.

\end{cor}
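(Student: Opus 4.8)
The plan is to deduce Corollary \ref{C7.11bq} as a direct consequence of Theorem \ref{Theorem7.10yc} (and its trans-analogue) by exhibiting the relevant rational pairings of representations in the sense of Definition \ref{funcratpr}. The only genuine content is to check that $(\mathcal C, \mathcal C^*, \{\varphi_x\})$ and $(\mathcal A^\circ, \mathcal A, \{\varphi_x\})$ actually satisfy conditions (1) and (2) of Definition \ref{funcratpr}; once that is done, part (a) is Theorem \ref{Theorem7.10yc} applied to $\mathcal A = \mathcal C^*$ together with its cis-analogue (the isomorphism $Com^{tr}\text{-}\mathcal C \simeq \mathcal A\text{-}Rat^{cs}$ mentioned just after the theorem), and part (b) is the same two isomorphisms applied to $\mathcal C = \mathcal A^\circ$.

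For part (a), first I would recall the classical fact that for any $K$-coalgebra $C$ the canonical evaluation map $\varphi_C \colon C \otimes C^* \longrightarrow K$, $c \otimes f \mapsto f(c)$, is a rational pairing: condition (i) is the injectivity of $C \hookrightarrow C^{**}$ twisted by a vector space $V$ (which holds because $V \otimes C \to \mathrm{Hom}_K(C^*,V)$ is injective — every comodule is the union of its finite-dimensional subcomodules, on which the statement is just $V \otimes W \cong \mathrm{Hom}_K(W^*,V)$ for $\dim W < \infty$), and condition (ii) is the standard fact that the identity $C^* \to C^*$ is an algebra map for the convolution product. Taking $\varphi_x := \varphi_{\mathcal C_x}$ for each $x$, condition (2) of Definition \ref{funcratpr} amounts to the identity $\mathcal C_\alpha^*(f)(c) = f(\mathcal C_\alpha(c))$ for $\alpha \in \mathscr X(x,y)$, $c \in \mathcal C_x$, $f \in \mathcal C_y^*$, which is exactly the definition of the dual map $\mathcal C_\alpha^* = \mathcal (C^*)_\alpha$. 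Hence $(\mathcal C, \mathcal C^*, \Phi)$ is a rational pairing, and Theorem \ref{Theorem7.10yc} gives $Com^{cs}\text{-}\mathcal C \simeq \mathcal C^*\text{-}Rat^{tr}$, while the trans-cis version gives $Com^{tr}\text{-}\mathcal C \simeq \mathcal C^*\text{-}Rat^{cs}$.

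For part (b), I would use that for any $K$-algebra $A$ the finite dual $A^\circ$ is a coalgebra and the pairing $\varphi_A \colon A^\circ \otimes A \longrightarrow K$, $f \otimes a \mapsto f(a)$, is rational — this is classical (see \cite[Lemma 1.5.2]{DNR} and the surrounding theory), with condition (ii) saying that the induced map $A \to (A^\circ)^*$ is an algebra morphism and the category of rational $A$-modules coinciding with $\mathbf M^{A^\circ}$. Given the algebra representation $\mathcal A \colon \mathscr X^{op} \to Alg$, functoriality of $(-)^\circ$ produces the coalgebra representation $\mathcal A^\circ \colon \mathscr X \to Coalg$ with $(\mathcal A^\circ)_\alpha = (\mathcal A_\alpha)^\circ$ for $\alpha \in \mathscr X(x,y) = \mathscr X^{op}(y,x)$ (one must note $\mathcal A_\alpha \colon \mathcal A_y \to \mathcal A_x$ restricts to a coalgebra map $\mathcal A_x^\circ \to \mathcal A_y^\circ$, which is where the ``finite codimension ideal'' condition is used). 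Setting $\varphi_x := \varphi_{\mathcal A_x}$, condition (2) of Definition \ref{funcratpr} reads $\varphi_y((\mathcal A_\alpha)^\circ(f) \otimes a) = \varphi_x(f \otimes \mathcal A_\alpha(a))$, i.e.\ $(\mathcal A_\alpha^\circ f)(a) = f(\mathcal A_\alpha a)$, which again is the defining property of the dual map. So $(\mathcal A^\circ, \mathcal A, \Phi)$ is a rational pairing and Theorem \ref{Theorem7.10yc} together with its trans-analogue yield $Com^{cs}\text{-}\mathcal A^\circ \simeq \mathcal A\text{-}Rat^{tr}$ and $Com^{tr}\text{-}\mathcal A^\circ \simeq \mathcal A\text{-}Rat^{cs}$.

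The main obstacle, such as it is, is purely bookkeeping: making sure the variance is handled consistently (the coalgebra side sits over $\mathscr X$ and the algebra side over $\mathscr X^{op}$, so in part (b) one must be careful that $\mathcal A_\alpha$ for $\alpha \in \mathscr X(x,y)$ goes $\mathcal A_y \to \mathcal A_x$ and therefore induces $\mathcal A_x^\circ \to \mathcal A_y^\circ$, matching the required direction $(\mathcal A^\circ)_x \to (\mathcal A^\circ)_y$), and confirming that the compatibility condition in Definition \ref{funcratpr}(2) is nothing more than the naturality/duality identity for the linear (resp.\ finite) dual. No transfinite induction or cardinality argument is needed here; the corollary is formal once the two standard rational pairings are identified as pairings of representations.
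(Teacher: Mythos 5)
Your proposal is correct and follows essentially the same route as the paper: the paper proves the corollary by simply invoking Theorem \ref{Theorem7.10yc} (and its cis/trans analogue stated right after it), after noting that $(C,C^*)$ and $(A^\circ,A)$ are the standard rational pairings. You merely spell out the verification of conditions (1) and (2) of Definition \ref{funcratpr} for these two pairings of representations, which the paper leaves implicit as classical facts.
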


\begin{proof}
Both (a) and (b) follow directly from Theorem \ref{Theorem7.10yc}.
\end{proof}

Our next objective is to give sufficient conditions for $Com^{cs}\text{-}\mathcal{C}\simeq \mathcal{A}\text{-}Rat^{tr}$ to be a torsion class in $\mathcal{A}\text{-}Mod^{tr}$. For this, we 
begin by extending some of the classical theory of rational pairings of coalgebras and algebras using torsion theory (see \cite{Lin}, \cite{NT}). Let $(C,A,\varphi)$ be a rational pairing. We note (see \cite[Proposition 2.2]{Rad}) that for any left $A$-module $N$, the rational submodule $R_\varphi(N)$ may also be expressed as
\begin{equation}\label{7.6eqtu}
R_\varphi(N)=\{\mbox{$n\in N$ $\vert$ $Ann(n)$ is a closed and cofinite left ideal in $A$}\}
\end{equation} 
Here $C^*=Hom_K(C,K)$ carries the finite topology (see, for instance \cite[Section 1.2]{DNR}) and $A$ is equipped with the topology that makes the  algebra morphism $A \longrightarrow C^*$ continuous. We also recall here that a subspace $V\subseteq A$ is said to be cofinite if the quotient $A/V$ is finite dimensional as a vector space. The following argument appears in essence in \cite[Proposition 22]{Lin}, which we extend here to rational pairings of coalgebras and algebras. 

\begin{lem}\label{P7.1ws} Let $\varphi: C\otimes A\longrightarrow K$ be a rational pairing. Suppose that $R_\varphi(A)$ is dense in $A$. Then, for any $N\in {_A}\mathbf M$, we have $R_\varphi(N/R_\varphi(N))=0$.
\end{lem}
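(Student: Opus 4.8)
The plan is to argue directly that every element of $N/R_\varphi(N)$ possessing a closed and cofinite annihilator must vanish, imitating \cite[Proposition 22]{Lin}. Write $T:=R_\varphi(N)$ and suppose $\bar n=n+T$ lies in $R_\varphi(N/T)$ for some $n\in N$. Applying the characterisation \eqref{7.6eqtu} to the left $A$-module $N/T$, the left ideal
\[
I:=Ann_A(\bar n)=\{\,a\in A\mid an\in T\,\}
\]
is closed and cofinite in $A$. The next step is to invoke the fact that, for the topology on $A$ induced from the finite topology on $C^*$, a closed and cofinite left ideal is automatically open; this relies on the Fundamental Theorem of Coalgebras, which guarantees that the subcoalgebra of $C$ generated by any finite-dimensional subspace is finite-dimensional (cf.\ \cite{Rad}). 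Then $1+I$ is a nonempty open subset of $A$, so by the density hypothesis we may pick $r\in R_\varphi(A)$ with $1-r\in I$.

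From here the computation is short. Decompose $n=rn+(1-r)n$: the summand $(1-r)n$ lies in $T$ since $1-r\in I$. For $rn$, observe that $r\in R_\varphi(A)$ means, again by \eqref{7.6eqtu} (now applied to $A$ as a left module over itself), that $Ann_A(r)$ is closed and cofinite, hence open; since $Ann_A(rn)\supseteq Ann_A(r)$, the left ideal $Ann_A(rn)$ is open and cofinite, and an open cofinite left ideal is closed (its complement is a union of translates of an open set). Thus $Ann_A(rn)$ is closed and cofinite, so $rn\in R_\varphi(N)=T$ by \eqref{7.6eqtu}. Hence $n\in T$, i.e.\ $\bar n=0$; as $\bar n$ was an arbitrary element of $R_\varphi(N/T)$, this gives $R_\varphi(N/R_\varphi(N))=0$.

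So the skeleton is: reduce via \eqref{7.6eqtu} to an open left ideal $I$, use density to split $1$ across $R_\varphi(A)$ and $I$, and propagate rationality through the annihilator inclusion $Ann_A(rn)\supseteq Ann_A(r)$. The one genuinely non-formal ingredient, and the step I expect to require the most care, is the equivalence among left ideals of $A$ between ``open'' and ``closed and cofinite''; this is precisely where the explicit description of the topology on $A$ — the coarsest one making $A\to C^*$ continuous, equivalently the one with a neighbourhood basis of $0$ given by annihilators of finite-dimensional subcoalgebras of $C$ — has to be brought in, following \cite{Rad}. Everything else is bookkeeping with the adjunction $(I_\varphi,R_\varphi)$ and \eqref{7.6eqtu} recorded above.
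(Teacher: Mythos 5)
Your proof is correct, but it finishes differently from the paper's, so a comparison is worth recording. Both arguments share the same propagation step: for rational $a$ one has $Ann(a)\subseteq Ann(an)$, hence $Ann(an)$ is closed and cofinite and $an\in R_\varphi(N)$. The paper runs this for \emph{every} $a\in R_\varphi(A)$, concluding $R_\varphi(A)\subseteq Ann(n+R_\varphi(N))$, and then finishes with the bare definition of density: $Ann(n+R_\varphi(N))$ is closed by \eqref{7.6eqtu} and contains a dense subset, hence equals $A$, so $n+R_\varphi(N)=0$. You instead upgrade ``closed and cofinite'' to ``open'', use density once to produce a single $r\in R_\varphi(A)$ with $1-r\in I=Ann(n+R_\varphi(N))$, and split $n=rn+(1-r)n$; this approximate-unit variant is sound, and each half of the splitting is justified exactly as you say. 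What your route costs is that the equivalence ``closed and cofinite $\Rightarrow$ open'' in $A$ becomes an explicit ingredient, and your stated reason for it is slightly off target: the Fundamental Theorem of Coalgebras only serves to replace finite-dimensional subspaces by finite-dimensional subcoalgebras in the neighbourhood basis. The operative point is that for a closed cofinite left ideal $I\subseteq A$ the orthogonal $W=\{c\in C:\varphi(c\otimes a)=0 \ \forall a\in I\}$ embeds into $(A/I)^*$ via $c\mapsto \varphi(c\otimes -)$ (this uses the injectivity of $C\longrightarrow A^*$, i.e.\ condition (i) of the rational pairing with $V=K$), hence is finite-dimensional, while closedness of $I$ in the topology induced from $C^*$ says precisely that $I$ is the preimage of $W^\perp$, i.e.\ a basic open neighbourhood of $0$. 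Note that the paper's own appeal to \cite{Rad} for the closedness of $Ann(an)$ rests on the same fact, so you use nothing beyond what the paper implicitly needs; the paper's ending, however, never has to invoke openness of $Ann(n+R_\varphi(N))$ itself, which keeps the topological input to a minimum, whereas your version makes the topological mechanism visible at the price of having to verify it for a general rational pairing rather than only for $C^*$ as in \cite{Lin}.
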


\begin{proof} 
We choose $n + R_\varphi(N) \in R_\varphi(N/R_\varphi(N))$ and $a \in R_\varphi(A)$. Then, $Ann(n+R_\varphi(N) )$ and $Ann(a)$ are cofinite and closed left ideals in $A$.  Since $Ann(a)\subseteq Ann(an)$,  it follows from \cite[$\S$ 1.3 (b)]{Rad}  that $Ann(an)$ is also cofinite and closed. Hence, $an\in R_\varphi(N)$ and therefore $a \in Ann(n+ R_\varphi(N))$. This shows that $R_\varphi(A) \subseteq Ann(n+ R_\varphi(N))$. Since $R_\varphi(A)$ is dense in $A$ and $Ann(n+R_\varphi(N) )$ is closed, we get $Ann(n+R_\varphi(N) )=A$, i.e., $n + R_\varphi(N) =0$.
\end{proof}

\begin{thm}\label{P7.2wd} (see \cite{Lin}) Let $\varphi: C\otimes A\longrightarrow K$ be a rational pairing. Then, the following are equivalent.

\smallskip
(a) For any $N\in {_A}\mathbf M$, $R_\varphi(N/R_\varphi(N))=0$. 

\smallskip
(b) The full subcategory  of rational modules is a torsion class in ${_A}\mathbf M$.

\end{thm}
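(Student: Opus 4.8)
The plan is to reduce the equivalence to the single property that the class of rational left $A$-modules is closed under extensions, and then to invoke the standard fact that a full subcategory of a Grothendieck category is a torsion class precisely when it is closed under quotient objects, coproducts and extensions. The excerpt already records that the rational modules are the image of the smallest full Grothendieck subcategory $\mathbf{M}^C$ of ${_A}\mathbf{M}$ containing $C$, hence are closed under subobjects, quotients and direct sums, and that $R_\varphi(N)$ is the sum of all rational submodules of $N$, i.e. the largest rational submodule of $N$. Granting these, condition (b) is equivalent to closure of the rational modules under extensions, so it suffices to show: (a) $\iff$ the rational modules are closed under extensions.

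For (b) $\Rightarrow$ (a), given $N\in{_A}\mathbf{M}$ I would set $\bar N = N/R_\varphi(N)$ and let $N'\subseteq N$ be the preimage of the rational submodule $R_\varphi(\bar N)\subseteq\bar N$ under the quotient map $N\twoheadrightarrow\bar N$. Then there is a short exact sequence $0\to R_\varphi(N)\to N'\to R_\varphi(\bar N)\to 0$ with rational outer terms, so closure under extensions forces $N'$ to be rational; since $N'\subseteq N$ and $R_\varphi(N)$ is the largest rational submodule of $N$, we get $N'\subseteq R_\varphi(N)$, whence $R_\varphi(\bar N)=N'/R_\varphi(N)=0$, which is (a).

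For (a) $\Rightarrow$ (b), take a short exact sequence $0\to M'\to M\to M''\to 0$ with $M'$ and $M''$ rational. Then $M'$ is a rational submodule of $M$, so $M'\subseteq R_\varphi(M)$, and hence $M/R_\varphi(M)$ is a quotient of $M/M'\cong M''$, therefore rational; thus $R_\varphi\bigl(M/R_\varphi(M)\bigr)=M/R_\varphi(M)$. Applying (a) to $N=M$ gives $R_\varphi\bigl(M/R_\varphi(M)\bigr)=0$, so $M=R_\varphi(M)$ and $M$ is rational. Combined with the known closure of the rational modules under quotients and coproducts, the characterization of torsion classes yields (b).

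I do not anticipate a genuine obstacle: the whole argument is bookkeeping built on the facts, all available from the excerpt or classical, that $R_\varphi$ is right adjoint to the inclusion $\mathbf{M}^C\hookrightarrow{_A}\mathbf{M}$, that $R_\varphi(N)$ is the maximal rational submodule of $N$, and that rational modules are closed under subobjects and quotients. The one point deserving explicit mention is the use of the Grothendieck-category description of torsion classes, which in particular requires that the rational modules be closed under arbitrary direct sums in ${_A}\mathbf{M}$; this I would state explicitly, invoking the identification of the rational modules with the Grothendieck category $\mathbf{M}^C$, rather than reprove it.
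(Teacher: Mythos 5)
Your proposal is correct, but it takes a different route from the paper. The paper proves (a) $\Rightarrow$ (b) by exhibiting the torsion pair directly: it sets $\mathcal T=\{N : R_\varphi(N)=N\}$ and $\mathcal F=\{N: R_\varphi(N)=0\}$, gets ${_A}\mathbf M(N_1,N_2)=0$ for $N_1\in\mathcal T$, $N_2\in\mathcal F$ from the adjunction $(I_\varphi,R_\varphi)$, and then uses (a) to see that the canonical sequence $0\to R_\varphi(N)\to N\to N/R_\varphi(N)\to 0$ has torsion subobject and torsion-free quotient; the direction (b) $\Rightarrow$ (a) is dismissed as immediate, since $R_\varphi$ is the torsion radical of the class. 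You instead verify the closure-property criterion (quotients, coproducts, extensions): extension closure follows from (a) by your observation that $M'\subseteq R_\varphi(M)$ forces $M/R_\varphi(M)$ to be a rational quotient of $M''$, hence zero by (a); and conversely you deduce (a) from extension closure by pulling back $R_\varphi(N/R_\varphi(N))$ to a rational submodule $N'$ of $N$ that must sit inside the maximal rational submodule $R_\varphi(N)$. Both arguments are sound. The paper's approach is leaner in that it never needs closure under coproducts and works straight from the adjunction; yours has the merit of matching the criterion (closure under quotients, coproducts and extensions, as in \cite{BeR}) that the paper itself invokes in the proof of Theorem \ref{torclassh}, and it makes the (b) $\Rightarrow$ (a) direction explicit rather than leaving it as ``immediate.'' Do state, as you indicate, why coproducts of rational modules are rational (e.g.\ because $I_\varphi$ is a left adjoint, or via the identification with $\mathbf M^C$), since that closure is genuinely used in your route but not in the paper's.
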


\begin{proof} It is immediate that (b) $\Rightarrow$ (a). 
To show that (a) $\Rightarrow$ (b) we consider the two full subcategories $\mathcal T$ and $\mathcal F$ of $_A{\mathbf M}$ given by 
\begin{equation}
Ob(\mathcal T):=\{\mbox{$N\in {_A}\mathbf M$ $\vert$ $R_\varphi(N)=N$ }\}\qquad Ob(\mathcal F):=\{\mbox{$N\in {_A}\mathbf M$ $\vert$ $R_\varphi(N)=0$ }\}
\end{equation}
From the adjoint pair $(I_\varphi,R_\varphi)$, it is clear that $_A\mathbf M(N_1,N_2)=0$ for any $N_1\in \mathcal T$ and $N_2\in \mathcal F$. Finally, for any $N\in {_A}\mathbf M$, we have a short exact sequence
\begin{equation}
0\longrightarrow R_\varphi(N)\longrightarrow N\longrightarrow N/R_\varphi(N)\longrightarrow 0
\end{equation} From \eqref{7.6eqtu}, it is clear that $R_\varphi(N)\in \mathcal T$. Using (a), we see that $  N/R_\varphi(N)\in \mathcal F$ and this proves (b). 
\end{proof}

\begin{Thm}\label{torclassh} Let $(\mathcal{C},\mathcal{A},\Phi)$ be a rational pairing. Suppose that for each $x\in Ob(\mathscr X)$ and
$N\in {_{\mathcal A_x}}\mathbf M$, we have $R_{\varphi_x}(N/R_{\varphi_x}(N))=0$. Then, the full subcategory $\mathcal{A}\text{-}Rat^{tr}$ of rational modules is a torsion class in $\mathcal{A}\text{-}Mod^{tr}$.
\end{Thm}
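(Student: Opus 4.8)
The strategy is to promote the torsion-theoretic statement for each fixed $\mathcal A_x$ (Proposition \ref{P7.2wd}) to the level of trans-modules, using the fact that the category $\mathcal A\text{-}Mod^{tr}$ has kernels, cokernels and colimits computed pointwise. First I would fix the notation: for $\mathcal M\in\mathcal A\text{-}Mod^{tr}$ one already has the rationalization $R_\Phi^{tr}(\mathcal M)\subseteq\mathcal M$ constructed objectwise via \eqref{7.3gt}, and by the earlier theorem $R_\Phi^{tr}$ is right adjoint to the inclusion $I_\Phi^{tr}$ with $\mathcal M$ rational iff $R_\Phi^{tr}(\mathcal M)=\mathcal M$. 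The plan is to define two full subcategories
\[
Ob(\mathcal T):=\{\mathcal M\in\mathcal A\text{-}Mod^{tr}\mid R_\Phi^{tr}(\mathcal M)=\mathcal M\}=\mathcal A\text{-}Rat^{tr},\qquad
Ob(\mathcal F):=\{\mathcal M\in\mathcal A\text{-}Mod^{tr}\mid R_\Phi^{tr}(\mathcal M)=0\},
\]
and to verify the three axioms of a torsion pair: $\mathcal A\text{-}Mod^{tr}(\mathcal M,\mathcal N)=0$ for $\mathcal M\in\mathcal T$, $\mathcal N\in\mathcal F$; closure of $\mathcal T$ under quotients and $\mathcal F$ under subobjects (in fact these follow once the other axioms and a functorial splitting are in place, but it is cleanest to note them directly); and the existence, for every $\mathcal M$, of a short exact sequence $0\to\mathcal T(\mathcal M)\to\mathcal M\to\mathcal F(\mathcal M)\to 0$ with $\mathcal T(\mathcal M)\in\mathcal T$ and $\mathcal F(\mathcal M)\in\mathcal F$.

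For the Hom-vanishing: a morphism $\eta:\mathcal M\to\mathcal N$ with $\mathcal M\in\mathcal T$, $\mathcal N\in\mathcal F$ is, objectwise, a morphism $\eta_x:\mathcal M_x\to\mathcal N_x$ of $\mathcal A_x$-modules with $\mathcal M_x$ rational (since $R_{\varphi_x}(\mathcal M_x)=\mathcal M_x$, using $R_\Phi^{tr}(\mathcal M)_x\subseteq R_{\varphi_x}(\mathcal M_x)$ together with $R_\Phi^{tr}(\mathcal M)=\mathcal M$) and $\mathcal N_x$ co-rational in the sense $R_{\varphi_x}(\mathcal N_x)=0$ — this last requires a small argument, since a priori $R_\Phi^{tr}(\mathcal N)=0$ only tells us that no element of $\mathcal N_x$ has rational image under every $^\alpha\mathcal N$; taking $\alpha=\mathrm{id}_x$ gives $R_\Phi^{tr}(\mathcal N)_x\supseteq$ (the elements of $\mathcal N_x$ whose images under all $^\alpha\mathcal N$ are rational) which need not be all of $R_{\varphi_x}(\mathcal N_x)$. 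I would resolve this by working with the objectwise rational submodules: set $\mathcal N^{rat}_x:=R_{\varphi_x}(\mathcal N_x)$; since each $\alpha_\circ$ preserves rationality by Definition \ref{funcratpr}(2) and $\mathbf M^{\mathcal C_x}$ is closed under subobjects in $_{\mathcal A_x}\mathbf M$, the maps $^\alpha\mathcal N$ restrict to $\{\mathcal N^{rat}_x\}$, giving a subobject $\mathcal N^{rat}\subseteq\mathcal N$ in $\mathcal A\text{-}Mod^{tr}$; and then $\mathcal N^{rat}$ is rational, hence $\mathcal N^{rat}=R_\Phi^{tr}(\mathcal N^{rat})\subseteq R_\Phi^{tr}(\mathcal N)=0$, forcing $R_{\varphi_x}(\mathcal N_x)=0$ for all $x$. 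With this in hand $\mathrm{Im}(\eta_x)\subseteq R_{\varphi_x}(\mathcal N_x)=0$, so $\eta=0$.

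For the canonical sequence: take $\mathcal T(\mathcal M):=R_\Phi^{tr}(\mathcal M)$, which lies in $\mathcal T$ by part (2) of the previous theorem, and $\mathcal F(\mathcal M):=\mathcal M/R_\Phi^{tr}(\mathcal M)$, with the quotient computed pointwise. I must show $\mathcal F(\mathcal M)\in\mathcal F$, i.e. $R_\Phi^{tr}(\mathcal M/R_\Phi^{tr}(\mathcal M))=0$. Again passing to the objectwise rational submodule of $\mathcal M/R_\Phi^{tr}(\mathcal M)$ and using the argument above, it suffices to show $R_{\varphi_x}\big((\mathcal M/R_\Phi^{tr}(\mathcal M))_x\big)=0$ for each $x$. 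Now $(\mathcal M/R_\Phi^{tr}(\mathcal M))_x=\mathcal M_x/R_\Phi^{tr}(\mathcal M)_x$, and $R_\Phi^{tr}(\mathcal M)_x\subseteq R_{\varphi_x}(\mathcal M_x)$; the rational submodule functor is left exact but here I need the surjection $R_{\varphi_x}(\mathcal M_x/R_\Phi^{tr}(\mathcal M)_x)\twoheadrightarrow$ something trivial, so I would first argue that in fact $R_\Phi^{tr}(\mathcal M)_x=R_{\varphi_x}(\mathcal M_x)$ — this is the key local identification and follows because $\{R_{\varphi_x}(\mathcal M_x)\}_x$ assembles into a subobject of $\mathcal M$ in $\mathcal A\text{-}Mod^{tr}$ (the maps $^\alpha\mathcal M$ restrict to it, exactly as for $\mathcal N^{rat}$ above), this subobject is rational, hence contained in $R_\Phi^{tr}(\mathcal M)$, giving the reverse inclusion. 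Therefore $(\mathcal M/R_\Phi^{tr}(\mathcal M))_x=\mathcal M_x/R_{\varphi_x}(\mathcal M_x)$, and by the hypothesis $R_{\varphi_x}(\mathcal M_x/R_{\varphi_x}(\mathcal M_x))=0$, which is what we wanted. The closure of $\mathcal T$ under quotients (resp. $\mathcal F$ under subobjects) then follows from the general fact that the classes $(\mathcal T,\mathcal F)$ defined by $\mathrm{Hom}$-orthogonality together with the existence of such functorial sequences constitute a torsion pair; alternatively, closure of $\mathcal T$ under quotients is immediate objectwise from \eqref{7.6eqtu} since rational modules over $\mathcal A_x$ are closed under quotients.

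\textbf{Main obstacle.} The genuinely delicate point is the local identification $R_\Phi^{tr}(\mathcal M)_x=R_{\varphi_x}(\mathcal M_x)$: the definition \eqref{7.3gt} of $R_\Phi^{tr}$ a priori only gives the inclusion $\subseteq$, and upgrading it to an equality requires checking that the objectwise rational parts are compatible with all the transition maps $^\alpha\mathcal M$, which is where Definition \ref{funcratpr}(2) (compatibility of the pairings) is essential — it guarantees $\alpha_\circ$ sends rational $\mathcal A_x$-modules to rational $\mathcal A_y$-modules, so that $\{R_{\varphi_x}(\mathcal M_x)\}_x$ really is a trans-submodule. Once this compatibility lemma is established, everything else reduces cleanly to Proposition \ref{P7.2wd} applied objectwise.
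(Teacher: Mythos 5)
Your proposal is correct, but it takes a genuinely different route from the paper's. The paper simply invokes the characterization of torsion classes in a complete and cocomplete abelian category as the full subcategories closed under coproducts, quotients and extensions (citing \cite{BeR}), and checks these three closures pointwise: since a trans-module is rational precisely when each $\mathcal M_x$ is a rational $\mathcal A_x$-module, and since (co)kernels and coproducts in $\mathcal{A}\text{-}Mod^{tr}$ are computed pointwise, everything reduces to the fact (Proposition \ref{P7.2wd} plus the hypothesis) that rational modules form a torsion class in each ${_{\mathcal A_x}}\mathbf M$. You instead exhibit the torsion pair explicitly, with $\mathcal F=\{\mathcal N : R_\Phi^{tr}(\mathcal N)=0\}$, verifying Hom-orthogonality and the canonical sequence $0\to R_\Phi^{tr}(\mathcal M)\to\mathcal M\to\mathcal M/R_\Phi^{tr}(\mathcal M)\to 0$; the crux is your local identification $R_\Phi^{tr}(\mathcal M)_x=R_{\varphi_x}(\mathcal M_x)$, and your argument for it is sound: Definition \ref{funcratpr}(2) shows, exactly as in the proof of Theorem \ref{Theorem7.10yc}, that restriction of scalars $\alpha_\circ$ sends rational modules to rational modules, so the image of $\alpha_\circ R_{\varphi_y}(\mathcal M_y)$ under $^\alpha\mathcal M$ is rational (here you should appeal to closure of rational modules under quotients/images rather than subobjects, both being in \cite{Rad}) and hence lands in $R_{\varphi_x}(\mathcal M_x)$, making $\{R_{\varphi_x}(\mathcal M_x)\}_x$ a rational trans-submodule necessarily contained in $R_\Phi^{tr}(\mathcal M)$. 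Your route yields a by-product the paper does not record, namely that the torsion radical of the pair is exactly the rationalization functor $R_\Phi^{tr}$ and that it is computed pointwise, i.e.\ the a priori inclusion in \eqref{7.3gt} becomes an equality under the compatibility condition; the paper's route buys brevity, since it never needs this identification and uses the hypothesis only through the objectwise Proposition \ref{P7.2wd}. Your final appeal to the fact that Hom-orthogonality plus the existence of such sequences (for isomorphism-closed classes) constitutes a torsion pair is legitimate, and your alternative direct check of closure of $\mathcal T$ under quotients covers the same ground.
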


\begin{proof} Since $\mathcal{A}\text{-}Mod^{tr}$ is an abelian category that is both complete and cocomplete, it suffices (see \cite[$\S$ 1.1]{BeR}) to show that  the full subcategory $\mathcal{A}\text{-}Rat^{tr}$ is closed under coproducts, quotients and extensions. Since $R_{\varphi_x}(N/R_{\varphi_x}(N))=0$ for every $x\in Ob(\mathscr X)$ and $N\in {_{\mathcal A_x}}\mathbf M$, we know 
from Proposition \ref{P7.2wd} that rational $\mathcal A_x$-modules form a torsion class in ${_{\mathcal A_x}}\mathbf M$. 

\smallskip We consider now a short exact sequence 
\begin{equation}
0\longrightarrow \mathcal M'\longrightarrow \mathcal M\longrightarrow \mathcal M''\longrightarrow 0
\end{equation} in $\mathcal{A}\text{-}Mod^{tr}$. Suppose that $\mathcal M\in \mathcal{A}\text{-}Rat^{tr}$. Then, each $\mathcal M_x\longrightarrow 
\mathcal M''_x$ is an epimorphism in ${_{\mathcal A_x}}\mathbf M$. In particular, each $\mathcal M''_x$ is a rational $\mathcal A_x$-module and we get 
$\mathcal M''\in \mathcal{A}\text{-}Rat^{tr}$. 

\smallskip
Similarly, if $\mathcal M'$, $\mathcal M''\in \mathcal{A}\text{-}Rat^{tr}$, we see that each $0\longrightarrow \mathcal M'_x\longrightarrow \mathcal M_x\longrightarrow \mathcal M''_x\longrightarrow 0$ is exact in ${_{\mathcal A_x}}\mathbf M$ with both $\mathcal M'_x$, $\mathcal M''_x$ rational. Again, since rational $\mathcal A_x$-modules are closed under extensions in ${_{\mathcal A_x}}\mathbf M$, it follows that each $\mathcal M_x$ is rational, i.e.,
$\mathcal M\in  \mathcal{A}\text{-}Rat^{tr}$. By similar reasoning, we see that $ \mathcal{A}\text{-}Rat^{tr}$ is also closed under coproducts. This proves the result.
\end{proof}

\begin{rem} \emph{Since the submodules of a rational module are always rational (see \eqref{7.6eqtu}), we note that the condition in Theorem \ref{torclassh} makes  $\mathcal{A}\text{-}Rat^{tr}$ a hereditary torsion class in $\mathcal{A}\text{-}Mod^{tr}$.}
\end{rem}

\begin{cor} Let $(\mathcal{C},\mathcal{A},\Phi)$ be a rational pairing. Suppose that for each $x\in Ob(\mathscr X)$, $R_{\varphi_x}(\mathcal A_x)$ is dense in $\mathcal A_x$. Then, the full subcategory $\mathcal{A}\text{-}Rat^{tr}$ of rational modules is a torsion class in $\mathcal{A}\text{-}Mod^{tr}$.
\end{cor}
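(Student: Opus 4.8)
The plan is to obtain this as an immediate consequence of Lemma \ref{P7.1ws} together with Theorem \ref{torclassh}, so that the entire argument is a short chaining of results already in hand. First I would fix an object $x \in Ob(\mathscr X)$. By part (1) of Definition \ref{funcratpr}, the map $\varphi_x : \mathcal C_x \otimes \mathcal A_x \longrightarrow K$ is a rational pairing of the coalgebra $\mathcal C_x$ with the algebra $\mathcal A_x$, and by hypothesis $R_{\varphi_x}(\mathcal A_x)$ is dense in $\mathcal A_x$ (here $\mathcal A_x$ carries the topology transported from the finite topology on $\mathcal C_x^*$ along the continuous algebra morphism $\mathcal A_x \longrightarrow \mathcal C_x^*$, as fixed just before Lemma \ref{P7.1ws}). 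Applying Lemma \ref{P7.1ws} with $C = \mathcal C_x$, $A = \mathcal A_x$, $\varphi = \varphi_x$, I would conclude that $R_{\varphi_x}(N/R_{\varphi_x}(N)) = 0$ for every $N \in {_{\mathcal A_x}}\mathbf M$.

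Since $x$ was arbitrary, this is precisely the hypothesis needed to invoke Theorem \ref{torclassh}. Feeding it in, I would conclude directly that the full subcategory $\mathcal{A}\text{-}Rat^{tr}$ of rational trans-modules is a torsion class in $\mathcal{A}\text{-}Mod^{tr}$. As in the remark following Theorem \ref{torclassh}, I would also record that this torsion class is in fact hereditary, since submodules of rational modules are rational by the description in \eqref{7.6eqtu}; combined with Corollary \ref{C7.11bq}(a), this yields the corresponding torsion-theoretic statements for $Com^{cs}\text{-}\mathcal C$ inside $\mathcal C^*\text{-}Mod^{tr}$.

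There is no substantial obstacle here: the only point deserving a moment's care is purely bookkeeping, namely that ``dense'' in the statement of the corollary is interpreted with respect to the same topology on $\mathcal A_x$ used in Lemma \ref{P7.1ws}, and that the compatibility condition (2) of Definition \ref{funcratpr} plays no role beyond what was already used to set up the rational pairing structure on $\mathcal{A}\text{-}Rat^{tr}$. With the conventions of the preceding sections the two implications compose verbatim, and the corollary follows.
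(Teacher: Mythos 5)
Your proposal is correct and follows exactly the paper's own argument: apply Lemma \ref{P7.1ws} pointwise at each $x\in Ob(\mathscr X)$ to get $R_{\varphi_x}(N/R_{\varphi_x}(N))=0$, then invoke Theorem \ref{torclassh}. The additional remarks on hereditariness and the consequence for $Com^{cs}\text{-}\mathcal C$ are consistent with the paper but not needed for the corollary itself.
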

\begin{proof}
This follows directly from Lemma \ref{P7.1ws} and Theorem \ref{torclassh}.
\end{proof}

\begin{cor}
Let $\mathcal{C}:\mathscr{X} \longrightarrow Coalg$ be a representation taking values in right semiperfect $K$-coalgebras. Then, $Com^{cs}\text{-}\mathcal C$ forms  a torsion class in $\mathcal{C^*}\text{-}Mod^{tr}$.
\end{cor}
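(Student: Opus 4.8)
The plan is to deduce this corollary from the preceding results, primarily from the corollary immediately above (which identifies $Com^{cs}$-$\mathcal C$ with $\mathcal C^*$-$Rat^{tr}$) together with Theorem \ref{torclassh}. First I would recall that for any $K$-coalgebra $C$, the canonical pairing $\varphi: C\otimes C^*\longrightarrow K$ is a rational pairing, and that the dual representation $\mathcal C^*:\mathscr X^{op}\longrightarrow Alg$ together with the family of these canonical pairings forms a rational pairing $(\mathcal C,\mathcal C^*,\Phi)$ in the sense of Definition \ref{funcratpr}; condition (2) there is automatic since the induced algebra maps are the linear duals of the coalgebra maps $\mathcal C_\alpha$. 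By Corollary \ref{C7.11bq}(a), $Com^{cs}$-$\mathcal C\simeq \mathcal C^*$-$Rat^{tr}$, so it suffices to show that $\mathcal C^*$-$Rat^{tr}$ is a torsion class in $\mathcal C^*$-$Mod^{tr}$.

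By Theorem \ref{torclassh}, this reduces to checking the hypothesis that for each $x\in Ob(\mathscr X)$ and each left $\mathcal C_x^*$-module $N$, one has $R_{\varphi_x}(N/R_{\varphi_x}(N))=0$; equivalently, by Lemma \ref{P7.1ws}, it suffices that $R_{\varphi_x}(\mathcal C_x^*)$ be dense in $\mathcal C_x^*$ with respect to the finite topology. So the key step is: \emph{if $C$ is a right semiperfect $K$-coalgebra, then the rational part $R_\varphi(C^*)$ of $C^*$ (as a left $C^*$-module under the canonical pairing) is dense in $C^*$.} This is the classical fact that right semiperfectness is equivalent to the left rational part of $C^*$ being dense (see Lin's work \cite{Lin}, or the standard references on semiperfect coalgebras such as \cite{DNR}); I would cite this rather than reprove it. Given this, each $R_{\varphi_x}(\mathcal C_x^*)$ is dense in $\mathcal C_x^*$, Lemma \ref{P7.1ws} applies to give the vanishing condition, and Theorem \ref{torclassh} yields that $\mathcal C^*$-$Rat^{tr}$ is a (hereditary) torsion class in $\mathcal C^*$-$Mod^{tr}$.

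Finally I would transport this back along the isomorphism of categories $Com^{cs}$-$\mathcal C\simeq \mathcal C^*$-$Rat^{tr}\subseteq \mathcal C^*$-$Mod^{tr}$ of Corollary \ref{C7.11bq}(a), concluding that $Com^{cs}$-$\mathcal C$ is a torsion class in $\mathcal C^*$-$Mod^{tr}$. The main obstacle, such as it is, is invoking the correct semiperfectness criterion: one must be careful about the left/right conventions, since $C$ right semiperfect corresponds to the density of the \emph{left} rational structure on $C^*$ (the one relevant to left $C^*$-modules, which is what appears in trans-modules over $\mathcal C^*:\mathscr X^{op}\longrightarrow Alg$). Once the orientation is pinned down correctly, the rest is a formal chaining of the results already established in the paper, with no substantial new computation.
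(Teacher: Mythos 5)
Your proposal is correct and follows essentially the same route as the paper: identify $Com^{cs}\text{-}\mathcal C$ with $\mathcal C^*\text{-}Rat^{tr}$ via Corollary \ref{C7.11bq}(a) and then apply Theorem \ref{torclassh}. The only cosmetic difference is that the paper verifies the hypothesis of Theorem \ref{torclassh} by citing Lin's Theorem 23 (the radical property $R_\varphi(N/R_\varphi(N))=0$ for right semiperfect $C$) directly, whereas you derive it from the density of the left rational part of $\mathcal C_x^*$ via Lemma \ref{P7.1ws}, which is the same argument factored through the paper's preceding density corollary, with the left/right orientation handled correctly.
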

\begin{proof}
Let $C$ be a coalgebra and $\varphi:C\otimes C^*\longrightarrow K$ the canonical pairing. If $C$ is right semiperfect, we know from \cite[Theorem 23]{Lin} that
$R_\varphi(N/R_\varphi(N))=0$ for any left $C^*$-module $N$. The result is now clear from Corollary \ref{C7.11bq} and Theorem \ref{torclassh}.
\end{proof}

Let $C$ be a $K$-coalgebra. We note that any $C$-contramodule $(M,\pi^C_M:Hom_K(C,M)\longrightarrow M)$ may be treated as a $C^*$-module by considering
$C^*\otimes M\longrightarrow Hom_K(C,M)\xrightarrow{\pi^C_M}M$. We recall from \cite[Theorem 3.11]{BBW} that this determines a functor 
$
	\mathbf{M}_{[C,-]} \longrightarrow \mathbf{M}_{C^*}
$.

\begin{thm}
	Let $\mathcal{C}:\mathscr{X} \longrightarrow Coalg$ be a coalgebra representation. Then, we have a functor $Cont^{tr}\text{-}\mathcal{C} \longrightarrow Mod^{cs}\text{-}\mathcal{C}^*$.
\end{thm}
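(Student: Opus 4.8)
The plan is to build the functor $Cont^{tr}\text{-}\mathcal{C} \longrightarrow Mod^{cs}\text{-}\mathcal{C}^*$ out of the pointwise functors $\mathbf{M}_{[\mathcal{C}_x,\_\_]} \longrightarrow \mathbf{M}_{\mathcal{C}_x^*}$ from \cite[Theorem 3.11]{BBW} recalled just above, and to check that the structure maps of a trans-contramodule convert correctly into those of a cis-module over the linear dual representation $\mathcal{C}^*:\mathscr{X}^{op}\longrightarrow Alg$. First I would recall the key compatibility: for any morphism of coalgebras $\alpha:C\longrightarrow D$ with dual algebra morphism $\alpha^*:D^*\longrightarrow C^*$, the contrarestriction $\alpha_\bullet:\mathbf{M}_{[C,\_\_]}\longrightarrow\mathbf{M}_{[D,\_\_]}$ becomes, after applying the functor to $\mathbf{M}_{C^*}$ resp. $\mathbf{M}_{D^*}$, the restriction of scalars $(\alpha^*)_\circ:\mathbf{M}_{C^*}\longrightarrow \mathbf{M}_{D^*}$. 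This is a direct unwinding of the definitions: the $D$-contramodule structure on $M$ coming from $\alpha_\bullet$ is $Hom_K(D,M)\xrightarrow{\_\_\circ\alpha}Hom_K(C,M)\xrightarrow{\pi^C_M}M$, and passing to the $D^*$-action one gets $D^*\otimes M\longrightarrow C^*\otimes M\longrightarrow M$ where the first map is induced by $\alpha^*$; this is precisely the restriction of scalars along $\alpha^*$. One similarly checks that the contraextension $\alpha^\bullet$, being the left adjoint, is sent to the left adjoint $(\alpha^*)^\circ = \_\_\otimes_{D^*}C^*$ of restriction, though for the statement as phrased it is cleanest to work with the $_\alpha\mathcal{M}:\mathcal{M}_y\longrightarrow\alpha_\bullet\mathcal{M}_x$ form of the data and the $\mathcal{M}_\alpha:\mathcal{M}_x\longrightarrow\alpha_\circ\mathcal{M}_y$ form of a cis-module.

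Next I would define the functor on objects. Given $\mathcal{M}\in Cont^{tr}\text{-}\mathcal{C}$, for each $x\in Ob(\mathscr{X})$ let $\tilde{\mathcal{M}}_x$ be $\mathcal{M}_x$ regarded as a right $\mathcal{C}_x^*$-module via \cite[Theorem 3.11]{BBW}. For $\alpha\in\mathscr{X}(x,y)$, which is a morphism $x\longrightarrow y$ in $\mathscr{X}$ hence $\mathcal{C}_\alpha^*:\mathcal{C}_y^*\longrightarrow\mathcal{C}_x^*$ is a morphism in $\mathscr{X}^{op}$ for $\mathcal{C}^*$, we have $_\alpha\mathcal{M}:\mathcal{M}_y\longrightarrow\alpha_\bullet\mathcal{M}_x$ a morphism of $\mathcal{C}_y$-contramodules; applying the pointwise functor and using the compatibility above, this becomes a morphism $\tilde{\mathcal{M}}_y\longrightarrow(\mathcal{C}_\alpha^*)_\circ\tilde{\mathcal{M}}_x$ of right $\mathcal{C}_y^*$-modules. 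Reading the indices against the definition of a cis-module over the algebra representation $\mathcal{C}^*:\mathscr{X}^{op}\longrightarrow Alg$: for a morphism $\alpha:x\longrightarrow y$ in $\mathscr{X}$, i.e. a morphism $\mathscr{X}^{op}(y,x)$, a cis-module needs a map from the module at the source (in $\mathscr{X}^{op}$, that is $y$) to the restriction of the module at the target ($x$) — which is exactly $\tilde{\mathcal{M}}_y\longrightarrow(\mathcal{C}_\alpha^*)_\circ\tilde{\mathcal{M}}_x$. So the trans-structure on contramodules over $\mathcal{C}$ is precisely a cis-structure over $\mathcal{C}^*$. I would then verify the unit axiom ($_{id_x}\mathcal{M}=id$ goes to $id$, since the pointwise functor is, well, a functor) and the cocycle condition: $_{\beta\alpha}\mathcal{M}=\beta_\bullet(_\alpha\mathcal{M})\circ{_\beta}\mathcal{M}$ maps, using functoriality of the pointwise construction and of $\_\bullet\mapsto\_\circ$, to the required identity $\tilde{\mathcal{M}}_{\beta\alpha}=(\mathcal{C}_\beta^*)_\circ(\tilde{\mathcal{M}}_\alpha)\circ\tilde{\mathcal{M}}_\beta$ in $Mod^{cs}\text{-}\mathcal{C}^*$ (matching the composition law there, read in $\mathscr{X}^{op}$). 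On morphisms, a morphism $\eta:\mathcal{M}\longrightarrow\mathcal{N}$ in $Cont^{tr}\text{-}\mathcal{C}$ has components $\eta_x$ that are $\mathcal{C}_x$-contramodule maps, hence $\mathcal{C}_x^*$-linear, and the commuting squares carry over verbatim; functoriality (preservation of identities and composition) is then immediate.

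The main obstacle — really the only nontrivial point — is the compatibility lemma: that the pointwise $\mathbf{M}_{[C,\_\_]}\longrightarrow\mathbf{M}_{C^*}$ functors intertwine $\alpha_\bullet$ with restriction of scalars along $\alpha^*$. This requires care because it is the one place where one must actually compare the two $C^*$-module structures rather than just invoke abstract nonsense, and because the relationship between the $Cohom$/contratensor description of $\alpha^\bullet$ and the dual-algebra tensor product is not completely formal. I expect to handle it by reducing to free contramodules $T_C(V)=Hom_K(C,V)$, where $\alpha^\bullet(T_D(V))=T_C(V)$ by \eqref{free} and where the $C^*$- and $D^*$-actions are the evident ones, and then extending by right-exactness and the fact that every contramodule is a cokernel of free ones; alternatively, and more simply, one argues directly on the contrarestriction side (the $_\alpha\mathcal{M}$ data), where the claim is just that $Hom_K(D,M)\xrightarrow{\_\circ\alpha}Hom_K(C,M)\xrightarrow{\pi^C_M}M$ induces the $D^*$-action obtained by restricting the $C^*$-action along $\alpha^*:D^*\longrightarrow C^*$, which unwinds in one line from the definition of the convolution action and the fact that $\alpha^*$ is an algebra map. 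Everything else is routine bookkeeping with indices and the $\mathscr{X}$ versus $\mathscr{X}^{op}$ convention, which I would state carefully but not belabor.
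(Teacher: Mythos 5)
Your proposal is correct and takes essentially the same approach as the paper: apply the pointwise functor $\mathbf{M}_{[\mathcal{C}_x,\_\_]}\longrightarrow \mathbf{M}_{\mathcal{C}_x^*}$ recalled just before the statement and observe that each ${_\alpha}\mathcal{M}:\mathcal{M}_y\longrightarrow\alpha_\bullet\mathcal{M}_x$ induces a morphism of right $\mathcal{C}_y^*$-modules, giving the cis-structure over $\mathcal{C}^*$. The paper's proof is simply a terser version of yours, leaving implicit the compatibility of $\alpha_\bullet$ with restriction of scalars along $\mathcal{C}_\alpha^*$ that you verify explicitly.
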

\begin{proof}
	Let $\mathcal{M} \in Cont^{tr}\text{-}\mathcal{C}$. Then, $\mathcal{M}_x\in \mathbf M_{[\mathcal C_x,\_\_]}$ may be treated as a $\mathcal C_x^*$-module for each 
	$x\in Ob(\mathscr X)$. Additionally, for each $\alpha \in \mathscr{X}(x,y)$, the  morphism ${_\alpha}\mathcal{M}:\mathcal{M}_y \longrightarrow \alpha_\bullet\mathcal{M}_x $  in $\mathbf M_{[\mathcal C_y,\_\_]}$ induces a morphism in $\mathbf M_{\mathcal C_y^*}$. This proves the result.
\end{proof}

For a $K$-coalgebra $C$ and a $C$-bicomodule $N$, we recall (see \cite{semicontra})  that the functor $\mathbf{M}^{{C}}(N,{-}):	\mathbf{M}^{{C}}\longrightarrow Vect$ takes values in $ \mathbf{M}_{[{C},\_\_]}$. Moreover, the functor 
$
		\mathbf{M}^{{C}}(N,{-}):	\mathbf{M}^{{C}}\longrightarrow \mathbf{M}_{[{C},\_\_]} 
$ has a left adjoint 
$
	{-}\boxtimes_C N: \mathbf{M}_{[{C},\_\_]} \longrightarrow 	\mathbf{M}^{{C}}
$
known as the contratensor product (see \cite{semicontra}). Explicitly, for any $M\in \mathbf{M}_{[{C},\_\_]}$, we have 
\begin{equation}\label{conten7}M\boxtimes_CN:=  Coeq\left(\begin{tikzcd}
			Hom_K(C,M)\otimes N \ar[r,shift left=.75ex," "]
			\ar[r,shift right=.75ex,swap," "]
			&
			M\otimes N
		\end{tikzcd}\right)\end{equation} where the two maps in \eqref{conten7} are induced 
by the structure maps $Hom_K(C,M)\longrightarrow M$ and $N\longrightarrow C\otimes N$ of $M$ and $N$ respectively.

\smallskip
Let $\alpha:C\longrightarrow D$ be a morphism of cocommutative coalgebras. Let $M$ be a $C$-contramodule and $N$ be a $C$-comodule. From the construction of the contratensor product in \eqref{conten7}, we note that there is a canonical morphism 
\begin{equation}\label{7.17uc}
\alpha_\bullet M\boxtimes_D\alpha^*N \longrightarrow \alpha^*(M\boxtimes_CN)
\end{equation} in $\mathbf M^D$.

\begin{lem}\label{L7.20g} Let $\alpha:C\longrightarrow D$ be a quasi-finite morphism of cocommutative coalgebras. Then, 

\smallskip
(a) For any $M\in \mathbf M_{[D,\_\_]}$ and $N\in \mathbf M^D$, we have a  natural isomorphism in $\mathbf M^C$: 
\begin{equation}\label{7.15vq} \alpha^!(M\boxtimes_DN)\cong (\alpha^\bullet M)\boxtimes_C(\alpha^!N)\end{equation}

\smallskip
(b) For any $N\in \mathbf M^D$ and $P\in \mathbf M^C$, we have a natural isomorphism in $\mathbf M_{[D,\_\_]}$
\begin{equation}\label{rv7.16h}
\mathbf M^D(N,\alpha^*P)\cong \alpha_\bullet\mathbf M^C(\alpha^!N,P)
\end{equation} 

\end{lem}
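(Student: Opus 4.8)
The plan is to establish part (a) first and then obtain part (b) from it by a purely formal chain of adjunctions together with the Yoneda lemma. Throughout one uses that $C$ and $D$ are cocommutative, so that every right comodule is canonically a bicomodule; in particular $N$ is a $D$-bicomodule and $\alpha^!N\in\mathbf M^C$ is a $C$-bicomodule, which is exactly what makes the cohomomorphism and contratensor functors appearing below well defined, and $\alpha^!=H_D(C,{-})$ exists because $\alpha$ is quasi-finite.

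For (a), I would first construct a natural comparison morphism $\theta_M\colon\alpha^!(M\boxtimes_DN)\to(\alpha^\bullet M)\boxtimes_C(\alpha^!N)$ in $\mathbf M^C$, for $M\in\mathbf M_{[D,\_\_]}$. By the adjunction $\alpha^!\dashv\alpha^*$ this amounts to giving a natural morphism $M\boxtimes_DN\to\alpha^*\bigl((\alpha^\bullet M)\boxtimes_C(\alpha^!N)\bigr)$, and I would take the composite
\[
M\boxtimes_DN\xrightarrow{\ \eta_M\boxtimes_D\eta_N\ }\alpha_\bullet\alpha^\bullet M\boxtimes_D\alpha^*\alpha^!N\longrightarrow\alpha^*\bigl((\alpha^\bullet M)\boxtimes_C(\alpha^!N)\bigr),
\]
where $\eta_M$, $\eta_N$ are the units of $\alpha^\bullet\dashv\alpha_\bullet$ and $\alpha^!\dashv\alpha^*$, and the second arrow is the canonical morphism \eqref{7.17uc} applied to $\alpha^\bullet M$ and $\alpha^!N$. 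Next I would observe that both functors $M\mapsto\alpha^!(M\boxtimes_DN)$ and $M\mapsto(\alpha^\bullet M)\boxtimes_C(\alpha^!N)$ preserve colimits, hence are right exact, since $\boxtimes_D$, $\boxtimes_C$, $\alpha^\bullet$ and $\alpha^!$ are all left adjoints. As every $D$-contramodule is a cokernel of free $D$-contramodules (see the discussion around \eqref{free}), it then suffices, by naturality of $\theta$, to check that $\theta_{T_D(V)}$ is an isomorphism for every vector space $V$.

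The free case I would handle as follows. By \eqref{free}, $\alpha^\bullet T_D(V)=T_C(V)$. Combining the adjunctions ${-}\boxtimes_C(\alpha^!N)\dashv\mathbf M^C(\alpha^!N,{-})$ and $T_C\dashv(\text{forgetful})$ of \eqref{5adjcontra} gives, for all $Q\in\mathbf M^C$, natural isomorphisms
\[
\mathbf M^C\bigl(T_C(V)\boxtimes_C\alpha^!N,\,Q\bigr)\cong Hom_K\bigl(V,\mathbf M^C(\alpha^!N,Q)\bigr)\cong\mathbf M^C\bigl(V\otimes\alpha^!N,\,Q\bigr),
\]
hence $T_C(V)\boxtimes_C\alpha^!N\cong V\otimes\alpha^!N$; similarly $T_D(V)\boxtimes_DN\cong V\otimes N$, and since $\alpha^!$ commutes with the coproduct $V\otimes({-})$ one gets $\alpha^!(T_D(V)\boxtimes_DN)\cong V\otimes\alpha^!N$ as well. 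It then remains to verify that $\theta_{T_D(V)}$ is precisely this identification, which is a direct computation with the coequalizer presentation \eqref{conten7}, the value of $\alpha^\bullet$ on free contramodules, and the map \eqref{7.17uc}. This proves (a). Finally, for (b) I would argue by Yoneda: for $P\in\mathbf M^C$ and $M\in\mathbf M_{[D,\_\_]}$, chaining $\alpha^\bullet\dashv\alpha_\bullet$, ${-}\boxtimes_C(\alpha^!N)\dashv\mathbf M^C(\alpha^!N,{-})$, part (a), $\alpha^!\dashv\alpha^*$ and ${-}\boxtimes_DN\dashv\mathbf M^D(N,{-})$ yields
\begin{align*}
\mathbf M_{[D,\_\_]}\bigl(M,\alpha_\bullet\mathbf M^C(\alpha^!N,P)\bigr)
&\cong\mathbf M_{[C,\_\_]}\bigl(\alpha^\bullet M,\mathbf M^C(\alpha^!N,P)\bigr)\\
&\cong\mathbf M^C\bigl(\alpha^\bullet M\boxtimes_C\alpha^!N,\,P\bigr)\\
&\cong\mathbf M^C\bigl(\alpha^!(M\boxtimes_DN),\,P\bigr)\\
&\cong\mathbf M^D\bigl(M\boxtimes_DN,\,\alpha^*P\bigr)\\
&\cong\mathbf M_{[D,\_\_]}\bigl(M,\mathbf M^D(N,\alpha^*P)\bigr),
\end{align*}
all natural in $M$, so $\alpha_\bullet\mathbf M^C(\alpha^!N,P)\cong\mathbf M^D(N,\alpha^*P)$ naturally in $P$. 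The step I expect to be the real obstacle is the identification of $\theta_{T_D(V)}$ in the free case: matching the abstractly defined comparison map with the evident isomorphism $V\otimes\alpha^!N\cong V\otimes\alpha^!N$ forces one to unwind the coequalizer definition of $\boxtimes$, the action of $\alpha^\bullet$ on free contramodules, and the map \eqref{7.17uc}. Everything else is formal manipulation of adjoint functors.
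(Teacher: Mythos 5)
Your proof is correct and follows essentially the same route as the paper: verify \eqref{7.15vq} on free $D$-contramodules (your $T_D(V)\boxtimes_DN\cong V\otimes N$ is the paper's $D^*\boxtimes_DN\cong N$ together with direct sums, using $\alpha^\bullet T_D(V)=T_C(V)$ and that $\alpha^!$ preserves $V\otimes(-)$), extend to all of $\mathbf M_{[D,\_\_]}$ via colimit preservation of $\alpha^\bullet$, $\alpha^!$ and the contratensor products plus presentation by frees, and deduce (b) by exactly the same Yoneda chain of adjunctions. The only difference is organizational — you build a single natural comparison map $\theta$ from the adjunction units and \eqref{7.17uc} before reducing to the free case, which treats the asserted naturality more explicitly than the paper's construction from a chosen free presentation — but note that for the cokernel comparison you genuinely need the deferred computation that $\theta_{T_D(V)}$ itself is invertible (abstract isomorphy of its source and target alone does not suffice), a routine unwinding of \eqref{conten7}, \eqref{free} and \eqref{7.17uc} that you have correctly flagged.
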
 

\begin{proof} (a) We first set $M=D^*$ in $\mathbf M_{[D,\_\_]}$. From \cite[$\S$ 1]{BPS}, we know that $D^*\boxtimes_DN\cong N$ in $\mathbf M^D$. Since $\alpha^\bullet D^*=C^*$ in $\mathbf M_{[C,\_\_]}$, \eqref{7.15vq} reduces to 
\begin{equation} \label{7.16vq} \alpha^!(M\boxtimes_DN)=\alpha^!(D^*\boxtimes_DN)\cong \alpha^!N\cong C^*\boxtimes_C(\alpha^!N)=(\alpha^\bullet D^*)\boxtimes_C(\alpha^!N)\cong (\alpha^\bullet M)\boxtimes_C(\alpha^!N)
\end{equation} We also know (see \cite[$\S$ 1]{BPS}) that $\_\_\boxtimes_D\_\_$ preserves colimits in both arguments. Since $\alpha^\bullet$ and $\alpha^!$ are both left adjoints, it follows from \eqref{7.16vq} that \eqref{7.15vq} holds for any free contramodule $M={D^*}^{(I)}$, i.e., any direct sum of copies of $D^*$. Since any $M\in \mathbf M_{[D,\_\_]}$ can be expressed as a cokernel $M=Cok(f:F_2\longrightarrow F_1)$ 
of a morphism of free contramodules and both  $\alpha^\bullet$ and $\alpha^!$  preserve colimits, we obtain $\alpha^!(
M\boxtimes_DN)\cong (\alpha^\bullet M)\boxtimes_C(\alpha^!N)$. By lifting morphisms in $ \mathbf M_{[D,\_\_]}$ to morphisms of their free resolutions, we see that this isomorphism does not depend on the choice of $M=Cok(f:F_2\longrightarrow F_1)$. 

\smallskip
(b) This follows from Yoneda lemma and the isomorphism in part (a), by noting that for any $M\in  \mathbf M_{[D,\_\_]}$, we have
\begin{equation}
\begin{array}{ll}
 \mathbf M_{[D,\_\_]}(M,\mathbf M^D(N,\alpha^*P))&\cong \mathbf M^D(M\boxtimes_DN,\alpha^*P)\\ 
 &\cong \mathbf M^C(\alpha^!(M\boxtimes_DN),P) \cong \mathbf M^C((\alpha^\bullet M)\boxtimes_C(\alpha^!N),P)\\ &\cong 
 \mathbf M_{[C,\_\_]} (\alpha^\bullet M,\mathbf M^C(\alpha^!N,P)) \cong \mathbf M_{[D,\_\_]}(M,\alpha_\bullet \mathbf M^C(\alpha^!N,P))\\
 \end{array}
\end{equation}\end{proof}

\begin{Thm}\label{T7.19fe}  Let $\mathcal{C}:\mathscr{X} \longrightarrow Coalg$ be a coalgebra representation taking values in cocommutative coalgebras. Let $\mathcal N\in Com^{tr}_c$-$\mathcal C$ be a cartesian trans-comodule over $\mathcal C$. 

\smallskip
(a) We have a functor $F:Cont^{tr}$-$\mathcal{C}\longrightarrow Com^{tr}$-$\mathcal{C}$ defined by setting 
\begin{equation}
F(\mathcal M)_x:=\mathcal M_x\boxtimes_{\mathcal C_x}\mathcal N_x\qquad\forall\textrm{ }x\in Ob(\mathscr X), \textrm{ }\mathcal M\in Cont^{tr}\text{-}\mathcal C
\end{equation} 
(b) Suppose that $\mathcal{C}:\mathscr{X} \longrightarrow Coalg$ is quasi-finite. Then, we have a functor $G:Com^{tr}$-$\mathcal{C}\longrightarrow Cont^{tr}$-$\mathcal{C}$ defined by setting
\begin{equation}
G(\mathcal P)_x:=\mathbf M^{\mathcal C_x}(\mathcal N_x,\mathcal P_x) \qquad \forall\textrm{ }x\in Ob(\mathscr X), \textrm{ }\mathcal P\in Com^{tr}\text{-}\mathcal C
\end{equation} 
In that case, $(F,G)$ is a pair of adjoint functors. 
\end{Thm}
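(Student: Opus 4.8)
The plan is to build the two functors pointwise and then verify the adjunction by reducing everything, via naturality and the cartesian hypothesis on $\mathcal N$, to the pointwise adjunction $(\_\_\boxtimes_{\mathcal C_x}\mathcal N_x,\ \mathbf M^{\mathcal C_x}(\mathcal N_x,\_\_))$ on each $\mathbf M^{\mathcal C_x}$ together with Lemma \ref{L7.20g}. First I would treat part (a): for $\mathcal M\in Cont^{tr}\text{-}\mathcal C$ and $\alpha\in\mathscr X(x,y)$ I must produce a structure morphism ${}^{\alpha}F(\mathcal M):\alpha^!F(\mathcal M)_y=\alpha^!(\mathcal M_y\boxtimes_{\mathcal C_y}\mathcal N_y)\longrightarrow F(\mathcal M)_x=\mathcal M_x\boxtimes_{\mathcal C_x}\mathcal N_x$. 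By Lemma \ref{L7.20g}(a) the source is naturally isomorphic to $(\alpha^\bullet\mathcal M_y)\boxtimes_{\mathcal C_x}(\alpha^!\mathcal N_y)$; now I use the contramodule structure map ${}^{\alpha}\mathcal M:\alpha^\bullet\mathcal M_y\to\mathcal M_x$ and, crucially, the fact that $\mathcal N$ is \emph{cartesian}, so ${}^{\alpha}\mathcal N:\alpha^!\mathcal N_y\to\mathcal N_x$ is an isomorphism. Tensoring these two morphisms gives ${}^{\alpha}F(\mathcal M)$. Functoriality of $\_\_\boxtimes_{\mathcal C_x}\_\_$ in both variables, together with the cocycle identities for $\mathcal M$ (Definition \ref{cont-rep}) and for $\mathcal N$ (Definition \ref{trans-rep}) and the compatibility of the isomorphisms in Lemma \ref{L7.20g}(a) with composition, yields ${}^{\,\beta\alpha}F(\mathcal M)={}^{\alpha}F(\mathcal M)\circ\alpha^!({}^{\beta}F(\mathcal M))$; the unit axiom is immediate. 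Thus $F(\mathcal M)\in Com^{tr}\text{-}\mathcal C$, and $F$ is a functor since a morphism $\eta:\mathcal M\to\mathcal M'$ induces $\eta_x\boxtimes\mathrm{id}$ componentwise, compatible with the structure maps by naturality.

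Next I would handle part (b) under the quasi-finiteness hypothesis. For $\mathcal P\in Com^{tr}\text{-}\mathcal C$ set $G(\mathcal P)_x:=\mathbf M^{\mathcal C_x}(\mathcal N_x,\mathcal P_x)$, which is a $\mathcal C_x$-contramodule by the recalled property of $\mathbf M^{\mathcal C_x}(N,\_\_)$ for a bicomodule $N$ (here $\mathcal N_x$ is a $\mathcal C_x$-bicomodule because $\mathcal C_x$ is cocommutative). The structure morphism ${}^{\alpha}G(\mathcal P):\alpha^\bullet G(\mathcal P)_y\to G(\mathcal P)_x$ is obtained from Lemma \ref{L7.20g}(b): $\alpha_\bullet\mathbf M^{\mathcal C_x}(\alpha^!\mathcal N_y,\mathcal P_x)\cong\mathbf M^{\mathcal C_y}(\mathcal N_y,\alpha^*\mathcal P_x)$, into which I feed the trans-comodule structure map ${}_{\alpha}\mathcal P:\mathcal P_y\to\alpha^*\mathcal P_x$ and then apply the inverse of ${}^{\alpha}\mathcal N$ (again using cartesianness of $\mathcal N$) to rewrite $\alpha^!\mathcal N_y$ as $\mathcal N_x$; taking the adjoint of the resulting map $\alpha^\bullet G(\mathcal P)_y\to\mathbf M^{\mathcal C_x}(\mathcal N_x,\mathcal P_x)$ gives ${}^{\alpha}G(\mathcal P)$. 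As before, the cocycle condition for $G(\mathcal P)$ follows from those for $\mathcal P$ and $\mathcal N$ and the coherence of the isomorphisms in Lemma \ref{L7.20g}. Finally, for the adjunction $\mathrm{Hom}_{Com^{tr}\text{-}\mathcal C}(F(\mathcal M),\mathcal P)\cong\mathrm{Hom}_{Cont^{tr}\text{-}\mathcal C}(\mathcal M,G(\mathcal P))$: a morphism on either side is a family of pointwise morphisms compatible with the structure maps, and pointwise the adjunction is exactly $\mathbf M^{\mathcal C_x}(\mathcal M_x\boxtimes_{\mathcal C_x}\mathcal N_x,\mathcal P_x)\cong\mathbf M_{[\mathcal C_x,\_\_]}(\mathcal M_x,\mathbf M^{\mathcal C_x}(\mathcal N_x,\mathcal P_x))$. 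I would show that this family of bijections carries compatible families to compatible families, by checking that the square expressing compatibility with ${}^{\alpha}F(\mathcal M)$/${}_{\alpha}\mathcal P$ is transformed into the square expressing compatibility with ${}_{\alpha}\mathcal M$/${}^{\alpha}G(\mathcal P)$; this is a diagram chase that unwinds the definitions of the structure maps together with the two isomorphisms of Lemma \ref{L7.20g} and the adjunctions $(\alpha^\bullet,\alpha_\bullet)$, $(\alpha^!,\alpha^*)$.

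The main obstacle I anticipate is \emph{not} the pointwise adjunction but the bookkeeping of the structure morphisms: one must verify that the isomorphisms of Lemma \ref{L7.20g}(a),(b) are natural and compatible with composition of morphisms in $\mathscr X$ (a ``coherence'' statement that is implicit in the proof of Lemma \ref{L7.20g} but needs to be invoked carefully), and that cartesianness of $\mathcal N$ is used in a way consistent across the cocycle identities — i.e. that replacing $\alpha^!\mathcal N_y$ by $\mathcal N_x$ via $({}^{\alpha}\mathcal N)^{-1}$ is compatible with the analogous replacements for $\beta$ and $\beta\alpha$. Once this coherence is pinned down, showing that the pointwise adjunction bijections respect compatibility with the $\mathscr X$-structure is routine. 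I would therefore spend most of the write-up making the definitions of ${}^{\alpha}F(\mathcal M)$ and ${}^{\alpha}G(\mathcal P)$ explicit and checking the one compatibility square, leaving the rest as ``a direct verification.''
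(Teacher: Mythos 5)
Your treatment of part (b) and of the final adjunction is essentially the paper's own: the structure map of $G(\mathcal P)$ is built from Lemma \ref{L7.20g}(b) together with the unit of $(\alpha^!,\alpha^*)$ and the structure map of $\mathcal P$, cartesianness of $\mathcal N$ is invoked exactly where the paper invokes it (to identify $\alpha^!$ applied to a component of $\mathcal N$ with the other component), and adjointness is reduced to the pointwise isomorphism $\mathbf M^{\mathcal C_x}(\mathcal M_x\boxtimes_{\mathcal C_x}\mathcal N_x,\mathcal P_x)\cong\mathbf M_{[\mathcal C_x,\_\_]}(\mathcal M_x,\mathbf M^{\mathcal C_x}(\mathcal N_x,\mathcal P_x))$, with the compatibility check you describe left implicit in the paper.

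Part (a), however, has a genuine problem. You define the structure maps of $F(\mathcal M)$ in the upper-index form ${}^{\alpha}F(\mathcal M):\alpha^!F(\mathcal M)_y\longrightarrow F(\mathcal M)_x$ and identify the source via Lemma \ref{L7.20g}(a). Both steps presuppose that the morphisms $\mathcal C_\alpha$ are quasi-finite: otherwise $\alpha^!$ does not exist and Lemma \ref{L7.20g} does not apply. But the theorem adds quasi-finiteness only in part (b); part (a) is asserted for an arbitrary representation by cocommutative coalgebras, with $Com^{tr}\text{-}\mathcal C$ understood through the lower-index maps as in the remark after Definition \ref{trans-rep}. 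The paper's proof of (a) accordingly never uses $\alpha^!$: it defines ${}_{\alpha}F(\mathcal M):F(\mathcal M)_x\longrightarrow\alpha^*F(\mathcal M)_y$ as ${}_{\alpha}\mathcal M\boxtimes{}_{\alpha}\mathcal N$ followed by the canonical (lax, not necessarily invertible) comparison morphism $\alpha_\bullet M\boxtimes\alpha^*N\longrightarrow\alpha^*(M\boxtimes N)$ of \eqref{7.17uc}; this also sidesteps the coherence-of-isomorphisms issue you yourself flag for Lemma \ref{L7.20g}(a). Note, too, that cartesianness of $\mathcal N$ plays no role in (a) — even in your own construction ${}^{\alpha}\mathcal N$ already points in the required direction, so calling its invertibility ``crucial'' there is a slip. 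To repair your argument you would either have to add quasi-finiteness as a hypothesis in (a) (proving a weaker statement) or switch to the lower-index construction via \eqref{7.17uc}.
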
 

\begin{proof}
(a) We consider $\alpha\in \mathscr X(y,x)$. Then, we have the following composition in $\mathbf M^{\mathcal C_x}$
\begin{equation*}
\begin{CD}
_\alpha F(\mathcal M):F(\mathcal M)_x = \mathcal M_x\boxtimes_{\mathcal C_x} \mathcal N_x @>_{\alpha}\mathcal M\boxtimes_{\mathcal C_x}{_\alpha\mathcal N}>>(\alpha_\bullet\mathcal M_y)\boxtimes_{\mathcal C_x}(\alpha^\ast\mathcal N_y)@>>> \alpha^*(\mathcal M_y\boxtimes_{\mathcal C_y}\mathcal N_y)=\alpha^* F(\mathcal M)_y
\end{CD}
\end{equation*} where the last morphism follows from \eqref{7.17uc}. Accordingly, we have a functor $F:Cont^{tr}$-$\mathcal{C}\longrightarrow Com^{tr}$-$\mathcal{C}$. 

\smallskip
(b) Let  $\alpha\in \mathscr X(y,x)$. Using the adjoint pair $(\alpha^!,\alpha^*)$, we have a canonical morphism $u_\alpha(\mathcal P):\mathcal P_x\longrightarrow \alpha^\ast\alpha^!
\mathcal P_x$ in $\mathbf M^{\mathcal C_x}$. Accordingly, we have the following composition in $\mathbf M_{[\mathcal C_x,\_\_]}$
\begin{equation*}
\begin{CD}
\mathbf M^{\mathcal C_x}(\mathcal N_x,\mathcal P_x)@>\mathbf M^{\mathcal C_x}(\mathcal N_x,u_\alpha(\mathcal P))>>\mathbf M^{\mathcal C_x}(\mathcal N_x, \alpha^\ast\alpha^!\mathcal P_x)@>\cong>> \alpha_\bullet \mathbf M^{\mathcal C_y}(\alpha^!\mathcal N_x,\alpha^!\mathcal P_x)@>\alpha_\bullet \mathbf M^{\mathcal C_y}(\alpha^!\mathcal N_x,{^\alpha\mathcal P})>>  \alpha_\bullet \mathbf M^{\mathcal C_y}(\alpha^!\mathcal N_x, \mathcal P_y)
\end{CD}
\end{equation*} where the isomorphism in the middle is obtained from Lemma \ref{L7.20g}(b). Since $\mathcal N\in Com^{tr}_c$-$\mathcal C$ is cartesian, we put $\alpha^!\mathcal N_x=\mathcal N_y$. The above therefore gives us a morphism 
\begin{equation*}
\begin{CD}
_\alpha G(\mathcal P): G(\mathcal P)_x=\mathbf M^{\mathcal C_x}(\mathcal N_x,\mathcal P_x) @>>>  \alpha_\bullet \mathbf M^{\mathcal C_y}(\alpha^!\mathcal N_x, \mathcal P_y)=\alpha_\bullet \mathbf M^{\mathcal C_y}(\mathcal N_y, \mathcal P_y)=\alpha_\bullet G(\mathcal P)_y
\end{CD}
\end{equation*} corresponding to $\alpha\in \mathscr X(y,x)$. Hence, we have a functor $G:Com^{tr}$-$\mathcal{C}\longrightarrow Cont^{tr}$-$\mathcal{C}$. The adjointness of $F$ and $G$ follows from the fact that $\mathbf M^{\mathcal C_x}(\mathcal M_x\boxtimes_{\mathcal C_x}\mathcal N_x,\mathcal P_x)\cong \mathbf M_{[\mathcal C_x,\_\_]}(\mathcal M_x,
\mathbf M^{\mathcal C_x}(\mathcal N_x,\mathcal P_x))$ for each $x\in Ob(\mathscr X)$. 
\end{proof}

\small
\begin{bibdiv}
\begin{biblist}
	\bib{AGL}{article}
	{
   author={Abuhlail, J. Y.},
   author={G\'{o}mez-Torrecillas, J.},
   author={Lobillo, F. J.},
   title={Duality and rational modules in Hopf algebras over commutative
   rings},
   journal={J. Algebra},
   volume={240},
   date={2001},
   number={1},
   pages={165--184},
}
	
	\bib{AR}{book}{
		author={Adamek, J.},
		author= {Rosicky, J.},
		title={Locally Presentable and Accessible Categories}, 
		place={Cambridge},
		series={London Mathematical Society Lecture Note Series},
		year={1994}, 
		collection={London Mathematical Society Lecture Note Series}
	}
	
	\bib{takh}{article}{
   author={Al-Takhman, K.},
   title={Equivalences of comodule categories for coalgebras over rings},
   journal={J. Pure Appl. Algebra},
   volume={173},
   date={2002},
   number={3},
   pages={245--271},
}

\bib{Z1}{article}{
   author={Artin, M.},
   author={Tate, J.},
   author={Van den Bergh, M.},
   title={Some algebras associated to automorphisms of elliptic curves},
   conference={
      title={The Grothendieck Festschrift, Vol. I},
   },
   book={
      series={Progr. Math.},
      volume={86},
      publisher={Birkh\"{a}user Boston, Boston, MA},
   },
   date={1990},
   pages={33--85},
}

\bib{Z2}{article}{
   author={Artin, M.},
   author={Zhang, J. J.},
   title={Noncommutative projective schemes},
   journal={Adv. Math.},
   volume={109},
   date={1994},
   number={2},
   pages={228--287},
}

\bib{AB}{article}{
 author={Banerjee, A.},
 title={Entwined modules over representations of categories},
 journal={arXiv:2008.11913},
 year={2020},
}

\bib{BPS}{article}{
   author={Bazzoni, S.},
   author={Positselski, P.},
    author={\v{S}t'ov\'{\i}\v{c}ek, J.},
   title={Projective covers of flat contramodules},
   journal={arXiv:1911.11720 [math.RA]},
   date={2019},
}

\bib{Pst2}{article}{
   author={Bazzoni, S.},
   author={Positselski, L.},
   title={Matlis category equivalences for a ring epimorphism},
   journal={J. Pure Appl. Algebra},
   volume={224},
   date={2020},
   number={10},
   pages={106398, 25},
}

\bib{BeR}{article}{
   author={Beligiannis, A.},
   author={Reiten, I.},
   title={Homological and homotopical aspects of torsion theories},
   journal={Mem. Amer. Math. Soc.},
   volume={188},
   date={2007},
   number={883},
}

\bib{BBW}{article}{
	author={Böhm, G.},
	author={Brzeziński, T.},
	author={Wisbauer, R.},
	title={Monads and comonads on module categories},
	journal={Journal of Algebra (arXiv:0804.1460 [math.RA])},
	volume={322}, 
	issue={5},
	date={2009},
	pages={1719-1747},
}

  \bib{BW}{book}{
   author={Brzezinski, T.},
   author={Wisbauer, R.},
   title={Corings and comodules},
   series={London Mathematical Society Lecture Note Series},
   volume={309},
   publisher={Cambridge University Press, Cambridge},
   date={2003},
   pages={xii+476},
}

 \bib{Del}{article}{
   author={Deligne, P.},
   title={Cat\'{e}gories tannakiennes},
   journal={The Grothendieck Festschrift, II, Progr. Math.},
   volume={87},
   date={1990},
   pages={111-195},
  
}

\bib{Oda}{article}{
   author={Di, Z.},
   author={Estrada, S.},
   author={Liang, L.},
   author={Odaba\c{s}\i , S.},
   title={Gorenstein flat representations of left rooted quivers},
   journal={J. Algebra},
   volume={584},
   date={2021},
   pages={180--214},
}

\bib{DNR}{book}{
   author={D\u{a}sc\u{a}lescu, S.},
   author={N\u{a}st\u{a}sescu, C.},
   author={Raianu, \c{S}.},
   title={Hopf algebras},
   series={Monographs and Textbooks in Pure and Applied Mathematics},
   volume={235},
   note={An introduction},
   publisher={Marcel Dekker, Inc., New York},
   date={2001},
   pages={x+401},
 }
 
 \bib{EM}{article}{
   author={Eilenberg, S.},
   author={Moore, J. C.},
   title={Foundations of relative homological algebra},
   journal={Mem. Amer. Math. Soc.},
   volume={55},
   date={1965},
  
}

\bib{EEES}{article}{
   author={Enochs, E.},
   author={Estrada, S.},
   title={Projective representations of quivers},
   journal={Comm. Algebra},
   volume={33},
   date={2005},
   number={10},
   pages={3467--3478},
   issn={0092-7872},
}

	\bib{EEGR}{article}{
   author={Enochs, E.},
   author={Estrada, S.},
   author={Garc\'{\i}a Rozas, J. R.},
   title={Injective representations of infinite quivers. Applications},
   journal={Canad. J. Math.},
   volume={61},
   date={2009},
   number={2},
   pages={315--335},
}

 \bib{EV}{article}{
  title={Cartesian modules over representations of small categories},
  author={Estrada, S.},
  author={Virili, S.},
  journal={Adv. Math},
  year={2017},
  volume={310},
  pages={557--609},
}

\bib{Gab}{article}{
   author={Gabriel, P.},
   title={Des cat\'{e}gories ab\'{e}liennes},
   language={French},
   journal={Bull. Soc. Math. France},
   volume={90},
   date={1962},
   pages={323--448},
}

\bib{Tohuku}{article}{
	author= {Grothendieck, A.},
	title={Sur quelques points d'algèbre homologique},
	journal={Tohoku Math. J.(2)},
	number={9},
	date={1957},
	pages={119-221},
}

\bib{Ill}{article}{
author={Illusie, L.},
   title={Existence de r\'{e}solutions globales},
   series={Lecture Notes in Mathematics, Vol. 225},
   note={Th\'{e}orie des intersections et th\'{e}or\`eme de Riemann-Roch, S\'{e}minaire de G\'{e}om\'{e}trie Alg\'{e}brique du Bois-Marie 1966--1967 (SGA 6);
   Dirig\'{e} par P. Berthelot, A. Grothendieck et L. Illusie. Avec la
   collaboration de D. Ferrand, J. P. Jouanolou, O. Jussila, S. Kleiman, M.
   Raynaud et J. P. Serre},
   publisher={Springer-Verlag, Berlin-New York, 1971},
}

\bib{KS}{book}{
	author={M.~Kashiwara,},
	author={P.~Schapira,},
	title={Categories and Sheaves},
	publisher={Springer-Verlag, Berlin-Heidelberg},
	date={2006},
}

\bib{Lin}{article}{
   author={Lin, B.},
   title={Semiperfect coalgebras},
   journal={J. Algebra},
   volume={49},
   date={1977},
   number={2},
   pages={357--373},
}

\bib{Low}{article}{
   author={Lowen, W.},
   title={A generalization of the Gabriel-Popescu theorem},
   journal={J. Pure Appl. Algebra},
   volume={190},
   date={2004},
   number={1-3},
   pages={197--211},
}

\bib{LGS}{article}{
   author={Lowen, W.},
   author={Ramos Gonz\'{a}lez, J.},
   author={Shoikhet, B.},
   title={On the tensor product of linear sites and Grothendieck categories},
   journal={Int. Math. Res. Not. IMRN},
   date={2018},
   number={21},
   pages={6698--6736},
}

\bib{Lurie}{book}{
   author={Lurie, Jacob},
   title={Derived algebraic geometry},
   note={Thesis (Ph.D.)--Massachusetts Institute of Technology},
   publisher={ProQuest LLC, Ann Arbor, MI},
   date={2004},
}

\bib{Mit}{article}{
   author={Mitchell, B.},
   title={Rings with several objects},
   journal={Adv. Math.},
   volume={8},
   date={1972},
   pages={1--161},
}

\bib{NT}{article}{
   author={N\u{a}st\u{a}sescu, C.},
    author={Torrecillas, B.}
   title={Torsion theory for coalgebras, },
   journal={J. Pure Appl. Algebra},
   volume={97},
   date={1994},
   pages={203--220},
}

\bib{semicontra}{book}{
	author={Positselski, L.},
	title={Homological Algebra of Semimodules and Semicontramodules: Semi-Infinite Homological Algebra of Associative Algebraic Structures},
	year={2010},
	publisher ={Springer Basel},
}

\bib{Pmem}{article}{
	author = { L. Positselski},
	title = { Two kinds of derived categories, Koszul duality, and comodule-contramodule
correspondence},
	journal = {Memoirs of the American Mathematical Society},
	year = {2011},
	volume={212},
	issue={996},
}

\bib{P2}{article}{
title = {Covers, envelopes, and cotorsion theories in locally presentable abelian categories and contramodule categories},
		author = {Positselski, L.},
	author={Rosický, J.},
	journal = {Journal of Algebra},
	volume = {483},
	pages = {83--128},
	year = {2017},
}

\bib{Pst1}{article}{
   author={Positselski, L.},
   title={Contraadjusted modules, contramodules, and reduced cotorsion
   modules},
   journal={Mosc. Math. J.},
   volume={17},
   date={2017},
   number={3},
   pages={385--455},
}

\bib{P}{article}{
	author={Positselski, L.},
	title={Contramodules},
	journal={arXiv:1503.00991},
	year={2019},
}

\bib{co-contra}{article}{
	author = {Positselski, L.},
	title = {Smooth duality and co-contra correspondence},
	journal = {Journal of Lie Theory},
	year = {2020},
	volume={30},
	issue={1},
pages={85--144},
}

\bib{P1}{article}{
	author={Positselski, L.},
	title={Contramodules over pro-perfect topological rings},
	journal={arXiv:1807.10671},
	year={2021},
}

\bib{Rad}{article}{
   author={Radford, D. E.},
   title={Coreflexive coalgebras},
   journal={J. Algebra},
   volume={26},
   date={1973},
   pages={512--535},
}

\bib{Rose-1}{article}{
   author={Rosenberg, A. L.},
   title={Reconstruction of Schemes},
   journal={MPI Preprints Series},
   volume={108},
   date={1996},
}

\bib{Rose}{article}{
   author={Rosenberg, A. L.},
   title={The spectrum of abelian categories and reconstruction of schemes},
   conference={
      title={Rings, Hopf algebras, and Brauer groups},
      address={Antwerp/Brussels},
      date={1996},
   },
   book={
      series={Lecture Notes in Pure and Appl. Math.},
      volume={197},
      publisher={Dekker, New York},
   },
   date={1998},
   pages={257--274},
}

\bib{Rose1}{article}{
   author={Rosenberg, A. L.},
   title={Spectra of `spaces' represented by abelian categories},
   journal={MPI Preprints Series},
   volume={115},
   date={2004},
}

\bib{Sha}{article}{
   author={Shapiro, I.},
   title={Mixed vs stable anti-Yetter-Drinfeld contramodules},
   journal={SIGMA Symmetry Integrability Geom. Methods Appl.},
   volume={17},
   date={2021},
   pages={026, 10},
}

\bib{Stacks}{article}{
   title={The Stacks Project},
   journal={available online, from stacks.math.columbia.edu},
}

\bib{Z3}{article}{
   author={Stafford, J. T.},
   author={van den Bergh, M.},
   title={Noncommutative curves and noncommutative surfaces},
   journal={Bull. Amer. Math. Soc. (N.S.)},
   volume={38},
   date={2001},
   number={2},
   pages={171--216},
}

\bib{takeu}{article}{
	author={Takeuchi, M.},
	title={Morita theorems for categories of comodules},
	journal={J. Fac. Sci. Univ. Tokyo Sect. IA Math.},
	volume={24},
	date={1977},
	number={3},
	pages={629--644},
}

\bib{TV-1}{article}{
   author={To\"{e}n, B.},
   author={Vezzosi, G.},
   title={Homotopical algebraic geometry. I. Topos theory},
   journal={Adv. Math.},
   volume={193},
   date={2005},
   number={2},
   pages={257--372},
}

\bib{TV-2}{article}{
   author={To\"{e}n, B.},
   author={Vezzosi, G.},
   title={Homotopical algebraic geometry. II. Geometric stacks and
   applications},
   journal={Mem. Amer. Math. Soc.},
   volume={193},
   date={2008},
   number={902},
   pages={x+224},
}

\bib{TV}{article}{
   author={To\"{e}n, B.},
   author={Vaqui\'{e}, M.},
   title={Au-dessous de ${\rm Spec}\,\Bbb Z$},
   journal={J. K-Theory},
   volume={3},
   date={2009},
   number={3},
   pages={437--500},
}

\bib{Wis}{article}{
   author={Wisbauer, R.},
   title={Comodules and contramodules},
   journal={Glasg. Math. J.},
   volume={52},
   date={2010},
   number={A},
   pages={151--162},
}
\end{biblist}
\end{bibdiv}
\end{document}